\newtheorem{theorem}{Theorem}           
\newtheorem{corollary}[theorem]{Corollary}
\newtheorem{lemma}[theorem]{Lemma}
\newtheorem{prop}[theorem]{Proposition}
\theoremstyle{definition}              
\theoremstyle{remark}                  
\newtheorem{remark}[theorem]{Remark}
\newtheorem*{remark*}{Remark}
\DeclareMathOperator{\dist}{dist}
\DeclareMathOperator{\spt}{spt}
\newcommand{\abs}[1]{\left| #1 \right|}
\newcommand{\norm}[1]{\left\| #1 \right\|}
\newcommand{\eps}{\varepsilon}
\newcommand{\Dir}{\Lambda^{\mathrm{Dir}}}
\newcommand{\St}{\Lambda^{\mathrm{St}}}
\renewcommand{\Cap}{\Lambda^{\mathrm{Cap}}}
\DeclareMathAlphabet{\mathpzc}{OT1}{pzc}{m}{it}
\renewcommand{\d}{\mathrm{d}}
\newcommand{\N}{\mathbb{N}}       
\newcommand{\R}{\mathbb{R}}
\newcommand{\Z}{\mathbb{Z}}
\newcommand{\T}{\mathbb{T}}
\newcommand{\mcP}{\mathscr{P}}
\newcommand{\mcF}{\mathscr{F}}
\newcommand{\mcG}{\mathscr{G}}
\definecolor{lightblue}{rgb}{0.22,0.45,0.70}   
\definecolor{darkgray}{gray}{0.4}    
\definecolor{lightgray}{gray}{0.8}
\title{\sc A not-so-strange term coming from somewhere}
\def\correspondingauthor{\footnote{Corresponding author: giacomo.canevari@univr.it}}
\author[1]{Giacomo Canevari\correspondingauthor{}}
\author[2]{Kirill Cherednichenko}
\author[3,4,5]{Arghir Zarnescu}
\affil[1]{Università degli Studi di Verona,
Strada le Grazie 15, 37134 Verona, Italy
\vspace{2mm}}
\affil[2]{Department of Mathematical Sciences, University of Bath,
\linebreak Claverton Down, Bath, BA2 7AY, United Kingdom \vspace{2mm}}
\affil[3]{Simion Stoilow Institute of Mathematics of the Romanian Academy, \linebreak
P.O. Box 1-764, RO-014700 Bucharest, Romania \vspace{2mm}}
\affil[4]{BCAM, Basque Center for Applied Mathematics,
\linebreak Mazarredo 14, 48009, Bilbao, Bizkaia, Spain \vspace{2mm}}
\affil[5]{IKERBASQUE, Basque Foundation for Science,
\linebreak Plaza Euskadi 5, 48009, Bilbao, Bizkaia, Spain}
\date{\today}
\begin{document}

\maketitle

\begin{abstract}
 We consider Laplace’s equation in a periodically perforated domain with Robin boundary conditions on the holes, where the Robin coefficient is scaled proportionally to the inverse total surface area of the perforations.

 We identify a regime in which surface and bulk effects contribute at the same order and show that the homogenised equation contains an additional zeroth-order term depending nonlinearly on the Robin parameter. This term is characterised via a Steklov-type spectral problem in which the spectral parameter appears both in the equation and in the boundary condition.

 The resulting term interpolates continuously between the Neumann and Dirichlet limits, recovering the classical capacitary strange term in the strong-coupling limit.
\end{abstract}

\section{Introduction}

Homogenisation of boundary value problems for differential operators on perforated domains is a classical topic. In this paper, we identify a critical regime for Robin boundary conditions in which the effective behaviour interpolates between the classical Neumann and Dirichlet limits. More precisely, when the Robin coupling is scaled by the inverse total surface area of the perforations, the homogenised equation acquires a zeroth-order term whose coefficient depends nonlinearly on the Robin parameter and is characterised through a Steklov-type spectral problem. This provides a unified framework bridging previously distinct asymptotic regimes, for the Dirichlet and Neumann problem, respectively.

\smallskip
The case of the Laplacian on a domain with ``stiff'' inclusions (with Dirichlet boundary conditions) in ${\mathbb R}^n$ was first studied by Marchenko and Hruslov \cite{MarchenkoKhruslov, Marchenko_Khruslov_book} in the context of scattering by domains with fine-grained boundaries, see also \cite[Section 3.6]{JikovKozlovOleinik}, and subsequently by Cioranescu and Murat \cite{CioranescuMurat, CioranescuMurat-eng}, who coined the expression ``le terme \'{e}trange'' for the effect of the perforations on the homogenised description. Denoting by $\varepsilon$ the typical distance between the inclusions, the effect occurs if the radii $r_\varepsilon$ of the inclusions are scaled as $r_\varepsilon\sim c\varepsilon^{n/(n-2)}$. In this case, the perforated domain Laplacian $-\Delta_\varepsilon$ converges to the operator $-\Delta+\mu$ on the domain without perforations, where $\mu$ is proportional to the harmonic capacity of a single perforation in ${\mathbb R}^n$. Perforated-domain problems have since been studied extensively in various variants, including those with Neumann, and more generally Robin, boundary conditions on the boundaries of the perforations --- see, e.g., \cite{Brillard, AnsiniBraides, ConcaMuratTimofte, AnsiniBabadjianZeppieri, Focardi, ChiadoPiat_Nazarov_Piatnitski, Nazarov2015, GomezNazarovPerez, Borisov2022, Borisov2024, ScardiaZemasZeppieri}. They are known (at least in some cases) to yield explicitly characterised limit operators, to which the operators of the perforated problems converge in the norm-resolvent sense \cite{CDR}, sometimes with error control in terms of~$\varepsilon$ \cite{KhrabustovskyiPost, KhrabustovskyiPlum}.

\smallskip
These developments have led to two research trends, one mathematically motivated and the other one related to physical problems applications. The mathematical trend concerns the role of the condition on the boundaries of the perforations on the asymptotic analysis as $\varepsilon\to0$ and its quantitative effect on the limit problem. The applied trend is concerned with the problem of accounting for the effect of the surface energy on the asymptotics of the total energy stored within the medium. This  has been prompted from the modelling perspective by issues in continuum mechanics. For example, the recent works \cite{Calderer_et_al, canevari2020design} have investigated questions of this type in the setting of liquid crystals.

\smallskip
Clearly, the existing results for the Dirichlet problem (``infinitely stiff'' inclusions) and the Robin problem (including the Neumann problem in the case of vanishing Robin coupling) leave a gap between the two extreme cases of Robin boundary conditions, namely the Dirichelt and the Neumann case. A natural question is whether there exists a family of (possibly $\varepsilon$-parametrised) perforated problems whose effective descriptions bridge this gap.

In the present paper, we provide such a  family of problems on perforated domains with $\varepsilon$-dependent surface energy terms --- see Remark \ref{rk:CM} below. Perhaps unsurprisingly, the scaling coefficient for the Robin parameter is provided by the inverse total surface area of the perforations. This balances the bulk and surface energy components for functions whose pointwise magnitude remains uniformly controlled and creates a genuine competition between the two effects for minimising sequences of the total energy; see \eqref{energy} below.

\smallskip
The proof of our main result, Theorem \ref{th:strange_term}, reveals a connection between the convergence of energy minimisers in this critical regime and a trace inequality, i.e. an inequality between suitably scaled boundary and bulk energy integrals, see \eqref{trace-intro}, where the optimal constant is provided by the lowest eigenvalue of a Steklov-type boundary-value problem, see \eqref{eigenvalue}, with the spectral parameter present in both the equation and the boundary condition. The said eigenvalue is characterised by the minimisation problem \eqref{lambda_eps-}, involving a key parameter $\kappa$ that enters the effective ``strange-term'' formulation \eqref{hom_eq}.

\smallskip
This connection with Steklov-type eigenvalue problems places the problem we study among modelling frameworks where Dirichlet-to-Neumann maps emerge in the quantification of the effect of inclusions on long-wave propagation, mainly in periodic settings. The inclusions can be small relative to the separation distance, may have contrasting material properties, or may involve a specific scaling for the Robin coupling between the solution and the boundary flux. The related works either begin by assuming Steklov boundary conditions \cite{ChiadoPiat_Nazarov_Piatnitski, GomezNazarovPerez, Girouard_et_al, Girouard_Lagace} and derive the effective medium, or lead to Dirichlet-to-Neumann maps through asymptotic analysis of transmission conditions \cite{CKS_2022, CKVZ}. It is noteworthy that, in our setting, the asymptotic behaviour of \eqref{main_eq} is governed by eigenvalues of a Steklov-type problem (i.e., Problem~\eqref{eigenvalue} below), despite the original formulation involving Robin conditions scaled by the total surface area of the perforations.

We conclude this section by noting that $L^2$ convergence estimates of the kind we obtain here form a natural step towards the quantitative analysis of evolution problems in inhomogeneous media. A relevant early contribution is \cite{Cioranescu_et_al_wave}. Another research direction is the study of analogous surface effects when the perforations are located along lower-dimensional manifolds; see, for example, \cite{Borisov_Cardone_Durante}.

\medskip
\noindent\textbf{Main novelty.}
The main novelty of this work lies in the identification of a critical scaling of the Robin interaction that balances surface and bulk effects, leading to a new homogenised term characterised via a Steklov-type spectral problem and interpolating between the classical Neumann and Dirichlet regimes.

\medskip
We now proceed to the description of the model problem we use to illustrate the general method developed in the present work (Section \ref{sect:abstract}) and its results --- in particular, the convergence estimates stated in Theorem \ref{th:mainintro} for the model case.

\paragraph{Statement of the problem.}

Let~$\Omega\subseteq\R^n$ be a bounded Lipschitz domain.
While some of our results (particularly, those
related to the ``model example'' described
in~\eqref{model_example}--\eqref{gamma_intro} below)
assume that the dimension satisfies~$n\geq 3$, others
(those in the ``abstract setting'', see Sections~\ref{sect:abstract}, \ref{sect:eigen},
and~\ref{sect:homog}) cover the case~$n = 2$ as well.
For each~$\eps\in (0, \, 1)$,
let~$\mcP_\eps\subseteq\R^n$ be a closed,
$\eps\Z^n$-periodic set (i.e., $\eps z + x\in\mcP_\eps$
for all~$z\in\Z^n$ and all~$x\in\mcP_\eps$).
Note that~$\mcP_\eps$ is \emph{not} a subset of~$\Omega$.
The intersection~$\mcP_\eps\cap\Omega$ represents
the ``holes'' or ``perforations'' in the domain~$\Omega$.
However, by abuse of terminology, we will say that~$\mcP_\eps$,
as a whole, represents the perforations.
We work in the perforated domain~$\Omega_\eps := \Omega\setminus\mcP_\eps$
and impose a Robin boundary condition on the boundary of the holes,
$\Gamma_\eps := \Omega\cap\partial\mcP_\eps$.
More precisely, for each~$\eps\in (0, \, 1)$ we consider the solution~$u_\eps\in H^1(\Omega_\eps)$ of
\begin{equation} \label{main_eq} \tag{P$_\eps$}
 \begin{cases}
  -\Delta u_\eps + \alpha u_\eps = f & \textrm{in } \Omega_\eps, \\
  \hspace{0.8mm} \partial_\nu u_\eps
   + \dfrac{\beta\,u_\eps}{\mu_\eps}
   = \dfrac{g}{\mu_\eps}
   & \textrm{on } \Gamma_\eps, \\
  u_\eps = 0 &\textrm{on } \partial\Omega_\eps\cap\partial\Omega.
 \end{cases}
\end{equation}
Here~$\nu = \nu_\eps$ is the exterior unit normal
to~$\Omega_\eps$, $\alpha\geq 0$ and~$\beta > 0$ are given parameters,
$f\in L^2(\Omega)$ a given source density,
$g\in C^0(\overline{\Omega})$ is also given,
and~$(\mu_\eps)_{0 < \eps < 1}$
is any family of positive numbers that satisfy
\begin{equation} \label{mu_eps}
 \lim_{\eps\to 0} \frac{\abs{\Omega} \mu_\eps}
  {\sigma(\Gamma_\eps)} = 1,
\end{equation}
where~$\abs{\Omega}$ denotes the Lebesgue measure of~$\Omega$
and~$\sigma$ is the surface area
(i.e., the $(n-1)$-di\-men\-sion\-al Hausdorff measure).
In particular, $\mu_\eps$
is comparable to the total surface area of~$\Gamma_\eps$
--- namely, there exist $\eps$-independent positive constants~$c_1$, $c_2$ satisfying
\begin{equation} \label{mu_eps-surface}
 c_1 \, \sigma(\Gamma_\eps) \leq \mu_\eps
 \leq c_2 \, \sigma(\Gamma_\eps)
\end{equation}
for all sufficiently small~$\eps$.
The term~$g/\mu_\eps$ is the surface density of sources located on~$\Gamma_\eps$.
We are interested in the asymptotic behaviour of solutions~$u_\eps$
in the limit as~$\eps\to 0$.
As we explained earlier, the main difference between Problem~\eqref{main_eq} and earlier ones in this subject area (see e.g.~\cite{MarchenkoKhruslov, RauchTaylor, CioranescuMurat, Kaizu, JikovKozlovOleinik}) is the normalisation factor~$\mu_\eps$ in the Robin boundary conditions.
As we shall see later, this factor produces a
``strange term'' in the homogenised equation that depends nonlinearly on~$\beta$.

Problem~\eqref{main_eq} arises as the first-order optimality condition for minimisers of the functional
\begin{equation} \label{energy}
 \mcF_\eps(u) :=
  \int_{\Omega}\left(\frac{1}{2}\abs{\nabla u}^2
   + \frac{\alpha}{2} u^2 - f u\right) \d x
   + \frac{1}{\mu_\eps}
   \int_{\Gamma_\eps} \left(\frac{\beta}{2} u^2 - g u \right) \d\sigma,
\end{equation}
over~$u\in H^1(\Omega_\eps)$ such that~$u = 0$
on~$\partial\Omega_\eps\cap\partial\Omega$.
Functionals with this structure appear naturally in many contexts.
For instance, in materials science, they serve
as models for composite materials
--- liquid crystal colloids, among many others ---
consisting of many micro- or nano-particles dispersed
in a host material, with molecular interactions between
the host and the particles localised around the surfaces~\cite{Bennett-PhysRevE,Calderer_et_al,canevari2020design}.
Other applications include, e.g.,
thermal insulation modelling~\cite{della2021optimization} or in biology, modelling suspensions of rod-shaped viruses \cite{van2023homogenization}, lipid vesicle membranes \cite{wang2008modelling}.

From the variational point of view, it is natural
for the normalisation coefficient~$\mu_\eps$
in~\eqref{energy} to satisfy the bounds~\eqref{mu_eps-surface}.
Indeed, this choice guarantees that surface and bulk energies
are of the same order as~$\eps \to 0$.
Condition~\eqref{mu_eps} is more convenient
than~\eqref{mu_eps-surface} for the purposes of our analysis,
because it allows solutions of~\eqref{main_eq}
to converge to a uniquely defined limit problem as~$\eps\to 0$,
while~\eqref{mu_eps-surface} only provides compactness
of the solutions.
However, if a family~$(\mu_\eps)_{0 < \eps < 1}$
satisfies~\eqref{mu_eps-surface}, then we can reduce to the
case~\eqref{mu_eps} by extracting
a subsequence~$\eps_j\to 0$ in such a way
that~$\mu_{\eps_j}/\sigma(\Gamma_{\eps_j})$ converges,
then suitably rescaling~$\mu_{\eps_j}$ and~$\beta$.

We will prove a homogenisation result
for all perforation sets~$\mcP_\eps$ satisfying certain abstract
conditions (see Theorem~\ref{th:strange_term} below).
However, to simplify the exposition, we first present
a model example to which our analysis applies
and leave the more general framework to Section~\ref{sect:abstract}.
As mentioned above, in our model example we assume~$n\geq 3$ and define
\begin{equation} \label{model_example}
 \mcP_\eps := \eps\Z^n + \eps^\gamma \overline{D}
 = \left\{\eps z + \eps^\gamma x\colon z\in\Z^n, \ x\in \overline{D}\right\} \!,
\end{equation}
where~$D\subseteq\R^n$ is a bounded open set
of class~$C^2$ such that~$\R^n\setminus\overline{D}$ is connected,
and~$\gamma$ is set to be
\begin{equation} \label{gamma_intro}
 \gamma := \frac{n}{n - 2}.
\end{equation}
Note that we assume some additional regularity on the set~$D$, i.e.~$C^2$ instead of Lipschitz, because in the sequel we will need to apply elliptic regularity estimates near the hole boundary; see the proof of Proposition~\ref{prop:example} and condition~\eqref{hp:lambda0} that is used there.
As an example of a family~$(\mu_\eps)_{0 < \eps < 1}$
satisfying~\eqref{mu_eps} in this case, we can take
\begin{equation} \label{mu_eps_bar_intro}
 \mu_\eps = \overline{\mu}_\eps := \eps^{\gamma}\sigma(D).
\end{equation}
By an explicit computation, we see that such~$\overline{\mu}_\eps$
is precisely the total surface area of~$\eps^{-n}$
copies of a single ``hole'', $\eps^\gamma\overline{D}$,
hence it satisfies~\eqref{mu_eps}
(see Remark~\ref{rk:mu_eps_bar} for the details).
Other choices of~$\mu_\eps$ are admissible, so long
as they satisfy~\eqref{mu_eps}.
Holes of the form~\eqref{model_example}--\eqref{gamma_intro}
correspond exactly to the critical scaling
for the homogenisation of a Dirichlet problem, i.e.
\begin{equation} \label{main_eq_Dir}
 u^{\mathrm{Dir}}_\eps \in H^1_0(\Omega_\eps), \qquad
 -\Delta u^{\mathrm{Dir}}_\eps
  = f \quad \textrm{in } \Omega_\eps.
\end{equation}
Cioranescu and Murat~\cite{CioranescuMurat} proved that
solutions to this problem
satisfy~$\norm{u_\eps - u_0}_{L^2(\Omega_\eps)}\to 0$
as~$\eps\to 0$, where~$u_0$ is
the unique solution~$u^{\mathrm{Dir}}_0 \in H^1_0(\Omega)$ to
\begin{equation} \label{hom_eq_Dir}
 -\Delta u^{\mathrm{Dir}}_0 
  + \Cap_* 
  u^{\mathrm{Dir}}_0 = f
   \quad \textrm{in } \Omega.
\end{equation}
Here~$\Cap_*$ is a ``strange term'', defined as the solution
to a capacity problem, i.e.~$\Cap_*$ is the Dirichlet energy
of a harmonic function that is equal to one on~$\partial D$
and decays to zero at infinity.
For instance, when~$D$ is the unit ball~$B^n$ we have
\[
 \Cap_* = (n-2) \sigma_n,
\]
where~$\sigma_n := \sigma(\partial B^n)$.
By contrast, Kaizu~\cite{Kaizu} proved a homogenisation
result for the Robin problem
\begin{equation} \label{main_eq_K}
 \begin{cases}
  -\Delta u^{\mathrm{K}}_\eps 
   = f
   &\textrm{in } \Omega_\eps, \\
  \hspace{2mm} \partial_{\nu} u^{\mathrm{K}}_\eps
   + \beta u^{\mathrm{K}}_\eps = 0
   &\textrm{on } \partial\Omega_\eps,
 \end{cases}
\end{equation}
where the critical scaling of the holes is different,
namely, each component of~$\mcP_\eps$
is a ball of radius~$\eps^{n/(n-1)}\ll \eps^\gamma$.
In this case, the limit problem takes the form
\begin{equation} \label{hom_eq_K}
 \begin{cases}
  -\Delta u^{\mathrm{K}}_0 
   + \beta\sigma_n
   u^{\mathrm{K}}_0 = f
   &\textrm{in } \Omega, \\
  \hspace{2mm} \partial_{\nu} u^{\mathrm{K}}_0
   + \beta u^{\mathrm{K}}_0 = 0
   &\textrm{on } \partial\Omega.
 \end{cases}
\end{equation}
Therefore, introducing the normalisation coefficient~$\mu_\eps$
in~\eqref{main_eq} makes the problem closer to~\eqref{main_eq_Dir}
than to~\eqref{main_eq_K}, in terms of the critical scaling but also,
as we will see shortly, in terms of the expression for
the ``strange term'' we obtain in the limit problem.

\paragraph{The homogenisation limit (in the model example).}

The homogenised problem is defined in terms of auxiliary
variational problems that depend on the holes~$\mcP_\eps$.
For instance, for our model example~\eqref{model_example},
we take a parameter~$\kappa\in (0, \, 1)$ and consider
\begin{equation} \label{model_lambda*}
 \lambda_*(\kappa)
  = -\inf\left\{\int_{\R^{n}\setminus \overline{D}}
   \abs{\nabla z}^2\colon z\in \dot{H}^1(\R^n\setminus\overline{D})
   \oplus\R , \ \fint_{\partial D} z^2 \,\d\sigma
   - \kappa \, z(\infty)^2 = -1 \right\} \! .
\end{equation}
Here~$\dot{H}^1(\R^n\setminus\overline{D})$ is a
homogenous Sobolev space, whose elements
decay to zero at infinity and have square-integrable gradient,
while~$\dot{H}^1(\R^n\setminus\overline{D})\oplus\R$ is the set
of functions~$z$ for which there is a constant~$z(\infty)\in\R$
such that~$z - z(\infty)\in \dot{H}^1(\R^n\setminus\overline{D})$.
The notation~$\fint_{\partial D}$ denotes the integral
average over~$\partial D$, that is,
\[
 \fint_{\partial D} z^2 \, \d\sigma
 := \frac{1}{\sigma(\partial D)} \int_{\partial D} z^2 \, \d\sigma.
\]
The negative sign on the right-hand side of~\eqref{model_lambda*}
is introduced for later convenience.
It can be shown (see Theorem~\ref{th:lambda*} below)
that for each~$\beta > 0$ there exists a
unique~$\kappa_* = \kappa_*(\beta) \in (0, \, 1)$
satisfying
\[
 -\lambda_*(\kappa_*(\beta)) = \beta.
\]
We define our candidate homogenised problem in
terms of~$\kappa_*(\beta)$ as follows:
\begin{equation} \label{hom_eq} \tag{P$_0$}
 \begin{cases}
  -\Delta u_0 + \alpha u_0 + \beta\kappa_*(\beta) u_0
   = f + \kappa_*(\beta) g & \textrm{in } \Omega, \\
  u_0 = 0 &\textrm{on } \partial\Omega.
 \end{cases}
\end{equation}
The boundary terms proportional
to~$\mu_\eps^{-1}$ in~\eqref{main_eq} lead to
bulk terms proportional to~$\kappa_*(\beta)$ in~\eqref{hom_eq}.
Problem~\eqref{hom_eq} has a unique solution~$u_0\in H^1(\Omega)$.

\begin{theorem} \label{th:mainintro}
 Let~$\mcP_\eps$ be defined as in~\eqref{model_example}--\eqref{gamma_intro},
 with~$\mu_\eps$ satisfying~\eqref{mu_eps}. Then,
 for all~$f\in L^2(\Omega)$, all~$g\in C^0(\overline{\Omega})$
 and all~$\alpha\geq 0$, $\beta > 0$, the solutions~$u_\eps$
 to~\eqref{main_eq} satisfy
 \[
  \lim_{\eps\to 0} \norm{u_\eps - u_0}_{L^2(\Omega_\eps)} = 0.
 \]
\end{theorem}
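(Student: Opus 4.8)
The plan is to prove this via the energy (De Giorgi–$\Gamma$-convergence) method, since Problem \eqref{main_eq} is the Euler–Lagrange equation of $\mcF_\eps$ and Problem \eqref{hom_eq} is the Euler–Lagrange equation of the limit functional
\[
 \mcF_0(u) := \int_\Omega\left(\frac12\abs{\nabla u}^2 + \frac{\alpha+\beta\kappa_*(\beta)}{2}u^2 - fu\right)\d x, \qquad u\in H^1_0(\Omega).
\]
First I would record \emph{a priori} bounds: testing \eqref{main_eq} with $u_\eps$ and using the scaling \eqref{mu_intro} of $\mu_\eps$ together with a uniform (scaled) trace inequality on the perforated cell gives $\norm{u_\eps}_{H^1(\Omega_\eps)} \leq C$, with $C$ independent of $\eps$. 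One then extends $u_\eps$ to $\widetilde u_\eps\in H^1_0(\Omega)$ with uniformly bounded $H^1$ norm (this is standard for perforations of the form \eqref{model_example} since the holes shrink much faster than $\eps$), so that along a subsequence $\widetilde u_\eps\rightharpoonup u_*$ weakly in $H^1_0(\Omega)$ and strongly in $L^2(\Omega)$; because the holes have vanishing measure, $\norm{u_\eps - u_*}_{L^2(\Omega_\eps)}\to 0$. It therefore suffices to identify $u_* = u_0$, and by uniqueness for \eqref{hom_eq} it is enough to show $u_*$ minimises $\mcF_0$, which I would get from matching $\liminf$ and $\limsup$ inequalities.

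The heart of the argument is the $\liminf$ inequality: for any sequence $v_\eps\in H^1(\Omega_\eps)$ vanishing on $\partial\Omega$ with $\widetilde v_\eps\rightharpoonup v$ in $H^1_0(\Omega)$, one must show $\liminf_\eps \mcF_\eps(v_\eps)\geq \mcF_0(v)$. The bulk term $\frac12\int\abs{\nabla v}^2 - \int fv$ passes to the limit by weak lower semicontinuity and the compactness already noted. The nontrivial point is that the remaining combination
\[
 \frac{\alpha}{2}\int_{\Omega_\eps} v_\eps^2\,\d x + \frac{\beta}{2\mu_\eps}\int_{\Gamma_\eps} v_\eps^2\,\d\sigma
\]
must produce, in the limit, the term $\frac{\alpha+\beta\kappa_*(\beta)}{2}\int_\Omega v^2$. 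This is where the Steklov-type problem \eqref{model_lambda*} enters: after rescaling variables in each periodicity cell $\eps z + \eps^\gamma(\cdot)$, the surface integral $\mu_\eps^{-1}\int_{\Gamma_\eps}v_\eps^2$ is, on each cell, controlled below by $\kappa_*(\beta)$ times a bulk contribution, via the inequality encoded in the definition of $\lambda_*(\kappa_*(\beta)) = -\beta$; summing over the $\sim\eps^{-n}$ cells and using that $v_\eps\approx v$ on the (coarse) scale $\eps$ recovers the constant $\beta\kappa_*(\beta)$. Concretely, one would introduce on each cell the solution $z_\eps^z$ of the cell problem defining $\lambda_*$, use it as a comparison field, and sum the local estimates; a compensated-compactness / oscillating-test-function bookkeeping (à la Cioranescu–Murat) is needed to handle the cross terms. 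The matching $\limsup$ inequality is proved by the classical recovery-sequence construction: given $v\in C_c^\infty(\Omega)$ (then extend by density), set $v_\eps := v\,w_\eps$ where $w_\eps$ is the rescaled optimiser of \eqref{model_lambda*} (with parameter $\kappa_*(\beta)$, correctly normalised) glued periodically and cut off to equal $1$ away from the holes; a direct computation shows $\mcF_\eps(v_\eps)\to\mcF_0(v)$, the surface term contributing exactly $\frac{\beta\kappa_*(\beta)}{2}\int_\Omega v^2$.

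I expect the main obstacle to be the $\liminf$ inequality, specifically the passage from the \emph{global} surface integral $\mu_\eps^{-1}\int_{\Gamma_\eps}v_\eps^2$ to a sum of \emph{local} Steklov quotients that can each be bounded using \eqref{model_lambda*}. The delicate points are: (i) localising $v_\eps$ to each cell without losing mass through the cell interfaces — this requires a covering/partition argument and a uniform Poincaré-type estimate showing $v_\eps$ is close to its cell average on the relevant annular region; (ii) handling the coupling between the $\alpha$-term and the surface term, since the optimal cell profile trades bulk energy near the hole against surface energy on $\Gamma_\eps$, so the two cannot be estimated independently but only through the combined functional whose optimal value is governed by $\lambda_*$; and (iii) controlling the error from the Sobolev extension near $\Gamma_\eps$. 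Once the sharp constant $\kappa_*(\beta)$ is correctly extracted from \eqref{model_lambda*} — using Theorem \ref{th:lambda*} to know $\kappa_*(\beta)$ is well-defined and lies in $(0,1)$ — the two inequalities pin down $u_* = u_0$ and, since every subsequence has a further subsequence converging to the same limit $u_0$, the full sequence converges, giving $\norm{u_\eps - u_0}_{L^2(\Omega_\eps)}\to 0$.
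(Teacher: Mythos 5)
Your proposal takes a genuinely different route from the paper. The paper does not argue by $\Gamma$-convergence: Theorem~\ref{th:mainintro} is obtained as a special case of the abstract Theorem~\ref{th:strange_term}, whose proof proceeds by Tartar's method of oscillating test functions. The weak formulation of~\eqref{main_eq} is tested against~$w_\eps\varphi$, where $\varphi\in C^\infty_{\mathrm c}(\Omega)$ and $w_\eps=w_{\eps,\kappa_*(\beta)}$ is the rescaled cell eigenfunction; one then passes to the limit using the convergences established in Lemma~\ref{lemma:corrector} and an abstract ``convergence-of-arbitrary-functions'' lemma (Lemma~\ref{lemma:test}, modelled on \cite{JikovKozlovOleinik}). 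There is no energy comparison argument. A $\Gamma$-convergence route is in principle viable for this class of problems, but it is technically no easier and your sketch of the lower bound contains a genuine error.

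The error is in how the $\liminf$ inequality is split. You write that $\frac12\int|\nabla v_\eps|^2-\int f v_\eps$ passes to the limit by weak lower semicontinuity and that, separately, the combination $\frac{\alpha}{2}\int_{\Omega_\eps}v_\eps^2+\frac{\beta}{2\mu_\eps}\int_{\Gamma_\eps}v_\eps^2$ must produce $\frac{\alpha+\beta\kappa_*(\beta)}{2}\int_\Omega v^2$ in the limit. This cannot be correct, because on the recovery sequence $v_\eps=v\,w_\eps$ the Dirichlet energy does \emph{not} converge to $\frac12\int|\nabla v|^2$ (there is a strictly positive excess of size $\frac{|\lambda_*(\kappa)|}{2 v_*(\kappa)^2}\int v^2$ coming from $\int v^2|\nabla w_\eps|^2$), and the surface term converges not to $\frac{\beta\kappa}{2}\int v^2$ but to $\frac{\beta}{2}\bigl(\kappa - v_*(\kappa)^{-2}\bigr)\int v^2$, which is strictly smaller. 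Only the \emph{sum} of the excess Dirichlet energy and the surface term produces $\frac{\beta\kappa_*(\beta)}{2}\int v^2$; the two cannot be estimated separately. Consequently a correct $\Gamma$-liminf must take the form
\[
\liminf_{\eps\to 0}\Bigl(\tfrac12\int_{\Omega_\eps}|\nabla v_\eps|^2+\tfrac{\beta}{2\mu_\eps}\int_{\Gamma_\eps}v_\eps^2\Bigr)
\;\ge\;\tfrac12\int_\Omega|\nabla v|^2+\tfrac{\beta\kappa_*(\beta)}{2}\int_\Omega v^2,
\]
and the only way to prove it is precisely the oscillating-test-function / compensated-compactness decomposition you allude to in item~(ii): split $\nabla v_\eps$ into a macroscopic part (recovering $\int|\nabla v|^2$) and a microscopic part near the holes that, together with the surface term, is bounded below via the cell problem~\eqref{lambda_eps-} with $\kappa=\kappa_*(\beta)$. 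Note also that the trace inequality of Lemma~\ref{lemma:singletrace} gives a lower bound on the surface term of the form $\fint v^2 \ge \kappa\int v^2 - \frac{1}{|\lambda(\eps,\kappa)|}\int|\nabla v|^2$, with a \emph{minus} sign on the Dirichlet term, so invoking it naively absorbs (rather than preserves) the Dirichlet energy and gives a weaker conclusion than $\mcF_0(v)$; the trade-off must be engineered cell-by-cell with the corrector, not by a global inequality. So your plan's Step~2 (liminf) needs to be reformulated and carries essentially all the technical weight that the paper instead distributes between Lemmas~\ref{lemma:test} and~\ref{lemma:corrector}. The compactness/extension preliminaries and the limsup (recovery-sequence) step are sound as sketched and parallel what the paper does with $w_\eps$.
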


Actually, we can prove a convergence result in a more general,
abstract framework (see Theorem~\ref{th:strange_term} below),
and obtain an ``abstract'' convergence rate
(Proposition~\ref{prop:abstractrate})
along the lines of~\cite[Th\'eor\`eme~1.1]{KacimiMurat}.
Theorem~\ref{th:mainintro} extends to the case
of $\eps$-dependent source terms~$f_\eps\in L^2(\Omega)$
and~$g_\eps\in C^0(\overline{\Omega})$ in the
right-hand side of~\eqref{main_eq},
so long as~$f_\eps\rightharpoonup f$ weakly in~$L^2(\Omega)$
and~$g_\eps\to g$ uniformly (see Remark~\ref{rk:f_eps} below).
When the holes are spherical, i.e.~$D = B^n$,
the minimisation problem~\eqref{model_lambda*}
leading to a formula for the ``(not so) strange term'' in~\eqref{hom_eq}:
\begin{equation} \label{kappa_*-spheres}
 \kappa_*(\beta) = \frac{\sigma_n (n-2)}{\sigma_n (n-2) + \beta},
\end{equation}
where~$\sigma_n$ is the surface area of the unit sphere in~$\R^n$.
In particular, $\beta\kappa_*(\beta)$ converges to zero in the limit
as~$\beta\to 0$, which approximates the Neumann boundary condition.
On the other hand, in the limit~$\beta\to +\infty$
(which approximates the Dirichlet boundary condition in the limit problem)
the coefficient $\beta\kappa_*(\beta)$ tends to~$\sigma_n (n-2)$,
which is precisely the ``strange term'' as given in~\cite{CioranescuMurat}.
For more general choices of~$D$, we do not have an
explicit expression of~$\kappa_*(\beta)$,
but it remains true that~$\beta\kappa_*(\beta)$
converges to zero as $\beta\to 0$ and to the capacitary term~$\Cap_*$ in~\eqref{hom_eq_Dir} as~$\beta\to+\infty$ (see Remark~\ref{rk:CM}).
 The main technical tool in the proof of Theorem~\ref{th:mainintro}
is the analysis of the following Steklov-type eigenvalue problem.
The problem is set in a rescaled unit cell~$Y := (-1.2, \, 1/2)^n$, which we identify with unit torus~$\T^n$ by imposing periodic boundary conditions on~$\partial Y$. Given a parameter~$\kappa > 0$ with~$\kappa\neq 1$ and~$\eps$ small enough, we consider ground-state solutions~$v_{\eps,\kappa}$ to
\begin{equation} \label{eigenvalue-intro}\tag{EV$_{\eps,\kappa}$}
 \begin{cases}
  -\Delta v_{\eps,\kappa}
   + \kappa\, \lambda(\eps, \, \kappa) \,  v_{\eps,\kappa} = 0
   & \textrm{in } \T^n\setminus \eps^{\gamma - 1}\overline{D}, \\[8pt]
  \partial_\nu v_{\eps,\kappa}
   = \dfrac{\lambda(\eps, \, \kappa)\, v_{\eps,\kappa}}{\eps^{\gamma + 2}\sigma(\partial D)}
   & \textrm{on }  \partial(\eps^{\gamma - 1} D),
 \end{cases}
\end{equation}
where~$\nu$ is the unit normal to~$\partial(\eps^{\gamma - 1} D)$
pointing inside~$H_\eps$. The purpose of Problem~\eqref{eigenvalue-intro} in our analysis is two-fold: when~$0 < \kappa < 1$, we find non-trivial solutions~$v_{\eps,\kappa}$ with \emph{negative} spectral parameter~$\lambda(\eps, \, \kappa)$ and use them to construct correctors for the homogenisation problem (see Lemma~\ref{lemma:corrector}); when~$\kappa > 1$, we consider the least \emph{positive} value~$\lambda(\eps, \, \kappa)$ for which nontrivial solutions to~\eqref{eigenvalue-intro} exist, which is involved in a trace inequality on the perforated domain~$\Omega_\eps$ (see Lemma~\ref{lemma:trace}).

We investigate other scalings for the hole diameter, 
that is, we consider the set
$\mcP_\eps = \eps\Z^n + r_\eps\overline{D}$
for more general choices of~$r_\eps > 0$
(see Proposition~\ref{prop:otherregimes}
in Section~\ref{sect:otherregimes}).
It turns out that when
\[
 \lim_{\eps\to 0}\frac{r_\eps}{\eps^\gamma} = 0,
\]
the solutions of~\eqref{main_eq} converge to the unique
solution~$u_0\in H^1_0(\Omega)$ of
\[
 -\Delta u_0 + \alpha u_0 = f
 \qquad \textrm{in } \Omega,
\]
while if
\[
 \lim_{\eps\to 0}\frac{r_\eps}{\eps^\gamma} = +\infty,
 \qquad \lim_{\eps\to 0} \frac{r_\eps}{\eps} = 0,
\]
then the limit~$u_0\in H^1_0(\Omega)$ satisfies
\[
 -\Delta u_0 + \alpha u_0 + \beta u_0 = f + g
 \qquad \textrm{in } \Omega.
\]
The scaling~$r_\eps = \eps^{\gamma}$
is critical, in that it is the only one
leading to ``(not so) strange terms''
with a coefficient~$0 < \kappa_*(\beta) < 1$.

\paragraph*{Outline of the paper structure.}

In Section \ref{sect:abstract}, we provide a general description of the setting to which our methodology and the main result concerning the $L^2$-convergence of solutions $u_\varepsilon$ (see Theorem \ref{th:mainintro}) as well as the estimate for the case of bounded data $f$ (see Proposition \ref{prop:abstractrate}) apply. This is then followed by showing how the model example introduced above fits into the general framework (Section \ref{model_example_fit_sec}).

In Section \ref{sect:eigen} we discuss the function  $\lambda_*(\kappa),$ see (\ref{model_lambda*}), which represents the limit (as $\varepsilon\to0$) of the scaled eigenvalues $\lambda(\varepsilon, \kappa)$ of the eigenvalue problem~\eqref{eigenvalue} below, with the spectral parameter present in both the equation and the boundary condition. As discussed above, the function $\lambda_*(\kappa)$ is crucial for determining an appropriate variant of the ``strange term'' for the problem \ref{hom_eq}. The link between $\lambda_*(\kappa)$ and the values $\St_*$ and $\Dir_*$, introduced in Section~\ref{sect:abstract} (see Theorem~\ref{th:lambda*}
and Proposition~\ref{prop:lambda0}), is proved in Section \ref{sect:lambda*}.

Section \ref{main_proof} begins with a discussion of trace inequalities (Lemma \ref{lemma:trace} and Lemma \ref{lemma:singletrace-N}) and continues with a proof of the main result of the paper, Theorem \ref{th:strange_term}.

In Section \ref{sect:example} the spectral problem \eqref{eigenvalue} and the associated function $\lambda_*(\kappa)$ is considered for the model example. The section is concluded by a discussion of ``subcritical'' (smaller perforations) and ``supercritical'' (larger perforations) asymptotic regimes (Section \ref{sect:otherregimes}).

Finally, in Section~\ref{sect:many_holes}
we discuss another example, namely, a perforation set~$H_\eps$
that satisfies all the assumptions of our abstract setting,
but has $\sigma(\Gamma_\eps)\to +\infty$
as~$\eps\to 0$, which implies~$\mu_\eps\to 0$
as~$\eps\to 0$ in view of~\eqref{mu_eps_bar_intro}.
This example falls within the applicability range
of our homogenisation result and produces a non-trivial
``strange term'' in the limit problem, even though
the Robin coefficient~$\beta/\mu_\eps$ in~\eqref{main_eq}
becomes negligible as~$\eps\to 0$.

\numberwithin{equation}{section}
\numberwithin{definition}{section}
\numberwithin{theorem}{section}

\section{An abstract setting and the model example}
\label{sect:abstract}

Instead of restricting our attention to the model
example~\eqref{model_example}, we prove a homogenisation result
for a more general family of holes that satisfy a set
of abstract assumptions, \eqref{hp:first}--\eqref{hp:last} below.
Then, we will show that the model example
satisfies~\eqref{hp:first}--\eqref{hp:last}.
Throughout the paper, we denote the Lebesgue measure and the
$(n-1)$-dimensional Hausdorff measure
of a set~$E\subseteq\R^n$ by~$\abs{E}$, $\sigma(E)$, respectively.
Given a set~$E$ with~$0 < \sigma(E) < +\infty$,
we will denote by~$\fint_{E}$ the integral average
over~$E$, that is,
\[
 \fint_{E} u \, \d\sigma :=
 \frac{1}{\sigma(E)} \int_E u\, \d\sigma
\]
for any integrable function~$u\colon E\to\R$.
We will denote by~$B_\rho^n(x_0)$ the open ball in~$\R^n$
of radius~$\rho > 0$ centred at~$x_0$.
In the case when the centre is the origin,
we will write~$B_\rho^n := B_\rho^n(0)$ and~$B^n := B^n_1$.

\begin{figure}[t]
 \centering
 \includegraphics[height=.3\textheight]{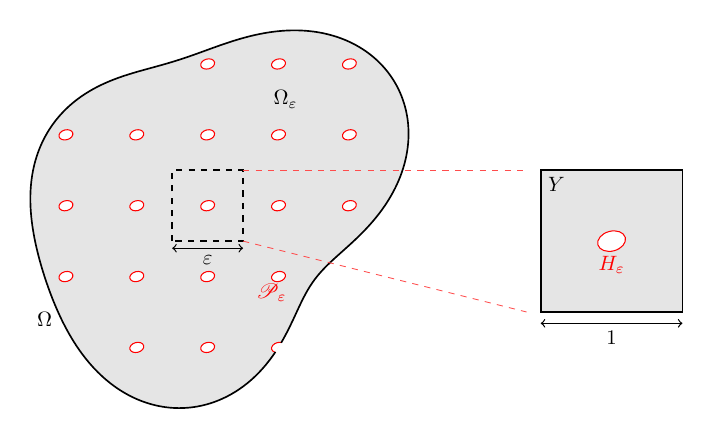}
 \caption{The domain and the unit periodicity cell~$Y$.}
 \label{fig:domain}
\end{figure}

\subsection{Description of the abstract setting}
\paragraph*{Geometry of the holes.}

Let~$Y := (-1/2, \, 1/2)^n\subseteq\R^n$ be the reference
unit cube in dimension~$n$, representing the unit periodicity cell.
For~$\eps\in (0, \, 1)$, we consider a closed subset~$H_\eps\subseteq Y$.
The set~$H_\eps$ represents a single ``hole'' in the domain,
rescaled by a factor of~$\eps$, as illustrated in Figure~\ref{fig:domain}.
(For instance, in our model example~\eqref{model_example},
we have~$H_\eps = \eps^{\gamma - 1} \overline{D}$.)

We make the following assumptions on~$H_\eps$:

\begin{enumerate}[label=(C\textsubscript{\arabic*}),
 ref=C\textsubscript{\arabic*}]
 \item \label{hp:H} \label{hp:first} 
 $H_\eps$ is the closure of an open Lipschitz set.

 \item \label{hp:decreas} There exists a compact set~$K_0\subseteq Y$ such that~$H_\eps \subseteq K_0$ for all~$\eps \in (0, \, 1)$.

 \item \label{hp:extension} For all~$\eps \in (0, \, 1)$,
 there exists a bounded linear extension operator
 $E_\eps\colon H^1(Y\setminus H_\eps)\to H^1(Y)$
 that satisfies
 \[
  \norm{\nabla(E_\eps v)}_{L^2(Y)}
   \leq C \norm{\nabla v}_{L^2(Y\setminus H_\eps)}
 \]
 for all~$v\in H^1(Y\setminus H_\eps)$
 and some constant~$C>0$ that does \emph{not} depend on~$\eps$, $v$.
\end{enumerate}

It would be interesting to investigate whether
assumption~\eqref{hp:extension} could be weakened,
but we do not pursue this direction here.
There are other assumptions we make on the set~$H_\eps$,
namely, assumptions~\eqref{hp:critical} and~\eqref{hp:limit} below.
However, we need to introduce some notation
before stating them.

\paragraph*{The critical scaling.}

Let~$\T^n$ be the $n$-dimensional (flat) torus, defined as
as the quotient of the closed cube~$\overline{Y}:=[-1/2, \, 1/2]^n$
up to identification of the opposite faces.
We identify~$H_\eps$ with a subset of~$\T^n$.
In particular, we will write~e.g.~$\varphi\in H^1(\T^n)$
or~$\varphi\in H^1(\T^n\setminus H_\eps)$, meaning that the
function~$\varphi\in H^1(Y)$ or~$\varphi\in H^1(Y\setminus H_\eps)$,
respectively, satisfies periodic boundary
conditions on~$\partial Y$ in the sense of traces.
For each~$\eps$, we let~$\Dir(\eps) > 0$ be the first eigenvalue
of the Laplacian in~$\T^n\setminus H_\eps$ with Dirichlet boundary
some condition on~$\partial H_\eps$, i.e.
\begin{equation} \label{Dir_eps}
 \Dir(\eps) := \inf\left\{\int_{\T^n}
  \abs{\nabla\varphi}^2 \, \d x\colon
 \varphi\in H^1(\T^n), \quad \varphi = 0
 \textrm{ in } H_\eps, \quad
 \int_{\T^n\setminus H_\eps} \varphi^2 \, \d x = 1  \right\} \! .
\end{equation}
The infimum on the right-hand side of~\eqref{Dir_eps} is attained
at a unique minimiser~$\varphi_\eps$,
which is positive in~$\T^n\setminus H_\eps$
and satisfies
\begin{equation} \label{Dir_eps_eigen}
 \begin{cases}
  -\Delta\varphi_\eps = \Dir(\eps) \varphi_\eps
   & \textrm{in } \T^n\setminus H_\eps, \\
  \varphi_\eps = 0
   & \textrm{on } \partial H_\eps.
 \end{cases}
\end{equation}
Henceforth, boundary conditions are understood
in the sense of traces. We also consider the value
\begin{equation} \label{St_eps}
 \St(\eps) := \inf\left\{\int_Y\abs{\nabla\psi}^2 \, \d x\colon
 \psi\in H^1(Y\setminus H_\eps), \quad
 \fint_{\partial H_\eps}\psi^2 \, \d\sigma=1,
 \quad \psi = 0 \textrm{ on } \partial Y \right\} \! ,
\end{equation}
which is, up to a normalisation constant,
the lowest positive eigenvalue for a
Steklov-type spectral problem on
the perforated cell~$Y\setminus H_\eps$. (For the classical Steklov problem, see, e.g., \cite{GirouardPolterovich, Levitin_et_al}.)
More precisely, there exists a minimiser~$\psi_\eps$ for the
right-hand side of~\eqref{St_eps}, unique up to the sign,
and it satisfies
\begin{equation} \label{St_eps_eigen}
 \begin{cases}
  -\Delta\psi_\eps = 0 & \textrm{in } Y\setminus H_\eps, \\
  \partial_\nu \psi_\eps
   = \dfrac{\St(\eps)\,\psi_\eps}{\sigma(\partial H_\eps)}
   & \textrm{on } \partial H_\eps, \\
  \psi_\eps = 0 & \textrm{on } \partial Y,
 \end{cases}
\end{equation}
where~$\nu$ is the unit normal to~$\T^n\setminus H_\eps$
that points inside~$H_\eps$.
We are interested in the asymptotic regime
for which both~$\Dir(\eps)$ and~$\St(\eps)$ are of order~$\eps^2$,
i.e.~we make the following assumption:

\begin{enumerate}[label=(C\textsubscript{\arabic*}),
 ref=C\textsubscript{\arabic*}, resume]
 \item \label{hp:critical} The limits
 \[
  \Dir_* := \lim_{\eps\to 0} \frac{\Dir(\eps)}{\eps^2},
  \qquad \St_* := \lim_{\eps\to 0} \frac{\St(\eps)}{\eps^2}
 \]
 exist, are strictly positive and finite.
\end{enumerate}

This particular scaling turns out to be
related to some trace inequality for the boundary trace
on~$\Gamma_\eps$ (see Equation~\eqref{trace-intro} below).

Before proceeding with the last assumption~\eqref{hp:limit},
we mention that~$\Dir_*$ has a different
interpretation, in terms of a capacitary problem.
For each~$\eps\in (0, \, 1)$, we let
\begin{equation} \label{Cap_eps}
 \Cap(\eps) := \inf\left\{\int_Y\abs{\nabla\zeta}^2 \, \d x\colon
 \zeta\in H^1(Y), \quad \zeta = 0
 \textrm{ a.e. in } H_\eps, \quad
 \zeta = 1 \textrm{ on } \partial Y \right\}
\end{equation}
Instead of imposing~$\zeta = 1$ in~$H_\eps$
and~$\zeta = 0$ on~$\partial Y$,
as is usually done, we did the opposite,
to facilitate comparison with~\eqref{Dir_eps}.
The infimum on the right-hand side of~\eqref{Cap_eps}
is attained at a unique minimiser~$\zeta_\eps$,
which is positive in~$Y\setminus H_\eps$ and satisfies
\begin{equation} \label{Cap_eps_eigen}
 \begin{cases}
  -\Delta\zeta_\eps = 0 & \textrm{in } Y\setminus H_\eps, \\
  \zeta_\eps = 0        & \textrm{in } H_\eps, \\
  \zeta_\eps = 1        & \textrm{on } \partial Y.
 \end{cases}
\end{equation}
We also define
\begin{equation} \label{Cap_*}
 \Cap_* := \lim_{\eps\to 0} \frac{\Cap(\eps)}{\eps^2} \, ,
\end{equation}
so long as the limit exists.
If so, we have~$\St_* \leq \Cap_*$, because
$1 - \zeta_\eps$ is an admissible competitor
for the problem~\eqref{St_eps}, which defines~$\St(\eps)$.

\begin{prop} \label{prop:DirCap}
 Under assumptions~\eqref{hp:first}--\eqref{hp:critical},
 we have~$\Cap_* = \Dir_*$. In particular,
 the limit in~\eqref{Cap_*} exists.
\end{prop}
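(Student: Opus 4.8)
The plan is to show the two inequalities $\Cap_* \leq \Dir_*$ and $\Dir_* \leq \Cap_*$ separately, using the respective minimisers $\varphi_\eps$ (for $\Dir(\eps)$) and $\zeta_\eps$ (for $\Cap(\eps)$) as competitors in the other problem after suitable truncation and rescaling. The inequality $\St_* \leq \Cap_*$ already noted in the text is not quite what we want, so a genuine comparison between the Dirichlet eigenvalue and the capacity is needed.

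For the bound $\Cap_* \leq \Dir_*$, I would start from the Dirichlet eigenfunction $\varphi_\eps$ on $\T^n \setminus H_\eps$ normalised by $\int_{\T^n\setminus H_\eps}\varphi_\eps^2 = 1$, with $\int_{\T^n}\abs{\nabla\varphi_\eps}^2 = \Dir(\eps)$. The idea is that, under the critical scaling $\Dir(\eps)\sim\Dir_*\eps^2$, the eigenfunction $\varphi_\eps$ is close to a constant on $\T^n\setminus H_\eps$ (the Dirichlet condition on the tiny set $H_\eps$ forces only a small "dip"), so $\varphi_\eps$, suitably normalised to have trace $1$ on $\partial Y$, becomes an admissible competitor for $\Cap(\eps)$. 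Concretely: since $H_\eps\subseteq K_0\csubset Y$ by~\eqref{hp:decreas}, away from $H_\eps$ the function $\varphi_\eps$ is harmonic-like with a spectral correction of size $O(\eps^2)$; one uses elliptic estimates (interior regularity away from $\partial H_\eps$, plus the extension operator~\eqref{hp:extension}) to show $\varphi_\eps \to c$ in $H^1$ on an annular region between $\partial K_0$ and $\partial Y$, for some constant $c > 0$ with $c^2\abs{Y} = 1 + o(1)$. Then $\tilde\zeta_\eps := 1 - \varphi_\eps/(\varphi_\eps|_{\partial Y})$, or more carefully a cut-off version that equals $1-\varphi_\eps/c$ near $H_\eps$ and is interpolated to have the right boundary trace, is admissible for $\Cap(\eps)$ and has Dirichlet energy $\leq \Dir(\eps)/c^2 + o(\eps^2) = \Dir_*\eps^2 + o(\eps^2)$; dividing by $\eps^2$ and passing to the limit gives $\Cap_* \leq \Dir_*$.

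For the reverse bound $\Dir_* \leq \Cap_*$, I would run the argument in the other direction using the capacitary minimiser $\zeta_\eps$, which satisfies~\eqref{Cap_eps_eigen}: it is harmonic in $Y\setminus H_\eps$, vanishes on $\partial H_\eps$, equals $1$ on $\partial Y$, and $\int_Y\abs{\nabla\zeta_\eps}^2 = \Cap(\eps)\sim\Cap_*\eps^2$. The function $w_\eps := 1 - \zeta_\eps$ then vanishes on $\partial Y$ and equals $1$ on $\partial H_\eps$, and $\int_Y \abs{\nabla w_\eps}^2 = \Cap(\eps)$. Viewing $w_\eps$ as a periodic function on $\T^n$ (extended by $1$ inside $H_\eps$, which is legitimate since $\zeta_\eps = 0$ on $\partial H_\eps$ means $w_\eps\equiv 1$ there matches continuously) but this forces $w_\eps = 0$ on $H_\eps$ — wait, that is backwards; instead I would use $\zeta_\eps$ itself extended by $0$ into $H_\eps$ as a function on $\T^n$, which is admissible for the Dirichlet-type problem. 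The subtlety is that $\zeta_\eps$ does not vanish on $\partial Y$ but rather equals $1$; since on $\T^n$ there is no $\partial Y$, this is not an obstruction — $\zeta_\eps$ extended by $0$ on $H_\eps$ lies in $H^1(\T^n)$, vanishes on $H_\eps$, and has $L^2(\T^n\setminus H_\eps)$-norm converging to $\abs{Y}^{1/2}\cdot 1 = \abs{Y}^{1/2}$ (again because $\zeta_\eps\to 1$ away from $H_\eps$, by the same elliptic estimates plus the uniform extension bound and the smallness of the Dirichlet energy). Hence $\Dir(\eps) \leq \int_{\T^n}\abs{\nabla\zeta_\eps}^2 / \int_{\T^n\setminus H_\eps}\zeta_\eps^2 = \Cap(\eps)/(\abs{Y} + o(1))$, and after dividing by $\eps^2$ and letting $\eps\to 0$ we get $\Dir_* \leq \Cap_*/\abs{Y}$; but $\abs{Y} = 1$, so $\Dir_*\leq\Cap_*$. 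Combining the two inequalities yields $\Cap_* = \Dir_*$, and since $\Dir_*$ exists by~\eqref{hp:critical}, so does the limit defining $\Cap_*$.

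The main obstacle I anticipate is making rigorous the assertion that both extremal functions $\varphi_\eps$ and $\zeta_\eps$ converge (in a strong enough topology, e.g.\ $H^1$ on a fixed annular neighbourhood of $\partial Y$, or at least in $L^2(Y\setminus H_\eps)$ with controlled mass) to the constant $1$ as $\eps\to 0$. This is where the scaling assumption~\eqref{hp:critical} and the uniform extension operator~\eqref{hp:extension} do the real work: the smallness of the energies ($O(\eps^2)$) together with a Poincaré-type inequality on $Y\setminus H_\eps$ (uniform in $\eps$, obtained via the extension operator) shows that each minimiser deviates from its mean by an amount that is $O(\eps)$ in $H^1$, hence the normalisations can be transferred from one problem to the other at the cost of $o(\eps^2)$ in energy. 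One should be careful that the truncation/cut-off used to adjust boundary traces on $\partial Y$ is supported in a region where the gradient is already small, so that it contributes only a lower-order term; this is a standard but slightly delicate cut-off argument near $\partial Y$, and I would localise it to the annulus $\{x : \dist(x,\partial Y) < \delta\}$ with $\delta$ fixed small (independent of $\eps$), using that $H_\eps\subseteq K_0\csubset Y$ keeps $H_\eps$ away from this annulus.
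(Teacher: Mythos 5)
Your overall strategy — comparing the two quantities by using the minimiser of each problem as a competitor for the other, after showing both extremals are close to a constant via Poincar\'e plus elliptic estimates — is exactly the one the paper follows. The second half of your argument ($\Dir_* \leq \Cap_*$) is essentially identical to the paper's: one extends $\zeta_\eps$ by zero into $H_\eps$, notes that $\zeta_\eps$ is constant on $\partial Y$ so it lifts to $H^1(\T^n)$, normalises by $\norm{\zeta_\eps}_{L^2(Y)} = 1 + O(\eps)$, and concludes.

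There is, however, a concrete error in your construction of the competitor for $\Cap(\eps)$. You propose $\widetilde{\zeta_\eps} := 1 - \varphi_\eps/(\varphi_\eps|_{\partial Y})$, or a cut-off version ``that equals $1 - \varphi_\eps/c$ near $H_\eps$''. This has the wrong orientation: since $\varphi_\eps = 0$ on $\partial H_\eps$, your function equals $1$ on $H_\eps$ and is close to $0$ near $\partial Y$, while an admissible competitor for $\Cap(\eps)$ must vanish on $H_\eps$ and equal $1$ on $\partial Y$. You should drop the ``$1-$'' and simply normalise $\varphi_\eps$ by the constant it approaches. A second, smaller issue is that $\varphi_\eps$ lives on $\T^n$ and is not constant on $\partial Y$, so $\varphi_\eps|_{\partial Y}$ is not a well-defined scalar, and an interpolation layer near $\partial Y$ would be needed if you normalise naively. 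The paper sidesteps both issues at once by taking
\[
\widetilde{\varphi_\eps} := \min\!\left(1,\ \frac{\varphi_\eps}{\inf_{Y\setminus U_0}\varphi_\eps}\right),
\]
where $U_0 = Y\setminus K_0$; once one knows $\varphi_\eps \to 1$ uniformly on $\T^n\setminus\overline{U_0}$ (which the paper proves by interior elliptic regularity and bootstrap, along the lines you sketch), the minimum is identically $1$ on a neighbourhood of $\partial Y$, so the boundary condition holds automatically with no cut-off and no extra energy contribution. With this correction your argument aligns with the paper's.
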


\begin{remark} \label{rk:zerocapacity}
 Proposition~\ref{prop:DirCap} and
 the Poincar\'e inequality in~$Y$ imply that
 \[
  \abs{H_\eps} \leq \norm{1 - \zeta_\eps}^2_{L^2(Y)}
  \leq \Cap(\eps) \leq C \eps^2
 \]
 for some $\eps$-independent constant~$C$.
 Moreover, the quantity~$\Cap(\eps)$ is the condenser capacity of~$H_\eps$
 in~$Y$. If the sets~$H_\eps$ are monotonically increasing with~$\eps$, i.e.~$H_{\eps}\subseteq H_\delta$ for all~$0 < \eps < \delta$, then Proposition~\ref{prop:DirCap} implies
 that the $2$-capacity of~$H_0 := \cap_{\eps> 0} H_\eps$
 is equal to zero, and the Hausdorff
 dimension of~$H_0$ is at most~$n-2$.
\end{remark}

The value~$\Cap_*=\Dir_*$ is related to the ``strange term''
\`a la Cioranescu-Murat~\cite{CioranescuMurat} for the problems with
Dirichlet boundary conditions. Indeed, the results
of~\cite{CioranescuMurat} imply that the solution
to Problem~\eqref{main_eq_Dir}
converge, as~$\eps\to 0$, to the solution of~\eqref{hom_eq_Dir},
where the ``strange term'' is precisely~$\Cap_* = \Dir_*$.
It will be interesting to compare~$\Dir_*$ with the behaviour
of our strange term~$\beta \kappa_*(\beta)$ in~\eqref{hom_eq},
in the limit as the Robin parameter~$\beta$ tends to infinity
(see, for instance, Remark~\ref{rk:CM}).

\paragraph*{A ``not-so-strange'' eigenvalue problem.}

For any given~$\eps \in (0, \, 1)$, $\kappa > 0$
and~$v\in H^1(\T^n\setminus H_\eps)$, let
\begin{equation} \label{G_eps}
 \mcG_{\eps,\kappa}(v) := \fint_{\partial H_\eps} v^2 \, \d\sigma
   - \kappa\int_{\T^n\setminus H_\eps} v^2 \, \d x.
\end{equation}
We define a quantity~$\lambda(\eps, \, \kappa)\in [-\infty, \, +\infty]$
by solving a suitable minimisation problem, as follows.
For~$\kappa > 1$, we set
\begin{equation} \label{lambda_eps+} \tag{MIN$_{\eps,\kappa}^+$}
 \lambda(\eps, \, \kappa) := \inf\left\{
  \int_{\T^n\setminus H_\eps}\abs{\nabla v}^2 \, \d x \colon
  v\in H^1(\T^n\setminus H_\eps),
  \quad \mcG_{\eps,\kappa}(v) = 1\right\} \! .
\end{equation}
Furthermore, for~$0 < \kappa \leq 1$, we set
\begin{equation} \label{lambda_eps-} \tag{MIN$_{\eps,\kappa}^-$}
 \lambda(\eps, \, \kappa) := -\inf\left\{
  \int_{\T^n\setminus H_\eps}\abs{\nabla v}^2 \, \d x \colon
  v\in H^1(\T^n\setminus H_\eps),
  \quad \mcG_{\eps,\kappa}(v) = -1\right\} \! .
\end{equation}
There is a negative sign in front of infimum on the right-hand side
of~\eqref{lambda_eps-}, but this is not the case for~\eqref{lambda_eps+};
this guarantees monotonicity for the
function~$\kappa\mapsto\lambda(\eps, \, \kappa)$
(see Equation~\eqref{lambdamonotone} below).
As it turns out (see Lemma~\ref{lemma:exists} below),
if~$\kappa\abs{\T^n\setminus H_\eps} > 1$
or~$0 < \kappa \leq 1$, then~$\lambda(\eps, \, \kappa)$
is finite and non-zero and the infimum on the right-hand side of
either~\eqref{lambda_eps+} or~\eqref{lambda_eps-}, respectively, is attained.
Moreover, minimisers~$v_{\eps,\kappa}$ satisfy
\begin{equation} \label{eigenvalue} \tag{EV$_{\eps,\kappa}$}
 \begin{cases}
  -\Delta v_{\eps,\kappa}
   + \kappa\, \lambda(\eps, \, \kappa) \,  v_{\eps,\kappa} = 0
   & \textrm{in } \T^n\setminus H_\eps, \\[8pt]
  \partial_\nu v_{\eps,\kappa}
   = \dfrac{\lambda(\eps, \, \kappa)\, v_{\eps,\kappa}}{\sigma(\partial H_\eps)} & \textrm{on } \partial H_\eps,
 \end{cases}
\end{equation}
where~$\nu$ is the unit normal to~$\T^n\setminus H_\eps$ pointing inside~$H_\eps$.
In Problem~\eqref{eigenvalue}, the eigenvalue appears
both in the equation and the boundary condition
--- a situation considered in, e.g.,~\cite{Shkalikov, vonBelowGilles, Girouard_et_al}.
However, Problem~\eqref{eigenvalue} differs
from the ones considered in~\cite{Shkalikov, vonBelowGilles, Girouard_et_al}
in that the eigenvalue~$\lambda(\eps, \, \kappa)$ appears
\emph{with different signs} in the equation and
in the boundary condition.

When~$0 < \kappa\leq 1$, minimisers~$v_{\eps,\kappa}$
satisfy a Robin boundary condition (with positive Robin coefficient),
which is rather natural in the context of our problem. On the other hand,
when~$\kappa > 1$ the quantity~$\lambda(\eps,\,\kappa)$
is also important, as it allows us to prove a trace inequality on
the perforated domain~$\Omega_\eps$ (see Lemma~\ref{lemma:trace} below):
for any~$\kappa > 1$, any~$\eps$ small enough
and any~$u\in H^1(\Omega_\eps)$ such that~$u=0$
on~$\partial\Omega_\eps\cap\partial\Omega$, it holds that
\begin{equation} \label{trace-intro}
 \frac{1}{\mu_\eps} \int_{\Gamma_\eps} u^2 \, \d\sigma
  \leq C\left(\frac{\eps^2}{\lambda(\eps, \, \kappa)}
   \int_{\Omega_\eps} \abs{\nabla u}^2
  + \kappa \int_{\Omega_\eps} u^2\right) \! ,
\end{equation}
where~$C$ is a constant that does not depend on~$\eps$, $\kappa$, nor~$u$.
This motivates us to make the following (and last) assumption:
\begin{enumerate}[label=(C\textsubscript{\arabic*}),
 ref=C\textsubscript{\arabic*}, resume]
 \item \label{hp:limit} \label{hp:last}
 For any~$\kappa > 0$, $\kappa\neq 1$, the limit
 \[
  \lambda_*(\kappa) := \lim_{\eps\to 0}
   \frac{\lambda(\eps, \, \kappa)}{\eps^2}
 \]
 exists.
\end{enumerate}

\begin{remark} \label{rk:subsequence}
 As it turns out (see Remark~\ref{rk:Helly} below),
 assumptions~\eqref{hp:first}--\eqref{hp:critical}
 imply that there exists a (non-relabelled) subsequence~$\eps_j\to 0$
 satisfying~\eqref{hp:limit}, and that~$\lambda_*(\kappa)$
 is finite and nonzero for all~$\kappa > 0$, $\kappa\neq 1$.
 Our arguments would still remain valid without condition~\eqref{hp:limit}
 so long as we took the limit along a  subsequence~$\eps_j\to 0$ only;
 in that case, however, the function~$\lambda_*$ and the homogenised
 problem~\eqref{hom_eq} would depend on the choice of~$(\eps_j)_{j\in\N}$.
 Instead, the main point of assumption~\eqref{hp:limit} is that we
 have the same limit for all subsequences~$\eps_j\to 0$.
\end{remark}

Our first goal is to study the properties of~$\lambda_*(\kappa)$.
We extend~$\lambda_*$ to a function~$(0, \, +\infty)\to\R$,
by setting~$\lambda_*(1) := 0$.
This choice is rather natural, given that~$\lambda(\eps, \, \kappa)$
is negative for~$0 < \kappa < 1$ (see~\eqref{lambda_eps-})
and positive for~$\kappa > 1$ (see~\eqref{lambda_eps+}).
In fact, it guarantees continuity of~$\lambda_*$ even at~$\kappa = 1$,
as stated in our next result.

\begin{figure}[t]
 \centering
 \begin{subfigure}{0.48\textwidth}
  \centering
  \includegraphics[width=\linewidth]{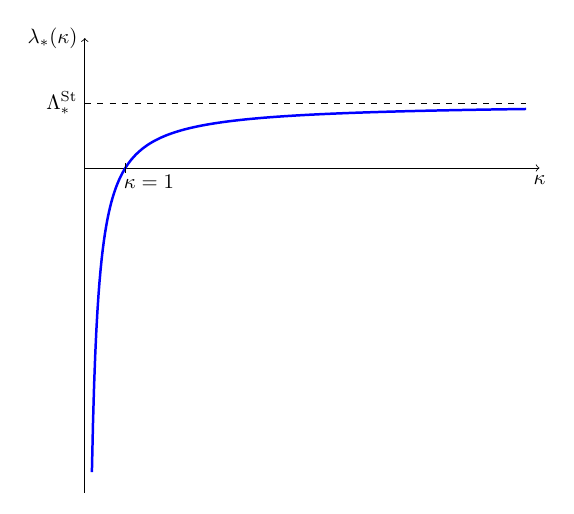}
 \end{subfigure}
 \hfill
 \begin{subfigure}{0.48\textwidth}
  \centering
  \includegraphics[width=\linewidth]{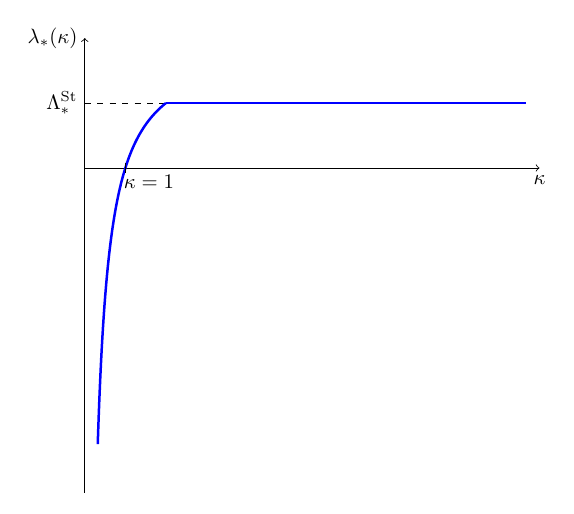}
 \end{subfigure}
 \caption{Possible behaviours for the function~$\lambda_*$,
 consistent with Theorem~\ref{th:lambda*}. The function~$\lambda_*$ may be strictly increasing or it may have a plateau corresponding to the value~$\St_*$ (although we do not have an example of the latter). Either way, $\lambda_*$ is strictly increasing in~$(0, \, 1)$ and has a vertical asymptote at~$\kappa = 0$. It is a continuous function, but we do \emph{not} know whether it is differentiable, in general.}
\end{figure}

\begin{theorem} \label{th:lambda*}
 Under assumptions~\eqref{hp:first}--\eqref{hp:last},
 the function $\lambda_*\colon (0, \, +\infty)\to\R$
 is continuous and is either strictly increasing
 everywhere, or strictly increasing in an interval of the form~$(0, \, \kappa_*)$ with~$\kappa_* > 1$, then constant in~$[\kappa_*, \, +\infty)$.
 Moreover, the limits
 \begin{equation} \label{limits0infty}
  \lambda_0 := - \lim_{\kappa\to 0} \kappa \, \lambda_*(\kappa),
  \qquad
  \lambda_\infty := \lim_{\kappa\to +\infty} \lambda_*(\kappa)
 \end{equation}
 exist, are finite and positive, and satisfy
 \[
  \lambda_\infty = \St_*, \qquad
  \St_* \leq \lambda_0 \leq \Dir_*,
 \]
 where~$\Dir_*$ and~$\St_*$ are given in~\eqref{hp:critical}.
\end{theorem}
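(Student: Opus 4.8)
The plan is to analyse the scalar function $\kappa \mapsto \lambda_*(\kappa)$ via the corresponding functions $\kappa\mapsto\lambda(\eps,\kappa)$ at fixed $\eps$, establishing monotonicity and continuity at the pre-limit level and then passing to the limit, and to identify the two endpoint limits by comparing the minimisation problem \eqref{lambda_eps-}/\eqref{lambda_eps+} with the problems \eqref{St_eps}, \eqref{Dir_eps}, \eqref{Cap_eps}. First I would record the monotonicity of $\kappa\mapsto\lambda(\eps,\kappa)$ (this is Equation~\eqref{lambdamonotone} referenced in the text): since increasing $\kappa$ makes $\mcG_{\eps,\kappa}$ smaller, the constraint sets in \eqref{lambda_eps+} and \eqref{lambda_eps-} change monotonically, and the sign conventions were precisely chosen so that $\lambda(\eps,\cdot)$ is nondecreasing across the threshold $\kappa=1$, where it passes through $0$. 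Dividing by $\eps^2$ and using \eqref{hp:limit} (assumption~\eqref{hp:last}), monotonicity is inherited by $\lambda_*$. Since a monotone function has one-sided limits everywhere, the limits $\lambda_0$ and $\lambda_\infty$ in \eqref{limits0infty} exist in $[0,+\infty]$ automatically; the work is to show they are finite, positive, and to pin down their values.

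For $\lambda_\infty = \St_*$: as $\kappa\to+\infty$, a scaling/normalisation argument should show that the minimiser $v_{\eps,\kappa}$ of \eqref{lambda_eps+} is forced to have vanishingly small $L^2$-mass on $\T^n\setminus H_\eps$ (because the term $-\kappa\int v^2$ in $\mcG_{\eps,\kappa}(v)=1$ must be compensated by the boundary term), so in the limit the problem degenerates to minimising $\int|\nabla v|^2$ subject to $\fint_{\partial H_\eps} v^2 = 1$ and ``$v$ essentially supported near $\partial H_\eps$'' — but that is exactly \eqref{St_eps} once one realises that, after the Dirichlet boundary behaviour is accounted for on the torus, the relevant cell problem is the Steklov one. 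More carefully: for the upper bound $\lambda_\infty\le\St_*$ one takes the Steklov minimiser $\psi_\eps$, subtracts a small multiple of its mass, and checks it is asymptotically admissible for \eqref{lambda_eps+} with cost $(1+o(1))\St(\eps)$; for the lower bound $\lambda_\infty\ge\St_*$ one shows that the $-\kappa\int v^2$ contribution is nonpositive, so dropping it only decreases the constraint value, hence $\lambda(\eps,\kappa)\ge$ the Steklov-type infimum; combined with the $\eps^{-2}$ rescaling and \eqref{hp:critical} this gives $\lambda_\infty=\St_*$. I would need to be a little careful about the role of $\partial Y$ versus working on $\T^n$, using assumption~\eqref{hp:extension} to move between $H^1(Y\setminus H_\eps)$ and $H^1(\T^n\setminus H_\eps)$.

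For $\lambda_0$: as $\kappa\to 0^+$, in \eqref{lambda_eps-} the relevant competitors $v$ satisfy $\fint_{\partial H_\eps} v^2 - \kappa\int v^2 = -1$, i.e. $\kappa\int v^2 \approx 1$, so after writing $v = \kappa^{-1/2}w$ the function $w$ satisfies $\int_{\T^n\setminus H_\eps} w^2 \approx 1$ and $\fint_{\partial H_\eps} w^2 = o(1)$, so the problem for $-\kappa\lambda(\eps,\kappa) = \kappa\int|\nabla v|^2 = \int|\nabla w|^2$ converges to the Dirichlet/capacitary eigenvalue problem \eqref{Dir_eps}–\eqref{Dir_eps_eigen} in the limit $\kappa\to 0$. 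This explains the definition $\lambda_0 = -\lim_{\kappa\to0}\kappa\lambda_*(\kappa)$ and suggests $\lambda_0$ lies between $\St_*$ and $\Dir_*=\Cap_*$ (the equality $\Dir_*=\Cap_*$ being Proposition~\ref{prop:DirCap}). The inequality $\lambda_0\le\Dir_*$ follows by using (an appropriate rescaling of) the Dirichlet eigenfunction $\varphi_\eps$ as a competitor; the inequality $\St_*\le\lambda_0$ follows from monotonicity of $\lambda_*$ together with $\lambda_\infty=\St_*$ — indeed since $\lambda_*$ is nondecreasing and $-\kappa\lambda_*(\kappa)\ge 0$ for $\kappa\le1$, one relates $\lambda_0$ to the supremum of $\lambda_*$ on $(0,1]$, hence to $\lambda_\infty=\St_*$... — this last comparison is the point I would double-check, since it requires reconciling the two normalisations ($\lambda_*$ itself vs. $\kappa\lambda_*$).

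Continuity of $\lambda_*$ on $(0,+\infty)$ is the remaining routine-but-delicate point: at $\kappa\neq1$ it should follow from stability of the minimisation problems \eqref{lambda_eps+}/\eqref{lambda_eps-} under perturbation of $\kappa$ (the constraint functional $\mcG_{\eps,\kappa}$ depends continuously, indeed affinely, on $\kappa$) together with a diagonal/Helly argument (Remark~\ref{rk:Helly}) to pass the $\eps$-limit and the $\kappa$-limit in a compatible order — here a uniform-in-$\kappa$ bound on $\lambda(\eps,\kappa)/\eps^2$ on compact subsets of $(0,\infty)\setminus\{1\}$ is what makes the interchange legitimate. Continuity at $\kappa=1$ is where the two-sided behaviour meets: one must show $\lambda_*(\kappa)\to0$ as $\kappa\to1^\pm$, which amounts to showing the infima in \eqref{lambda_eps+} and \eqref{lambda_eps-} both tend to $0$ after rescaling; since at $\kappa=1$ the constant function is a competitor making $\mcG_{\eps,1}$ have a definite sign only through the boundary/bulk balance, a careful competitor construction using $\zeta_\eps$ (or $1-\zeta_\eps$) should pin the order down. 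For strict monotonicity on an interval $(-\infty,\kappa_*)$: since $\lambda_*$ is nondecreasing and passes strictly from negative values (for $\kappa<1$, where $-\inf(\cdots)<0$) through $0$ at $\kappa=1$ to positive values approaching $\St_*>0$, it cannot be locally constant near and below $1$; the strictness should propagate as long as $\lambda_*(\kappa)<\lambda_\infty=\St_*$, which defines the threshold $\kappa_*\in(1,+\infty]$. I expect the main obstacle to be making the two limiting identifications $\lambda_\infty=\St_*$ and the two-sided bound on $\lambda_0$ fully rigorous — specifically, controlling the ``mass concentrates near $\partial H_\eps$'' (resp. ``mass spreads over the cell'') heuristics with quantitative estimates uniform in $\eps$, and correctly handling the interplay between the torus formulation of \eqref{lambda_eps+}/\eqref{lambda_eps-} and the cube-with-Dirichlet formulations of \eqref{St_eps}, \eqref{Dir_eps}, \eqref{Cap_eps} via the extension operator of \eqref{hp:extension}.
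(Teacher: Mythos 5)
Your proposal identifies the correct structural blocks of the proof (monotonicity via the trace-inequality characterisation, identification of $\lambda_\infty$ and bounds on $\lambda_0$ by competitor comparison, continuity via stability, and strict monotonicity up to a threshold), and the upper bounds $\lambda_\infty \le \St_*$ and $\lambda_0 \le \Dir_*$ via the competitors $\psi_\eps$ and $\kappa^{-1/2}\varphi_\eps$ match the paper's Lemma~\ref{lemma:lambdabounds}. However, there are genuine gaps in the two places you yourself flag. First, the lower bound $\St_* \le \lambda_\infty$ cannot be obtained by ``dropping the $-\kappa\int v^2$ contribution'': that weakens the constraint to $\fint_{\partial H_\eps}v^2\geq 1$, and more fundamentally the Steklov infimum \eqref{St_eps} ranges over functions vanishing on $\partial Y$, whereas the minimiser $v_{\eps,\kappa}$ of \eqref{lambda_eps+} is a periodic torus function that does not. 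The paper's Step~4 bridges this by a nontrivial truncation: one shows (via uniform Poincar\'e and interior elliptic regularity, as in Lemma~\ref{lemma:constantlimit+} and~\eqref{lambda41}) that $v_{\eps,\kappa}\to v_*(\kappa)$ uniformly on $\T^n\setminus\overline{U_0}$ and then uses $(v_{\eps,\kappa}-\sigma_{\eps,\kappa})_+$, where $\sigma_{\eps,\kappa}=\sup_{\T^n\setminus\overline{U_0}}v_{\eps,\kappa}$, as a competitor for \eqref{St_eps}; the normalisation $c_{\eps,\kappa}$ then requires the bound~\eqref{v0+} on $v_*(\kappa)$ to stay positive as $\kappa\to\infty$. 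Second, the inequality $\St_*\le\lambda_0$ does \emph{not} follow from monotonicity alone: the supremum of $\lambda_*$ on $(0,1]$ is $0$, and $\lambda_0$ concerns $-\kappa\lambda_*(\kappa)$, which is a different normalisation. The paper obtains $\lambda_0\ge\lambda_\infty$ (Step~5) from the quantitative bound $|\lambda_*(\kappa)|\ge\lambda_\infty|\kappa-1|/\kappa$ of Lemma~\ref{lemma:lambda_lower_bd}, which in turn is extracted from the quadratic discriminant argument in the proof of Lemma~\ref{lemma:constantlimit-}.

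Beyond these, the remaining points need concrete competitors rather than a ``stability + Helly'' umbrella: Helly's theorem gives pointwise convergence of the monotone family $\eps^{-2}\lambda(\eps,\cdot)$ but not continuity of the limit. The paper's Step~1 proves two-sided continuity away from $1$ by testing \eqref{lambda_eps+} for $\kappa_2$ with $v_{\eps,\kappa_1}$ and invoking the convergence $v_{\eps,\kappa}\to v_*(\kappa)$ from Lemmas~\ref{lemma:constantlimit+}--\ref{lemma:constantlimit-}; Steps~2--3 settle continuity at $\kappa=1$ with the specific competitors $(1/(2-2\kappa)+\zeta_\eps)^{1/2}$ (using the capacity minimiser) and $\psi_\eps/(\kappa-1)^{1/2}$, neither of which is obvious from a general stability argument. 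Finally, strict monotonicity up to $\kappa_*$ is established in Step~6 by a bilinear rigidity argument (testing (EV$_{\eps,\kappa_1}$) against $v_{\eps,\kappa_2}$ and vice versa), which forces $(\kappa_2-\kappa_1)\lambda_*(\kappa_1)v_*(\kappa_1)v_*(\kappa_2)=0$ whenever $\lambda_*(\kappa_1)=\lambda_*(\kappa_2)$; the nonvanishing of $v_*(\kappa)$ when $\lambda_*(\kappa)<\St_*$ (Remark~\ref{rk:v*zero}) then forces $\kappa_1=\kappa_2$. Your outline states the conclusion but would need all of these ingredients to close.
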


Theorem~\ref{th:lambda*} implies, in particular,
that~$\lambda_*(\kappa) \sim -\lambda_0/\kappa \to -\infty$
as~$\kappa\to 0$, that~$\lambda_*(\kappa)\to 0$ as~$\kappa\to 1$,
and that~$\lambda_*$ is strictly increasing in~$(0, \, 1)$.
Therefore, the function~$(0, \, 1)\to (0, \, +\infty)$
given by~$\kappa\mapsto -\lambda_*(\kappa)$ is a bijection,
and for any~$\beta > 0$ there exists
a unique~$\kappa_* = \kappa_*(\beta)\in (0, \, 1)$ such that
\begin{equation} \label{kappa_*}
 -\lambda_*(\kappa_*(\beta)) = \beta.
\end{equation}
The number~$\kappa_*(\beta)$ is precisely the one that appears
in our candidate homogenised equation~\eqref{hom_eq}.

Unfortunately, we do not have any example
which shows the strict inequality~$\lambda_0 < \Dir_*$,
nor a proof that~$\lambda_0 = \Dir_*$ in general.
The best we can offer is the following conditional result.

\begin{prop} \label{prop:lambda0}
 In addition to~\eqref{hp:first}--\eqref{hp:last},
 assume that there exists a constant~$C > 0$
 such that
 \begin{equation} \label{hp:lambda0_geometry}
  \sigma(\partial H_\eps
   \cap B^n_\rho(x)) \geq C \rho^{n-1}
 \end{equation}
 for all~$\eps \in (0, \, 1)$, all~$x\in\partial H_\eps$
 and all~$\rho$ with~$0 < \rho <
  \sigma(\partial H_\eps)^{\frac{1}{n-1}}$.
 Moreover, suppose that there exists a continuous
 function~$\omega\colon [0, \, +\infty) \to [0, \, +\infty)$
 that satisfies~$\omega(0) = 0$ and
 \begin{equation} \label{hp:lambda0}
  \kappa^{1/2} \abs{v_{\eps,\kappa}(x) - v_{\eps,\kappa}(y)}
  \leq \omega\!\left(\frac{\abs{x-y}}
   {\sigma(\partial H_\eps)^{\frac{1}{n-1}}}\right)
 \end{equation}
 for all~$x\in\partial H_\eps$, $y\in\partial H_\eps$
 and all~$\kappa$ and~$\eps$ small enough
 (say, all~$\kappa$ smaller than some~$\kappa_0 > 0$
 and all~$\eps$ smaller than some~$\eps_0(\kappa) > 0$).
 Then, $\lambda_0 = \Dir_*$.
\end{prop}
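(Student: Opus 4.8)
The plan is to establish $\lambda_0 = \Dir_*$ by showing $\lambda_0 \geq \Dir_* $ as the complement to the already-known inequality $\lambda_0 \leq \Dir_*$ (which is part of Theorem~\ref{th:lambda*}). Recall that $-\kappa\lambda_*(\kappa) = \lim_{\eps\to0} \bigl(-\kappa\lambda(\eps,\kappa)/\eps^2\bigr)$, and for small $\kappa$ the quantity $-\lambda(\eps,\kappa)$ equals the infimum in~\eqref{lambda_eps-}. So I need a \emph{lower bound} on the Dirichlet-type energy $\int |\nabla v|^2$ over competitors $v = v_{\eps,\kappa}$ normalised by $\mcG_{\eps,\kappa}(v) = -1$, i.e. by $\kappa\int_{\T^n\setminus H_\eps} v^2 - \fint_{\partial H_\eps} v^2 = 1$. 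The strategy is to use the minimiser $v_{\eps,\kappa}$ of~\eqref{lambda_eps-} to build an admissible competitor for the capacitary/Dirichlet problem defining $\Dir(\eps)$ (equivalently $\Cap(\eps)$, using Proposition~\ref{prop:DirCap}), and conversely, to exploit the equation~\eqref{eigenvalue} it satisfies.

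The key steps, in order. First, I would analyse the structure of $v_{\eps,\kappa}$ via~\eqref{eigenvalue}: it is superharmonic-like with $-\Delta v_{\eps,\kappa} = -\kappa\lambda(\eps,\kappa) v_{\eps,\kappa}$ and a Robin condition $\partial_\nu v_{\eps,\kappa} = \lambda(\eps,\kappa)v_{\eps,\kappa}/\sigma(\partial H_\eps)$ on $\partial H_\eps$. Since $0<\kappa\leq1$ and $\lambda(\eps,\kappa)<0$, the bulk coefficient $-\kappa\lambda(\eps,\kappa)$ is positive. Second — and this is where hypotheses~\eqref{hp:lambda0_geometry} and~\eqref{hp:lambda0} enter — the uniform modulus-of-continuity bound~\eqref{hp:lambda0} on $\partial H_\eps$ together with the Ahlfors-regularity~\eqref{hp:lambda0_geometry} forces $\kappa^{1/2}v_{\eps,\kappa}$ to be \emph{nearly constant} on $\partial H_\eps$ as $\eps\to0$ (because $\diam(H_\eps)/\sigma(\partial H_\eps)^{1/(n-1)} \to 0$: this should follow from Remark~\ref{rk:zerocapacity}, $|H_\eps|\to0$, combined with the lower surface bound; I'd need to confirm the diameter-to-surface ratio vanishes, which is the natural reading of the critical scaling). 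Call this near-constant value $b_\eps := \kappa^{1/2}\,\fint_{\partial H_\eps} v_{\eps,\kappa}$; then $\fint_{\partial H_\eps}(\kappa^{1/2}v_{\eps,\kappa})^2 = b_\eps^2 + o(1)$. Third, rescale: set $w_\eps := \kappa^{1/2}v_{\eps,\kappa}/b_\eps$ (assuming $b_\eps\not\to0$, which must be argued from the normalisation — if $b_\eps\to0$ the boundary term is negligible and the normalisation $\kappa\int v^2 = 1 + \fint v^2 \to 1$ pins down $\int v^2$). Then $w_\eps \to$ roughly $1$ on $\partial H_\eps$, and $\int_{\T^n\setminus H_\eps}|\nabla w_\eps|^2 = \kappa b_\eps^{-2}\int|\nabla v_{\eps,\kappa}|^2 = \kappa b_\eps^{-2}\,(-\lambda(\eps,\kappa))$. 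Fourth, compare: the truncation/modification of $w_\eps$ near $\partial H_\eps$ to make it exactly constant there, combined with the normalisation controlling $\kappa\int v_{\eps,\kappa}^2$, yields a competitor for $\Dir(\eps)$ after extending by the constant into $H_\eps$ — or more precisely, $1 - w_\eps$ (suitably handled) becomes a competitor for $\Cap(\eps)$. Carefully tracking the normalisation $\kappa\int_{\T^n\setminus H_\eps}v_{\eps,\kappa}^2 - \fint_{\partial H_\eps}v_{\eps,\kappa}^2 = 1$ through these rescalings, and using $\kappa b_\eps^{-2}(-\lambda(\eps,\kappa))\geq \Cap(\eps)(1+o(1))$, should give $-\kappa\lambda(\eps,\kappa)/\eps^2 \geq b_\eps^2\,\Cap(\eps)/\eps^2\,(1+o(1))$, and since $b_\eps^2 = \kappa\fint v_{\eps,\kappa}^2 = \kappa\bigl(\kappa\int v_{\eps,\kappa}^2 - 1\bigr)$ and the normalisation forces $b_\eps^2\to 1$ as $\kappa\to0$ (the bulk term $\kappa\int v^2$ must approach $1$ to keep $\mcG_{\eps,\kappa} = -1$ with vanishing boundary contribution — wait, that needs $\fint v^2\to$ something finite; this is exactly the delicate balance), one concludes $\lambda_0 = -\lim_{\kappa\to0}\kappa\lambda_*(\kappa) \geq \lim\Cap(\eps)/\eps^2 = \Cap_* = \Dir_*$.

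The main obstacle, I expect, is the interchange of the two limits $\eps\to0$ and $\kappa\to0$ and the precise bookkeeping of the normalisation constraint $\mcG_{\eps,\kappa}(v_{\eps,\kappa}) = -1$. The constraint couples a bulk term $\kappa\int v^2$ and a boundary term $\fint_{\partial H_\eps}v^2$, and one must show that after the rescaling by $\kappa^{1/2}$ the boundary term is genuinely the dominant normalisation as $\eps\to0$ for fixed small $\kappa$ — equivalently, that $\kappa\int_{\T^n\setminus H_\eps}v_{\eps,\kappa}^2$ does not blow up. This should follow from a Poincaré-type inequality on $\T^n\setminus H_\eps$ (uniform in $\eps$ via the extension operator~\eqref{hp:extension}) controlling $\int v^2$ by $\int|\nabla v|^2$ plus $\fint_{\partial H_\eps}v^2$, but one has to feed back the resulting bound on $\int|\nabla v_{\eps,\kappa}|^2 = -\lambda(\eps,\kappa)$, which is only $O(\eps^2)$, to close the estimate. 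Making~\eqref{hp:lambda0} do its job — converting the pointwise oscillation bound into a genuine $L^2(\partial H_\eps)$ statement that $\kappa^{1/2}v_{\eps,\kappa}$ concentrates around its average — requires the diameter of $H_\eps$ to shrink faster than $\sigma(\partial H_\eps)^{1/(n-1)}$, which is where~\eqref{hp:lambda0_geometry} guarantees the surface measure is not too small; verifying this ratio vanishes under~\eqref{hp:first}--\eqref{hp:critical} is a necessary preliminary lemma.
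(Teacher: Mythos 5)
Your general strategy --- build a competitor for $\Dir(\eps)$ (or $\Cap(\eps)$) out of the eigenfunction $v_{\eps,\kappa}$, track the normalisation through the rescaling by $\kappa^{1/2}$, and compare energies --- is the right shape, and matches the paper in outline. But the key intermediate step you propose is wrong, and the way you intend the hypotheses~\eqref{hp:lambda0_geometry}--\eqref{hp:lambda0} to do their work does not match what they can actually deliver.

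Specifically, you claim $\kappa^{1/2}v_{\eps,\kappa}$ becomes nearly constant on $\partial H_\eps$ because $\diam(H_\eps)/\sigma(\partial H_\eps)^{1/(n-1)}\to 0$. That ratio is scale-invariant and does \emph{not} vanish: in the model example $H_\eps=\eps^{\gamma-1}\overline{D}$ one has $\diam(\partial H_\eps)=\eps^{\gamma-1}\diam(\partial D)$ and $\sigma(\partial H_\eps)^{1/(n-1)}=\eps^{\gamma-1}\sigma(\partial D)^{1/(n-1)}$, so the ratio equals the $\eps$-independent constant $\diam(\partial D)/\sigma(\partial D)^{1/(n-1)}$. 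Consequently~\eqref{hp:lambda0} only bounds the oscillation of $\kappa^{1/2}v_{\eps,\kappa}$ on $\partial H_\eps$ by $\omega(\mathrm{const})$, i.e.\ $O(1)$, not $o(1)$. Your ``modification near $\partial H_\eps$ to make it exactly constant there'' would then cost $O(1)$ Dirichlet energy per cell, which destroys the comparison. Your fallback case $b_\eps\to0$ is in fact the one that always occurs, but you leave it at ``the boundary term is negligible'' without saying how to produce a competitor with the correct boundary condition --- which is where the whole difficulty sits.

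What the paper's proof does instead is use~\eqref{hp:lambda0_geometry}--\eqref{hp:lambda0} for a weaker, pointwise claim: that $\kappa^{1/2}\sup_{\partial H_\eps}v_{\eps,\kappa}\to0$ in the iterated limit $\eps\to0$, $\kappa\to0$. First one shows $\kappa^{1/2}\fint_{\partial H_\eps}v_{\eps,\kappa}^2\to0$ from the constraint $\mcG_{\eps,\kappa}(v_{\eps,\kappa})=-1$ together with the $M_{\eps,\kappa}$-bounds of Lemma~\ref{lemma:constantlimit-}. Then, arguing by contradiction: if $\kappa^{1/2}v_{\eps,\kappa}(x_0)\geq\eta$ at some $x_0\in\partial H_\eps$, the equicontinuity~\eqref{hp:lambda0} forces $\kappa^{1/2}v_{\eps,\kappa}\geq\eta/2$ on a ball $B_{\rho\sigma(\partial H_\eps)^{1/(n-1)}}(x_0)$ with $\rho$ independent of $\eps,\kappa$, and~\eqref{hp:lambda0_geometry} says that ball carries at least a fixed \emph{fraction} $C\rho^{n-1}$ of the total surface measure --- so the average $\kappa^{1/2}\fint v_{\eps,\kappa}$ would be bounded below, a contradiction. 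With the sup small, the truncation $(v_{\eps,\kappa}-\sup_{\partial H_\eps}v_{\eps,\kappa})_+$ is \emph{identically zero} on $\partial H_\eps$ with no near-constancy needed, has Dirichlet energy at most $\abs{\lambda(\eps,\kappa)}$, and after dividing by $c_{\eps,\kappa}:=\kappa\int_{\T^n\setminus H_\eps}(v_{\eps,\kappa}-\sup_{\partial H_\eps}v_{\eps,\kappa})_+^2\,\d x$ becomes an admissible competitor for~\eqref{Dir_eps}; one then checks $c_{\eps,\kappa}\to1$ via the elementary inequality $x^2-(x-a)_+^2\leq2ax$ and the lower bound on $M_{\eps,\kappa}$. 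Passing $\eps\to0$, then $\kappa\to0$, yields $\Dir_*\leq\lambda_0$.
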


Condition~\eqref{hp:lambda0_geometry} is a geometric one,
and could be interpreted as a curvature bound on~$\partial H_\eps$.
(We need to restrict our attention to radii~$\rho < \sigma(\partial H_\eps)^{\frac{1}{n-1}}$ so that the right-hand side of~\eqref{hp:lambda0_geometry} does not exceed~$C\sigma(\partial H_\eps)$.
Instead, condition~\eqref{hp:lambda0} is an equicontinuity
estimate for solutions to the boundary value problem~\eqref{eigenvalue}.
Checking whether condition~\eqref{hp:lambda0} is satisfied
in a concrete example requires proving (boundary) elliptic regularity
estimates, uniformly with respect to the parameter~$\eps$,
for solutions to~\eqref{eigenvalue}.

\begin{remark} \label{rk:CM}
 Theorem~\ref{th:lambda*}
 implies that the ``strange term''~$\beta \kappa_*(\beta)$
 on the left-hand side of Equation~\eqref{hom_eq} tends
 to zero as~$\beta\to 0$ (i.e., when the Robin boundary
 conditions approach the Neumann case)
 and to~$\lambda_0$ as~$\beta\to +\infty$
 (i.e., when the boundary conditions approach
 the Dirichlet case).
 In particular, when the equality~$\lambda_0 = \Dir_* = \Cap_*$ holds
 (for instance, in our model example~\eqref{model_example})
 our strange term converges to the capacitary one
 (see Equation~\eqref{hom_eq_Dir}), in the limit of large~$\beta$.
\end{remark}

\paragraph*{The homogenisation problem.}

We define the set~$\mcP_\eps\subseteq\R^n$ as the union of
rescaled copies of~$H_{\eps}$ periodically
distributed on the cubic grid of size~$\eps$:
\begin{equation} \label{P_eps}
 \mcP_\eps := \eps \Z^n + \eps H_{\eps}
 = \left\{\eps z + \eps x\colon z\in\Z^n, \ x\in\mcP_\eps \right\} \! .
\end{equation}
Given a bounded, Lipschitz domain~$\Omega\subseteq\R^n$,
we set~$\Omega_\eps := \Omega\setminus\mcP_\eps$,
$\Gamma_\eps := \partial\mcP_\eps\cap\Omega$.
We also take a family $(\mu_\eps)_{0 < \eps < 1}$
of positive numbers that satisfies~\eqref{mu_eps}.

\begin{remark} \label{rk:mu_eps_bar}
 For instance, the family defined by
 \begin{equation} \label{mu_eps_bar}
  \overline{\mu}_\eps
   := \frac{\sigma(\partial H_{\eps})}{\eps},
   \qquad \eps \in (0, \, 1)
 \end{equation}
 satisfies~\eqref{mu_eps}.
 Indeed, the number~$N_\eps$ of points~$z\in\Z^n$ such that the
 cube~$\eps z + \eps Y$ intersects~$\Omega$, as well as
 the number~$M_\eps$ of points~$z\in\Z^n$ such that the
 cube~$\eps z + \eps Y$ is contained in~$\Omega$,
 are both of order~$\eps^{-n}$; 
 more precisely, we have
 $\lim_{\eps\to 0} \eps^n N_\eps
  = \lim_{\eps\to 0} \eps^n M_\eps
  = \abs{\Omega}$.
 As a consequence, we have
 \[
  \left(\abs{\Omega} + \mathrm{o}_{\eps\to 0}(1)\right)
   \overline{\mu}_\eps
  \leq N_\eps \eps^{n-1} \sigma(\partial H_\eps)
  \leq \sigma(\Gamma_\eps)
  \leq M_\eps \eps^{n-1} \sigma(\partial H_\eps)
  \leq \left(\abs{\Omega} + \mathrm{o}_{\eps\to 0}(1)\right)
   \overline{\mu}_\eps,
 \]
 and it follows that~$\overline{\mu}_\eps$ satisfies~\eqref{mu_eps}.
\end{remark}

Now all the terms of the $\eps$-problem~\eqref{main_eq}
and of the candidate limit problem~\eqref{hom_eq}
are rigorously defined. We next state the main convergence result.

\begin{theorem} \label{th:strange_term}
 Let~$(\mu_\eps)_{0 < \eps < 1}$ satisfy~\eqref{mu_eps}.
 Let~$\alpha \geq 0$, $\beta > 0$,
 $f\in L^2(\Omega)$, and $g\in C^0(\overline{\Omega})$.
 Under assumptions~\eqref{hp:first}--\eqref{hp:last},
 Problems~\eqref{main_eq}, \eqref{hom_eq} admit unique
 solutions~$u_\eps$, $u_0$ respectively, which satisfy
 \[
  \norm{u_\eps - u_0}_{L^2(\Omega_\eps)}
  \to 0 \qquad \textrm{as } \eps\to 0.
 \]
\end{theorem}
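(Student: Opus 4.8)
\textbf{Proof proposal for Theorem~\ref{th:strange_term}.}

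The plan is to run the standard energy/variational method for homogenisation of perforated domains, but with the $\eps$-scaled Steklov-type eigenfunctions $v_{\eps,\kappa}$ from \eqref{eigenvalue} playing the role of the oscillating test functions that usually come from the capacitary problem \eqref{Cap_eps_eigen}. Existence and uniqueness of $u_\eps$ and $u_0$ are immediate: $\mcF_\eps$ is strictly convex and coercive on the closed subspace of $H^1(\Omega_\eps)$ with zero trace on $\partial\Omega_\eps\cap\partial\Omega$ (using the Poincaré inequality on $\Omega_\eps$ that follows from the uniform extension operator in \eqref{hp:extension}), and \eqref{hom_eq} is a standard coercive elliptic problem once $\alpha+\beta\kappa_*(\beta)\geq 0$, which holds since $\kappa_*(\beta)\in(0,1)$.

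First I would establish a uniform a priori bound: testing \eqref{main_eq} against $u_\eps$, combined with the trace inequality \eqref{trace-intro} (valid for any fixed $\kappa>1$ and $\eps$ small, with $\eps^2/\lambda(\eps,\kappa)$ bounded by \eqref{hp:limit}), gives $\norm{\nabla u_\eps}_{L^2(\Omega_\eps)}+\norm{u_\eps}_{L^2(\Omega_\eps)}\leq C\norm{f}_{H^{-1}}$. Applying the extension operator $E_\eps$ cell-by-cell produces $\tilde u_\eps\in H^1(\Omega)$ bounded in $H^1$, hence (up to subsequence) $\tilde u_\eps\rightharpoonup u_*$ weakly in $H^1(\Omega)$ and strongly in $L^2(\Omega)$; since $|\mcP_\eps\cap\Omega|\to 0$ by Remark~\ref{rk:zerocapacity}, $u_\eps$ and $\tilde u_\eps$ have the same $L^2(\Omega_\eps)$ limit, so it suffices to identify $u_*=u_0$ and upgrade to strong convergence. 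The boundary condition $u_*=0$ on $\partial\Omega$ is inherited in the limit.

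The core of the argument is the construction of a recovery/corrector family. For the lower bound (liminf of energies), I would show $\mcF_\eps(u_\eps)\geq \mcF(u_*)-o(1)$, where $\mcF(w):=\int_\Omega(\tfrac12|\nabla w|^2+\tfrac12(\alpha+\beta\kappa_*(\beta))w^2-fw)$; the extra zero-order term $\tfrac12\beta\kappa_*(\beta)\int_\Omega u_*^2$ must be extracted from $\tfrac{\beta}{2\mu_\eps}\int_{\Gamma_\eps}u_\eps^2\,\d\sigma$ together with a (negative) defect in the bulk Dirichlet energy coming from oscillation near the holes — this is where the eigenvalue relation $-\lambda_*(\kappa_*(\beta))=\beta$ enters, reflecting the exact balance in \eqref{lambda_eps-}: for $\kappa=\kappa_*(\beta)\in(0,1]$ the minimiser of $\int|\nabla v|^2$ subject to $\fint_{\partial H_\eps}v^2-\kappa\int v^2=-1$ has energy $-\lambda(\eps,\kappa)\sim -\lambda_*(\kappa)\eps^2=\beta\eps^2$, and rescaling to $\Gamma_\eps$ converts this into the coefficient $\beta\kappa_*(\beta)$ after accounting for $\mu_\eps$ and the number $\sim\eps^{-n}$ of cells. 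For the upper bound I would build a recovery sequence $w_\eps := \phi - \sum_z (\text{cell-wise corrector based on } v_{\eps,\kappa_*})\,\phi(\eps z)$ for smooth $\phi$ with compact support in $\Omega$, verify $w_\eps\to\phi$ in $L^2$, compute $\limsup \mcF_\eps(w_\eps)\leq\mcF(\phi)$ using \eqref{hp:last} and the characterisation of $\lambda_*$, and conclude by density and $\Gamma$-convergence that $u_*$ minimises $\mcF$, i.e. $u_*=u_0$; strong $L^2$ convergence then follows from convergence of the (strictly convex) energies together with weak convergence. The identification of the limit as a full limit (not just along subsequences) follows from uniqueness of $u_0$.

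\textbf{Main obstacle.} I expect the hardest part to be making the corrector construction rigorous and matching constants: one must (i) control the cross terms between the macroscopic profile $\phi$ and the rapidly oscillating corrector, showing they are $o(1)$ — here the scaling $\mu_\eps=\sigma(H_\eps)/\eps$ and the critical assumption \eqref{hp:critical} are essential to see that the surface integral over $\Gamma_\eps$ and the bulk Dirichlet defect are both of order $1$ and combine into exactly $\beta\kappa_*(\beta)$; (ii) deal with cells near $\partial\Omega$ and with the fact that the correctors live on $\T^n\setminus H_\eps$ rather than on $Y\setminus H_\eps$, which requires gluing/cutoff arguments that do not spoil the energy estimate; and (iii) transfer the trace inequality \eqref{trace-intro} and the weak-convergence of boundary measures $\mu_\eps^{-1}\sigma\llcorner\Gamma_\eps$ to the (normalised) Lebesgue measure on $\Omega$, so that $\mu_\eps^{-1}\int_{\Gamma_\eps}\phi^2\,\d\sigma\to\int_\Omega\phi^2$ for continuous $\phi$ — a fact that underlies the appearance of the zero-order term in the first place. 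Once these are in place, the $\Gamma$-liminf/$\Gamma$-limsup pair plus uniqueness of $u_0$ closes the proof.
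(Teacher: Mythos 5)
Your proposal takes a genuinely different route from the paper's. The paper does not prove $\Gamma$-convergence of the energies $\mcF_\eps$; instead it tests the weak formulation of \eqref{main_eq} against $w_\eps\varphi$ --- where $w_\eps$ is precisely the corrector you propose, the rescaled and normalised Steklov eigenfunction $v_{\eps,\kappa_*(\beta)}$ --- integrates by parts twice, and passes to the limit term by term. This is the Tartar / Jikov--Kozlov--Oleinik oscillating-test-function method, formalised in Lemma~\ref{lemma:test}, and in it the strange term $\beta\kappa_*(\beta)$ appears not as a lower-semicontinuity defect of the energy but as the $H^{-1}$-limit of $-\overline{\Delta w_\eps}$, while the boundary contribution vanishes because $w_\eps$ very nearly satisfies the \emph{same} Robin condition as $u_\eps$ (see \eqref{corrector_Gamma}, where the choice $\lambda_*(\kappa_*(\beta))+\beta=0$ makes the right-hand side of order $\mathrm{o}(1)$). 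Both routes use the same ingredients --- the corrector $w_\eps$, the trace inequality, the relation $-\lambda_*(\kappa_*(\beta))=\beta$ --- but the PDE-testing route is substantially shorter because it avoids the $\Gamma$-liminf, which you leave at the level of a sketch and which is in fact the hard half: extracting $\tfrac12\beta\kappa_*(\beta)\int_\Omega u_*^2$ from the surface energy plus a negative Dirichlet defect requires a cell-by-cell decomposition $u_\eps\approx w_\eps u_* + \text{remainder}$ with the trace inequality controlling the remainder, or a blow-up argument, and you do not work these out. Your route would nevertheless survive nonlinear perturbations of the energy, for which the paper's linear testing trick is not available. One minor point: the strong $L^2(\Omega_\eps)$ convergence already follows from the compact embedding applied to the extended sequence $\widetilde{E}_\eps u_\eps$, so the energy-convergence argument at the end of your sketch is not needed for that conclusion.
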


For Problem~\eqref{main_eq} with homogeneous boundary conditions
(that is, when~$g=0$),
we also have the following estimate for the convergence rate,
along the lines of \cite[Th\'eor\`eme~1.1]{KacimiMurat}.
Given~$\beta > 0$, we take~$\kappa = \kappa_*(\beta)$ and
consider a minimiser~$v_{\eps, \kappa}$ for the right-hand
side of~\eqref{lambda_eps-}, identified with
a~$\Z^n$-periodic function on~$\R^n\setminus(\mcP_\eps/\eps)$.
We define a corrector, $w_\eps\colon\R^n\setminus\mcP_\eps\to\R$, as
\[
 w_{\eps}(x) :=
  \frac{1}{M_{\eps,\kappa}}
  v_{\eps,\kappa}\!\left(\frac{x}{\eps}\right)
  \qquad \textrm{where } x\in\R^n\setminus\mcP_\eps,
  \quad \ M_{\eps,\kappa} := \int_{\T^n\setminus H_\eps}
  v_{\eps,\kappa}(y) \, \d y.
\]
We will see later on (see Lemma~\ref{lemma:corrector}
in Section~\ref{sect:strange_term}) that
$\norm{w_\eps - 1}_{L^2(\Omega_\eps)} \to 0$ as~$\eps\to 0$.
We also define
\[
 \eta_\eps := \abs{\frac{\lambda(\eps, \, \kappa_*(\beta))}
   {\eps^2} + \beta} + \abs{\frac{\mu_\eps}{\overline{\mu}_\eps} - 1} \! ,
\]
where~$\overline{\mu}_\eps := \eps^{-1} \sigma(\partial H_\eps)$.
Assumption~\eqref{hp:limit} and the
definition of~$\kappa_*(\beta)$ (see Equation~\eqref{kappa_*}) imply
that $\eps^{-2}\lambda(\eps, \, \kappa_*(\beta))\to
\lambda_*(\kappa_*(\beta)) = -\beta$ as~$\eps\to 0$,
while assumption~\eqref{mu_eps} and Remark~\ref{rk:mu_eps_bar}
imply that~$\mu_\eps/\overline{\mu}_\eps\to 1$
as~$\eps\to 0$. Therefore, we have~$\eta_\eps\to 0$ as~$\eps\to 0$.

\begin{prop} \label{prop:abstractrate}
 Suppose that~$\Omega\subseteq\R^n$ is a bounded domain
 of class~$C^2$, $g=0$, and~$(\mu_\eps)_{0 < \eps < 1}$
 satisfies~\eqref{mu_eps}.
 Under assumptions~\eqref{hp:first}--\eqref{hp:last},
 there exists a constant~$C$ such that for all~$f\in L^\infty(\Omega)$
 and all~$\eps \in (0, \, 1)$, the solutions~$u_\eps$, $u_0$
 of problems~\eqref{main_eq}, \eqref{hom_eq} satisfy
 \begin{equation} \label{abstractrate}
  \norm{u_\eps - w_\eps u_0}_{H^1(\Omega_\eps)}
  \leq C (\eps + \eta_\eps + \sigma(\Gamma_\eps))
   \norm{f}_{L^\infty(\Omega)}
 \end{equation}
\end{prop}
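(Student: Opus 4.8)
The plan is to compare $u_\eps$ with the "oscillating test function" $w_\eps u_0$ by testing the weak formulation of~\eqref{main_eq} against $u_\eps - w_\eps u_0$ and estimating the resulting error terms, using the equation satisfied by the corrector $w_\eps$ to absorb the zero-order "strange" contribution. First I would record that, since $\Omega$ is of class $C^{1,1}$ and $f\in L^\infty(\Omega)$, elliptic regularity gives $u_0\in W^{2,p}(\Omega)$ for all $p<\infty$, hence $u_0\in C^{1,\gamma}(\overline\Omega)$ and $\norm{u_0}_{W^{2,p}(\Omega)}\lesssim \norm{f}_{L^\infty(\Omega)}$; this uniform control of $u_0$ and $\nabla u_0$ in $L^\infty$ is what makes the products with the corrector manageable. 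Next I would set up the weak form: for $\phi\in H^1(\Omega_\eps)$ vanishing on $\partial\Omega_\eps\cap\partial\Omega$,
\[
 \int_{\Omega_\eps}\nabla u_\eps\cdot\nabla\phi
 + \alpha\int_{\Omega_\eps} u_\eps\phi
 + \frac{\beta}{\mu_\eps}\int_{\Gamma_\eps} u_\eps\phi
 = \langle f,\phi\rangle,
\]
and the analogous identity for $u_0$ on the unperforated domain, tested against $w_\eps\phi$. Subtracting, and using that $w_\eps$ is built from the minimiser $v_{\eps,\kappa_*(\beta)}$, which solves~\eqref{eigenvalue} with $\kappa=\kappa_*(\beta)$ — so that $-\Delta w_\eps = -\kappa\lambda(\eps,\kappa)\eps^{-2} w_\eps$ in $\Omega_\eps$ and $\partial_\nu w_\eps = \lambda(\eps,\kappa)\eps^{-2}\sigma(\partial H_\eps)^{-1} w_\eps$ on $\Gamma_\eps$, combined with $\mu_\eps = \sigma(H_\eps)/\eps$ — the boundary integral $\frac{\beta}{\mu_\eps}\int_{\Gamma_\eps}$ pairs up against the flux of $w_\eps$, and the bulk term $\beta\kappa_*(\beta)\int u_0$ pairs up against $-\kappa\lambda\eps^{-2}\int w_\eps u_0\phi$; the mismatch between $\beta$ and $-\lambda(\eps,\kappa_*(\beta))/\eps^2$ is exactly $\eta_\eps$.

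The core computation is to take $\phi = u_\eps - w_\eps u_0$ as the test function (after checking it lies in the admissible space), expand $\nabla(w_\eps u_0) = u_0\nabla w_\eps + w_\eps\nabla u_0$, and integrate by parts on the $u_0\nabla w_\eps$ term to convert it into a bulk term involving $\Delta w_\eps = \kappa\lambda\eps^{-2}w_\eps$ and a boundary term on $\Gamma_\eps$ involving $\partial_\nu w_\eps$ (no contribution on $\partial\Omega$ since $\phi$ vanishes there). After the cancellations described above, the identity reduces to a coercive left-hand side $\norm{\nabla\phi}_{L^2(\Omega_\eps)}^2 + \alpha\norm{\phi}_{L^2(\Omega_\eps)}^2 + \frac{\beta}{\mu_\eps}\norm{\phi}_{L^2(\Gamma_\eps)}^2$ plus possibly a lower-order term controllable by the trace inequality Lemma~\ref{lemma:trace}, against a sum of error terms of the following types: (i) $\eta_\eps$ times integrals of $w_\eps u_0\phi$ and $w_\eps \nabla u_0\cdot\nabla\phi$; (ii) terms with a factor $\norm{w_\eps-1}_{L^2(\Omega_\eps)}$, which by Lemma~\ref{lemma:corrector} is $o(1)$ but needs to be quantified as $O(\eps)$ (this requires going into the model structure or the abstract corrector estimate and may be where an explicit rate is genuinely needed); (iii) terms involving $\nabla w_\eps$ tested against $w_\eps\nabla u_0$, i.e. $\int_{\Omega_\eps}\nabla u_0\cdot\nabla w_\eps\,(w_\eps u_0 - \text{stuff})$, which are controlled using that $\eps\nabla w_\eps$ is bounded in $L^2$ uniformly (a consequence of $\lambda(\eps,\kappa)\sim\eps^2$ and the normalisation $M_{\eps,\kappa}$); and (iv) a genuinely $O(\mu_\eps)$ term coming from the boundary flux $\frac{\beta}{\mu_\eps}\int_{\Gamma_\eps}(w_\eps^2-w_\eps)u_0\phi$ or from the discrepancy between $\Gamma_\eps$-integrals with and without the $\sigma(\partial H_\eps)$ normalisation. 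Each such error, after Cauchy–Schwarz and the trace inequality, is bounded by $C(\eps+\eta_\eps+\mu_\eps)\norm{f}_{L^\infty}\cdot\norm{\phi}_{H^1(\Omega_\eps)}$, and absorbing the $\norm{\phi}_{H^1}$ factor into the coercive left-hand side yields~\eqref{abstractrate}.

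The main obstacle I anticipate is the corrector estimate $\norm{w_\eps-1}_{L^2(\Omega_\eps)} = O(\eps)$ together with the uniform bound $\norm{\eps\nabla w_\eps}_{L^2(\Omega_\eps)} = O(1)$ (or better): Lemma~\ref{lemma:corrector} as advertised only gives $o(1)$ for the first, and upgrading to a rate requires quantitative control on how fast $v_{\eps,\kappa}/M_{\eps,\kappa}$ approaches the constant $1$, which in turn rests on the scaling $\lambda(\eps,\kappa_*(\beta))=\eps^2(\beta+o(1))$ and a Poincaré-type estimate on $\T^n\setminus H_\eps$ for functions with the mean/flux normalisation; making this quantitative — and making sure the constant does not blow up as $\eps\to 0$ despite $H_\eps$ itself varying with $\eps$ — is the delicate point. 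A secondary technical nuisance is the book-keeping of the periodic-scaling covering argument that turns unit-cell estimates for $v_{\eps,\kappa}$ into estimates on $\Omega_\eps$ (summing $O(\eps^{-n})$ cells, each contributing after the change of variables $x\mapsto x/\eps$), and verifying that the boundary cells near $\partial\Omega$ contribute only an $O(\eps)$ error, for which the Lipschitz regularity of $\partial\Omega$ suffices but must be used carefully in combination with the extension operator from~\eqref{hp:extension}.
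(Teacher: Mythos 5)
Your plan matches the paper's own proof: write the PDE satisfied by $U_\eps := u_\eps - w_\eps u_0$, test against $U_\eps$ and integrate by parts, use the boundary relation~\eqref{corrector_Gamma} for $w_\eps$ (with $\kappa = \kappa_*(\beta)$) to absorb the Robin term up to an $\eta_\eps$-error, and close via Young's inequality, the trace inequality of Lemma~\ref{lemma:trace}, the $W^{2,p}$ regularity of $u_0$, and the corrector bounds~\eqref{correctorL2}--\eqref{corrector_L2Gamma}. The ``main obstacle'' you flag is already handled: estimate~\eqref{correctorL2} in Lemma~\ref{lemma:corrector} gives $\norm{\overline{w_{\eps,\kappa}} - 1}_{L^2(\Omega)} \leq C\lambda(\eps,\kappa)^{1/2} + C\eps = O(\eps)$, proved from the uniform Poincar\'e inequality of Lemma~\ref{lemma:Poincare}, the lower bound on $M_{\eps,\kappa}$ from~\eqref{secondogrado5}, and the volume bound $\abs{\Omega\cap\mcP_\eps} = O(\eps^2)$ in~\eqref{corrector-meas}.
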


In Proposition~\ref{prop:abstractrate}, we assume
boundedness of the source term (as in~\cite{KacimiMurat})
and higher regularity for the domain~$\Omega$
(i.e.~$C^2$ instead of Lipschitz continuity)
because the proof requires higher regularity on~$u_0$.
So far, we do not have an estimate for general~$f\in L^2(\Omega)$.
For the estimate~\eqref{abstractrate} to be interpreted
as a convergence rate, we need to know
that~$\sigma(\Gamma_\eps)\to 0$ as~$\eps\to 0$.
While this is the case for the model example~\eqref{model_example},
this may not be true in the general setting.
In Section~\ref{sect:many_holes}, we provide
a family of examples for which
conditions~\eqref{hp:first}--\eqref{hp:last} are satisfied,
yet~$\sigma(\Gamma_\eps)$ can be made arbitrarily large,
possibly even~$\sigma(\Gamma_\eps)\to +\infty$ as~$\eps\to 0$.

\subsection{The model example fits into the abstract setting}
\label{model_example_fit_sec}

Assume that~$n\geq 3$. Let~$D\subseteq\R^{n}$ be a bounded
open set of class~$C^2$ such that~$\R^n\setminus\overline{D}$
is connected. We define
\begin{equation} \label{Hdelta}
 H_\eps := \eps^{\gamma-1} \overline{D}
 = \left\{\eps^{\gamma-1} y\colon y\in\overline{D}\right\} \!,
\end{equation}
where
\begin{equation} \label{deltaeps}
 \gamma := \frac{n}{n - 2} ,
\end{equation}
for all~$\eps > 0$ small enough,
say~$\eps \in (\eps, \, \eps_0]$ for some~$\eps_0 > 0$,
so that~$H_\eps \subseteq Y$ for~$\eps \in (0, \, \eps_0]$.
We define~$H_\eps := H_{\eps_0}$ for~$\eps\in (\eps_0, \, 1)$.
We denote by~$\dot{H}^1(\R^{n}\setminus\overline{D})$
the set of functions~$z_0\in L^2_{\mathrm{loc}}(\R^n\setminus\overline{D})$
that satisfy~$\nabla z_0\in L^2(\R^n\setminus\overline{D}, \, \R^n)$
and ``decay at infinity'' in a suitable sense,
i.e.~the set~$\{x\in\R^n\setminus\overline{D}\colon
\abs{z_0(x)}\geq c\}$ has finite measure
for all~$c > 0$. The set~$\dot{H}^1(\R^{n}\setminus\overline{D})$
of ``finite-energy'' functions is modelled after
the space~$D^1(\R^n)$ defined by Lieb and Loss~\cite{LiebLoss}.
We also consider the direct sum
\[
 V := \dot{H}^1(\R^{n}\setminus\overline{D})\oplus\R.
\]
Elements of~$V$ can be canonically identified with
functions of the form~$z = z_0 + L$,
where~$z_0\in\dot{H}^1(\R^{n}\setminus\overline{D})$
and~$L\in\R$ is a constant. For such a function~$z$,
we write~$z(\infty) := L$.

\begin{prop} \label{prop:example}
 The set~$H_\eps$ defined by~\eqref{Hdelta}--\eqref{deltaeps}
 satisfies conditions \eqref{hp:first}--\eqref{hp:last}, with
 \[
  \lambda_*(\kappa)
  = \min\left\{\int_{\R^{n}\setminus \overline{D}}
   \abs{\nabla z}^2\colon z\in V, \
   \fint_{\partial D} z^2 \,\d\sigma
   - \kappa \, z(\infty)^2 = 1 \right\}
 \]
 if~$\kappa > 1$ and
 \[
  \lambda_*(\kappa)
  = -\min\left\{\int_{\R^{n}\setminus \overline{D}}
   \abs{\nabla z}^2\colon z\in V, \
   \fint_{\partial D} z^2 \,\d\sigma
   - \kappa \, z(\infty)^2 = -1 \right\}
 \]
 if~$0 < \kappa < 1$. Moreover, we have
 \begin{align*}
  \Dir_* &= \min\left\{\int_{\R^{n}\setminus \overline{D}}
   \abs{\nabla z}^2\colon
   z\in V, \ z = 0 \textrm{ on } \partial D, \
   z(\infty) = 1 \right\} \, , \\
  \St_* &= \min\left\{\int_{\R^{n}\setminus H}
   \abs{\nabla z}^2\colon z\in V, \
   \fint_{\partial D} z^2\, \d\sigma = 1,
   \ z(\infty)= 0 \right\} \! .
 \end{align*}
 There exist minimisers for the
 minimisation problems on the right-hand sides
 of all these equations. Moreover, we have~$\lambda_0 = \Dir_*$,
 where~$\lambda_0$ is defined in~\eqref{limits0infty}.
\end{prop}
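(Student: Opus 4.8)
The plan is to verify the conditions \eqref{hp:first}--\eqref{hp:last} one by one, then identify the various limits with the explicit minimisation problems over $V$ by a rescaling argument, and finally check the extra hypotheses \eqref{hp:lambda0_geometry}--\eqref{hp:lambda0} of Proposition~\ref{prop:lambda0} to deduce $\lambda_0 = \Dir_*$. The first three conditions are quickly dispatched: \eqref{hp:H} holds since $\overline{D}$ is $C^{1,1}$, hence Lipschitz, and rescaling preserves this; \eqref{hp:decreas} holds because $H_\eps = \eps^{\gamma-1}\overline{D}$ with $\gamma > 1$, so the sets shrink to $\{0\}$ and are all contained in, say, $K_0 := \eps_0^{\gamma-1}\overline{D}$ for $\eps$ small; \eqref{hp:extension} follows from the existence of a uniform extension operator for $C^{1,1}$ domains, after noting that $Y\setminus H_\eps$ is, up to the fixed scaling of $Y$, a fixed small ball removed from the cube, so the extension constant can be taken $\eps$-independent (a standard reflection/partition-of-unity construction, uniform because $\partial D$ is fixed and the curvature of $\partial H_\eps = \eps^{\gamma-1}\partial D$ only \emph{decreases}).

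The heart of the argument is the asymptotics of $\Dir(\eps)$, $\St(\eps)$, $\Cap(\eps)$ and $\lambda(\eps,\kappa)$. For each of these I would rescale the cell problem by the factor $\eps^{\gamma-1}$ that matches the size of $H_\eps$: writing $x = \eps^{\gamma-1} y$ and $v(x) = V(y)$, the perforation $H_\eps$ becomes the fixed set $\overline{D}$, the ambient cube $Y$ becomes the expanding cube $\eps^{-(\gamma-1)}Y$, which exhausts $\R^n$ as $\eps\to 0$, and the Dirichlet energy picks up a factor $\eps^{(\gamma-1)(n-2)}$ while the $L^2$ bulk term picks up $\eps^{(\gamma-1)n}$ and the $L^{n-1}$ boundary term $\eps^{(\gamma-1)(n-1)}$. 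The choice $\gamma = n/(n-2)$ is exactly what makes $(\gamma-1)(n-2) = 2$, so the Dirichlet energy scales like $\eps^2$; dividing the cell quantities by $\eps^2$ then produces, in the limit, the minimisation problems over the exterior domain $\R^n\setminus\overline{D}$ and the homogeneous space $V = \dot H^1(\R^n\setminus\overline D)\oplus\R$. One has to check that the constraints converge correctly: the normalisation $\fint_{\partial H_\eps} v^2 = 1$ is scale-invariant (it is an average), the constraint $\int_{\T^n\setminus H_\eps} v^2 = 1$ in \eqref{Dir_eps} rescales into a vanishing $L^2$-mass condition that forces the limiting competitor into $\dot H^1$ (this is where the Lieb--Loss space $D^1(\R^n)$ enters), and for $\lambda(\eps,\kappa)$ the functional $\mcG_{\eps,\kappa}$ rescales — after multiplying through — into $\fint_{\partial D} z^2\,\d\sigma - \kappa\, z(\infty)^2$, with the bulk term $\kappa\int v^2$ of \eqref{G_eps} concentrating into the "value at infinity" $\kappa\,z(\infty)^2$ because the rescaled bulk measure, suitably normalised, converges to a point mass at infinity. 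The existence of minimisers in each limiting problem follows from the direct method using the Hardy/Sobolev inequality on $\dot H^1(\R^n\setminus\overline D)$ (valid since $n\ge 3$) for coercivity, plus weak lower semicontinuity; compactness of the boundary trace $\dot H^1 \hookrightarrow L^2(\partial D)$ is standard. This simultaneously gives the formulas for $\Dir_*$, $\St_*$ and $\lambda_*(\kappa)$, and shows the limits in \eqref{hp:critical} and \eqref{hp:limit} exist (so no subsequence is needed here). I also need $\Dir_* < \infty$ and $> 0$: finiteness from exhibiting an explicit competitor (a smooth cutoff of the fundamental-solution-type profile equal to $1$ on $\partial D$), positivity from the trace inequality $\fint_{\partial D} z^2 \le C\int_{\R^n\setminus\overline D}|\nabla z|^2$ for $z\in\dot H^1$.

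The main obstacle I expect is the last assertion, $\lambda_0 = \Dir_*$, via Proposition~\ref{prop:lambda0}. The geometric condition \eqref{hp:lambda0_geometry} is a uniform lower density bound for $\sigma$ on $\partial H_\eps$ at scales below $\sigma(\partial H_\eps)^{1/(n-1)}$; since $\partial H_\eps = \eps^{\gamma-1}\partial D$ is a rescaling of a fixed compact $C^{1,1}$ hypersurface, this is scale-invariant and follows from the Ahlfors regularity of $\partial D$ (a $C^{1,1}$ surface is $(n-1)$-Ahlfors regular), with a constant depending only on $D$. The genuinely hard hypothesis is \eqref{hp:lambda0}: one must show a uniform-in-$\eps$, uniform-in-small-$\kappa$ modulus of continuity for the rescaled minimisers $v_{\eps,\kappa}$ on $\partial H_\eps$. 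After rescaling to the fixed domain $\R^n\setminus\overline D$, the minimiser solves the elliptic problem \eqref{eigenvalue} with coefficients that are uniformly controlled (the zero-order term $\kappa\lambda(\eps,\kappa)$ is bounded, the Robin coefficient $\lambda(\eps,\kappa)/\sigma(\partial H_\eps)$ converges), posed on an expanding domain, so De Giorgi--Nash--Moser together with boundary Schauder/$C^{1,\alpha}$ estimates up to the $C^{1,1}$ boundary $\partial D$ — applied on a fixed neighbourhood of $\partial D$ and using that the competitor energy is bounded — yield a uniform Hölder, indeed $C^{1,\alpha}$, bound for the rescaled solution near $\partial D$; translating back, the factor $\eps^{\gamma-1} = \sigma(\partial H_\eps)^{1/(n-1)}$ times a constant appears exactly as the argument of $\omega$ in \eqref{hp:lambda0}, and the $\kappa^{1/2}$ weight is absorbed because $\|v_{\eps,\kappa}\|$ in the relevant norm is $O(\kappa^{-1/2})$ (seen from the constraint $\fint_{\partial D} z^2 - \kappa z(\infty)^2 = -1$, which for small $\kappa$ forces $z(\infty)^2 \approx \kappa^{-1}$, the dominant part of the mass). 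Assembling these estimates uniformly — in particular tracking the $\kappa$-dependence through the rescaling and through the elliptic constants — is the delicate bookkeeping; but once \eqref{hp:lambda0_geometry} and \eqref{hp:lambda0} are in hand, Proposition~\ref{prop:lambda0} closes the proof.
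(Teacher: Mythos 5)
Your proposal follows essentially the same route as the paper: verify \eqref{hp:first}--\eqref{hp:extension} directly with a rescaled uniform extension operator; blow up by the factor $\eps^{\gamma-1}$ that fixes $\overline{D}$ and exhausts $\R^n$, noting $(\gamma-1)(n-2)=2$ so that dividing by $\eps^2$ yields the exterior problems over $V=\dot H^1(\R^n\setminus\overline D)\oplus\R$; use direct-method/compact-trace arguments for existence of minimisers and for positivity/finiteness (establishing \eqref{hp:critical} and \eqref{hp:limit}); and finally invoke Proposition~\ref{prop:lambda0} after checking \eqref{hp:lambda0_geometry} via Ahlfors regularity of the fixed $C^{1,1}$ surface $\partial D$ and \eqref{hp:lambda0} via a rescaled uniform Poincar\'e--Sobolev estimate plus elliptic bootstrap to a uniform Lipschitz bound on $\partial D$, including the $\kappa^{1/2}$ normalisation of the rescaled minimiser. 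This is the paper's argument in all essentials; the paper simply packages the blow-up step as Lemma~\ref{lemma:example_limit} together with the compactness Lemma~\ref{lemma:V-limit} and a density lemma, and records in Remark~\ref{rk:Gamma} that the limit function itself furnishes a minimiser, whereas you propose the direct method outright for all the limit problems --- both work. Your heuristic that the bulk term ``concentrates into a point mass at infinity'' is a fair intuition for what the paper makes precise via the uniform Poincar\'e inequality forcing $v_{\eps,\kappa}$ to be nearly constant away from $H_\eps$, so that the bulk $L^2$ mass converges to $v_*(\kappa)^2 = z(\infty)^2$.
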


In the case when the holes are spherical, we can evaluate explicitly
the ``strange term'' in the limit problem.

\begin{prop} \label{prop:ball}
 When~$D$ is the unit ball, $D = B^n\subseteq\R^n$,
 we have
 \[
  \lambda_*(\kappa) = \sigma_n (n-2) \, \frac{\kappa - 1}{\kappa}
 \]
 for all~$\kappa > 0$, where~$\sigma_n := \sigma(\partial B^n)$.
 As a consequence, the function~$\kappa_*(\cdot)$ defined
 by~\eqref{kappa_*} is given by~\eqref{kappa_*-spheres}.
\end{prop}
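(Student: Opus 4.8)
The plan is to compute the two minimisation problems from Proposition~\ref{prop:example} explicitly when $D = B^n$. First I would solve the ``Steklov-type'' problem defining $\lambda_*(\kappa)$. Since all competitors $z \in V$ can be assumed radial by symmetrisation (rearranging on spheres decreases the Dirichlet energy while preserving $\fint_{\partial B^n} z^2\,\d\sigma$ and $z(\infty)$), a harmonic competitor outside $B^n$ must have the form $z(x) = a + b\abs{x}^{2-n}$ with $a = z(\infty)$ and $a + b = z(\partial B^n)$; the minimiser of $\int_{\R^n\setminus\overline{B^n}}\abs{\nabla z}^2$ among functions with prescribed boundary value on $\partial B^n$ and prescribed value at infinity is exactly this function, with energy $\int_{\R^n\setminus\overline{B^n}}\abs{\nabla z}^2 = (n-2)\sigma_n\, b^2 = (n-2)\sigma_n (z(\partial B^n) - z(\infty))^2$. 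Writing $t := z(\partial B^n)$ and $s := z(\infty)$, the problem becomes
\[
 \lambda_*(\kappa) = \pm\min\left\{(n-2)\sigma_n (t - s)^2 \colon t^2 - \kappa s^2 = \pm 1\right\},
\]
with the upper signs for $\kappa > 1$ and the lower signs for $0 < \kappa < 1$.

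Next I would carry out this two-variable constrained minimisation by Lagrange multipliers (or by direct substitution, parametrising the constraint conic). For $\kappa \neq 1$ the stationarity conditions $t - s = \lambda t$, $-(t-s) = -\lambda\kappa s$ give $s = t/\kappa$ at a critical point, hence $t - s = t(\kappa-1)/\kappa$ and the constraint $t^2 - \kappa s^2 = t^2(\kappa - 1)/\kappa = \pm 1$ forces $t^2 = \pm\kappa/(\kappa-1)$, which is positive precisely in the relevant range of $\kappa$ for each sign choice. Substituting back, $(n-2)\sigma_n (t-s)^2 = (n-2)\sigma_n\, t^2 (\kappa-1)^2/\kappa^2 = (n-2)\sigma_n (\kappa - 1)/\kappa$ in both cases, so $\lambda_*(\kappa) = \sigma_n (n-2)(\kappa-1)/\kappa$ after reinstating the outer sign. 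A small check that this critical point is indeed the minimiser (the constraint set is unbounded, so one must verify the energy $\to +\infty$ along the conic, which it does since $t - s$ grows) completes this computation. Finally, solving $-\lambda_*(\kappa_*(\beta)) = \beta$, i.e. $-\sigma_n(n-2)(\kappa_*-1)/\kappa_* = \beta$, for $\kappa_* \in (0,1)$ gives $\sigma_n(n-2)(1 - \kappa_*) = \beta\kappa_*$, hence $\kappa_* = \sigma_n(n-2)/(\sigma_n(n-2) + \beta)$ and therefore $\beta\kappa_*(\beta) = \sigma_n(n-2)\beta/(\sigma_n(n-2) + \beta)$, which is exactly~\eqref{kappa_*-spheres}.

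The only genuinely non-routine point is the reduction to radial competitors: one must be careful that the admissible class $V = \dot H^1(\R^n\setminus\overline{B^n})\oplus\R$, the boundary average $\fint_{\partial B^n} z^2\,\d\sigma$, and the value $z(\infty)$ all behave well under spherical symmetric decreasing rearrangement of $z - z(\infty)$ relative to the complement of the ball. Since the hole is a ball centred at the origin, this is the classical radial rearrangement and the Pólya--Szegő inequality applies; the boundary term only uses the average of $z^2$ on $\partial B^n$, which is unchanged, and $z(\infty)$ is unchanged as well. Alternatively, one can bypass rearrangement entirely: expand an arbitrary competitor in spherical harmonics on each sphere $\abs{x} = r$, note that the higher harmonic modes contribute positively to the Dirichlet energy but do not affect $\fint_{\partial B^n} z^2$ or $z(\infty)$ in a way that helps the constraint, and conclude that the optimum is attained among radial functions — this is the cleanest route and is the step I would write out in full. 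The remaining computations of $\Dir_*$ and $\St_*$ for the ball are the special cases $s = 1$ (giving $\Dir_* = (n-2)\sigma_n$) and $t = 1$, $s = 0$ (giving $\St_* = (n-2)\sigma_n$) of the same formula, consistent with $\lambda_\infty = \St_*$ and $\lambda_0 = \Dir_*$ from Theorem~\ref{th:lambda*}.
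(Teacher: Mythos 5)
Your computational core is essentially the same as the paper's, just organised a little differently. The paper reduces to radial competitors by spherical averaging, writes the one-dimensional Euler--Lagrange condition $z'(\rho)=c/\rho^{n-1}$, parametrises the constraint as a quadratic in $c$, and minimises $c_+(z_1)^2$ over the scalar $z_1$. You instead observe directly that the radial minimiser given $z(1)=t$ and $z(\infty)=s$ is the harmonic $a+b|x|^{2-n}$ with energy $(n-2)\sigma_n(t-s)^2$, and then do Lagrange multipliers on the two-variable problem with constraint $t^2-\kappa s^2=\pm1$. Both give $\lambda_*(\kappa)=(n-2)\sigma_n(\kappa-1)/\kappa$ and the same $\kappa_*(\beta)$. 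Your version is arguably a touch cleaner because it separates the elliptic step from the optimisation over boundary data.

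The one genuine soft spot is the reduction to radial competitors, and your two suggested justifications are both shaky as stated. Symmetric decreasing rearrangement of $z-z(\infty)$ on $\R^n\setminus\overline{B^n}$ does \emph{not} preserve the trace average $\fint_{\partial B^n}z^2\,\d\sigma$: rearrangement preserves the distribution function over the whole exterior domain, not on a fixed sphere; the radial rearrangement would put the essential supremum of $z$ on $\partial B^n$, which is generally strictly larger than the original spherical $L^2$ average. So the ``boundary term is unchanged'' claim is simply false. Your spherical-harmonics alternative is the right route, but your one-line summary of it (``higher modes \ldots do not affect $\fint z^2$ or $z(\infty)$ in a way that helps the constraint'') is also off: higher modes \emph{do} increase $\fint z^2$ and, for $\kappa>1$, this does help the constraint; the correct argument must compare the energy saved in the radial part against the extra energy cost of the non-radial part, using that the $k$-th mode with boundary value $a_k(1)$ costs $(n-2+k)\sigma_n\,a_k(1)^2\ge(n-1)\sigma_n\,a_k(1)^2$ while the radial saving is only $(n-2)\sigma_n\frac{\kappa-1}{\kappa}\,a_k(1)^2$. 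For fairness I note the paper's own reduction (spherical averaging plus Jensen on the energy) is also stated somewhat quickly: averaging strictly decreases $\fint_{\partial B^n}z^2$ when the trace is non-constant, so the constraint is not preserved either, and only for $0<\kappa<1$ (the range actually needed for $\kappa_*(\beta)$) does the subsequent rescaling go in the right direction. If you intend to write this out, do the spherical-harmonics decomposition properly and make the comparison of coefficients explicit; that closes the gap for all $\kappa>0$.
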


The following sections are devoted to proving the results
we stated so far.

\section{Properties of~$\lambda_*(\kappa)$}
\label{sect:eigen}

\subsection{Analysis of Problem~\eqref{eigenvalue}
for fixed~$\eps > 0$}
\label{sect:eigenvalue_eps}

We collect some properties of the
minimisers for Problems~\eqref{lambda_eps+}, \eqref{lambda_eps-}
for fixed values of~$\eps \in (0, \, 1)$ and~$\kappa$.
The results contained in this section
do \emph{not} require assumptions~\eqref{hp:critical},
\eqref{hp:limit} unless otherwise stated.

\begin{lemma} \label{lemma:exists}
 Assume that either~$\kappa\abs{\T^n\setminus H_\eps} > 1$
 or~$0 < \kappa \leq 1$.
 Then, the quantity~$\lambda(\eps, \, \kappa)$
 defined respectively by~\eqref{lambda_eps+}
 or~\eqref{lambda_eps-} is strictly positive and finite.
 In both cases, there exists a minimiser~$v_{\eps,\kappa}$
 which is unique up to the sign and does not attain zero value in~$\T^n\setminus H_\eps$.
 The minimiser is a solution to Problem~\eqref{eigenvalue} ---
 in fact, it is the unique solution, up to multiplication by a constant.
\end{lemma}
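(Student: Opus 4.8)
The plan is to prove Lemma~\ref{lemma:exists} by the direct method of the calculus of variations, treating the two cases $\kappa\abs{\T^n\setminus H_\eps}>1$ and $0<\kappa\le1$ in parallel, and then deriving the Euler--Lagrange system~\eqref{eigenvalue}. The key structural fact is that~$\mcG_{\eps,\kappa}$ is a quadratic form on~$H^1(\T^n\setminus H_\eps)$ that is \emph{not} sign-definite, so the minimisation is genuinely constrained to a level set~$\{\mcG_{\eps,\kappa}=\pm1\}$, which is non-convex; compactness must therefore be extracted with care.

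First I would establish that the relevant infimum is positive. For~\eqref{lambda_eps-} (the case~$0<\kappa\le1$), on the constraint~$\mcG_{\eps,\kappa}(v)=-1$ one has $\kappa\int v^2 = 1 + \fint_{\partial H_\eps}v^2 \ge 1$, so $\int v^2 \ge 1/\kappa > 0$; hence~$v$ cannot be constant, and since the domain is connected (the torus minus a compact set, still connected as~$H_\eps\subseteq K_0\subsetneq Y$), Poincaré--Wirtinger forces $\int\abs{\nabla v}^2 > 0$. Quantitatively, if the infimum were zero we could take a minimising sequence~$v_j$; after subtracting means~$\bar v_j := \fint v_j$, Poincaré gives $v_j - \bar v_j \to 0$ in~$L^2$, so~$v_j$ would converge to a constant in~$L^2$, which is incompatible with $\fint_{\partial H_\eps}(v_j)^2 - \kappa\int (v_j)^2 = -1$ unless that constant~$c$ satisfies $c^2\sigma(\partial H_\eps)/\sigma(\partial H_\eps) - \kappa c^2 \abs{\T^n\setminus H_\eps} = -1$, i.e.\ $c^2(1-\kappa\abs{\T^n\setminus H_\eps}) = -1$ --- impossible when~$0<\kappa\le1$ since then $1-\kappa\abs{\T^n\setminus H_\eps} \ge 1 - \abs{\T^n\setminus H_\eps} > 0$ (as $\abs{H_\eps}>0$). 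For~\eqref{lambda_eps+} with $\kappa\abs{\T^n\setminus H_\eps}>1$, the analogous constant~$c$ would need $c^2(1-\kappa\abs{\T^n\setminus H_\eps}) = 1$, again impossible since the bracket is now negative. So the infimum is strictly positive in both cases. Finiteness is immediate: a suitable smooth test function (e.g.\ an appropriately scaled bump, or just any~$v$ with $\mcG_{\eps,\kappa}(v)$ of the correct sign, rescaled) is admissible after normalising the constraint.

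Next, compactness. Take a minimising sequence~$v_j$ with $\int\abs{\nabla v_j}^2 \to \lambda$ (or $-\lambda$). The gradients are bounded in~$L^2$. To control the full~$H^1$ norm I need an~$L^2$ bound on~$v_j$ itself; decompose $v_j = (v_j - \bar v_j) + \bar v_j$, bound the first piece by Poincaré, and bound the means~$\bar v_j$ using the constraint: expanding $\mcG_{\eps,\kappa}(v_j) = \mp1$ and using the~$L^2$-closeness of~$v_j-\bar v_j$ to~$0$ shows $\bar v_j^2(1-\kappa\abs{\T^n\setminus H_\eps})$ stays bounded, and since the coefficient is bounded away from zero (by the case hypotheses), $\bar v_j$ is bounded. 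Hence~$v_j$ is bounded in~$H^1(\T^n\setminus H_\eps)$; pass to a weakly convergent subsequence~$v_j \rightharpoonup v$. By Rellich, $v_j \to v$ strongly in~$L^2(\T^n\setminus H_\eps)$, and by compactness of the trace operator~$H^1(\T^n\setminus H_\eps)\to L^2(\partial H_\eps)$ (here~\eqref{hp:H}, Lipschitz regularity of~$\partial H_\eps$, is used), also~$v_j\to v$ in~$L^2(\partial H_\eps)$. Therefore~$\mcG_{\eps,\kappa}(v_j)\to\mcG_{\eps,\kappa}(v)$, so~$v$ satisfies the constraint, and by weak lower semicontinuity of the Dirichlet energy, $v$ is a minimiser. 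This also shows the attained value equals~$\pm\lambda(\eps,\kappa)$ and is finite and nonzero.

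For the Euler--Lagrange equation: at a minimiser~$v=v_{\eps,\kappa}$, since the infimum is nonzero the constraint functional~$\mcG_{\eps,\kappa}$ has nonvanishing differential at~$v$ (its differential in direction~$v$ itself is $2\mcG_{\eps,\kappa}(v)=\mp2\ne0$), so the Lagrange multiplier rule applies: there is~$\mu\in\R$ with $\int\nabla v\cdot\nabla\varphi = \mu\bigl(\fint_{\partial H_\eps}v\varphi\,\d\sigma - \kappa\int v\varphi\,\d x\bigr)$ for all~$\varphi\in H^1(\T^n\setminus H_\eps)$. Testing with~$\varphi=v$ identifies $\mu$: $\int\abs{\nabla v}^2 = \mu\cdot(\mp1)$, so $\mu = \mp\lambda(\eps,\kappa) = -\lambda(\eps,\kappa)$ consistently in both cases when one tracks signs carefully (in the~\eqref{lambda_eps-} case the energy is~$-\lambda$ and the constraint value~$-1$, giving $\mu = \lambda$... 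I will recompute the signs at write-up so that the multiplier comes out as the~$\lambda(\eps,\kappa)$ appearing in~\eqref{eigenvalue}). The weak formulation with this multiplier is exactly the distributional form of the system~\eqref{eigenvalue}: choosing~$\varphi\in C_c^\infty(\T^n\setminus H_\eps)$ yields $-\Delta v + \kappa\lambda(\eps,\kappa)v = 0$ in~$\T^n\setminus H_\eps$, and then integrating by parts for general~$\varphi$ reads off the Robin-type condition $\partial_\nu v = \lambda(\eps,\kappa)v/\sigma(\partial H_\eps)$ on~$\partial H_\eps$ (with~$\nu$ pointing into~$H_\eps$, matching the sign convention in the excerpt).

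For the sign and uniqueness statements: the minimiser~$v$ does not change sign because $\abs{v}$ is also admissible (the constraint depends only on~$v^2$) and $\int\abs{\nabla\abs v}^2 = \int\abs{\nabla v}^2$, so~$\abs v$ is also a minimiser and hence, by the strong maximum principle applied to $-\Delta\abs v = -\kappa\lambda(\eps,\kappa)\abs v$ (a linear equation, $\abs v\ge0$, $\abs v\not\equiv0$), $\abs v>0$ throughout the connected open set~$\T^n\setminus H_\eps$; thus~$v$ itself has no interior zero, so $v$ is of one sign and we may normalise~$v>0$. Uniqueness up to sign then follows: if~$v_1,v_2$ are two positive minimisers, a standard argument --- either the strict convexity of the Dirichlet energy restricted appropriately, or the observation that the quotient~$v_1/v_2$ must be constant because both solve the same linear equation with the same ``ground-state'' positivity (a Picone/ground-state uniqueness argument, or: the first eigenvalue of a self-adjoint eigenvalue problem is simple) --- gives~$v_1 = c\,v_2$, and the constraint fixes~$c=\pm1$. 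Equivalently, one reformulates~\eqref{eigenvalue} as a genuine self-adjoint eigenvalue problem (for the operator $-\Delta$ with the given boundary condition, with inner product weighted by the constraint form), whose lowest eigenvalue is simple with a positive eigenfunction by Krein--Rutman; this simultaneously gives existence, positivity, and uniqueness up to scaling, and that the minimiser is \emph{the} unique solution of~\eqref{eigenvalue} up to constant multiples.

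The main obstacle I anticipate is \textbf{the non-convexity of the constraint set} combined with the indefiniteness of~$\mcG_{\eps,\kappa}$: one cannot simply invoke convex duality, and the strict positivity of the infimum (ruling out the constant-function degeneracy) is where the precise case hypotheses $\kappa\abs{\T^n\setminus H_\eps}>1$ versus $0<\kappa\le1$ are essential --- this is the delicate point, and it is also where one sees why the intermediate range $1 < \kappa \le 1/\abs{\T^n\setminus H_\eps}$ is excluded. A secondary technical point is ensuring the trace operator on~$\partial H_\eps$ is compact uniformly enough to pass the boundary term to the limit, which rests on the Lipschitz assumption~\eqref{hp:H}; for \emph{fixed}~$\eps$ this is classical, so it is not a real difficulty here (uniformity in~$\eps$ is deferred to later sections).
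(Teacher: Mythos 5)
Your proof is correct and follows essentially the same route as the paper's: the direct method with a mean/fluctuation decomposition, using the quadratic constraint to bound the mean, Rellich plus trace compactness to pass the constraint to the weak limit, a Lagrange multiplier identified by testing against the minimiser itself, the modulus trick plus the strong maximum principle for sign, and a linear-combination (ground-state) argument for uniqueness of solutions to~\eqref{eigenvalue}. The only caveat is that the Krein--Rutman alternative you offer in passing does not apply directly here, since the constraint form~$\mcG_{\eps,\kappa}$ is indefinite and hence cannot serve as a weight for an inner product; this is immaterial, though, because your primary argument is the one the paper actually uses and it is complete.
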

\begin{proof}
 We focus on the case~$\kappa\abs{\T^n\setminus H_\eps} > 1$;
 the other case is similar and we omit the details.
 First, we show that~$\lambda(\eps, \, \kappa) < +\infty$,
 or equivalently, there exists $v\in H^1(\T^n\setminus H_\eps)$
 such that~$\mcG_{\eps,\kappa}(v)=1$.
 Let~$\varphi\in C^\infty_{\mathrm{c}}(Y)$ be a cut-off function,
 such that $\varphi = 1$ in a neighbourhood of~$H_\eps$
 and~$0 \leq \varphi \leq 1$. By taking
 the support of~$\varphi$ small enough, we can make sure that
 $\mcG_{\eps,\kappa}(\varphi) > 0$.
 Then, $v := \varphi/(\mcG_{\eps,\kappa}(\varphi))^{1/2}$
 satisfies~$\mcG_{\eps,\kappa}(v)=1$.

 Next, we prove that a minimiser exists.
 Let~$v_j\in H^1(\T^n\setminus H_\eps)$ be a minimising
 sequence for~\eqref{lambda_eps+}. We write
 \[
  v_j = w_j + M_j,
 \]
 where~$w_j\in H^1(\T^n\setminus H_\eps)$ is such that
 $\int_{\T^n\setminus H_\eps} w_j \, \d x = 0$ and~$M_j\in\R$.
 The constraint~$\mcG_{\eps,\kappa}(v_j) = 1$
 can be written as
 \begin{equation} \label{exists1}
  \left(1 - \kappa\abs{\T^n\setminus H_\eps} \right) M_j^2
   + 2M_j \fint_{\partial H_\eps} w_j \, \d\sigma
   + \fint_{\partial H_\eps} w_j^2 \, \d\sigma
   - \kappa \int_{\T^n\setminus H_\eps} w_j^2 \, \d x = 1.
 \end{equation}
 Equation~\eqref{exists1} can be solved explicitly
 for~$M_j$, which provides a bound for~$M_j$
 in terms of~$w_j$, $\eps$ and~$\kappa$ only.
 Since~$w_j$ is bounded in~$H^1(\T^n\setminus H_\eps)$
 by the Poincar\'e inequality, it follows
 that~$M_j$ is bounded and hence, $v_j$ is bounded
 in~$H^1(\T^n\setminus H_\eps)$.
 Let~$v_{\eps,\kappa}$ be the weak limit of
 any subsequence of~$v_j$. Then, the compact Sobolev embeddings
 $H^1(\T^n\setminus H_\eps)\hookrightarrow L^2(\T^n\setminus H_\eps)$
 and~$H^{1/2}(\partial H_\eps)\hookrightarrow L^2(\partial H_\eps)$
 imply that~$v_{\eps,\kappa}$ satisfies
 $\mcG_{\eps,\kappa}(v_{\eps,\kappa})=1$ and, hence,
 is a minimiser for~\eqref{lambda_eps+}.
 The function~$v_{\eps,\kappa}$ cannot be constant,
 for otherwise we would have
 \[
  \mcG_{\eps,\kappa}(v_{\eps,\kappa})
  = (1 - \kappa\abs{\T^n\setminus H_\eps})v_{\eps,\kappa}^2 \leq 0,
 \]
 which is a contradiction.
 Therefore, $\lambda(\eps, \, \kappa) > 0$.

 By the Lagrange multiplier theorem, any minimiser~$v_{\eps,\kappa}$
 of~\eqref{lambda_eps+} must satisfy
 \begin{equation} \label{Lagrangemult}
  \int_{\T^n\setminus H_\eps} \nabla v_{\eps,\kappa}\cdot\nabla\varphi \, \d x
  + \mu \fint_{\partial H_\eps} v_{\eps,\kappa} \, \varphi \, \d\sigma
  - \kappa \mu \int_{\T^n\setminus H_\eps} v_{\eps,\kappa}\, \varphi \, \d x = 0
 \end{equation}
 for some Lagrange multiplier~$\mu\in\R$
 and any~$\varphi\in H^1(\T^n\setminus H_\eps)$.
 Taking~$\varphi = v_{\eps,\kappa}$ in~\eqref{Lagrangemult},
 we obtain
 \[
  \lambda(\eps, \, \kappa)
  = \int_{\T^n\setminus H_\eps} \abs{\nabla v_{\eps,\kappa}}^2 \, \d x
  = - \mu \, \mathscr{G}(v_{\eps,\kappa})
  = -\mu,
 \]
 so~$v_{\eps,\kappa}$ satisfies~\eqref{eigenvalue}.
 It only remains to show that the solution of~\eqref{eigenvalue}
 is unique up to multiplication by a constant and has constant
 sign inside~$\T^n\setminus H_\eps$.
 Let~$v\neq v_{\eps,\kappa}$ be another solution of~\eqref{eigenvalue}.
 Testing the equation for~$v$ with~$\varphi = v$ itself,
 we deduce that
 \[
  \int_{\T^n\setminus H_\eps}\abs{\nabla v}^2 \d x
   = \lambda(\eps,\kappa) \, \mcG_{\eps,\kappa}(v),
 \]
 so that~$\mcG_{\eps,\kappa}(v) > 0$
 and~$\tilde{v} := v/(\mcG_{\eps,\kappa}(v))^{1/2}$
 is another minimiser of~\eqref{lambda_eps+}. Then, $\abs{\tilde{v}}$,
 too, is a minimiser for~\eqref{lambda_eps+}
 and hence, a solution to~\eqref{eigenvalue}.
 By applying the strong maximum principle to~$\abs{\tilde{v}}$,
 we conclude that~$v$ is either zero everywhere
 or non-zero anywhere in~$Y\setminus H_\eps$.
 Now, if~$x_0$ is an arbitrary point in the
 interior of~$\T^n\setminus H_\eps$,
 then~$w := v(x_0) v_{\eps,\kappa} - v_{\eps,\kappa}(x_0) v$
 is a solution of~\eqref{eigenvalue}
 that takes zero value at~$x_0$. By the argument above,
 we conclude that~$w = 0$ identically,
 i.e. $v$ and~$v_{\eps,\kappa}$ are linearly dependent.
\end{proof}

\begin{remark}
 If~$H_\eps$ satisfies the outer ball condition,
 then~$v_{\eps,\kappa}$ is nowhere equal to zero
 in the closure of~$\T^n\setminus H_\varepsilon$, by Hopf's lemma.
\end{remark}

\begin{remark} \label{rk:bdav}
 By testing Problem~\eqref{eigenvalue}
 against the constant function~$1$ (i.e.,
 choosing~$\varphi=1$ in~\eqref{Lagrangemult}),
 we obtain the equality
 \begin{equation} \label{bdav}
  \fint_{\partial H_\eps} v_{\eps,\kappa} \,\d\sigma
   = \kappa\int_{\T^n\setminus H_\eps} v_{\eps,\kappa} \, \d x,
 \end{equation}
 for all~$\kappa$, $\eps$
 such that~$\kappa\abs{\T^n\setminus H_\eps} > 1$
 or~$0 < \kappa \leq 1$.
\end{remark}


The following lemma provides an equivalent characterisation
of~$\lambda(\eps, \, \kappa)$.

\begin{lemma} \label{lemma:singletrace}
 If~$\kappa\abs{\T^n\setminus H_\eps} > 1$, then
 $1/\lambda(\eps, \, \kappa)$ is the minimum among
 all numbers~$\mu>0$ that satisfy the inequality
 \begin{equation} \label{singletrace+}
  \fint_{\partial H_\eps} v^2 \, \d\sigma
  \leq \mu \int_{\T^n\setminus H_\eps} \abs{\nabla v}^2 \, \d x
  + \kappa \int_{\T^n\setminus H_\eps} v^2 \, \d x
 \end{equation}
 for all (periodic) functions~$v\in H^1(\T^n\setminus H_\eps)$.
 In a similar way, when~$0 < \kappa \leq 1$
 the value~$-1/\lambda(\eps, \, \kappa)$ is the
 minimum among all~$\mu>0$ with which the inequality
 \begin{equation} \label{singletrace-}
  \kappa \int_{Y\setminus H_\eps} v^2 \, \d x
  \leq \mu \int_{Y\setminus H_\eps} \abs{\nabla v}^2 \, \d x
  + \fint_{\partial H_\eps} v^2 \, \d\sigma
 \end{equation}
 holds for all~$v\in H^1(\T^n\setminus H_\eps)$.
\end{lemma}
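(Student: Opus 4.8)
The plan is to recast both inequalities in terms of the quadratic functional $\mcG_{\eps,\kappa}$ from~\eqref{G_eps}. By a trivial rearrangement, \eqref{singletrace+} holds for every $v\in H^1(\T^n\setminus H_\eps)$ if and only if $\mcG_{\eps,\kappa}(v)\leq\mu\int_{\T^n\setminus H_\eps}\abs{\nabla v}^2\,\d x$ for every such $v$, and \eqref{singletrace-} holds for every $v$ if and only if $-\mcG_{\eps,\kappa}(v)\leq\mu\int_{Y\setminus H_\eps}\abs{\nabla v}^2\,\d x$ for every $v$. I will carry out the argument in the regime $\kappa\abs{\T^n\setminus H_\eps}>1$, where $\lambda(\eps,\,\kappa)>0$ by Lemma~\ref{lemma:exists}; the regime $0<\kappa\leq1$ is identical after replacing $\mcG_{\eps,\kappa}$ by $-\mcG_{\eps,\kappa}$ and noting that, by~\eqref{lambda_eps-} and Lemma~\ref{lemma:exists}, the quantity $-\lambda(\eps,\,\kappa)=\inf\{\int_{Y\setminus H_\eps}\abs{\nabla v}^2\,\d x\colon\mcG_{\eps,\kappa}(v)=-1\}$ is strictly positive, finite and attained, so that $-1/\lambda(\eps,\,\kappa)=1/(-\lambda(\eps,\,\kappa))>0$.

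The first step is to check that $\mu:=1/\lambda(\eps,\,\kappa)$ is admissible, i.e.\ satisfies the rearranged form of~\eqref{singletrace+}. Fix $v\in H^1(\T^n\setminus H_\eps)$. If $\mcG_{\eps,\kappa}(v)\leq0$, the inequality is trivial, since its right-hand side is nonnegative. If $\mcG_{\eps,\kappa}(v)>0$, I would use that $\mcG_{\eps,\kappa}$ and $v\mapsto\int\abs{\nabla v}^2\,\d x$ are both homogeneous of degree two: the rescaled function $\tilde v:=v/\sqrt{\mcG_{\eps,\kappa}(v)}$ satisfies $\mcG_{\eps,\kappa}(\tilde v)=1$, hence is admissible in~\eqref{lambda_eps+}, so $\int\abs{\nabla\tilde v}^2\,\d x\geq\lambda(\eps,\,\kappa)$; multiplying through by $\mcG_{\eps,\kappa}(v)$ gives $\int\abs{\nabla v}^2\,\d x\geq\lambda(\eps,\,\kappa)\,\mcG_{\eps,\kappa}(v)$, which is exactly the claim. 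In particular the set of admissible $\mu$ is nonempty.

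The second step is to show that no $\mu<1/\lambda(\eps,\,\kappa)$ is admissible, so that $1/\lambda(\eps,\,\kappa)$ is indeed the \emph{minimum} of that set. By Lemma~\ref{lemma:exists} there is a minimiser $v_{\eps,\kappa}$ of~\eqref{lambda_eps+}, for which $\mcG_{\eps,\kappa}(v_{\eps,\kappa})=1$ and $\int\abs{\nabla v_{\eps,\kappa}}^2\,\d x=\lambda(\eps,\,\kappa)$. If $\mu$ satisfies the rearranged~\eqref{singletrace+} for all $v$, then testing with $v=v_{\eps,\kappa}$ yields $1=\mcG_{\eps,\kappa}(v_{\eps,\kappa})\leq\mu\int\abs{\nabla v_{\eps,\kappa}}^2\,\d x=\mu\,\lambda(\eps,\,\kappa)$, i.e.\ $\mu\geq1/\lambda(\eps,\,\kappa)$. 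Running the same two steps with $-\mcG_{\eps,\kappa}$ and $-\lambda(\eps,\,\kappa)$ in place of $\mcG_{\eps,\kappa}$ and $\lambda(\eps,\,\kappa)$ then proves that $-1/\lambda(\eps,\,\kappa)$ is the minimum of the $\mu>0$ for which~\eqref{singletrace-} holds.

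No step presents a genuine difficulty: the statement is just the standard duality between an optimal Poincar\'e/trace-type inequality and the associated constrained minimisation problem, the only tools being the degree-two homogeneity of the functionals (which lets one normalise an arbitrary $v$ onto the constraint manifold) and the attainment of the infimum, already supplied by Lemma~\ref{lemma:exists}. The single point deserving care is the sign bookkeeping, since for $0<\kappa\leq1$ the relevant positive quantity is $-\lambda(\eps,\,\kappa)$ rather than $\lambda(\eps,\,\kappa)$ and the bulk and boundary $L^2$-terms exchange roles in~\eqref{singletrace-} compared with~\eqref{singletrace+}.
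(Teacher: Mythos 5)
Your proposal is correct and follows essentially the same route as the paper's own proof: rewrite the inequality in terms of $\mcG_{\eps,\kappa}$, verify admissibility of $\mu=1/\lambda(\eps,\kappa)$ by splitting into $\mcG_{\eps,\kappa}(v)\leq 0$ (trivial) and $\mcG_{\eps,\kappa}(v)>0$ (degree-two homogeneity), and obtain minimality by testing the inequality with the minimiser supplied by Lemma~\ref{lemma:exists}. The sign bookkeeping for $0<\kappa\leq 1$ is also handled in the same spirit as the paper, which simply declares that case analogous.
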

\begin{proof}
 Again, we focus on the case~$\kappa\abs{\T^n\setminus H_\eps} > 1$,
 as the other one is analogous.
 We claim that~$\mu = 1/\lambda(\eps, \, \kappa)$
 satisfies~\eqref{singletrace+} for all~$v\in H^1(\T^n\setminus H_\eps)$.
 Indeed, let~$v\in H^1(\T^n\setminus H_\eps)$. If~$\mcG_{\eps,\kappa}(v) \leq 0$,
 then~$v$ satisfies~\eqref{singletrace+} for all~$\mu\geq 0$.
 If~$\mcG_{\eps,\kappa}(v) >0$, then
 $w := v/(\mcG_{\eps,\kappa}(v))^{1/2}$
 satisfies~$\mcG_{\eps,\kappa}(w)=1$ and hence
 \[
  \frac{1}{\mcG_{\eps,\kappa}(v)}
   \int_{\T^n\setminus H_\eps} \abs{\nabla v}^2 \, \d x
  = \int_{\T^n\setminus H_\eps} \abs{\nabla w}^2 \, \d x
  \geq \lambda(\eps, \, \kappa) .
 \]
 In both cases, the inequality~\eqref{singletrace+} follows,
 with~$\mu = 1/\lambda(\eps, \, \kappa)$.
 Now, take~$\mu > 0$ to be any other constant with which
 \eqref{singletrace+} holds for all~$v\in H^1(\T^n\setminus H_\eps)$.
 Let~$v_{\eps,\kappa}$ be a minimiser for~\eqref{lambda_eps+}.
 Then, one has
 \[
  \mu \, \lambda(\eps, \, \kappa)
  = \mu \int_{\T^n\setminus H_\eps} \abs{\nabla v_{\eps,\kappa}}^2
  \stackrel{\eqref{singletrace+}}{\geq}
   \mcG_{\eps,\kappa}(v_{\eps,\kappa}) = 1,
 \]
 and hence~$\mu \geq 1/\lambda(\eps, \, \kappa)$.
\end{proof}

\begin{corollary} \label{cor:lambdamonotone}
 Assume that condition~\eqref{hp:limit} holds,
 in addition to~\eqref{hp:first}--\eqref{hp:extension}.
 Then, $\kappa\mapsto\lambda_*(\kappa)$
 is a nondecreasing function of~$\kappa \in (0, \, +\infty)$,
 while~$\kappa\mapsto -\kappa\,\lambda_*(\kappa)$
 is a nonincreasing function of~$\kappa\in (0, \, 1)$.
\end{corollary}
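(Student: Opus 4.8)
The plan is to reduce everything to the fixed-scale monotonicity
\begin{equation} \label{lambdamonotone}
 \lambda(\eps, \, \kappa_1) \le \lambda(\eps, \, \kappa_2), \qquad
 -\kappa_1\,\lambda(\eps, \, \kappa_1) \ge -\kappa_2\,\lambda(\eps, \, \kappa_2),
\end{equation}
the first inequality holding at each fixed~$\eps > 0$ whenever~$0 < \kappa_1 \le \kappa_2$ lie both in~$(0, \, 1]$ or both in~$[1, \, +\infty)$, the second whenever~$0 < \kappa_1 \le \kappa_2 \le 1$, and then to divide by~$\eps^2$ and let~$\eps\to 0$ using assumption~\eqref{hp:limit}. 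Together with the convention~$\lambda_*(1) := 0$ and the elementary sign facts~$\lambda(\eps, \, \kappa) \le 0$ for~$0 < \kappa \le 1$ (by~\eqref{lambda_eps-} it is minus the infimum of a family of nonnegative numbers) and~$\lambda(\eps, \, \kappa) \ge 0$ for~$\kappa > 1$ (likewise, from~\eqref{lambda_eps+}), the limits of~\eqref{lambdamonotone} yield~$\lambda_*(\kappa) \le 0 = \lambda_*(1)$ for~$\kappa < 1$, $\lambda_*(\kappa) \ge 0 = \lambda_*(1)$ for~$\kappa > 1$, and monotonicity of~$\lambda_*$ separately on~$(0, \, 1)$ and on~$(1, \, +\infty)$; these three facts together say that~$\lambda_*$ is nondecreasing on all of~$(0, \, +\infty)$, while the second half of~\eqref{lambdamonotone} gives that~$\kappa\mapsto -\kappa\,\lambda_*(\kappa)$ is nonincreasing on~$(0, \, 1)$.

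The first inequality in~\eqref{lambdamonotone} I would prove by a direct rescaling in the minimisation problems~\eqref{lambda_eps-}, \eqref{lambda_eps+}. Take~$0 < \kappa_1 < \kappa_2 \le 1$ and let~$v$ be admissible for~\eqref{lambda_eps-} at~$\kappa_1$, so~$\mcG_{\eps,\kappa_1}(v) = -1$ and, in particular,~$v$ is not identically~$0$, whence~$\int_{\T^n\setminus H_\eps} v^2 > 0$. Then
\[
 \mcG_{\eps,\kappa_2}(v) = \mcG_{\eps,\kappa_1}(v) - (\kappa_2 - \kappa_1)\int_{\T^n\setminus H_\eps} v^2 \le -1,
\]
so~$\tilde{v} := \bigl(-\mcG_{\eps,\kappa_2}(v)\bigr)^{-1/2}\,v$ satisfies~$\mcG_{\eps,\kappa_2}(\tilde v) = -1$ and has Dirichlet energy~$\le \int_{\T^n\setminus H_\eps}\abs{\nabla v}^2$; taking the infimum over such~$v$ gives~$-\lambda(\eps, \, \kappa_2) \le -\lambda(\eps, \, \kappa_1)$, i.e.~$\lambda(\eps, \, \kappa_1) \le \lambda(\eps, \, \kappa_2)$. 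The case~$1 \le \kappa_1 < \kappa_2$ is analogous, starting from~$v$ admissible for~\eqref{lambda_eps+} at~$\kappa_2$ and rescaling it to be admissible at~$\kappa_1$; in the (degenerate) subcase~$\kappa_1\abs{\T^n\setminus H_\eps} < 1$ one may simply take~$v$ constant to see~$\lambda(\eps, \, \kappa_1) = 0 \le \lambda(\eps, \, \kappa_2)$.

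For the second inequality in~\eqref{lambdamonotone} the rescaling is no longer available, and here I would invoke the dual characterisation of Lemma~\ref{lemma:singletrace}: for~$0 < \kappa \le 1$, the number~$-1/\lambda(\eps, \, \kappa) > 0$ is the least~$\mu > 0$ for which~\eqref{singletrace-} holds for every~$v \in H^1(\T^n\setminus H_\eps)$. Multiplying~\eqref{singletrace-} through by~$-\lambda(\eps, \, \kappa) > 0$ and writing~$\nu(\eps, \, \kappa) := -\kappa\,\lambda(\eps, \, \kappa)$, one finds that~$\nu(\eps, \, \kappa)$ is the \emph{largest}~$\nu > 0$ for which
\[
 \nu\left(\int_{\T^n\setminus H_\eps} v^2 - \frac{1}{\kappa}\fint_{\partial H_\eps} v^2\right)
  \le \int_{\T^n\setminus H_\eps}\abs{\nabla v}^2
  \qquad\text{for all } v \in H^1(\T^n\setminus H_\eps).
\]
For~$\kappa_1 \le \kappa_2$ one has~$1/\kappa_1 \ge 1/\kappa_2$, so the left-hand side (at fixed~$v$) only decreases when~$\kappa_1$ replaces~$\kappa_2$; hence~$\nu(\eps, \, \kappa_2)$ remains admissible at~$\kappa_1$, and maximality yields~$\nu(\eps, \, \kappa_1) \ge \nu(\eps, \, \kappa_2)$, which is precisely the second inequality in~\eqref{lambdamonotone}. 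Passing to the limit~$\eps\to0$ as in the first paragraph then completes the proof. The main points requiring care are the sign bookkeeping involved in turning inequalities for~$1/\lambda(\eps, \, \kappa)$ into inequalities for~$\lambda(\eps, \, \kappa)$, the legitimacy of using Lemma~\ref{lemma:exists} and Lemma~\ref{lemma:singletrace} in the relevant ranges of~$\kappa$, and the degenerate regime~$\kappa > 1$ with~$\kappa\abs{\T^n\setminus H_\eps}\le 1$, in which~$\lambda(\eps, \, \kappa)$ collapses to~$0$; none of these is a genuine obstacle.
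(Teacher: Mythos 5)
Your proof is correct and follows essentially the same plan as the paper: establish monotonicity of $\kappa\mapsto\lambda(\eps,\kappa)$ and of $\kappa\mapsto -\kappa\,\lambda(\eps,\kappa)$ at each fixed $\eps>0$, then pass to the limit using~\eqref{hp:limit} together with the convention $\lambda_*(1):=0$ and the sign facts. There is one small methodological difference: for the first monotonicity you argue directly by rescaling a competitor in the variational problems~\eqref{lambda_eps+}--\eqref{lambda_eps-} (for $0<\kappa_1<\kappa_2\le 1$, an admissible $v$ at $\kappa_1$ has $\mcG_{\eps,\kappa_2}(v)\le -1$, so $v/(-\mcG_{\eps,\kappa_2}(v))^{1/2}$ is admissible at $\kappa_2$ with no larger energy), whereas the paper instead reads the same monotonicity directly off the dual characterisation in Lemma~\ref{lemma:singletrace} (the trace inequality~\eqref{singletrace+}--\eqref{singletrace-} has the coefficient $\kappa$ appearing linearly in the right-hand side, so the optimal constant $\mu$ is monotone in $\kappa$). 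Both routes are equally elementary; the paper's has the merit of handling both parts of the corollary with the single Lemma~\ref{lemma:singletrace}, while yours avoids the first invocation entirely at the cost of a separate computation. For the second monotonicity both you and the paper use Lemma~\ref{lemma:singletrace}; you phrase $-\kappa\lambda(\eps,\kappa)$ as the \emph{largest} admissible constant in a rearranged inequality, the paper phrases $-1/(\kappa\lambda(\eps,\kappa))$ as the \emph{smallest} admissible $\mu$ — these are algebraically equivalent. Your handling of the degenerate range $\kappa>1$ with $\kappa\abs{\T^n\setminus H_\eps}\le 1$ (vacuous in the limit but present at fixed $\eps$) is careful and correct.
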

\begin{proof}
 Lemma~\ref{lemma:singletrace} implies
 that, for any~$\eps \in (0, \, 1)$, the function
 $\kappa\mapsto 1/\lambda(\eps, \, \kappa)$
 is nonincreasing for~$\kappa > \abs{\T^n\setminus H_\eps}^{-1}$,
 while $\kappa\mapsto -1/\lambda(\eps, \, \kappa)$
 is nondecreasing for~$0 < \kappa\leq 1$. As a consequence,
 \begin{equation} \label{lambdamonotone}
  \kappa\mapsto \lambda(\eps, \, \kappa)
  \textrm{ is a nondecreasing function of } \kappa \in (0, \, 1]
   \cup \left(\abs{\T^n\setminus H_\eps}^{-1}, \, +\infty\right)
 \end{equation}
 and, if condition~\eqref{hp:limit}
 is satisfied, the function $\kappa\mapsto\lambda_*(\kappa)$
 is also nondecreasing.
 Now, the monotonicity of~$\kappa\in (0, \, 1)\mapsto\kappa\lambda_*(\kappa)$
 does not follow immediately from that of~$\kappa\mapsto\lambda_*(\kappa)$,
 because~$\lambda_*(\kappa) < 0$ for~$\kappa < 1$.
 However, Lemma~\ref{lemma:singletrace} implies that, when~$0 < \kappa \leq 1$,
 the value~$-1/(\kappa\,\lambda(\eps, \, \kappa))$
 is the minimum of all~$\mu > 0$ satisfying the inequality
 \[
  \int_{Y\setminus H_\eps} v^2 \, \d x
  \leq \mu \int_{Y\setminus H_\eps} \abs{\nabla v}^2  \, \d x
  + \frac{1}{\kappa}\fint_{\partial H_\eps} v^2 \, \d\sigma
 \]
 for all~$v\in H^1(\T^n\setminus H_\eps)$.
 Therefore,
 \begin{equation*}
  \kappa\mapsto - \frac{1}{\kappa\,\lambda(\eps, \, \kappa)}
  \textrm{ is a nondecreasing function of }
  \kappa\in (0, \, 1)
 \end{equation*}
 and so is~$\kappa\mapsto\kappa\,\lambda_*(\kappa)$
 so long as~\eqref{hp:limit} is satisfied.
\end{proof}

Before we proceed further with the analysis of Problem~\eqref{eigenvalue},
we recall that the existence of a uniformly bounded extension operator
provided by assumption~\eqref{hp:extension} implies a uniform
Poincar\'e inequality for the sets~$Y\setminus H_\eps$.
This is a classical result, but we include its proof
for reader's convenience.

\begin{lemma} \label{lemma:Poincare}
 If assumptions~\eqref{hp:first}--\eqref{hp:extension}
 are satisfied, then there exists~$C_P>0$
 such that, for all~$\eps \in (0, \, 1)$ and
 all~$u\in H^1(Y\setminus H_\eps)$, there holds
 \begin{equation} \label{Poincare-average}
  \int_{Y\setminus H_\eps}
   \abs{u(x) - \fint_{Y\setminus H_\eps} u(y) \, \d y}^2 \, \d x
  \leq C_P \int_{Y\setminus H_\eps} \abs{\nabla u}^2 \, \d x.
 \end{equation}
 Moreover, there exists~$C_0>0$
 such that, for all~$\eps \in (0, \, 1)$
 and all~$u\in H^1(Y\setminus H_\eps)$
 with $u = 0$ on~$\partial Y$
 (in the sense of traces), there holds
 \begin{equation} \label{Poincare-0}
  \int_{Y\setminus H_\eps} u^2 \, \d x \leq C_0
  \int_{Y\setminus H_\eps} \abs{\nabla u}^2 \, \d x.
 \end{equation}
\end{lemma}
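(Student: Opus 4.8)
The plan is to deduce the Poincaré inequalities on the perforated cells $Y\setminus H_\eps$ from the standard Poincaré inequalities on the full cube $Y$ (or the torus $\T^n$), transported through the uniformly bounded extension operator $E_\eps$ from Assumption~\eqref{hp:extension}. First I would prove~\eqref{Poincare-average}. Given $u\in H^1(Y\setminus H_\eps)$, set $\tilde u := E_\eps u\in H^1(Y)$; by the classical Poincaré--Wirtinger inequality on the fixed Lipschitz domain $Y$ there is a constant $C_Y$, independent of $\eps$, with $\|\tilde u - \fint_Y\tilde u\|_{L^2(Y)}^2\le C_Y\|\nabla\tilde u\|_{L^2(Y)}^2$. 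Restricting the left-hand side to $Y\setminus H_\eps$ only decreases it, and by~\eqref{hp:extension} the right-hand side is bounded by $C_Y C^2\|\nabla u\|_{L^2(Y\setminus H_\eps)}^2$. Finally, among all constants $c\in\R$ the mean $\fint_{Y\setminus H_\eps}u$ minimises $\|u-c\|_{L^2(Y\setminus H_\eps)}^2$, so it does no worse than $c = \fint_Y\tilde u$; this yields~\eqref{Poincare-average} with $C_P := C_Y C^2$.

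For~\eqref{Poincare-0} I would argue similarly but using the zero boundary trace on $\partial Y$ in place of a mean-zero normalisation. Extend $u$ by zero outside $Y\setminus H_\eps$ through $Y$ — wait, more carefully: take $\tilde u := E_\eps u$, which need not vanish on $\partial Y$, so instead I would work on a slightly larger fixed domain, say the ball $B := B^n_1\supseteq\overline Y$, multiply $\tilde u$ by a fixed cut-off $\chi\in C^\infty_{\mathrm c}(B)$ equal to $1$ on $\overline Y$, and observe that $\chi\tilde u\in H^1_0(B)$ with $\|\nabla(\chi\tilde u)\|_{L^2(B)}\le C(\|\nabla\tilde u\|_{L^2(Y)}+\|\tilde u\|_{L^2(Y)})$. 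Hmm, this reintroduces a lower-order term, which is awkward. A cleaner route: since $u$ has zero trace on $\partial Y$, the extension-by-zero $\hat u$ of $u$ to all of $\R^n$ lies in $H^1(\R^n)$ with $\|\nabla\hat u\|_{L^2(\R^n)} = \|\nabla u\|_{L^2(Y\setminus H_\eps)}$ (there is no contribution from $\partial H_\eps$ since we extend by zero only across $\partial Y$, not across $\partial H_\eps$) — but $\hat u$ may still be nonzero on $H_\eps$. To handle $H_\eps$, I would instead extend $u$ by zero across $\partial Y$ first, getting a function in $H^1(\R^n\setminus H_\eps)$ that is $\Z^n$-periodic and vanishes outside $Y$; then apply the extension operator $E_\eps$ periodically (or observe that $E_\eps$ composed with the zero-extension still gives an $H^1$ function on $\R^n$ supported in a fixed bounded set), and conclude via the Poincaré inequality for functions supported in a fixed bounded set, $\|w\|_{L^2}\le C\|\nabla w\|_{L^2}$, with $C$ depending only on that set. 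Tracking the extension constant $C$ from~\eqref{hp:extension} through this composition gives~\eqref{Poincare-0} with a constant $C_0$ independent of $\eps$.

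The one genuinely delicate point — and the step I expect to require the most care — is the interplay between the extension operator and the two different boundary pieces $\partial Y$ and $\partial H_\eps$: the extension operator $E_\eps$ is built to fill in the hole $H_\eps$ and says nothing about behaviour near $\partial Y$, whereas the constraint in~\eqref{Poincare-0} lives on $\partial Y$. The right bookkeeping is to extend by zero across $\partial Y$ \emph{before} invoking $E_\eps$, so that the hole-filling and the boundary condition are handled by separate, $\eps$-independent mechanisms; one should check that zero-extension across $\partial Y$ does not interact badly with $E_\eps$, which is immediate because $H_\eps\subseteq K_0\csubset Y$ by~\eqref{hp:decreas}, so the hole stays uniformly away from $\partial Y$ and the two operations act on disjoint regions. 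Everything else is a routine chaining of fixed, $\eps$-independent functional inequalities.
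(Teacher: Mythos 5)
Your proof of~\eqref{Poincare-average} is correct and slightly slicker than the paper's: you invoke the $L^2$-minimality of the mean to pass from the constant $\fint_Y E_\eps u$ to the constant $\fint_{Y\setminus H_\eps} u$, whereas the paper explicitly estimates the difference of the two means via H\"older (its Equation~\eqref{Poinc2}). Both are one-line fixes of the same issue, so I count this as essentially the same approach.

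For~\eqref{Poincare-0}, however, you missed a simple observation that makes the whole discussion unnecessary. You write that $\tilde u := E_\eps u$ ``need not vanish on $\partial Y$'' and then construct a workaround (cut-off on a larger ball, then zero-extension first and $E_\eps$ second). But $E_\eps$ is an \emph{extension} operator: by definition $(E_\eps u)|_{Y\setminus H_\eps} = u$. Since $H_\eps\subseteq K_0\csubset Y$ (Assumption~\eqref{hp:decreas}), the outer boundary $\partial Y$ lies in the closure of $Y\setminus H_\eps$, so the trace of $E_\eps u$ on $\partial Y$ coincides with the trace of $u$ on $\partial Y$, which is zero. Hence $E_\eps u\in H^1_0(Y)$ automatically, and the Friedrichs inequality on $Y$ together with the gradient bound in~\eqref{hp:extension} gives
\[
 \norm{u}_{L^2(Y\setminus H_\eps)}
 \leq \norm{E_\eps u}_{L^2(Y)}
 \leq C_Y \norm{\nabla (E_\eps u)}_{L^2(Y)}
 \leq C_Y\, C\, \norm{\nabla u}_{L^2(Y\setminus H_\eps)} \! ,
\]
which is~\eqref{Poincare-0}. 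This is exactly what the paper does. Your workaround, as stated, is also genuinely incomplete: applying $E_\eps$ ``periodically'' after a zero-extension needs some care to make precise, and your cut-off variant reintroduces a lower-order term you then have to reabsorb, which you noticed but did not resolve. The gap is not fatal, but the key point you should internalise is that extension operators automatically preserve boundary traces on any part of the boundary they do not modify.
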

\begin{proof}
 Let~$E_\eps\colon H^1(Y\setminus H_\eps)\to H^1(Y)$ be the
 linear extension operator given by assumption~\eqref{hp:extension},
 and let~$u\in H^1(Y\setminus H_\eps)$. Invoking the Poincar\'e
 inequality in~$Y$ and then assumption~\eqref{hp:extension}, we have
 \begin{equation} \label{Poinc1}
  \norm{E_\eps u - \int_{Y} (E_\eps u)(x) \, \d x}_{L^2(Y)}
  \leq \norm{\nabla (E_\eps u)}_{L^2(Y)}
  \leq C\norm{\nabla u}_{L^2(Y\setminus H_\eps)} \! .
 \end{equation}
 Furthermore,
 \begin{equation} \label{Poinc2}
  \begin{split}
   \abs{\fint_{Y\setminus H_\eps} u(y) \, \d y
    - \int_{Y} (E_\eps u)(x) \, \d x}
   &\leq \fint_{Y\setminus H_\eps} \abs{u(y)
    - \int_{Y} (E_\eps u)(x) \, \d x} \d y \\
   &\leq \frac{1}{\abs{Y\setminus H_\eps}}
    \int_{Y} \abs{(E_\eps u)(y)
    - \int_{Y} (E_\eps u)(x) \, \d x} \d y \\
   &\leq  C\norm{\nabla u}_{L^2(Y\setminus H_\eps)} \! .
  \end{split}
 \end{equation}
 The last inequality follows from the H\"older inequality
 and~\eqref{Poinc1}. Combining~\eqref{Poinc1}
 with~\eqref{Poinc2}, we obtain~\eqref{Poincare-average}.

 Finally, if~$u$ has zero trace on~$\partial Y$,
 the inequality~\eqref{Poincare-0} follows immediately by
 applying the Poincar\'e inequality to~$E_\eps u\in H^1_0(Y)$.
\end{proof}

\begin{remark} \label{rk:Poincarep}
 Let~$n\geq 3$. Under the same
 assumptions~\eqref{hp:first}--\eqref{hp:extension},
 there exists a constant~$C > 0$ such that, for all~$\eps \in (0, \, 1)$
 and all~$u\in H^1(Y\setminus H_\eps)$, there holds
 \begin{equation*}
  \norm{u - \fint_{Y\setminus H_\eps} u(y) \, \d y}_{L^p(Y\setminus H_\eps)} \leq C \norm{\nabla u}_{L^2(Y\setminus H_\eps)} \! ,
 \end{equation*}
 where~$p := 2n/(n - 2)$.
\end{remark}

We conclude this section by stating and proving some inequalities
between the values~$\Dir(\eps)$, $\St(\eps)$,
$\Cap(\eps)$, $\lambda(\eps, \, \kappa)$.

\begin{lemma} \label{lemma:inequalities}
 There exist positive constants~$\eps_0$, $\tau$, $C$
 such that following inequalities hold for all~$\eps \in (0, \, \eps_0]$:
 \begin{align}
  \St(\eps)  &\leq \Cap(\eps), \label{St-Cap} \\[2mm]
  \Cap(\eps) &\leq
   \frac{\Dir(\eps)}{\left(1 - C \Dir(\eps)\right)^2}
   \hspace{1.6cm} \textrm{if } \Dir(\eps) \leq \tau,
   \label{Cap-Dir} \\[2mm]
  \Dir(\eps) &\leq
   \frac{\Cap(\eps)}{\left(1 - \Cap(\eps)\right)^2}
   \hspace{1.8cm} \textrm{if } \Cap(\eps) \leq \tau,
   \label{Dir-Cap} \\[2mm]
  \lambda(\eps, \, \kappa) &\leq
   \frac{\St(\eps)}{\left(1 - C\kappa\St(\eps)\right)^2}
   \hspace{1.4cm} \textrm{if }
   \kappa\St(\eps) \leq \tau \ \textrm{ and } \
   \kappa\abs{\T^n\setminus H_\eps}>1,
   \label{lambda_bound+0} \\[2mm]
 -\kappa \, \lambda(\eps, \, \kappa) &\leq
  \Dir(\eps) \hspace{3cm} \textrm{if } 0 < \kappa < 1.
  \label{lambda_bound-0}
 \end{align}
\end{lemma}
\begin{proof}
 We prove each inequality separately.

 \medskip
 \noindent
 \textrm{Proof of~\eqref{St-Cap}.}
 This inequality follows immediately
 from the definitions of~$\St(\eps)$ and~$\Cap(\eps)$
 because, if~$\zeta$ is an admissible competitor
 for the minimisation problem~\eqref{Cap_eps},
 then~$1 - \zeta$ is admissible for~\eqref{St_eps}.

 \medskip
 \noindent
 \textit{Proof of~\eqref{Cap-Dir}.}
 Over the course of the proof we will assume with no loss of generality
 that~$\Dir(\eps)$ is ``small'' --- that is,
 $\Dir(\eps) \leq \tau$ for some $\eps$-independent constant~$\tau$.
 Let~$\varphi_\eps\in H^1(\T^n)$ be
 the minimiser for Problem~\eqref{Dir_eps}, and let
 \[
  M_\eps := \int_{\T^n} \varphi_\eps \, \d x.
 \]
 Since~$\int_{\T^n} \varphi_\eps^2 \, \d x = 1$ by definition,
 the Poincar\'e inequality in~$\T^n$ ---
 combined with assumption~\eqref{hp:critical} --- implies
 \begin{equation} \label{CapDir1}
  1 - M_\eps^2
  = \int_{\T^n} \left(\varphi_\eps - M_\eps\right)^2 \d x
  \leq C\Dir(\eps).
 \end{equation}
 The function~$\varphi_\eps - M_\eps$ satisfies
 the elliptic equation
 \begin{equation} \label{CapDir2}
  -\Delta(\varphi_\eps - M_\eps) = \Dir(\eps) \varphi_\eps
  \qquad \textrm{in } \T^n\setminus H_\eps,
 \end{equation}
 because of~\eqref{Dir_eps_eigen}.
 By assumption~\eqref{hp:decreas}, there exists
 a compact set~$K_0\subseteq Y$ that contains~$H_\eps$
 for all~$\eps\in (0, \, 1)$. Let~$U_0 := Y\setminus K_0$.
 By applying (interior) elliptic regularity estimates
 and a bootstrap argument to the equation~\eqref{CapDir2},
 we deduce
 \begin{equation*}
  \begin{split}
   \norm{\varphi_\eps - M_\eps}_{L^\infty(U_0)}
   &\leq C \norm{\varphi_\eps - M_\eps}_{L^2(\T^n\setminus H_\eps)}
    + C \Dir(\eps) \norm{\varphi_\eps}_{L^2(\T^n\setminus H_\eps)}
    \\
   &\leq C \norm{\varphi_\eps - M_\eps}_{L^2(\T^n)}
    + C \Dir(\eps)
  \end{split}
 \end{equation*}
 for some~$\eps$-independent constant~$C$.
 This implies, again by the Poincar\'e inequality in~$\T^n$
 and assumption~\eqref{hp:critical},
 \begin{equation*}
  \norm{\varphi_\eps - M_\eps}_{L^\infty(U_0)}
  \leq C \Dir(\eps)^{1/2}.
 \end{equation*}
 This inequality, combined with~\eqref{CapDir1}, proves that
 \begin{equation} \label{CapDir3}
  \norm{\varphi_\eps - 1}_{L^\infty(U_0)} \leq C \Dir(\eps)^{1/2}
 \end{equation}
 and, in particular, $\inf_{U_0} \varphi_\eps > 0$
 if~$\Dir(\eps)$ is small enough
 (that is, if~$\Dir(\eps)$ is smaller than
 some quantity depending only on~$C$, not on~$\eps$).
 Now, we identify~$\varphi_\eps$ with a function
 defined in the unit cube and let
 \[
  \widetilde{\varphi}_\eps
  := \min\left(1, \, \frac{\varphi_\eps}{\inf_{U_0} \varphi_\eps}\right) \!.
 \]
 The function~$\widetilde{\varphi}_\eps \in H^1(Y)$
 is equal to~$1$ in~$U_0$ and is an admissible
 competitor for the minimisation problem
 in~\eqref{Cap_eps}. By definition of~$\Cap(\eps)$, we obtain
 \[
  \Cap(\eps)
  \leq \frac{\Dir(\eps)}{\left(\inf_{U_0}\varphi_\eps\right)^2}
  \stackrel{\eqref{CapDir3}}{\leq}
   \frac{\Dir(\eps)}{(1 - C\Dir(\eps)^{1/2})^2},
 \]
 so~\eqref{Cap-Dir} follows.

 \medskip
 \noindent
 \textit{Proof of~\eqref{Dir-Cap}.}
 We can assume without loss of generality that
 $\Cap(\eps) \leq \tau$ for some~$\eps$-independent~$\tau$.
 Let~$\zeta_\eps\in H^1(Y)$ be the minimiser for Problem~\eqref{Cap_eps}.
 By definition, $\zeta_\eps - 1$ has zero trace on~$\partial Y$.
 Therefore, by applying the Poincar\'e inequality in~$Y$
 and the inequality~\eqref{Cap<Dir} we proved in the previous step,
 we obtain
 \begin{equation} \label{DirCap1}
  \abs{\norm{\zeta_\eps}_{L^2(Y)} - 1}
   \leq \norm{\zeta_\eps - 1}_{L^2(Y)}
   \leq \Cap(\eps)^{1/2}.
 \end{equation}
 Now, $\zeta_\eps$ can be identified with an element
 of~$H^1(\T^n)$, because it is constant on~$\partial Y$.
 Hence, the function
 \[
  \widetilde{\zeta_\eps}
  := \frac{\zeta_\eps}{\norm{\zeta_\eps}_{L^2(Y)}}
 \]
 is an admissible competitor for
 the minimisation problem~\eqref{Dir_eps},
 and therefore
 \begin{align*}
  \Dir(\eps)
  \leq \frac{\Cap(\eps)}{\norm{\zeta_\eps}_{L^2(Y)}^2}
  \leq \frac{\Cap(\eps)}{(1 - \Cap(\eps)^{1/2})^2},
 \end{align*}
 as claimed.

 \medskip
 \noindent
 \textit{Proof of~\eqref{lambda_bound+0}.}
 Let~$\kappa\abs{\T^n\setminus H_\eps}>1$ and
 let~$\psi_\eps$ be the minimiser for~\eqref{St_eps}.
 As~$\psi_\eps = 0$ on~$\partial Y$, the Poincar\'e inequality
 (see Lemma~\ref{lemma:Poincare}) 
 yields
 \begin{equation} \label{psi_eps-L2}
  \int_{Y\setminus H_\eps} \psi_\eps^2 \, \d x
   \leq C_0 \, \St(\eps),
 \end{equation}
 where~$C_0$ is the constant given by Lemma~\ref{lemma:Poincare}.
 As a consequence, we have
 $\mcG_{\eps,\kappa}(\psi_\eps) \geq 1 - C_0\kappa\,\St(\eps) > 0$
 if~$\kappa\St(\eps)$ is small enough, which we can
 assume without loss of generality. Then,
 $w_\eps := \psi_\eps/(\mcG_{\eps,\kappa}(\psi_\eps))^{1/2}$
 is an admissible competitor for the minimisation
 problem~\eqref{lambda_eps+}, and by
 definition of~$\lambda(\eps, \, \kappa)$ we obtain
 \[
  \lambda(\eps, \, \kappa)
  \leq \int_{\T^n\setminus H_\eps} \abs{\nabla w_\eps}^2 \, \d x
  \leq \frac{1}{\mcG_{\eps,\kappa}(\psi_\eps)}
   \int_{\T^n\setminus H_\eps} \abs{\nabla \psi_\eps}^2 \, \d x
  \leq \frac{\St(\eps)}{1 - C_0\kappa\,\St(\eps)} .
 \]

 \medskip
 \noindent
 \textit{Proof of~\eqref{lambda_bound-0}.}
 Assume that~$0 < \kappa \leq 1$.
 We consider the minimiser~$\varphi_\eps$
 for Problem~\eqref{Dir_eps}.
 Since $\mcG_{\eps,\kappa}(\kappa^{-1/2}\varphi_\eps) = -1$,
 we can use~$\kappa^{-1/2} \varphi_\eps$ as a test function
 in the definition~\eqref{lambda_eps-} of~$\lambda_*(\eps, \, \kappa)$
 and obtain~\eqref{lambda_bound-0}.
\end{proof}

The inequalities~\eqref{lambda_bound+0} and~\eqref{lambda_bound-0}
in Lemma~\ref{lemma:inequalities}, combined with
assumption~\eqref{hp:critical}, provide uniform bounds
for~$\lambda(\eps, \,\kappa)$ in terms of~$\Dir(\eps)$
and~$\St(\eps)$. More precisely, for~$\eps$ small enough we have
\begin{align}
 \lambda(\eps, \, \kappa)
  &\leq \frac{\St(\eps)}{1 - C\kappa\,\eps^2}
  \hspace{1.5cm} \textrm{if } \kappa\abs{\T^n\setminus H_\eps}>1,
  \label{lambda_bound+} \\
 -\kappa \, \lambda(\eps, \, \kappa) &\leq \Dir(\eps)
  \hspace{1.95cm} \textrm{if } 0 < \eps < 1.
  \label{lambda_bound-}
\end{align}

\begin{remark} \label{rk:Helly}
 Assume that conditions~\eqref{hp:first}--\eqref{hp:critical}
 are satisfied. For~$\eps\in (0, \, 1)$, we consider the functions
 $f_\eps(\kappa) := \eps^{-2} \lambda(\eps, \, \kappa)$,
 $\kappa\in (1, \, +\infty)$.
 These functions are positive, nondecreasing
 (by Corollary~\ref{cor:lambdamonotone}),
 and satisfy~$\limsup_{\eps\to 0} \sup_{\kappa\in K} f_\eps(\kappa) < +\infty$
 for all compact~$K\subseteq (1, \, +\infty)$,
 because of the estimate~\eqref{lambda_bound+}
 and assumption~\eqref{hp:critical}.
 Therefore, by applying Helly's selection
 theorem and a diagonal argument,
 we can extract a subsequence~$\eps_j\to 0$
 so as to have pointwise convergence
 $f_{\eps_j}(\kappa)\to\lambda_*(\kappa)$ as~$j\to +\infty$,
 for~$\kappa\in (1, \, +\infty)$. The limit function satisfies
 \begin{equation}  \label{lambda*bd+}
  \lambda_*(\kappa) \leq \St_* \qquad
   \textrm{for } \kappa > 1,
 \end{equation}
 by passing to the limit as~$j\to +\infty$ in the
 estimate~\eqref{lambda_bound+}.
 By a similar argument, under the
 assumptions~\eqref{hp:first}--\eqref{hp:critical}
 we can extract a further (non-relabelled)
 subsequence so that we also have pointwise convergence
 $f_{\eps_j}(\kappa)\to\lambda_*(\kappa)$
 for~$\kappa\in (0, \, 1)$ and
 \begin{equation}  \label{lambda*bd-} \
  -\kappa\lambda_*(\kappa) \leq \Dir_* \qquad
   \textrm{for } 0 < \kappa < 1.
 \end{equation}
 In particular, $\lambda_*(\kappa)$ is finite for all~$\kappa > 0$.
 We will see later, in Remark~\ref{rk:lambdainfinity},
 that~$\lambda_*(\kappa) \neq 0$ for all~$\kappa\neq 1$.
 Moreover, the inequalities \eqref{lambda*bd+}
 and~\eqref{lambda*bd-}, combined with
 Corollary~\ref{cor:lambdamonotone}, imply that the limits
 \begin{equation} \label{lambda_limits}
  \lambda_0 := -\lim_{\kappa\to 0} \kappa\,\lambda_*(\kappa),
  \qquad \lambda_\infty := \lim_{\kappa\to+\infty} \lambda_*(\kappa)
 \end{equation}
 exist, are finite and satisfy~$\lambda_0 \leq \Dir_*$,
 $\lambda_\infty \leq \St_*$.
\end{remark}

\subsection{Analysis of Problem~\eqref{eigenvalue} as~$\eps\to 0$}
\label{sect:eigenvalue_0}

While the results in Section~\ref{sect:eigenvalue_eps}
are independent of assumptions~\eqref{hp:critical} and~\eqref{hp:limit},
the results in this section (and the next ones)
depend crucially on them.
Throughout the sequel, we denote by~$C$
several positive constants, that do not depend on~$\eps$ and~$\kappa$
(but may depend on the values~$\St_*$ and~$\Dir_*$,
given by~\eqref{hp:critical}).

\begin{proof}[Proof of Proposition~\ref{prop:DirCap}]
 We need to prove that the limit~$\Cap_* := \lim_{\eps\to 0} \eps^{-2} \Cap(\eps)$ exists and is equal to~$\Dir_*$,
 defined in assumption~\eqref{hp:critical}.
 Under~\eqref{hp:critical}, we have~$\Dir(\eps) \leq C\eps^2$
 for all small enough~$\eps$. Therefore, the inequality~\eqref{Dir-Cap} in Lemma~\ref{lemma:inequalities} takes the form
 \[
  \Cap(\eps)
  \leq \frac{\Dir(\eps)}{(1 - C\eps)^2}.
 \]
 Dividing both sides of this inequality by~$\eps^2$
 and passing to the limit as~$\eps\to 0$, we obtain
 \begin{equation} \label{Cap<Dir}
  \limsup_{\eps\to 0} \frac{\Cap(\eps)}{\eps^2} \leq \Dir_*.
 \end{equation}
 In particular, this shows~$\Cap(\eps) \leq C\eps^2$
 for all small enough~$\eps$.
 As a consequence, we may write the inequality~\eqref{Dir-Cap} as
 \[
  \Dir(\eps) \leq \frac{\Cap(\eps)}{(1 - C\eps)^2}.
 \]
 Dividing both sides of this inequality by~$\eps^2$
 and passing to the limit as~$\eps\to 0$, we obtain
 \begin{equation} \label{Cap>Dir}
  \Dir_* \leq \liminf_{\eps\to 0} \frac{\Cap(\eps)}{\eps^2} .
 \end{equation}
 The proposition follows from~\eqref{Cap<Dir} and~\eqref{Cap>Dir}.
\end{proof}

Given a sufficiently small~$\eps \in (0, \, 1)$ and~$\kappa > 0$,
let~$v_\eps := v_{\eps,\kappa}$ be the unique positive minimiser for
either Problem~\eqref{lambda_eps+} or Problem~\eqref{lambda_eps-},
depending on whether~$\kappa>1$ or~$0 < \kappa \leq 1$.
Let~$\overline{v_{\eps,\kappa}}\in L^2(\T^n)$,
$\overline{\nabla v_{\eps,\kappa}}\in L^2(\T^n, \, \R^n)$
be the extensions of~$v_{\eps,\kappa}$, $\nabla v_{\eps,\kappa}$
by zero inside~$H_\eps$. We next study the convergence
of~$v_{\eps,\kappa}$ as~$\eps\to 0$. We consider the
cases~$\kappa > 1$ and~$0 < \kappa < 1$ separately.

\begin{lemma} \label{lemma:constantlimit+}
 Assume that~\eqref{hp:first}--\eqref{hp:last} are satisfied.
 Then, for any~$\kappa > 1$, there exists
 a constant~$v_*(\kappa)$ and a (non-relabelled)
 subsequence~$\eps\to 0$ such that
 \begin{equation} \label{constantlimit-conv}
  \norm{\overline{v_{\eps,\kappa}} - v_*(\kappa)}_{L^2(\T^n)}
  + \norm{\overline{\nabla v_{\eps,\kappa}}}_{L^2(\T^n)}
  \to 0 \qquad \textrm{as } \eps\to 0.
 \end{equation}
 Moreover, any possible value for the limit~$v_*(\kappa)$ satisfies
 \begin{equation} \label{v0+}
  0 \leq v_*(\kappa)\leq \left(\kappa^2 - \kappa\right)^{-1/2}
   \qquad \textrm{if } \kappa > 1.
 \end{equation}
\end{lemma}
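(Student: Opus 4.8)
The plan is to identify the limit constant with the mean value $M_\eps := \fint_{\T^n\setminus H_\eps} v_{\eps,\kappa}\,\d x$ and to read the bound~\eqref{v0+} off from the constraint $\mcG_{\eps,\kappa}(v_{\eps,\kappa}) = 1$ by combining Jensen's inequality with the orthogonality relation of Remark~\ref{rk:bdav}. First I would record that, since $\abs{H_\eps}\to 0$ by Remark~\ref{rk:zerocapacity}, we have $\kappa\abs{\T^n\setminus H_\eps} > 1$ for all $\eps$ small, so Lemma~\ref{lemma:lambdabounds} applies and, together with Assumption~\eqref{hp:critical}, yields $\int_{\T^n\setminus H_\eps}\abs{\nabla v_{\eps,\kappa}}^2 = \lambda(\eps, \, \kappa) \leq C\eps^2$. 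In particular, $\overline{\nabla v_{\eps,\kappa}}$ (the extension of $\nabla v_{\eps,\kappa}$ by zero) tends to $0$ in $L^2(\T^n)$. Writing $v_{\eps,\kappa} = w_\eps + M_\eps$ with $\int_{\T^n\setminus H_\eps} w_\eps\,\d x = 0$, the uniform Poincar\'e inequality~\eqref{Poincare-average} of Lemma~\ref{lemma:Poincare} (valid for the periodic function $v_{\eps,\kappa}$, viewed as an element of $H^1(Y\setminus H_\eps)$) gives $\norm{w_\eps}_{L^2(\T^n\setminus H_\eps)}^2 \leq C_P\,\lambda(\eps, \, \kappa) \to 0$.

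The heart of the argument is a uniform bound on $M_\eps$. I would rewrite the constraint as $\fint_{\partial H_\eps} v_{\eps,\kappa}^2 \, \d\sigma = 1 + \kappa\int_{\T^n\setminus H_\eps} v_{\eps,\kappa}^2 \, \d x$ and combine it with Jensen's inequality $\fint_{\partial H_\eps} v_{\eps,\kappa}^2 \, \d\sigma \geq \bigl(\fint_{\partial H_\eps} v_{\eps,\kappa} \, \d\sigma\bigr)^2$ and the identity $\fint_{\partial H_\eps} v_{\eps,\kappa}\,\d\sigma = \kappa\int_{\T^n\setminus H_\eps} v_{\eps,\kappa}\,\d x$ of Remark~\ref{rk:bdav}. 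Since $\int_{\T^n\setminus H_\eps} v_{\eps,\kappa}\,\d x = M_\eps\abs{\T^n\setminus H_\eps}$ and $\int_{\T^n\setminus H_\eps} v_{\eps,\kappa}^2\,\d x = \norm{w_\eps}_{L^2}^2 + M_\eps^2\abs{\T^n\setminus H_\eps}$, this yields, for every $\eps$ small enough that $\kappa\abs{\T^n\setminus H_\eps} > 1$,
\[
 \kappa\abs{\T^n\setminus H_\eps}\left(\kappa\abs{\T^n\setminus H_\eps} - 1\right) M_\eps^2
 \leq 1 + \kappa\norm{w_\eps}_{L^2}^2 .
\]
Because $\abs{\T^n\setminus H_\eps}\to 1$ and $\norm{w_\eps}_{L^2}\to 0$, this shows that $M_\eps$ is bounded and that $\limsup_{\eps\to 0} M_\eps^2 \leq \bigl(\kappa(\kappa - 1)\bigr)^{-1} = (\kappa^2 - \kappa)^{-1}$.

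To conclude, I would extract a (non-relabelled) subsequence $\eps\to 0$ along which $M_\eps$ converges to a limit $v_*(\kappa)$. Then $v_*(\kappa) \geq 0$, since $v_{\eps,\kappa}$ is the \emph{positive} minimiser and hence $M_\eps > 0$, and $v_*(\kappa)^2 \leq (\kappa^2 - \kappa)^{-1}$ by the displayed estimate, which is exactly~\eqref{v0+}. The convergence~\eqref{constantlimit-conv} then follows from
\[
 \norm{\overline{v_{\eps,\kappa}} - v_*(\kappa)}_{L^2(\T^n)}^2
 = \norm{w_\eps + \left(M_\eps - v_*(\kappa)\right)}_{L^2(\T^n\setminus H_\eps)}^2
 + \abs{H_\eps}\, v_*(\kappa)^2 ,
\]
each summand tending to $0$ as $\eps\to 0$ (using $\norm{w_\eps}_{L^2}\to 0$, $M_\eps\to v_*(\kappa)$ and $\abs{H_\eps}\to 0$), together with the bound on $\overline{\nabla v_{\eps,\kappa}}$ from the first step. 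The one delicate point — and the place where a careless approach goes wrong — is the direction of the estimate on $M_\eps$: it comes from \emph{lower}-bounding the averaged boundary integral $\fint_{\partial H_\eps} v_{\eps,\kappa}^2$ via Jensen and Remark~\ref{rk:bdav}, whereas a plain Cauchy--Schwarz comparison of $\bigl(\int_{\T^n\setminus H_\eps} v_{\eps,\kappa}\bigr)^2$ with $\int_{\T^n\setminus H_\eps} v_{\eps,\kappa}^2$ produces an inequality of the wrong sign and leads to a contradiction.
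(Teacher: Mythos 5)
Your proposal is correct and follows essentially the same route as the paper's proof: decompose $v_{\eps,\kappa}$ into its mean plus an oscillation controlled by the uniform Poincar\'e inequality and the bound $\lambda(\eps,\kappa)\leq C\eps^2$, then bound the mean by combining the constraint $\mcG_{\eps,\kappa}(v_{\eps,\kappa})=1$ with Jensen's inequality on $\partial H_\eps$ and the identity of Remark~\ref{rk:bdav}, yielding $\limsup M_\eps^2\leq(\kappa^2-\kappa)^{-1}$. The only differences are cosmetic (you normalise $M_\eps$ as an average rather than an integral and track the factors $\abs{\T^n\setminus H_\eps}$ explicitly, where the paper absorbs them into $\mathrm{O}(\kappa\eps^2)$ terms via $\abs{H_\eps}\to 0$).
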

\begin{proof}
 Let
 \begin{equation} \label{M_eps_kappa}
  M_{\eps,\kappa} :=
  \int_{\T^n\setminus H_{\eps}} v_{\eps,\kappa} \, \d x \geq 0.
 \end{equation}
 Applying the Poincar\'e inequality,
 Lemma~\ref{lemma:Poincare}, and assumption~\eqref{hp:limit},
 we obtain
 \begin{equation} \label{constantlimit1}
  \begin{split}
   \int_{\T^n\setminus H_{\eps}}
   v_{\eps,\kappa}^2 \, \d x - M_{\eps,\kappa}^2
   &= \int_{\T^n\setminus H_{\eps}}
    \left(v_\eps - M_{\eps, \kappa}\right)^2 \d x
   \leq C\,\eps^2 \abs{\lambda_*(\kappa)} \! .
  \end{split}
 \end{equation}
 The constraint $\mcG_{\eps,\kappa}(v_{\eps,\kappa}) = 1$
 can be written as
 \begin{equation} \label{constantlimit2}
  \begin{split}
   \fint_{\partial H_{\eps}} v_{\eps,\kappa}^2 \,\d\sigma
   = 1 + \kappa
    \int_{\T^n\setminus H_{\eps}} v_{\eps,\kappa}^2\, \d x
   = 1 + \kappa M_{\eps,\kappa}^2 + \mathrm{O}(\kappa\eps^2)
  \end{split}
 \end{equation}
 because of~\eqref{constantlimit1}, and so Remark~\ref{rk:bdav} implies
 \begin{equation} \label{bdav-w}
  \fint_{\partial H_\eps} v_{\eps,\kappa} \,\d\sigma
   = \kappa M_{\eps,\kappa}.
 \end{equation}
 Thus, combining~\eqref{constantlimit2} and~\eqref{bdav-w}
 with the H\"older inequality, we deduce
 \begin{equation*}
  \begin{split}
   0 \leq
   \fint_{\partial H_{\eps}} v_{\eps,\kappa}^2 \, \d\sigma
     - \left(\fint_{\partial H_{\eps}} v_{\eps,\kappa} \, \d\sigma\right)^2
   &= 1 + \kappa M_{\eps,\kappa}^2 - \kappa^2 M_{\eps,\kappa}^2
     + \mathrm{O}(\kappa\eps^2).
  \end{split}
 \end{equation*}
 This implies
 \begin{equation*}
  \begin{split}
   M_{\eps,\kappa}^2
   \leq \frac{1 + \mathrm{O}(\kappa\eps^2)}{\kappa^2 - \kappa}.
  \end{split}
 \end{equation*}
 We can now extract a (non-relabelled) subsequence~$\eps\to 0$
 in such a way that~$M_{\eps, \kappa}\to v_*(\kappa)$,
 where the limit satisfies~\eqref{v0+}.
 Condition~\eqref{constantlimit-conv} follows
 from~\eqref{hp:limit}, \eqref{constantlimit1}
 and the fact that~$\abs{H_\eps}\to 0$
 as~$\eps\to 0$ (see Remark~\ref{rk:zerocapacity}).
\end{proof}

In the next lemma, we assume that the limit~$\lambda_\infty$
defined in~\eqref{lambda_limits} is nonzero.
Under the assumptions~\eqref{hp:first}--\eqref{hp:last},
this condition is always satisfied, but we will give the
proof of this claim later (see Remark~\ref{rk:lambdainfinity}
in Section~\ref{sect:lambda*}).

\begin{lemma} \label{lemma:constantlimit-}
 Assume that~\eqref{hp:first}--\eqref{hp:last} are satisfied
 and~$\lambda_\infty\neq 0$.
 Then, for any~$\kappa \in (0, \, 1)$, there exists
 a constant~$v_*(\kappa)$ and a (non-relabelled)
 subsequence~$\eps\to 0$ that satisfies~\eqref{constantlimit-conv}.
 Moreover, any possible value for the limit~$v_*(\kappa)$ satisfies
 \begin{equation} \label{v0-}
  \frac{1}{\left(\kappa - \kappa^2\right)^{1/2}}
  \leq v_*(\kappa)
  \leq \frac{\abs{\lambda_*(\kappa)}^{1/2}
   + \left(\kappa \abs{\lambda_*(\kappa)}
   - \lambda_\infty\abs{\kappa - 1}
   \right)^{1/2}}{\lambda_\infty^{1/2} \abs{\kappa - 1}}
 \end{equation}
\end{lemma}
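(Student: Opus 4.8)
Here is how I would approach the proof, following the template of Lemma~\ref{lemma:constantlimit+} but with the roles of the boundary and bulk terms interchanged because of the opposite sign convention in~\eqref{lambda_eps-}.

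\emph{Setup and standing estimates.} Write $v_\eps:=v_{\eps,\kappa}$, $M_\eps:=M_{\eps,\kappa}=\int_{\T^n\setminus H_\eps}v_\eps\,\d x\geq 0$ (positivity by Lemma~\ref{lemma:exists}), and $\tilde v_\eps:=v_\eps-M_\eps$. Since $v_\eps$ realises the infimum in~\eqref{lambda_eps-}, one has $\int_{\T^n\setminus H_\eps}\abs{\nabla\tilde v_\eps}^2\,\d x=\int_{\T^n\setminus H_\eps}\abs{\nabla v_\eps}^2\,\d x=-\lambda(\eps,\,\kappa)$, which is $\leq C\eps^2$ by Lemma~\ref{lemma:lambdabounds} and, by~\eqref{hp:limit}, satisfies $\eps^{-2}(-\lambda(\eps,\,\kappa))\to\abs{\lambda_*(\kappa)}$; the Poincar\'e inequality of Lemma~\ref{lemma:Poincare} then gives $\int_{\T^n\setminus H_\eps}\tilde v_\eps^2\,\d x\leq C\eps^2$. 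Because $\abs{H_\eps}\to 0$ (Remark~\ref{rk:zerocapacity}), $\int_{\T^n\setminus H_\eps}v_\eps^2\,\d x=M_\eps^2+\mathrm{o}(1)M_\eps^2+\mathrm{O}(\eps^2)$, so the constraint $\mcG_{\eps,\kappa}(v_\eps)=-1$ reads $\fint_{\partial H_\eps}v_\eps^2\,\d\sigma=\kappa\int_{\T^n\setminus H_\eps}v_\eps^2\,\d x-1=\kappa M_\eps^2(1+\mathrm{o}(1))-1+\mathrm{O}(\eps^2)$, while Remark~\ref{rk:bdav} gives $\fint_{\partial H_\eps}v_\eps\,\d\sigma=\kappa M_\eps$.

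\emph{Lower bound.} Jensen's inequality on $\partial H_\eps$ yields $\kappa^2M_\eps^2=(\fint_{\partial H_\eps}v_\eps\,\d\sigma)^2\leq\fint_{\partial H_\eps}v_\eps^2\,\d\sigma$, i.e.\ $(\kappa-\kappa^2+\mathrm{o}(1))M_\eps^2\geq 1-\mathrm{O}(\eps^2)$, which pins $M_\eps$ from below (as $\kappa-\kappa^2>0$); along a subsequence $M_\eps\to v_*(\kappa)$ — whose existence will follow from the upper estimate below — this gives $v_*(\kappa)^2\geq(\kappa-\kappa^2)^{-1}$.

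\emph{Upper bound.} This is the heart of the matter, and here $\lambda_\infty$ (and the hypothesis $\lambda_\infty\neq 0$) enters. Fix $\kappa'>1$ large enough that $\lambda_*(\kappa')>0$ (possible since $\lambda_*(\kappa')\to\lambda_\infty>0$). For $\eps$ small, $\kappa'\abs{\T^n\setminus H_\eps}>1$, so the trace inequality~\eqref{singletrace+} applied to $\tilde v_\eps$ gives
\[
\fint_{\partial H_\eps}\tilde v_\eps^2\,\d\sigma\leq\frac{1}{\lambda(\eps,\,\kappa')}\int_{\T^n\setminus H_\eps}\abs{\nabla\tilde v_\eps}^2\,\d x+\kappa'\int_{\T^n\setminus H_\eps}\tilde v_\eps^2\,\d x.
\]
By~\eqref{hp:limit} (for both $\kappa$ and $\kappa'$) and the standing estimates, the right-hand side tends, as $\eps\to 0$, to $\abs{\lambda_*(\kappa)}/\lambda_*(\kappa')$ (the $\kappa'$-term being $\mathrm{O}(\eps^2)$), and it is uniformly bounded for small $\eps$. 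Since the left-hand side does not involve $\kappa'$, letting $\kappa'\to +\infty$ and using $\lambda_*(\kappa')\to\lambda_\infty=\sup_{\kappa'>1}\lambda_*(\kappa')$ (Corollary~\ref{cor:lambdamonotone} and~\eqref{lambda_limits}) gives $\limsup_{\eps\to 0}\fint_{\partial H_\eps}\tilde v_\eps^2\,\d\sigma\leq\abs{\lambda_*(\kappa)}/\lambda_\infty$. I then combine this with the triangle inequality in $L^2(\partial H_\eps,\sigma(\partial H_\eps)^{-1}\d\sigma)$ applied to $v_\eps=\tilde v_\eps+M_\eps$,
\[
M_\eps=\Bigl(\fint_{\partial H_\eps}M_\eps^2\,\d\sigma\Bigr)^{1/2}\leq\Bigl(\fint_{\partial H_\eps}v_\eps^2\,\d\sigma\Bigr)^{1/2}+\Bigl(\fint_{\partial H_\eps}\tilde v_\eps^2\,\d\sigma\Bigr)^{1/2};
\]
this inequality, together with the boundedness of $\fint_{\partial H_\eps}\tilde v_\eps^2\,\d\sigma$ and $\fint_{\partial H_\eps}v_\eps^2\,\d\sigma=\kappa M_\eps^2(1+\mathrm{o}(1))-1+\mathrm{O}(\eps^2)$, forces $M_\eps\leq C$ for small $\eps$ (because $\kappa<1$), so a convergent subsequence exists. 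Passing to the limit along it (using $\fint_{\partial H_\eps}v_\eps^2\,\d\sigma\to\kappa v_*(\kappa)^2-1\geq 0$) gives
\[
v_*(\kappa)\leq\bigl(\kappa v_*(\kappa)^2-1\bigr)^{1/2}+\bigl(\abs{\lambda_*(\kappa)}/\lambda_\infty\bigr)^{1/2},
\]
and squaring this (legitimate since $\kappa v_*(\kappa)^2-1\geq 0$; the case $v_*(\kappa)<(\abs{\lambda_*(\kappa)}/\lambda_\infty)^{1/2}$ makes~\eqref{v0-} automatic, its right-hand side being $\geq(\abs{\lambda_*(\kappa)}/\lambda_\infty)^{1/2}$) is the elementary algebra that reduces exactly to the quadratic inequality in $v_*(\kappa)$ equivalent to the upper bound in~\eqref{v0-}. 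Finally, \eqref{constantlimit-conv} follows as in Lemma~\ref{lemma:constantlimit+}: $\norm{\overline{\nabla v_\eps}}_{L^2(\T^n)}^2=-\lambda(\eps,\,\kappa)\to 0$ and $\norm{\overline{v_\eps}-v_*(\kappa)}_{L^2(\T^n)}^2\leq 2\int_{\T^n\setminus H_\eps}\tilde v_\eps^2\,\d x+2(M_\eps-v_*(\kappa))^2+v_*(\kappa)^2\abs{H_\eps}\to 0$.

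\emph{Main obstacle.} The difficulty I expect is conceptual rather than computational: one must resist proving the upper bound by a single Steklov/cut-off comparison for $\tilde v_\eps$, since that comparison carries an error of order $\eps^2$ which, divided by $\St(\eps)\sim\eps^2\St_*$, does not vanish in the limit and spoils the sharp constant $1/\lambda_\infty$. Instead the bound must be produced by the iterated limit in Lemma~\ref{lemma:singletrace} ($\eps\to 0$ first at fixed $\kappa'$, then $\kappa'\to+\infty$) feeding into the triangle inequality on $\partial H_\eps$. The only subsidiary point requiring care is the $\mathrm{o}(1)$-bookkeeping — in particular verifying that $M_\eps$ cannot diverge before the subsequence is extracted — which is routine given Lemma~\ref{lemma:lambdabounds} and $\abs{H_\eps}\to 0$.
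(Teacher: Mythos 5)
Your proof is correct and follows essentially the same strategy as the paper's: the trace inequality of Lemma~\ref{lemma:singletrace} applied to the oscillation $\tilde v_\eps = v_\eps - M_\eps$ with the iterated limit ($\eps\to 0$ at fixed $\tau>1$, then $\tau\to+\infty$) to obtain $\limsup_\eps\fint_{\partial H_\eps}\tilde v_\eps^2\,\d\sigma\leq\abs{\lambda_*(\kappa)}/\lambda_\infty$, combined with the constraint to bound $M_\eps$ and a Jensen-type argument for the lower bound. The only presentational difference is that you reach the quadratic inequality $(1-\kappa)v_*^2-2S^{1/2}v_*+(S+1)\leq 0$ via the triangle inequality in $L^2(\partial H_\eps,\sigma^{-1}\d\sigma)$, whereas the paper solves the constraint (written as a quadratic in $M_{\eps,\kappa}$, see \eqref{secondogrado}) explicitly and bounds the linear coefficient by Hölder; these are two equivalent dressings of the same algebra and lead to the identical bound.
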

\begin{proof}
 Let~$w_{\eps,\kappa} := v_{\eps,\kappa} - M_{\eps,\kappa}$,
 where~$M_{\eps,\kappa}$ is defined in~\eqref{M_eps_kappa}.
 Thanks to the assumption~\eqref{hp:extension},
 we can extend~$w_{\eps,\kappa}$ to a
 function in~$H^1(\T^n)$, still denoted~$w_{\eps,\kappa}$
 for simplicity, in such a way that the~$H^1(\T^n)$-norm
 of~$w_{\eps,\kappa}$ is controlled by
 its~$H^1(\T^n\setminus H_{\eps})$-norm, up to
 factors that do not depend on~$\eps$.
 Then, the assumption~\eqref{hp:limit} implies
 that~$w_{\eps,\kappa}\to 0$ strongly in~$H^1(\T^n)$
 as~$\eps\to 0$.
 Now suppose, for instance, that~$\kappa > 1$.
 The constraint~$\mcG_{\eps,\kappa}(v_{\eps,\kappa}) = -1$
 can be written as a quadratic polynomial of~$M_{\eps,\kappa}$:
 \begin{equation} \label{secondogrado}
  \begin{split}
   \left(1 - \kappa\abs{\T^n\setminus H_{\eps}} \right)
    M_{\eps,\kappa}^2
   + 2M_{\eps,\kappa} \fint_{\partial H_{\eps}}
    w_{\eps,\kappa} \, \d\sigma
   + \fint_{\partial H_{\eps}} w_{\eps,\kappa}^2 \, \d\sigma
   - \kappa \int_{\T^n\setminus H_{\eps}} w_{\eps,\kappa}^2 + 1 = 0.
  \end{split}
 \end{equation}
 Equation~\eqref{secondogrado} clearly has a real
 solution~$M_\eps$, so the discriminant of the said polynomial
 must be non-negative:
 \begin{equation} \label{secondogrado0.5}
  \begin{split}
   \left(\fint_{\partial H_{\eps}}
    w_{\eps,\kappa} \, \d\sigma\right)^2
    - \left(1 - \kappa\abs{\T^n\setminus H_{\eps}} \right)
    \left(\fint_{\partial H_{\eps}} w_{\eps,\kappa}^2 \, \d\sigma
   - \kappa \int_{\T^n\setminus H_{\eps}} w_{\eps,\kappa}^2 \, \d x
   + 1\right) \geq 0.
  \end{split}
 \end{equation}
 Let
 \[
  S_{\eps, \kappa} :=
   \fint_{\partial H_\eps} w_{\eps,\kappa}^2 \,\d\sigma .
 \]
 Equation~\eqref{secondogrado0.5}, combined with the
 H\"older inequality and~\eqref{constantlimit1}, implies
 \[
  S_{\eps,\kappa}
   - \left(1 - \kappa\abs{\T^n\setminus H_{\eps}} \right)
    \left(S_{\eps,\kappa} + 1 + \mathrm{O}(\kappa\eps^2)\right) \geq 0
 \]
 and, since~$\abs{H_\eps}\to 0$ as~$\eps\to 0$
 (by Remark~\ref{rk:zerocapacity}),
 \begin{equation} \label{secondogrado1}
  \liminf_{\eps\to 0} S_{\eps,\kappa}
  \geq \frac{1 - \kappa}{\kappa}.
 \end{equation}
 For each~$\tau > 1$ and small enough~$\eps \in (0, \, 1)$,
 Lemma~\ref{lemma:singletrace} implies
 \begin{equation*}
  \begin{split}
   S_{\eps,\kappa} &\leq \frac{1}{\lambda(\eps, \, \tau)}
    \int_{\T^n\setminus H_{\eps}} \abs{\nabla w_{\eps,\kappa}}^2
    + \tau\int_{\T^n\setminus H_{\eps}}
    w_{\eps,\kappa}^2
   \leq -\frac{\lambda(\eps, \, \kappa)}
    {\lambda(\eps, \, \tau)} + \mathrm{O}(\tau\eps^2).
  \end{split}
 \end{equation*}
 We pass to the limit in both sides of the last inequality,
 first as~$\eps\to 0$, then as~$\tau\to+\infty$.
 Note that~$\lambda_*(\tau)\neq 0$ if~$\tau$ is large enough,
 as we have assumed that~$\lambda_\infty\neq 0$. We therefore obtain
 \begin{equation} \label{secondogrado2}
  \begin{split}
   \limsup_{\eps\to 0} S_{\eps,\kappa}
   \leq -\frac{\lambda_*(\kappa)}{\lambda_\infty} .
  \end{split}
 \end{equation}
 Now, we consider Equation~\eqref{secondogrado} again
 and solve for~$M_{\eps,\kappa}$. Setting
 \[
  L_{\eps,\kappa}
  := \fint_{\partial H_{\eps}} w_{\eps,\kappa} \, \d\sigma
 \]
 and recalling that~$w_\eps\to 0$ strongly in~$L^2(\T^n)$,
 we obtain
 \begin{equation*} 
  \begin{split}
   M_{\eps,\kappa}
   &= \frac{-L_{\eps,\kappa} \pm \left(L_{\eps,\kappa}^2
     - (1 - \kappa)(S_{\eps,\kappa} + 1)
     + \mathrm{o}_{\eps\to 0}(1)\right)^{1/2}}{1 - \kappa\abs{\T^n\setminus H_\eps}}.
  \end{split}
 \end{equation*}
 Using the estimate $\abs{L_{\eps,\kappa}}\leq S_{\eps,\kappa}^{1/2}$,
 which follows from the H\"older inequality, we obtain
 \begin{equation} \label{secondogrado3}
  \begin{split}
   M_{\eps,\kappa}
   \leq \frac{S_{\eps,\kappa}^{1/2} + \left(\kappa S_{\eps,\kappa}
    + \kappa - 1\right)^{1/2}}{1 - \kappa}
    + \mathrm{o}_{\eps\to 0}(1).
  \end{split}
 \end{equation}
 Since~$S_{\eps,\kappa}$ is bounded by~\eqref{secondogrado2},
 it follows that~$M_{\eps,\kappa}$ is bounded, too.
 We can then extract a subsequence in such a way that
 $M_{\eps,\kappa}$ has a limit as~$\eps\to 0$, which we call~$v_*(\kappa)$.
 The limit~$v_*(\kappa)$ satisfies~\eqref{constantlimit-conv}
 because of assumption~\eqref{hp:limit}
 and bound~\eqref{constantlimit1}. By combining~\eqref{secondogrado2}
 and~\eqref{secondogrado3}, we obtain
 \begin{equation} \label{secondogrado4}
  \begin{split}
   M_{\eps,\kappa}
   \leq \frac{\abs{\lambda_*(\kappa)}^{1/2}
    + \left(\kappa \abs{\lambda_*(\kappa)}
    + \lambda_\infty(\kappa - 1)\right)^{1/2}}
    {\lambda_\infty^{1/2} (1 - \kappa)}
    + \mathrm{o}_{\eps\to 0}(1),
  \end{split}
 \end{equation}
 which proves the upper bound in~\eqref{v0-}.
 Furthermore, the same argument as the one we invoked
 in the proof of Lemma~\ref{lemma:constantlimit+} shows that
 \begin{equation} \label{secondogrado5}
  \begin{split}
   M_{\eps,\kappa}^2
   \geq \frac{1 + \mathrm{O}(\kappa\eps^2)}{\kappa - \kappa^2},
  \end{split}
 \end{equation}
 which proves the lower bound in~\eqref{v0-}.
\end{proof}

\begin{remark} \label{rk:futureref}
 Combining the estimate~\eqref{secondogrado4}
 with~\eqref{lambda*bd-}, we obtain the inequality
 \begin{equation} \label{secondogrado-futref}
  \begin{split}
   \kappa^{1/2} \limsup_{\eps\to 0} \int_{\T^n\setminus H_\eps} v_{\eps,\kappa} \, \d x
   \leq \frac{2\abs{\lambda_*(\kappa)}^{1/2} \kappa^{1/2}}
    {\lambda_\infty^{1/2} (1 - \kappa)}
   \leq \frac{2\left(\Dir_*\right)^{1/2}}
    {\lambda_\infty^{1/2} (1 - \kappa)} ,
  \end{split}
 \end{equation}
 which is valid for all~$0 < \kappa < 1$ so long as~$\lambda_\infty\neq 0$.
 This estimate will be useful later on.
\end{remark}

The next lemma is a by-product of the previous proof.

\begin{lemma} \label{lemma:lambda_lower_bd}
 Assume that~\eqref{hp:first}--\eqref{hp:last} are satisfied.
 Then, for any~$\kappa > 0$, there holds
 \begin{equation} \label{lambda_lower_bd}
  \abs{\lambda_*(\kappa)}
   \geq \frac{\lambda_\infty \abs{\kappa - 1}}{\kappa},
 \end{equation}
 where~$\lambda_\infty$ is given by~\eqref{lambda_limits}.
\end{lemma}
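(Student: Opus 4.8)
The plan is to extract the desired inequality from the estimates already established in the proof of Lemma~\ref{lemma:constantlimit-}, together with its counterpart Lemma~\ref{lemma:constantlimit+}, by eliminating the auxiliary quantity $v_*(\kappa)$ between the two-sided bounds. First I would dispose of the trivial case $\kappa = 1$, where the right-hand side of~\eqref{lambda_lower_bd} vanishes and there is nothing to prove (recall $\lambda_*(1) = 0$ by convention). For $\kappa \neq 1$, the idea is that Lemma~\ref{lemma:constantlimit+} (if $\kappa > 1$) or Lemma~\ref{lemma:constantlimit-} (if $0 < \kappa < 1$) produces, along a subsequence, a limit constant $v_*(\kappa)$ for $M_{\eps,\kappa}$, and the arguments in those proofs give matching lower and upper bounds on $v_*(\kappa)$. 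Comparing the two forces an inequality relating $\lambda_*(\kappa)$ and $\lambda_\infty$.

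More concretely, in both cases the lower bound on $M_{\eps,\kappa}^2$ is the universal one $M_{\eps,\kappa}^2 \geq (1 + \mathrm{O}(\kappa\eps^2))/\abs{\kappa - \kappa^2}$, coming from the Cauchy--Schwarz/Hölder step applied to~\eqref{constantlimit2} and~\eqref{bdav-w} (this is~\eqref{v0+}/\eqref{secondogrado5}). For the upper bound, the key estimate is on $S_{\eps,\kappa} = \fint_{\partial H_\eps} w_{\eps,\kappa}^2\,\d\sigma$: combining Lemma~\ref{lemma:singletrace} with the definition of $\lambda(\eps,\cdot)$ and passing to the limit as $\eps\to 0$ and then $\tau\to+\infty$ yields $\limsup_{\eps\to 0} S_{\eps,\kappa} \leq \abs{\lambda_*(\kappa)}/\lambda_\infty$ (this is~\eqref{secondogrado2}, with the appropriate sign/absolute value bookkeeping depending on whether $\kappa>1$ or $\kappa<1$). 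Since $S_{\eps,\kappa}$ is nonnegative, its $\limsup$ bound is meaningful only when $\abs{\lambda_*(\kappa)}/\lambda_\infty \geq 0$, which is automatic, but the point is that feeding this bound into the quadratic-formula expression~\eqref{secondogrado3} for $M_{\eps,\kappa}$ gives a finite upper bound on $v_*(\kappa)$ precisely of the form appearing in~\eqref{v0-}. Then the chain of inequalities
\[
 \frac{1}{\kappa\abs{\kappa - 1}}
 = \frac{1}{\abs{\kappa - \kappa^2}}
 \leq v_*(\kappa)^2
 \leq \left(\frac{\abs{\lambda_*(\kappa)}^{1/2}
   + \bigl(\kappa\abs{\lambda_*(\kappa)} - \lambda_\infty\abs{\kappa - 1}\bigr)^{1/2}}
   {\lambda_\infty^{1/2}\abs{\kappa - 1}}\right)^{2}
\]
must be combined and simplified: expanding the square on the right and clearing denominators produces a polynomial inequality in $\abs{\lambda_*(\kappa)}$, $\lambda_\infty$, and $\kappa$ that rearranges to $\abs{\lambda_*(\kappa)} \geq \lambda_\infty\abs{\kappa - 1}/\kappa$. (One should note that the very requirement that the argument $\kappa\abs{\lambda_*(\kappa)} - \lambda_\infty\abs{\kappa-1}$ under the square root in~\eqref{v0-} be non-negative already \emph{is} the claimed inequality, so in fact the lemma is essentially the statement that the upper bound in~\eqref{v0-} is well-defined; the algebraic manipulation above is the verification that this is forced rather than assumed.)

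The step I expect to require the most care is the sign bookkeeping: the proof of Lemma~\ref{lemma:constantlimit-} is written out for $\kappa > 1$ with the constraint $\mcG_{\eps,\kappa} = -1$, whereas for the present lemma one needs the conclusion for \emph{all} $\kappa > 0$, so one must run the symmetric argument in the regime $0 < \kappa < 1$ (where the roles of $\lambda_{\eps+}$ and $\lambda_{\eps-}$, and the signs in $\mcG_{\eps,\kappa}$, are swapped) and check that the absolute values in~\eqref{lambda_lower_bd} absorb both cases uniformly. The analytic content — the trace inequality of Lemma~\ref{lemma:singletrace}, the uniform Poincaré inequality of Lemma~\ref{lemma:Poincare}, the vanishing $\abs{H_\eps}\to 0$ from Remark~\ref{rk:zerocapacity}, and the existence of the limit $\lambda_\infty$ from Remark~\ref{rk:Helly} — is all already in place, so beyond the case split the remaining work is the elementary but slightly delicate algebra of eliminating $v_*(\kappa)$.
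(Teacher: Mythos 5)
Your plan ultimately lands on the same pair of estimates the paper uses, namely~\eqref{secondogrado1} and~\eqref{secondogrado2}, and the paper's proof is in fact nothing more than the direct comparison $\liminf_\eps S_{\eps,\kappa}\geq(1-\kappa)/\kappa$ (from~\eqref{secondogrado1}) against $\limsup_\eps S_{\eps,\kappa}\leq\abs{\lambda_*(\kappa)}/\lambda_\infty$ (from~\eqref{secondogrado2}), which rearranges immediately to~\eqref{lambda_lower_bd}; the symmetric version handles $\kappa>1$. The detour through $v_*(\kappa)$ and~\eqref{v0-} is not needed, and the ``expand the square and clear denominators'' step you describe as the main route is, as written, circular: the upper bound in~\eqref{v0-} can only be \emph{written} once $\kappa\abs{\lambda_*(\kappa)}-\lambda_\infty\abs{\kappa-1}\geq 0$, i.e. once the claim is already in hand, so eliminating $v_*(\kappa)$ algebraically presupposes the conclusion. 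Your final parenthetical is the correct fix, and when unpacked it is exactly the paper's argument --- the non-negativity of that square-root argument traces back, through the discriminant computation~\eqref{secondogrado0.5} and the substitution of~\eqref{secondogrado2} into~\eqref{secondogrado3}, to the inequality~\eqref{secondogrado1}~$\leq$~\eqref{secondogrado2}, which is precisely what the paper invokes.

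Two small points to tighten. First, the trivial reduction the paper makes is to $\lambda_\infty\neq 0$ (when $\lambda_\infty=0$ the right-hand side of~\eqref{lambda_lower_bd} vanishes and there is nothing to prove); this reduction is also necessary for your route, since Lemma~\ref{lemma:constantlimit-} and hence~\eqref{v0-} are stated under the hypothesis $\lambda_\infty\neq 0$. You dispose of $\kappa=1$, which is harmless but not the reduction that matters. Second, your remark that ``one must run the symmetric argument'' for $\kappa>1$ is correct, but note that the estimates~\eqref{secondogrado1}--\eqref{secondogrado3} as displayed in the paper carry a few sign typos (e.g.~$\lambda(\eps,\kappa)$ should appear as $\abs{\lambda(\eps,\kappa)}$ in~\eqref{secondogrado2} since $\lambda(\eps,\kappa)<0$ for $\kappa<1$); the absolute values in~\eqref{lambda_lower_bd} do absorb both cases uniformly once these are corrected, as you anticipated.
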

\begin{proof}
 We can assume without loss of generality
 that~$\lambda_\infty\neq 0$, for otherwise
 there is nothing to prove. Then,
 the estimate~\eqref{lambda_lower_bd} follows immediately
 from~\eqref{secondogrado1} and~\eqref{secondogrado2}
 when~$0 < \kappa < 1$. If~$\kappa > 1$,
 the argument is analogous.
\end{proof}

Lemma~\ref{lemma:lambda_lower_bd} implies
the function~$\lambda_*$ is nowhere equal to zero
if its limit~$\lambda_\infty$ is nonzero.

\subsection{Proof of Theorem~\ref{th:lambda*}
and of Proposition~\ref{prop:lambda0}}
\label{sect:lambda*}

\begin{proof}[Proof of Theorem~\ref{th:lambda*}]
We already know that the function~$\kappa\mapsto\lambda_*(\kappa)$
is nondecreasing (Corollary~\ref{cor:lambdamonotone}),
nonpositive for~$\kappa\in (0, \, 1)$
and nonnegative for~$\kappa > 1$
(as an immediate consequence of~\eqref{lambda_eps+}, \eqref{lambda_eps-}).
We split the rest of the proof into several steps.

 \medskip
 \noindent
 \emph{Step~1: The function~$\lambda_*$ is continuous away from~$1$.}
 Let us fix~$\kappa_1$, $\kappa_2$ with~$1 < \kappa_1 < \kappa_2$.
 We know that
 \begin{equation} \label{lambda11}
  \lambda_*(\kappa_1) \leq \lambda_*(\kappa_2),
 \end{equation}
 since~$\lambda_*$ is nondecreasing.
 To obtain the opposite inequality,
 we take~$\eps \in (0, \, 1)$ small enough, so that~$\kappa_1
 \abs{\T^n\setminus H_{\eps}} > 1$, and
 use~$v_{\eps, \kappa_1}/\mcG_{\eps,\kappa_2}(v_{\eps,\kappa_1})^{1/2}$
 as a competitor in the definition of~$\lambda(\eps, \, \kappa_2)$.
 We have
 \[
  \mcG_{\eps, \, \kappa_2}(v_{\eps,\kappa_1})
  = \mcG_{\eps, \, \kappa_1}(v_{\eps,\kappa_1})
   - (\kappa_2 - \kappa_1) \int_{\T^n\setminus H_{\eps}}
   v_{\eps,\kappa_1}^2 \, \d x
  = 1 - (\kappa_2 - \kappa_1) \, v_*(\kappa_1)^2
  + \mathrm{o}_{\eps\to 0}(1),
 \]
 at least along a subsequence~$\eps\to 0$, by virtue of
 Lemma~\ref{lemma:constantlimit+}. Therefore,
 \begin{equation*}
  \lambda(\eps, \, \kappa_2)
  \leq \frac{1}{\mcG_{\eps,\kappa_2}(v_{\eps,\kappa_1})}
   \int_{\T^n\setminus H_{\eps}}
   \abs{\nabla v_{\eps,\kappa_1}}^2 \, \d x
  = \frac{\lambda(\eps, \, \kappa_1)}{1 - (\kappa_2 - \kappa_1) \,
   v_*(\kappa_1)^2 + \mathrm{o}_{\eps\to 0}(1)}
 \end{equation*}
 and, by passing to the limit as~$\eps\to 0$
 (possibly, along a subsequence),
 \begin{equation} \label{lambda12}
  \lambda_*(\kappa_2) \leq \frac{\lambda_*(\kappa_1)}
   {1 - (\kappa_2 - \kappa_1) \, v_*(\kappa_1)^2}.
 \end{equation}
 As the function~$\kappa\mapsto v_*(\kappa)$ is bounded
 on any compact subset of~$(1, \, +\infty)$
 (see Lemmas~\ref{lemma:constantlimit+}
 and~\ref{lemma:constantlimit-}),
 the inequalities~\eqref{lambda11} and~\eqref{lambda12}
 imply that~$\lambda_*$ is continuous on~$(1, \, +\infty)$.
 Continuity on~$(0, \, 1)$ is established by a similar argument.

 \medskip
 \noindent
 \emph{Step~2: Left continuity at~$1$.}
 We claim that
 \begin{equation} \label{lambda20}
  \limsup_{\kappa\to 1^-}\frac{\lambda_*(\kappa)}{\kappa - 1}
   \leq \Dir_* ,
 \end{equation}
 which in particular implies that~$\lambda_*$
 is left-continuous at~$\kappa = 1$.
 The proof of~\eqref{lambda20}
 is based on a comparison argument.
 Let~$\zeta_\eps$ be the unique minimiser
 for Problem~\eqref{Cap_eps}.
 Since~$\zeta_\eps = 1$ on~$\partial Y$, 
 the Poincar\'e inequality (applied in~$Y$)
 and Proposition~\eqref{prop:DirCap} imply that
 \begin{equation} \label{lambda20.5}
  \norm{\zeta_\eps - 1}_{L^1(Y)}
  \leq \norm{\zeta_\eps - 1}_{L^2(Y)}
  \leq (\Cap(\eps))^{1/2} \leq C\eps
 \end{equation}
 for some~$\eps$-independent constant~$C$.
 Moreover, by applying the maximum principle
 to Equation~\eqref{Cap_eps_eigen}, we see
 that~$0 \leq \zeta_\eps \leq 1$ in~$Y$.
 Therefore, the function
 \[
  w_{\eps,\kappa} := \left(\frac{1}{2 - 2\kappa}
   + \zeta_\eps \right)^{1/2}
 \]
 is well-defined and can be identified with an element
 of~$H^1(\T^n)$, since~$\zeta_\eps = 1$ on~$\partial Y$.
 We have
 \[
  \begin{split}
   \mcG_{\eps,\kappa}(w_{\eps,\kappa})
   = \frac{1 - \kappa\abs{\T^n\setminus H_{\eps}}}
    {2 - 2\kappa}
     - \kappa \int_{\T^n\setminus H_\eps}\zeta_\eps \, \d x
   = \frac{1}{2} - \kappa + \mathrm{o}_{\eps\to 0}(1),
  \end{split}
 \]
 because~$\zeta_\eps = 0$ in~$H_\eps$,
 $\int_{Y} \zeta_\eps \, \d x \to 1$ as~$\eps\to 0$
 (by~\eqref{lambda20.5})
 and~$\abs{\T^n\setminus H_\eps}\to 1$ as~$\eps\to 0$
 (see Remark~\ref{rk:zerocapacity}).
 In particular, when~$1/2 < \kappa < 1$ we have
 $\mcG_{\eps,\kappa}(w_\eps) < 0$ for~$\eps$ small enough.
 Then, by definition of~$\lambda(\eps, \, \kappa)$,
 the following upper bound holds:
 \[
  \begin{split}
   -\lambda(\eps, \, \kappa)
   \leq -\frac{1}{\mcG_{\eps,\kappa}(w_\eps)}
    \int_{\T^n\setminus H_{\eps}} \abs{\nabla w_\eps}^2 \, \d x
   \leq \frac{2 - 2\kappa}{2\kappa - 1 + \mathrm{o}_{\eps\to 0}(1)}
    \int_{\T^n\setminus H_\eps}
     \frac{\abs{\nabla\zeta_\eps}^2}
     {2 + 4(1 - \kappa)\zeta_\eps} \, \d x.
  \end{split}
 \]
 Since~$\zeta_\eps\geq 0$, we can further estimate
 the right-hand side as
 \[
  \begin{split}
   -\lambda(\eps, \, \kappa)
   \leq \frac{1 - \kappa}
    {2\kappa - 1 + \mathrm{o}_{\eps\to 0}(1)}
    \int_{\T^n\setminus H_\eps} \abs{\nabla\zeta_\eps}^2 \d x.
  \end{split}
 \]
 Dividing both sides of this inequality
 by~$\eps^2(1 - \kappa)$, passing to the limit as~$\eps\to 0$,
 and applying Proposition~\ref{prop:DirCap},
 we obtain
 \begin{equation} \label{lambda21}
  \begin{split}
   -\frac{\lambda_*(\kappa)}{1 - \kappa}
   &\leq \frac{\Cap_*}{2\kappa - 1}
   = \frac{\Dir_*}{2\kappa - 1}
  \end{split}
 \end{equation}
 for all~$\kappa\in (1/2, \, 1)$.
 The claim~\eqref{lambda20} follows.

 \medskip
 \noindent
 \emph{Step~3: Right continuity at~$1$.}
 We claim that
 \begin{equation} \label{lambda30}
  \limsup_{\kappa\to 1^+} \frac{\lambda_*(\kappa)}{\kappa - 1}
  \leq \St_*.
 \end{equation}
 Again, the proof is based on a comparison argument,
 but this time we consider
 \[
  w_{\eps,\kappa} := \frac{\psi_\eps}{(\kappa - 1)^{1/2}},
 \]
 where~$\psi_\eps$ is the minimiser for Problem~\eqref{St_eps}.
 Since~$\psi_\eps = 0$ on~$\partial Y$,
 we can apply the uniform Poincar\'e inequality
 given by Lemma~\ref{lemma:Poincare} and obtain
 \[
  \norm{\psi_\eps}_{L^2(Y\setminus H_\eps)}
  \leq C\norm{\nabla \psi_\eps}_{L^2(Y\setminus H_\eps)}
  \leq C\eps
 \]
 thanks to assumption~\eqref{hp:critical}.
 Now, we have
 \[
  \begin{split}
   \mcG_{\eps,\kappa}(w_{\eps,\kappa})
   = \frac{1}{\kappa - 1}
    - \frac{\kappa}{\kappa - 1}
    \int_{\T^n\setminus H_\eps}\psi_\eps^2 \, \d x
   = \frac{1}{\kappa - 1} + \mathrm{o}_{\eps\to 0}(1)
  \end{split}
 \]
 and
 \[
  \begin{split}
   \lambda(\eps, \, \kappa)
   \leq \frac{1}{\mcG_{\eps,\kappa}(w_\eps)}
    \int_{\T^n\setminus H_{\eps}} \abs{\nabla w_\eps}^2 \, \d x
   \leq \left(\kappa - 1 + \mathrm{o}_{\eps\to 0}(1)\right)
    \St(\eps).
  \end{split}
 \]
 Now~\eqref{lambda30} follows by dividing both sides of this
 inequality by~$\eps^2(\kappa -1)$ and passing to the limit,
 first as~$\eps\to 0$, then as~$\kappa\to 1^+$.

 \medskip
 \noindent
 \textit{Step~4: One has~$\lambda_\infty = \St_*$.}
 From Remark~\eqref{rk:Helly}, we know that~$\lambda_\infty$
 exist and that~$\lambda_\infty\leq\St_*$.
 We claim that the opposite inequality also holds, i.e.
 \begin{equation} \label{lambda40}
  \St_* \leq \lambda_\infty.
 \end{equation}
 We will prove~\eqref{lambda40} by testing~\eqref{St_eps}
 against a suitable competitor.
 Given~$\eps \in (0, \, 1)$, $\kappa > 1$, let~$v_{\eps,\kappa}$
 be the positive minimiser of Problem~\eqref{lambda_eps+} and
 \[
  M_{\eps,\kappa} := \int_{\T^n\setminus H_\eps}
  v_{\eps,\kappa} \, \d x.
 \]
 Let~$K_0\subseteq Y$ be a compact set such that
 $H_{\eps} \subseteq K_0$ for all~$\eps \in (0, \, 1)$
 (as given by assumption~\eqref{hp:decreas}),
 and let~$U_0 := \T^n\setminus K_0$.
 The uniform Poincar\'e inequality in Lemma~\ref{lemma:Poincare}
 and assumption~\eqref{hp:limit} imply
 \begin{equation*}
  \norm{v_{\eps,\kappa} -
   M_{\eps,\kappa}}_{L^2(\T^n\setminus H_\eps)}
  \leq C\lambda(\eps, \, \kappa)^{1/2} \leq C\eps,
 \end{equation*}
 where~$C$ does not depend on~$\eps$.
 (In fact, we can choose~$C$ independently of~$\kappa$ too,
 since~$\lambda_*(\kappa) \leq \lambda_\infty \leq \St_*$
 by Remark~\ref{rk:Helly}.) By applying (interior)
 elliptic regularity estimates and a bootstrap argument
 to Equation~\eqref{eigenvalue}, exactly as
 in the proof of~\eqref{Cap-Dir}, we deduce
 \begin{equation} \label{lambda41}
  \begin{split}
   \norm{v_{\eps,\kappa}
    - M_{\eps,\kappa}}_{L^\infty(U_0)}
   \leq C \kappa \, \eps.
  \end{split}
 \end{equation}
 As a consequence, we can extract a (non-relabelled)
 subsequence~$\eps\to 0$ in such a way
 that~$v_{\eps,\kappa}\to v_*(\kappa)$ uniformly
 in~$U_0$, where~$v_*(\kappa)$
 is the constant given by Lemma~\ref{lemma:constantlimit+}.
 Now, let
 \[
  \sigma_{\eps,\kappa} := \sup_{U_0} v_{\eps,\kappa}, \qquad
  c_{\eps,\kappa} := \fint_{\partial H_\eps}
  \left(v_{\eps,\kappa} - \sigma_{\eps,\kappa}\right)_+^2 \, \d\sigma,
 \]
 where~$t_+ := \max(t, \, 0)$ denotes the positive part
 of a number~$t\in\R$.
 Assuming for a moment that~$c_{\eps,\kappa}\neq 0$, we define
 \[
  w_{\eps,\kappa} := \frac{\left(v_{\eps,\kappa} - \sigma_{\eps,\kappa}\right)_+}{c_{\eps,\kappa}^{1/2}},
 \]
 which is an admissible competitor for Problem~\eqref{St_eps}.
 However, we need to make sure that~$c_{\eps,\kappa}\neq 0$.
 By applying the  inequalities
 $0 \leq x^2 - (x-a)_+^2 \leq 2ax$, which are
 satisfied by any positive~$x$ and~$a$, we obtain
 \begin{equation} \label{lambda41.5}
  \begin{split}
   0 \leq \fint_{\partial H_\eps} \left(v_{\eps,\kappa}^2 -
   (v_{\eps,\kappa} - \sigma_{\eps,\kappa})_+^2\right)\d\sigma
   \leq 2\sigma_{\eps,\kappa}
    \fint_{\partial H_\eps} v_{\eps,\kappa} \, \d\sigma
   = 2\kappa \sigma_{\eps,\kappa}
    \int_{\T^n\setminus H_\eps} v_{\eps,\kappa} \, \d x.
  \end{split}
 \end{equation}
 The last equality follows from Remark~\ref{rk:bdav}.
 We can pass to the limit as~$\eps\to 0$, with the help
 of Lemma~\ref{lemma:constantlimit+} and~\eqref{lambda41}:
 \[
  \begin{split}
   0 \leq \limsup_{\eps\to 0}
   \fint_{\partial H_\eps} \left(v_{\eps,\kappa}^2 -
   (v_{\eps,\kappa} - \sigma_{\eps,\kappa})_+^2\right)\d\sigma
   \leq 2\kappa \, v_*(\kappa)^2 .
  \end{split}
 \]
 Moreover, the constraint~$\mcG_{\eps,\kappa}(v_{\eps,\kappa}) = 1$
 implies~$\fint_{\partial H_\eps} v_{\eps,\kappa}^2 \, \d\sigma \geq 1$, so
 \begin{equation} \label{lambda42}
  \begin{split}
   \liminf_{\eps\to 0} c_{\eps,\kappa}
   \geq 1 - 2\kappa \, v_*(\kappa)^2
   \stackrel{\eqref{v0+}}{\geq}
    1 - \frac{2\kappa}{\kappa^2 - \kappa} .
  \end{split}
 \end{equation}
 In particular, $c_{\eps,\kappa} > 0$ for~$\eps$
 small enough and~$\kappa$ large enough. Since~$w_{\eps,\kappa}$
 is an admissible competitor in the definition~\eqref{St_eps}
 of~$\St(\eps)$, we obtain
 \begin{equation} \label{lambda43}
  \begin{split}
   \St(\eps)
   &\leq \frac{\lambda(\eps,\kappa)}{c_{\eps,\kappa}} .
  \end{split}
 \end{equation}
 By dividing both sides of this inequality by~$\eps^2$
 and passing to the limit as~$\eps\to 0$, with the help of~\eqref{lambda42} we deduce
 \begin{equation*}
  \begin{split}
   \St_*
   &\leq \lambda_*(\kappa)
   \left(1 - \frac{2\kappa}{\kappa^2 - \kappa}\right)^{-1}.
  \end{split}
 \end{equation*}
 Now~\eqref{lambda40} follows by taking
 the limit as~$\kappa\to+\infty$.

 \begin{remark} \label{rk:lambdainfinity}
  The inequality~\eqref{lambda40}
  implies~$\lambda_\infty = \St_*\neq 0$
  and then~$\lambda_*(\kappa) \neq 0$ for all~$\kappa > 0$,
  $\kappa \neq 1$, by Lemma~\ref{lemma:lambda_lower_bd}.
 \end{remark}

 \begin{remark} \label{rk:v*zero}
  Suppose that~$\kappa > 1$ is such that~$v_*(\kappa) = 0$
  for some~$\kappa > 1$. Then, the inequalities~\eqref{lambda42},
  \eqref{lambda43} above take the form~$c_{\eps,\kappa} \geq 1$,
  $\St(\eps) \leq \lambda(\eps, \, \kappa)$ respectively, yielding
  $\St_* \leq \lambda_*(\kappa)$ in the limit as~$\eps\to 0$.
  Equivalently, we have proved that~$v_*(\kappa) > 0$
  for all~$\kappa > 1$ such that~$\lambda_*(\kappa) < \St_*$.
 \end{remark}

 \medskip
 \noindent
 \textit{Step~5: One has~$\lambda_0 \geq \lambda_\infty$.}
 By multiplying by~$\kappa$ both sides of~\eqref{lambda_lower_bd}
 and passing to the limit as~$\kappa\to 0$, we obtain
 \begin{equation} \label{lambda50}
  \lambda_0 \geq \lambda_\infty.
 \end{equation}
 In particular, combining this inequality with~\eqref{lambda40}
 and assumption~\eqref{hp:critical}, we deduce that~$\lambda_0 > 0$.

 \medskip
 \noindent
 \textit{Step~6: Strict monotonicity of~$\lambda_*$.}
 We already know that~$\lambda_*$ is nondecreasing.
 We claim that~$\lambda_*$ is actually strictly increasing,
 except \emph{possibly} for a neighbourhood of~$+\infty$
 where it might be equal to~$\St_*$.
 Let~$\kappa_1$, $\kappa_2$ be such that
 $0 < \lambda_1 \leq \lambda_2$
 and~$\lambda_*(\kappa_1) = \lambda_*(\kappa_2) < \St_*$.
 Since~$\lambda_*$ changes sign across~$\kappa = 1$
 and is always nonzero except at~$\kappa = 1$
 (thanks to~\eqref{lambda_lower_bd}),
 we must have either~$1 < \kappa_1 \leq \kappa_2$
 or $0 < \kappa_1 \leq \kappa_2 < 1$.
 Suppose, for instance, that~$1 < \kappa_1 \leq \kappa_2$.
 Let~$v_{\eps, \kappa_1}$, $v_{\eps,\kappa_2}$
 be the positive minimisers of~(MIN$^+_{\eps,\kappa_1}$),
 (MIN$^+_{\eps,\kappa_2}$) respectively.
 By testing Problem~(MIN$^+_{\eps,\kappa_1}$)
 against~$v_{\eps,\kappa_2}$, testing Problem~(MIN$^+_{\eps,\kappa_2}$)
 against~$v_{\eps,\kappa_1}$, taking the difference
 and dividing by~$\eps^2$, we obtain
 \begin{equation} \label{lambda61}
  \begin{split}
   \frac{\lambda(\eps, \, \kappa_2)
   - \lambda(\eps, \, \kappa_1)}{\eps^2}
    \fint_{\partial H_\eps} v_{\eps,\kappa_1} v_{\eps,\kappa_2} \, \d\sigma
   = \frac{\kappa_2\lambda(\eps, \, \kappa_2)
    - \kappa_1\lambda(\eps, \, \kappa_1)}{\eps^2}
    \int_{\T^n\setminus H_\eps} v_{\eps,\kappa_1} v_{\eps,\kappa_2}
  \end{split}
 \end{equation}
 The average of~$v_{\eps,\kappa_1} v_{\eps,\kappa_2}$
 on~$\partial H_\eps$ remains bounded as~$\eps\to 0$, because
 \[
  \begin{split}
   \fint_{\partial H_\eps} \abs{v_{\eps,\kappa_1}
    v_{\eps,\kappa_2}} \d\sigma
   \leq \frac{1}{2} \fint_{\partial H_\eps}
    \left(v_{\eps,\kappa_1}^2 + v_{\eps,\kappa_2}^2 \right)
    \d\sigma
   &= 1 + \frac{1}{2} \int_{\T^n\setminus H_\eps}
    \left(\kappa_1 v_{\eps,\kappa_1}^2
    + \kappa_2 v_{\eps,\kappa_2}^2 \right) \d x \\
   &= 1 + \frac{1}{2} \left(\kappa_1 v_*(\kappa_1)^2
    + \kappa_2 v_*(\kappa_2)^2 \right) + \mathrm{o}_{\eps\to 0}(1)
  \end{split}
 \]
 by, Lemma~\ref{lemma:constantlimit+}.
 Since we have assumed that~$\lambda_*(\kappa_1) = \lambda_*(\kappa_2)$,
 the left-hand side of~\eqref{lambda61} converges to zero as~$\eps\to 0$.
 We can apply Lemma~\ref{lemma:constantlimit+}
 to pass to the limit as~$\eps\to 0$ on the right-hand side,
 at least along a subsequence. We thus obtain
 \begin{equation*}
  \begin{split}
   \left(\kappa_2 - \kappa_1\right) \, \lambda_*(\kappa_1) \,
    v_*(\kappa_1) \, v_*(\kappa_2) = 0,
  \end{split}
 \end{equation*}
 where~$v_*(\kappa_1)$, $v_*(\kappa_2)$
 are given by Lemma~\ref{lemma:constantlimit+}.
 Note that~$v_*(\kappa_1) > 0$, $v_*(\kappa_2) > 0$,
 due to the assumption that~$\lambda_*(\kappa_1) = \lambda_*(\kappa_2) < \St_*$ and Remark~\ref{rk:v*zero}.
 Therefore, one has~$\kappa_1 = \kappa_2$, which proves the claim.
 In the case~$0 < \kappa < 1$ the proof is analogous, using
 the fact that~$v_*(\kappa_1)$, $v_*(\kappa_2)$ are nonzero
 as an immediate consequence of the lower bound in~\eqref{v0-}.
 This completes the proof of Theorem~\ref{th:lambda*}.
\end{proof}

\begin{proof}[Proof of Proposition~\ref{prop:lambda0}]
 From Remark~\ref{rk:Helly}, we know that the
 limit~$\lambda_0 := -\lim_{\kappa\to 0} \kappa \lambda_*(\kappa)$
 exists and that~$\lambda_0\leq \Dir_*$.
 Assuming conditions~\eqref{hp:lambda0_geometry}
 and~\eqref{hp:lambda0} are satisfied,
 we claim that the opposite inequality is true, i.e.
 \begin{equation} \label{lambda00}
  \Dir_* \leq \lambda_0.
 \end{equation}
 We will prove~\eqref{lambda00} by testing Problem~\eqref{Dir_eps}
 against a suitable competitor. Given~$0 < \kappa < 1$
 and~$\eps \in (0, \, 1)$, let~$v_{\eps,\kappa}$ be the minimiser
 for Problem~\eqref{lambda_eps+} and
 \[
  \sigma_{\eps, \, \kappa} :=
   \sup_{\partial H_\eps} v_{\eps,\kappa}, \qquad
  c_{\eps, \, \kappa} := \kappa \int_{\T^n\setminus H_\eps}
  (v_{\eps,\kappa} - \sigma_{\eps,\kappa})_+^2 \, \d x.
 \]
 Assuming~$c_{\eps,\kappa}\neq 0$, let
 \[
  w_{\eps,\kappa} := \frac{\kappa^{1/2}\left(v_{\eps,\kappa}
   - \sigma_{\eps,\kappa}\right)_+}{c_{\eps,\kappa}^{1/2}}.
 \]
 From the constraint~$\mcG(v_{\eps,\kappa}) = -1$
 and Lemma~\ref{lemma:constantlimit-}, we have
 \begin{equation} \label{lambda01}
  \begin{split}
   &\limsup_{\kappa\to 0} \limsup_{\eps\to 0} \kappa^{1/2}
    \fint_{\partial H_\eps} v_{\eps,\kappa}^2 \, \d\sigma \\
   &\hspace{2cm} = \limsup_{\kappa\to 0} \limsup_{\eps\to 0}
    \left(- \kappa^{1/2} + \kappa^{3/2}
    \int_{\T^n\setminus H_\eps} v_{\eps,\kappa}^2 \, \d x \right) \\
   &\hspace{2cm} \leq \limsup_{\kappa\to 0}
   \left(-\kappa + \frac{\kappa^{3/2} \abs{\lambda_*(\kappa)}^{1/2}
   + \kappa^{3/2} \left(\kappa \abs{\lambda_*(\kappa)}
   - \lambda_\infty\abs{\kappa - 1}
   \right)^{1/2}}{\lambda_\infty^{1/2} \abs{\kappa - 1}} \right) = 0.
  \end{split}
 \end{equation}
 We claim that this condition, combined with
 assumptions~\eqref{hp:lambda0_geometry}
 and~\eqref{hp:lambda0}, implies
 \begin{equation} \label{lambda02}
  \limsup_{\kappa\to 0} \limsup_{\eps\to 0}
   \kappa^{1/2} \sigma_{\eps, \, \kappa} = 0.
 \end{equation}
 Indeed, suppose~\eqref{lambda02} is not satisfied.
 Then, there exist a number~$\eta > 0$, a
 positive sequence~$\kappa_j\to 0$
 and, for each~$j$, a positive sequence~$\eps_{i,j}\to 0$
 and points~$x_{i,j}\in\partial H_{\eps_{i,j}}$ such that
 \begin{equation*}
   \kappa_j^{1/2} v_{\eps_{i,j}, \, \kappa_j}(x_{i,j})
   \geq \eta.
 \end{equation*}
 Thanks to assumption~\eqref{hp:lambda0},
 it is possible to find a number~$\rho > 0$
 (independent of~$i$, $j$) such that
 \begin{equation*}
   \kappa_j^{1/2} v_{\eps_{i,j}, \, \kappa_j}(x)
   \geq \frac{\eta}{2} \qquad
   \textrm{for all } x\in\partial H_{\eps_{i,j}}
   \textrm{ satisfying } \abs{x - x_{i,j}}
   \leq \rho \, \sigma(\partial H_\eps)^{\frac{1}{n-1}} \! .
 \end{equation*}
 Combining this inequality with assumption~\eqref{hp:lambda0_geometry}, we deduce
 \begin{equation*}
   \kappa_j^{1/2} \fint_{\partial H_\eps}
    v_{\eps_{i,j}, \, \kappa_j} \, \d\sigma \geq C\rho^{n-1}\eta
 \end{equation*}
 for some constant~$C$ that does not depend on~$\rho$, $i$, $j$.
 This contradicts~\eqref{lambda01} and completes the proof of~\eqref{lambda02}.

 Next, we proceed to estimate~$c_{\eps,\kappa}$.
 By applying the inequality~$x^2 - (x - a)_+^2 \leq 2a x$,
 exactly as in~\eqref{lambda41.5}--\eqref{lambda42},
 we obtain
 \[
  \begin{split}
   c_{\eps,\kappa} \geq
    \kappa\int_{\T^n\setminus H_\eps}
    v_{\eps,\kappa}^2 \, \d x - 2\kappa \,\sigma_{\eps,\kappa}
    \int_{\T^n\setminus H_\eps} v_{\eps,\kappa} \, \d x
  \end{split}
 \]
 and hence, from~\eqref{lambda02} and Lemma~\ref{lemma:constantlimit-},
 \begin{equation} \label{lambda03}
  \begin{split}
   \liminf_{\kappa\to 0} \liminf_{\eps\to 0} c_{\eps,\kappa}
   &\geq \liminf_{\kappa\to 0} \frac{\kappa}{\kappa - \kappa^2} = 1.
  \end{split}
 \end{equation}
 Now, taking~$w_{\eps,\kappa}$ as a competitor
 for Problem~\eqref{Dir_eps}, we deduce
 \[
  \Dir_* \leq \frac{\kappa\lambda(\eps,\kappa)}{c_{\eps,\kappa}}.
 \]
 We divide both sides of this inequality by~$\eps^2$
 and pass to the limit, first as~$\eps\to 0$, then as~$\kappa\to 0$.
 The inequality~\eqref{lambda00} now follows because of~\eqref{lambda03}.
\end{proof}

\section{The ``strange term'': proof of Theorem~\ref{th:strange_term}}
\label{sect:homog}

\label{main_proof}

\subsection{Trace inequalities}
\label{sect:trace}

The minimisation problem~\eqref{lambda_eps+} for the case~$\kappa > 1$
allows us to obtain trace inequalities, which will be useful in the
proof of Theorem~\ref{th:strange_term}.
Thanks to the uniform Poincar\'e inequality (Lemma~\ref{lemma:Poincare}),
we can prove a trace inequality similar to~\eqref{singletrace+}
for functions that do \emph{not} satisfy
periodic boundary conditions at the boundary of the unit cell.

\begin{lemma} \label{lemma:singletrace-N}
 For any~$\kappa > 1$, any~$\eps\in (0, \, 1)$
 such that~$\kappa\abs{Y\setminus H_\eps} > 1$,
 and any (not necessarily periodic)
 function~$v\in H^1(Y\setminus H_\eps)$, there holds
 \[
  \fint_{\partial H_\eps} v^2 \, \d\sigma
  \leq \frac{C}{\lambda(\eps, \, \kappa)}
   \int_{Y\setminus H_\eps} \abs{\nabla v}^2 \, \d x
  + 4\kappa \int_{Y\setminus H_\eps} v^2 \, \d x,
 \]
 where
 \begin{equation} \label{singletrace-C}
  C := 2\left(1 + C_P \sup_{0 < \eps < 1}\dist(H_\eps, \, \partial Y)^{-2}\right)
 \end{equation}
 and~$C_P$ is the constant given by Lemma~\ref{lemma:Poincare}.
\end{lemma}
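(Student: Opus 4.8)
The plan is to reduce the non-periodic trace inequality on $Y\setminus H_\eps$ to the periodic one on $\T^n\setminus H_\eps$ (Lemma~\ref{lemma:singletrace}) by replacing a general $v\in H^1(Y\setminus H_\eps)$ with a periodic function that agrees with $v$ near $H_\eps$. First I would fix a cut-off function: since by Assumption~\eqref{hp:decreas} the sets $H_\eps$ stay inside a fixed compact $K_0\csubset Y$, we have $d_0:=\inf_{\eps>0}\dist(H_\eps,\partial Y)>0$, so I can choose $\chi\in C^\infty_{\mathrm c}(Y)$ with $\chi\equiv 1$ on a neighbourhood of $K_0$, $0\le\chi\le 1$, and $\abs{\nabla\chi}\le C/d_0$. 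Then $\chi v$, extended by zero outside $Y$, is a legitimate element of $H^1(\T^n\setminus H_\eps)$, and it coincides with $v$ on a neighbourhood of $H_\eps$; in particular its trace on $\partial H_\eps$ equals that of $v$. Applying~\eqref{singletrace+} to $\chi v$ gives
\[
 \fint_{\partial H_\eps} v^2\,\d\sigma
  = \fint_{\partial H_\eps} (\chi v)^2\,\d\sigma
  \le \frac{1}{\lambda(\eps,\,\kappa)}\int_{Y\setminus H_\eps}\abs{\nabla(\chi v)}^2
  + \kappa\int_{Y\setminus H_\eps}(\chi v)^2 .
\]

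Next I would estimate the two terms on the right. For the zero-order term, $0\le\chi\le 1$ immediately yields $\int(\chi v)^2\le\int_{Y\setminus H_\eps}v^2$. For the gradient term, I expand $\nabla(\chi v)=\chi\nabla v+v\nabla\chi$ and use $(a+b)^2\le 2a^2+2b^2$ together with $\abs{\nabla\chi}\le C/d_0$ to get
\[
 \int_{Y\setminus H_\eps}\abs{\nabla(\chi v)}^2
  \le 2\int_{Y\setminus H_\eps}\abs{\nabla v}^2
  + \frac{2C^2}{d_0^2}\int_{Y\setminus H_\eps} v^2 .
\]
Here the second term has a factor $1/d_0^2$ but no longer any $\kappa$; to fold it back into a $\kappa\int v^2$ contribution of the stated shape I would invoke the uniform Poincaré inequality — but $v$ need not have zero trace on $\partial Y$, so I instead control $\int_{Y\setminus H_\eps}v^2$ in terms of $\int\abs{\nabla v}^2$ only up to the average, via~\eqref{Poincare-average}. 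This is exactly where the constant $C_P$ from Lemma~\ref{lemma:Poincare} enters, and it explains the appearance of $C_P/d_0^2$ in~\eqref{singletrace-C}; I would be slightly careful that the average term, which is genuinely lower-order, gets absorbed into the $\kappa\int v^2$ bracket (using $\kappa>1$) rather than needing a zero-trace hypothesis.

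Assembling the pieces, the gradient term contributes $\tfrac{2}{\lambda(\eps,\kappa)}\int\abs{\nabla v}^2$ plus a term of size $\tfrac{C_P}{d_0^2\,\lambda(\eps,\kappa)}\int\abs{\nabla v}^2$ coming from Poincaré applied to the $v\nabla\chi$ part — giving the factor $2(1+C_P/d_0^2)$ in front of $\lambda(\eps,\kappa)^{-1}\int\abs{\nabla v}^2$ — while the various $\int v^2$ leftovers combine with $\kappa\int(\chi v)^2$ to produce at most $4\kappa\int_{Y\setminus H_\eps}v^2$ (the factor $4$ absorbing the numerical constants from the $(a+b)^2$ splittings and the $\kappa>1$ trick). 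The only real subtlety, and the step I expect to need the most care, is the bookkeeping that turns the $\kappa$-free $\tfrac{1}{\lambda(\eps,\kappa)}\int v^2$ term into a $\kappa\int v^2$ term: one uses that $1/\lambda(\eps,\kappa)\le C\eps^{-2}$ is uniformly bounded below only in the wrong direction, so the honest route is the Poincaré estimate above, keeping the $v\nabla\chi$ contribution inside the gradient-energy term multiplied by $\lambda(\eps,\kappa)^{-1}$ exactly as recorded in~\eqref{singletrace-C}. Everything else is routine.
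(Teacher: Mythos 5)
Your plan — periodise by a cut-off and then invoke Lemma~\ref{lemma:singletrace} — is the right starting point, and it is the one the paper uses. But the cut-off you chose, namely $\chi v$ with $\chi\equiv 0$ near $\partial Y$, produces a term that cannot be absorbed, and the "slight care" you defer at the end is in fact the whole difficulty. Expanding $\nabla(\chi v)=\chi\nabla v+v\nabla\chi$ gives a contribution $\frac{2}{d_0^2\,\lambda(\eps,\kappa)}\int_{Y\setminus H_\eps}v^2$, and the prefactor $1/\lambda(\eps,\kappa)$ is of order $\eps^{-2}\to\infty$ in the critical regime~\eqref{hp:critical}. Poincaré~\eqref{Poincare-average} only controls $\int(v-\bar v)^2$ by $C_P\int|\nabla v|^2$; writing $\int v^2=\int(v-\bar v)^2+|Y\setminus H_\eps|\,\bar v^2$, you can move the oscillatory part into the $\frac{C}{\lambda}\int|\nabla v|^2$ bracket, but you are left with $\frac{2}{d_0^2\,\lambda(\eps,\kappa)}\,|Y\setminus H_\eps|\,\bar v^2$, which grows like $\eps^{-2}\int v^2$ and is \emph{not} dominated by $4\kappa\int v^2$ for any $\eps$-independent constant. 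The constant function $v\equiv 1$ already shows the bound you would obtain is useless. Using "$\kappa>1$" does not help, since $\kappa$ is fixed while $\lambda(\eps,\kappa)^{-1}$ is unbounded.

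The fix is to periodise $v$ to its average rather than to zero. In the paper's proof one takes $\xi\in\mathrm{Lip}(Y)$ with $\xi=1$ on $H_\eps$, $\xi=0$ on $\partial Y$, $|\nabla\xi|\le 1/d_0$, and sets $w:=\xi v+(1-\xi)\bar v$ where $\bar v:=\fint_{Y\setminus H_\eps}v$. Then $w=\bar v$ on $\partial Y$ (so $w\in H^1(\T^n\setminus H_\eps)$), $w=v$ on $\partial H_\eps$, and the crucial cancellation happens at the gradient level:
\[
 \nabla w=\xi\nabla v+(v-\bar v)\nabla\xi,
\]
so that only the mean-free part $(v-\bar v)$ hits $\nabla\xi$. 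Now $\int|\nabla w|^2\le 2\int|\nabla v|^2+\frac{2}{d_0^2}\int(v-\bar v)^2\le 2\bigl(1+\frac{C_P}{d_0^2}\bigr)\int|\nabla v|^2$ by~\eqref{Poincare-average}, with no leftover zero-order contribution, while $w^2\le 2v^2+2\bar v^2$ and $\int\bar v^2\le\int v^2$ give $\kappa\int w^2\le 4\kappa\int v^2$. This is exactly~\eqref{singletrace-C}. Your write-up identifies the right danger, but the decomposition $\chi v$ does not avoid it; you need the affine interpolant $\xi v+(1-\xi)\bar v$, which keeps the troublesome constant mode in the part with zero gradient.
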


Note that~$C < +\infty$, by virtue of
assumptions~\eqref{hp:H}, \eqref{hp:decreas}
and Lemma~\ref{lemma:Poincare}.

\begin{proof}[Proof of Lemma~\ref{lemma:singletrace-N}]
 Given~$\eps$ and~$\kappa$ as above, we define
 \[
  \xi(y) := \min\left\{1, \,
   \frac{\dist(y, \, \partial Y)}{\dist(H_\eps, \, \partial Y)} \right\}\!,
  \qquad y\in Y.
 \]
 The function~$\xi$ is well-defined,
 Lipschitz-continuous, equal to zero on~$\partial Y$
 and equal to~$1$ in~$H_\eps$.
 For any~$v\in H^1(Y\setminus H_\eps)$,
 let~$\bar{v}: = \fint_{Y\setminus H_\eps} v$
 and consider~$w := \xi v + (1 - \xi)\bar{v}$.
 Then, $w$ is constant on~$\partial Y$,
 so it can be identified with an element
 of~$H^1(\T^n\setminus H_\eps)$.
 By Lemma~\ref{lemma:singletrace}, we have
 \begin{equation} \label{comparelambda1}
  \begin{split}
   \fint_{\partial H_\eps} v^2\, \d\sigma
   &= \fint_{\partial H_\eps} w^2\, \d\sigma
   \leq \frac{1}{\lambda(\eps, \, \kappa)}
    \int_{Y\setminus H_\eps} \abs{\nabla w}^2 \, \d x
    + \kappa \int_{Y\setminus H_\eps} w^2 \, \d x.
  \end{split}
 \end{equation}
 The right-hand side of~\eqref{comparelambda1} can be further
 estimated by observing that
 \[
  \abs{\nabla w}^2 = \abs{\xi\nabla v + (v - \bar{v})\nabla\xi}^2
  \leq 2\abs{\nabla v}^2 + \frac{2 \abs{v - \bar{v}}^2}{\dist(H_\eps, \, \partial Y)^2}
 \]
 and
 \[
  w^2 \leq \left(\abs{v} + \abs{\bar{v}}\right)^2
  \leq 2v^2 + 2\bar{v}^2.
 \]
 Then, the estimate~\eqref{comparelambda1}, combined with
 Lemma~\ref{lemma:Poincare} and the H\"older inequality, implies
 \[
  \begin{split}
   \fint_{\partial H_\eps} v^2\, \d\sigma
   \leq \frac{2}{\lambda(\eps, \, \kappa)}
    \left(1 + \frac{C_P}{\dist(H_\eps, \, \partial Y)^2}\right)
    \int_{Y\setminus H_\eps} \abs{\nabla v}^2 \, \d x
    + 2\kappa \left(1+ \abs{Y\setminus H_\eps}^2\right)
    \int_{Y\setminus H_\eps} v^2 \, \d x,
  \end{split}
 \]
 and the lemma follows.
\end{proof}

As a consequence of Lemma~\ref{lemma:singletrace-N}, we obtain
a trace inequality on the perforated domain~$\Omega_\eps$.

\begin{lemma}
\label{lemma:trace}
 Let~$(\mu_\eps)_{0 < \eps < 1}$ be any family
 of positive numbers satisfying~\eqref{mu_eps}.
 For any~$\kappa > 1$, any small enough~$\eps \in (0, \, 1)$
 small enough and any~$u\in H^1(\Omega_\eps)$
 such that~$u = 0$ on~$\partial\Omega_\eps\cap\partial\Omega$
 (in the sense of traces), there holds
 \begin{equation} \label{traceeps}
  \frac{1}{\mu_\eps}
  \int_{\Gamma_\eps} u^2 \, \d\sigma
  \leq \frac{C\eps^2}{\lambda(\eps, \, \kappa)}
   \int_{\Omega_\eps} \abs{\nabla u}^2 \, \d x
  + C\kappa \int_{\Omega_\eps} u^2 \, \d x,
 \end{equation}
 where the constant~$C$ does not depend on~$\eps$, $\kappa$, $u$.
 In particular, there exists a constant~$M > 0$ such that,
 for all small enough~$\eps \in (0, \, 1)$
 and all~$u\in H^1(\Omega_\eps)$
 with~$u = 0$ on~$\partial\Omega_\eps\cap\partial\Omega$,
 there holds
 \begin{equation} \label{trace}
  \frac{1}{\mu_\eps}
  \int_{\Gamma_\eps} u^2 \, \d\sigma
  \leq M\int_{\Omega_\eps} \left(\abs{\nabla u}^2 + u^2\right) \d x .
 \end{equation}
\end{lemma}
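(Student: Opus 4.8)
\textit{Strategy.} The plan is to localise to the $\eps$-cells of the periodic grid, apply the cellwise estimate of Lemma~\ref{lemma:singletrace-N} on each cell after rescaling, and sum up. First I would extend $u$ by zero across $\partial\Omega$: since $\Omega$ is Lipschitz and $u$ vanishes on $\partial\Omega_\eps\cap\partial\Omega$ in the sense of traces, the function $\tilde u$ that equals $u$ on $\Omega_\eps$ and $0$ on $(\R^n\setminus\overline\Omega)\setminus\mcP_\eps$ belongs to $H^1_{\mathrm{loc}}(\R^n\setminus\mcP_\eps)$ and satisfies $\nabla\tilde u = \one_{\Omega_\eps}\nabla u$ almost everywhere. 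Set $Z_\eps := \{z\in\Z^n : (\eps z+\eps Y)\cap\Omega\neq\emptyset\}$. By periodicity of $\mcP_\eps$ we have $\Gamma_\eps\subseteq\bigcup_{z\in Z_\eps}(\eps z+\eps\,\partial H_\eps)$, the cubes $\{\eps z+\eps Y : z\in Z_\eps\}$ are pairwise essentially disjoint and cover $\Omega_\eps$, and $(\eps z+\eps Y)\setminus\mcP_\eps = \eps z+\eps(Y\setminus H_\eps)$ for each $z\in Z_\eps$ (because $H_\eps\subseteq K_0\csubset Y$, so only the $z$-copy of the hole meets this cube).

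\textit{The cell estimate and its sum.} For a fixed $z\in Z_\eps$, the rescaled function $v(y):=\tilde u(\eps z+\eps y)$ lies in $H^1(Y\setminus H_\eps)$; since $\kappa\,\abs{Y\setminus H_\eps}>1$, Lemma~\ref{lemma:singletrace-N} applies to $v$. A change of variables gives $\int_{Y\setminus H_\eps}\abs{\nabla v}^2 = \eps^{2-n}\int_{\eps z+\eps(Y\setminus H_\eps)}\abs{\nabla\tilde u}^2$, $\int_{Y\setminus H_\eps}v^2 = \eps^{-n}\int_{\eps z+\eps(Y\setminus H_\eps)}\tilde u^2$, and, recalling $\sigma(\partial H_\eps)=\eps\mu_\eps$, also $\fint_{\partial H_\eps}v^2\,\d\sigma = \eps^{-n}\mu_\eps^{-1}\int_{\eps z+\eps\,\partial H_\eps}\tilde u^2\,\d\sigma$. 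Substituting into the inequality of Lemma~\ref{lemma:singletrace-N} and multiplying through by $\eps^n$ yields
\[
 \frac{1}{\mu_\eps}\int_{\eps z+\eps\,\partial H_\eps}\tilde u^2\,\d\sigma
 \leq \frac{C\eps^2}{\lambda(\eps,\,\kappa)}\int_{\eps z+\eps(Y\setminus H_\eps)}\abs{\nabla\tilde u}^2
 + 4\kappa\int_{\eps z+\eps(Y\setminus H_\eps)}\tilde u^2 ,
\]
with $C$ as in~\eqref{singletrace-C}. Summing over $z\in Z_\eps$, bounding $\int_{\Gamma_\eps}u^2\,\d\sigma\le\sum_{z\in Z_\eps}\int_{\eps z+\eps\,\partial H_\eps}\tilde u^2\,\d\sigma$ on the left, and using the essential disjointness of the cells together with $\nabla\tilde u=\one_{\Omega_\eps}\nabla u$ (so cells protruding out of $\Omega$ add nothing) on the right, produces exactly~\eqref{traceeps}.

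\textit{Passing to~\eqref{trace}.} Here I would fix once and for all a constant $\kappa>1$. Since $\abs{H_\eps}\to 0$ as $\eps\to0$ (Remark~\ref{rk:zerocapacity}), the condition $\kappa\,\abs{Y\setminus H_\eps}>1$ holds for all $\eps$ small, so~\eqref{traceeps} applies; and by Assumption~\eqref{hp:limit} together with Theorem~\ref{th:lambda*}, $\eps^{-2}\lambda(\eps,\,\kappa)\to\lambda_*(\kappa)>0$, whence $\eps^2/\lambda(\eps,\,\kappa)\le 2/\lambda_*(\kappa)$ for $\eps$ small; taking $M:=\max\{\,2C/\lambda_*(\kappa),\,4\kappa\,\}$ in~\eqref{traceeps} gives~\eqref{trace}. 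The only genuinely delicate point is the zero-extension and the bookkeeping at cells meeting $\partial\Omega$: one must check that $\tilde u\in H^1$ across the (Lipschitz) part of $\partial\Omega$ — which is precisely where the hypotheses that $\Omega$ is Lipschitz and that $u$ has vanishing trace are used — and that such cells contribute no spurious gradient or boundary mass since $\tilde u\equiv 0$ outside $\overline\Omega$ there. Everything else reduces to the change of variables in Lemma~\ref{lemma:singletrace-N} and a sum over disjoint cells.
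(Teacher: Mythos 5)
Your proposal is correct and takes essentially the same route as the paper: extend $u$ by zero across $\partial\Omega$, apply the single-cell inequality of Lemma~\ref{lemma:singletrace-N} on each rescaled $\eps$-cube, sum over the cells, and for~\eqref{trace} invoke Assumption~\eqref{hp:limit} with Theorem~\ref{th:lambda*} to bound $\eps^2/\lambda(\eps,\kappa)$ uniformly for small~$\eps$. The paper condenses this into ``Lemma~\ref{lemma:singletrace-N} combined with a scaling argument''; you have simply filled in the change of variables and cell bookkeeping, and your choice of constant $M$ differs only cosmetically from the paper's.
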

\begin{proof}
 Let~$\overline{\mu}_\eps := \sigma(\partial H_\eps)/\eps$,
 as in~\eqref{mu_eps_bar}.
 By Remark~\ref{rk:mu_eps_bar}, we know~$\overline{\mu}_\eps$
 satisfies~\eqref{mu_eps}. Therefore, it suffices to prove
 the lemma with~$\mu_\eps = \overline{\mu}_\eps$, since any other
 family~$(\mu_\eps^\prime)_{0 < \eps < 1}$
 satisfying~\eqref{mu_eps} also satisfies
 $\mu^\prime_\eps/\overline{\mu}_\eps\to 1$ as~$\eps\to 0$
 and hence $1/\mu_\eps^\prime
 \leq 2/\overline{\mu}_\eps$ for small enough~$\eps$.
 We can identify~$u$ with a (compactly supported) function
 in~$H^1(\R^n\setminus\mcP_\eps)$, by setting~$u := 0$
 on~$\R^n\setminus(\mcP_\eps\cup\Omega)$.
 Then \eqref{traceeps} with~$\mu_\eps = \overline{\mu}_\eps$
 follows by Lemma~\ref{lemma:singletrace-N}
 combined with a scaling argument.
 If we fix~$\kappa$ to be large enough and take~$\eps$ to be small enough
 (depending on~$\kappa$), then~\eqref{hp:limit},
 Theorem~\ref{th:lambda*} and~\eqref{hp:critical} imply
 \[
  \frac{\lambda(\eps, \, \kappa)}{\eps^2}
  \geq \frac{\lambda_*(\kappa)}{2} \geq \frac{\St_*}{4} > 0,
 \]
 so~\eqref{trace} follows from~\eqref{traceeps},
 with~$M := 4\max\{C/\St_*, \, \kappa\}$.
\end{proof}

\begin{remark} \label{rk:tracenoncritical}
 By examining the proof of Lemma~\ref{lemma:trace},
 we see that the inequality~\eqref{traceeps} is completely independent
 of assumptions~\eqref{hp:critical}--\eqref{hp:limit}.
 For~\eqref{trace} to hold, it suffices to assume
 that~$(H_\eps)_{0 < \eps < 1}$
 satisfies~\eqref{hp:first}--\eqref{hp:extension} and that
 \begin{equation} \label{hp:otherregimes}
  \textrm{there exists } \kappa> 1 \textrm{ such that }
  \liminf_{\eps\to 0} \frac{\lambda(\eps, \, \kappa)}{\eps^2} > 0.
 \end{equation}
 This observation will be useful in the analysis
 of non-critical regimes, where assumption~\eqref{hp:critical} fails
 (Section~\ref{sect:otherregimes} below).
\end{remark}

\subsection{Proof of Theorem~\ref{th:strange_term}}
\label{sect:strange_term}

Henceforth, we denote by~$C$ several constants that may depend on
the domain~$\Omega$, the family of sets $(H_{\eps})_{0 < \eps < 1}$
and the parameters~$\alpha$, $\beta$, but not on~$\eps$.
Let~$H^1_{\mathrm{loc}}(\R^n\setminus\mcP_\eps)$,
respectively~$H^1_{\mathrm{loc}}(\R^n)$, denote the set
of measurable functions~$\R^n\setminus\mcP_\eps\to\R$,
respectively~$\R^n\to\R$, such that~$u\in H^1(U\setminus\mcP_\eps)$,
respectively~$u\in H^1(U)$, for all open and bounded set~$U\subseteq\R^n$.
Let~$E_\eps\colon H^1(Y\setminus H_{\eps})\to H^1(Y)$
be the extension operator provided by assumption~\eqref{hp:extension}.
By periodicity and scaling, we can construct an extension
operator
\begin{equation} \label{extension}
 \begin{split}
  &\widetilde{E}_\eps\colon
   H^1_{\mathrm{loc}}(\R^n\setminus\mcP_\eps)
   \to H^1_{\mathrm{loc}}(\R^n)
 \end{split}
\end{equation}
that satisfies the following estimate.
Given~$U\subseteq\R^n$, let~$U_\eps$ be the union of the
cubes~$\eps z + \eps Y$ with~$z\in\Z^n$
such that~$(\eps z + \eps Y)\cap U \neq \varnothing$.
Then, $\widetilde{E}_\eps$ satisfies
\begin{equation*}
 \|\nabla(\widetilde{E}_\eps u)\|_{L^2(U)}
 \leq C \norm{\nabla u}_{L^2(U_\eps)}
\end{equation*}
for all bounded, open~$U\subseteq\R^n$ and any
$u\in H^1_{\mathrm{loc}}(\R^n\setminus\mcP_\eps)$,
where the constant~$C > 0$ does not depend on~$\eps$, $u$.
Moreover, the operator~$\widetilde{E}_\eps$
``preserves the support up to a small error'' --- that is,
for all~$\eps\in (0, \, 1)$ and all
$u\in H^1_{\mathrm{loc}}(\R^n\setminus\mcP_\eps)$, there holds
\begin{equation} \label{extensionsupports}
 \spt(\widetilde{E}_\eps u)
  \subseteq 
  \left\{x\in\R^n\colon \dist(x, \, \spt u)\leq \sqrt{n} \eps \right\} \!.
\end{equation}
Here~$\spt(\widetilde{E}_\eps u)$, $\spt u$
denote the supports of~$\widetilde{E}_\eps u$, $u$ respectively.

Now, let~$u_\eps$ be the unique solution to Problem~\eqref{main_eq}.
We identify~$u_\eps$ with an element of~$H^1_{\mathrm{loc}}(\R^n\setminus\mcP_\eps)$ by setting~$u_\eps := 0$
in~$\R^n\setminus(\Omega\cup\mcP_\eps)$.
By testing Equation~\eqref{main_eq} against~$u_\eps$ itself,
we see that~$u_\eps$ satisfies
\begin{equation*}
 \norm{\nabla u_\eps}_{L^2(\Omega_\eps)}
  + \norm{u_\eps}_{L^2(\Omega_\eps)}
  \leq C\norm{f}_{L^2(\Omega)}
    + C\mu_\eps^{-1/2} \norm{g}_{L^2(\Gamma_\eps)} \! .
\end{equation*}
Furthermore, H\"older inequality implies
$\norm{g}_{L^2(\Gamma_\eps)}\leq \sigma(\Gamma_\eps)^{1/2}
\norm{g}_{L^\infty(\Omega)}$ and assumption~\eqref{mu_eps}
gives~$\sigma(\Gamma_\eps) \leq C \mu_\eps$, so we have
\begin{equation} \label{bounds}
 \norm{\nabla u_\eps}_{L^2(\Omega_\eps)}
  + \norm{u_\eps}_{L^2(\Omega_\eps)}
  \leq C\norm{f}_{L^2(\Omega)}
     + C \norm{g}_{C^0(\overline{\Omega})} \! .
\end{equation}
Note that~\eqref{bounds} holds even when~$\alpha = 0$,
since we can apply the Poincar\'e inequality to~$\widetilde{E}_\eps u_\eps$,
which is supported in a~$(\sqrt{n}\eps)$-neighbourhood of~$\Omega$.
Thanks to~\eqref{bounds}, 
we can extract a (non-relabelled) subsequence so that
\begin{equation} \label{weakconv}
 \widetilde{E}_\eps u_\eps\rightharpoonup u_0 \qquad
 \textrm{weakly in } H^1(\R^n) \textrm{ as } \eps\to 0.
\end{equation}
The limit function~$u_0$ is supported in~$\overline{\Omega}$,
because of~\eqref{extensionsupports}.
Our task is to identify the equation satisfied by~$u_0$.
We will need the following results.

\begin{lemma} \label{lemma:surfacetobulk}
 Let~$(\mu_\eps)_{0 < \eps < 1}$ be any family of
 positive numbers satisfying~\eqref{mu_eps}.
 For any~$\eps\in (0, \, 1)$, let~$\tau_\eps\in H^1_{\mathrm{loc}}(\R^n\setminus\mcP_\eps)$
 be an~$\eps\Z^n$-periodic function. Suppose that
 \begin{equation} \label{hp:stb1}
  \sup_{0 < \eps < 1} \left(\norm{\nabla\tau_\eps}_{L^2(\Omega_\eps)}
   + \norm{\tau_\eps}_{L^2(\Omega_\eps)}\right) < +\infty
 \end{equation}
 and that the limit
 \begin{equation} \label{hp:stb2}
  \tau_* := \lim_{\eps\to 0} \fint_{\partial H_\eps}
   \tau_\eps(\eps y) \, \d\sigma(y)
 \end{equation}
 exists in~$\R$. Then, for any continuous
 function~$h\colon\Omega\to\R$ with compact
 support in~$\Omega$, there holds
 \begin{equation} \label{stb}
  \lim_{\eps\to 0} \frac{1}{\mu_\eps}
   \int_{\Gamma_\eps} \tau_\eps h  \, \d\sigma
  = \tau_* \int_{\Omega} h \, \d x.
 \end{equation}
\end{lemma}
\begin{proof}
 We can assume with no loss of generality that
 $\mu_\eps = \overline{\mu}_\eps := \sigma(\partial H_\eps)/\eps$,
 because~$\overline{\mu}_\eps$ satisfies~\eqref{mu_eps}
 (see Remark~\ref{rk:mu_eps_bar}) and any other family
 $(\mu_\eps^\prime)_{0 < \eps < 1}$ satisfying~\eqref{mu_eps}
 is such that~$\mu_\eps^\prime/\overline{\mu}_\eps\to 1$
 as~$\eps\to 0$.
 For all~$z\in\eps\Z^n$, let~$Y_z^\eps := z + \eps Y$.
 Let~$\spt h$ be the support of~$h$, and let~$Z_\eps$
 be the set of points~$z\in\eps\Z^n$
 such that~$Y_z^\eps\cap (\spt h) \neq \varnothing$.
 Since~$\spt h$ is a compact subset of~$\Omega$,
 for~$\eps$ small enough we have~$Y_z^\eps\subseteq\Omega$
 for all~$z\in Z_\eps$.
 Using the continuity of~$h$, we approximate~$h$
 with a constant on each~$Y_z^\eps$ and take advantage
 of the fact that~$\tau_\eps$ is periodic. More precisely,
 we have
 \begin{equation*}
  \begin{split}
   \frac{1}{\overline{\mu}_\eps}
    \int_{\Gamma_\eps} \tau_\eps h \, \d\sigma
   &= \sum_{z\in Z_\eps} \frac{1}{\overline{\mu}_\eps}
    \int_{z + \eps \partial H_\eps} h(z) \tau_\eps(x) \, \d\sigma(x)
    + \sum_{z\in Z_\eps} \frac{1}{\overline{\mu}_\eps}
    \int_{z + \eps \partial H_\eps}
    \left(h(x) - h(z)\right) \tau_\eps(x) \, \d\sigma(x) \\
   &=: \mathrm{I}_\eps + \mathrm{II}_\eps.
  \end{split}
 \end{equation*}
 We analyse the terms on the right-hand side separately.
 The first one can be evaluated by making the
 change of variable~$x = z + \eps y$ and recalling that~$\overline{\mu}_\eps = \sigma(\partial H_\eps)/\eps$:
 \begin{equation*}
  \begin{split}
   \mathrm{I}_\eps
   = \eps^n \sum_{z\in Z_\eps} h(z)
    \fint_{\partial H_\eps} \tau_\eps(\eps y) \, \d\sigma(y).
  \end{split}
 \end{equation*}
 Now a Riemann sum for the function~$h$
 has appeared on the right-hand side. Using the continuity of~$h$
 and assumption~\eqref{hp:stb2}, we deduce
 \begin{equation} \label{stb1}
  \begin{split}
   \lim_{\eps\to 0} \mathrm{I}_\eps
     = \tau_* \int_\Omega h \, \d x.
  \end{split}
 \end{equation}
 For the second term, we have
 \begin{equation*}
  \begin{split}
   \abs{\mathrm{II}_\eps}
    &\leq \frac{1}{\overline{\mu}_\eps}
     \left(\int_{\Gamma_\eps} \abs{\tau_\eps} \d\sigma\right)
     \sup_{z\in Z_\eps} \norm{h - h(z)}_{L^\infty(Y_z^\eps)}  \\
    &\leq \left(\frac{1}{\overline{\mu}_\eps}
     \int_{\Gamma_\eps} \tau_\eps^2 \, \d\sigma\right)^{1/2}
     \left(\frac{\sigma(\Gamma_\eps)}{\overline{\mu}_\eps}\right)^{1/2}
     \sup_{z\in Z_\eps} \norm{h - h(z)}_{L^\infty(Y_z^\eps)} \! ,
  \end{split}
 \end{equation*}
 by the H\"older inequality.
 The first factor on the right-hand side is bounded uniformly
 with respect to~$\eps$ because of the trace inequality~\eqref{trace}
 and assumption~\eqref{hp:stb1}. The second is also bounded,
 because~$\overline{\mu}_\eps$ satisfies~\eqref{mu_eps}
 (see Remark~\ref{rk:mu_eps_bar}). The third converges to zero
 as~$\eps\to 0$, by the continuity of~$h$. Therefore, we have
 \begin{equation} \label{stb2}
  \begin{split}
   \lim_{\eps\to 0} \mathrm{II}_\eps = 0.
  \end{split}
 \end{equation}
 Combining~\eqref{stb1} with~\eqref{stb2}, we obtain~\eqref{stb}.
\end{proof}

\begin{remark} \label{rk:stb-otherregimes}
 Lemma~\ref{lemma:surfacetobulk} remains valid
 even if conditions~\eqref{hp:critical}--\eqref{hp:limit}
 fail, so long as~\eqref{hp:first}--\eqref{hp:extension}
 and the weaker condition~\eqref{hp:otherregimes} are satisfied.
 Indeed, these conditions are sufficient for
 the trace inequality~\eqref{trace} (see Remark~\ref{rk:tracenoncritical}).
\end{remark}

The next lemma is inspired by~\cite[Lemma~3.22, p.~125]{JikovKozlovOleinik}.
For each~$h\in L^2(\Omega_\eps)$, we denote by~$\overline{h}$
the extension by zero inside~$\mcP_\eps$.

\begin{lemma} \label{lemma:test}
 Let~$(\mu_\eps)_{0 < \eps < 1}$ be a family
 of positive numbers satisfying~\eqref{mu_eps}.
 Consider~$\alpha\geq 0$, $\beta > 0$,
 $f\in L^2(\Omega)$, and a continuous
 function~$g\colon\overline{\Omega}\to\R$. For all~$\eps\in (0, \, 1)$,
 let~$w_\eps\in H^1_{\mathrm{loc}}(\R^n\setminus\mcP_\eps)$
 be an~$\eps\Z^n$-periodic function
 and~$\xi_\eps\in H^1(\Omega_\eps)$.
 Suppose that the following conditions are satisfied:
 \begin{align}
   \overline{w_\eps} \to 1
    \qquad &\textrm{strongly in } L^2(\Omega)
    \textrm{ as } \eps\to 0, \label{test-L2} \\
   \overline{\nabla w_\eps}\rightharpoonup 0
   \qquad &\textrm{weakly in } L^2(\Omega)
    \textrm{ as } \eps\to 0, \label{test-gradient}\\
   -\overline{\Delta w_\eps} \to \Phi
    \qquad &\textrm{strongly in } H^{-1}(\Omega)
    \textrm{ as } \eps\to 0, \label{test-Laplacian}\\
   \overline{\xi_\eps} \to 0, \quad \overline{\nabla\xi_\eps}\to 0
    \qquad &\textrm{strongly in } L^2(\Omega)
    \textrm{ as } \eps\to 0, \label{test-xi} \\
   \fint_{\partial H_\eps} w_\eps(\eps y) \,\d\sigma(y)
    \to \tau_* \qquad &\textrm{as } \eps\to 0 \label{test-average} \\
   \partial_\nu w_\eps + \frac{\beta \, w_\eps}{\mu_\eps}
    &= \frac{\xi_\eps}{\mu_\eps}
    \qquad \textrm{on } \Gamma_\eps, \label{test-BC}
 \end{align}
 for some~$\Phi\in H^{-1}(\Omega)$ and~$\tau_*\in\R$.
 Then, the limit function~$u_0$ in~\eqref{weakconv} satisfies
 \[
  -\Delta u_0 + \alpha u_0 + \Phi u_0 = f + \tau_* g
 \]
 in the sense of distributions in~$\Omega$.
\end{lemma}
\begin{proof}
 For~$\varphi\in C^\infty_{\mathrm{c}}(\Omega)$,
 we test Problem~\eqref{main_eq}
 against~$w_\eps\varphi$ and integrate by parts:
 \begin{equation} \label{test0}
  \begin{split}
   &\int_{\Omega_\eps}\left(w_\eps\nabla u_\eps\cdot\nabla\varphi
    + \varphi\nabla u_\eps\cdot\nabla w_\eps
    + \alpha \, u_\eps w_\eps \varphi\right) \d x
    + \frac{\beta}{\mu_\eps} \int_{\Gamma_\eps}
    u_\eps w_\eps \varphi \, \d\sigma \\
   &\hspace{3cm} = \int_{\Omega_\eps} f w_\eps \varphi \, \d x
    + \frac{1}{\mu_\eps} \int_{\Gamma_\eps} g w_\eps \varphi \, \d\sigma.
  \end{split}
 \end{equation}
 Integrating by parts again in the term with~$\nabla w_\eps$
 and recalling~\eqref{test-BC}, we obtain
 \begin{equation} \label{test1}
  \begin{split}
   &\int_{\Omega_\eps} \left(w_\eps\nabla u_\eps\cdot\nabla\varphi
    - \varphi u_\eps\,\Delta w_\eps
    - u_\eps\nabla \varphi\cdot\nabla w_\eps
    + \alpha \, u_\eps w_\eps \varphi\right) \d x\\
   &\hspace{4cm} + \frac{\beta}{\mu_\eps}
    \int_{\Gamma_\eps} u_\eps \, \xi_\eps \varphi  \,\d\sigma
   = \int_{\Omega_\eps} f w_\eps \varphi \, \d x
    + \frac{1}{\mu_\eps} \int_{\Gamma_\eps} g w_\eps \varphi \, \d\sigma.
  \end{split}
 \end{equation}
 We first estimate the surface integral on the left-hand side.
 Applying the H\"older inequality and
 the trace inequality~\eqref{trace} we have
 \begin{equation*}
  \begin{split}
   \frac{1}{\mu_\eps}
    \int_{\Gamma_\eps} u_\eps \, \xi_\eps \varphi  \,\d\sigma
   &\leq \left(\frac{1}{\mu_\eps} \int_{\Gamma_\eps}
    u_\eps^2 \,\d\sigma\right)^{1/2}
    \left(\frac{1}{\mu_\eps} \int_{\Gamma_\eps}
    \xi_\eps^2 \, \varphi^2 \,\d\sigma\right)^{1/2} \\
   &\leq M \left(\norm{\nabla u_\eps}_{L^2(\Omega_\eps)}
    + \norm{u_\eps}_{L^2(\Omega_\eps)}\right)
    \left(\norm{\nabla(\xi_\eps\varphi)}_{L^2(\Omega_\eps)}
    + \norm{\xi_\eps\varphi}_{L^2(\Omega_\eps)}\right)
  \end{split}
 \end{equation*}
 where the constant~$M$ does not depend on~$\eps$.
 Then, recalling~\eqref{bounds} and~\eqref{test-xi},
 we obtain
 \begin{equation} \label{test2}
  \begin{split}
   \frac{1}{\mu_\eps}
    \int_{\Gamma_\eps} u_\eps \, \xi_\eps \varphi  \,\d\sigma
   \to 0 \qquad \textrm{as } \eps\to 0.
  \end{split}
 \end{equation}
 For the surface integral on the right-hand side,
 we apply Lemma~\ref{lemma:surfacetobulk}
 with~$\tau_\eps = w_\eps$ and~$h = g\varphi$.
 (The assumptions of Lemma~\ref{lemma:surfacetobulk}
 are satisfied, due to~\eqref{test-L2}, \eqref{test-gradient},
 and~\eqref{test-average}.) We therefore obtain
 \begin{equation} \label{test3}
  \begin{split}
   \frac{1}{\mu_\eps}
    \int_{\Gamma_\eps} g w_\eps \varphi  \,\d\sigma
   \to \tau_* \int_{\Omega} g \, \varphi \,\d x
    \qquad \textrm{as } \eps\to 0.
  \end{split}
 \end{equation}
 Using~\eqref{weakconv}, \eqref{test-L2},
 \eqref{test-gradient}, \eqref{test-Laplacian} and~\eqref{test2},
 we can now pass to the limit as~$\eps\to 0$
 on both sides of~\eqref{test1}. This yields
 \begin{equation*}
  \begin{split}
   &\int_{\Omega} \left(\nabla u_0\cdot\nabla\varphi
    + \alpha \, u_0 \varphi\right) \d x + \langle \Phi, \, u_0 \varphi\rangle
   = \int_{\Omega} f \varphi \, \d x,
  \end{split}
 \end{equation*}
 and the lemma follows.
\end{proof}

Our next task is to construct a family of
correctors~$w_\eps$ satisfying the assumptions
of Lemma~\ref{lemma:test}. We achieve this by considering
solutions to the ``not so strange'' eigenvalue
problem~\eqref{eigenvalue}. More precisely,
given~$\kappa \in (0, \, 1)$ and~$\eps \in (0, \, 1)$,
we consider the positive minimiser~$v_\eps := v_{\eps,\kappa}$ of~\eqref{lambda_eps-},
which we identify with a~$\Z^n$-periodic function
on~$\R^n\setminus(\mcP_\eps/\eps)$, and define
$w_\eps\colon\R^n\setminus\mcP_\eps\to\R$ as follows:
\begin{equation} \label{corrector}
 w_{\eps, \kappa}(x) :=
  \frac{1}{M_{\eps,\kappa}}
  v_{\eps,\kappa}\!\left(\frac{x}{\eps}\right)
  \qquad \textrm{where } \ M_{\eps,\kappa}
  := \int_{\T^n\setminus H_\eps} v_{\eps,\kappa}(x) \, \d x,
  \quad x\in\R^n\setminus\mcP_\eps.
\end{equation}
We also define
\begin{equation} \label{eta_eps}
 \eta_{\eps,\kappa} := \abs{\frac{\lambda(\eps, \, \kappa)}
    {\eps^2} - \lambda_*(\kappa)}
    + \abs{\frac{\mu_\eps}{\overline{\mu}_\eps} - 1} \! ,
\end{equation}
where~$\overline{\mu}_\eps := \eps^{-1} \sigma(\partial H_\eps)$.
In the next statement, we show that~$w_{\eps,\kappa}$
(for a suitable~$\kappa$)
satisfies conditions~\eqref{test-L2}--\eqref{test-BC}
in Lemma~\ref{lemma:test}, with quantitative
estimates on the convergence rates,
which will be useful in the proof of Proposition~\ref{prop:abstractrate} later on.

\begin{lemma} \label{lemma:corrector}
 Let~$(\mu_\eps)_{0 < \eps < 1}$ be any family
 of positive numbers satisfying~\eqref{mu_eps}.
 For any~$\kappa\in (0, \, 1)$ and~$\eps \in (0, \, 1)$,
 the function~$w_{\eps,\kappa}$ satisfies
 \begin{align}
  \norm{\overline{w_{\eps, \kappa}} - 1}_{L^2(\Omega)}
  &\leq C\eps\kappa^{-1/2},
  \label{correctorL2} \\
  \norm{-\overline{\Delta w_{\eps, \kappa}} + \kappa \, \lambda_*(\kappa) \overline{w_{\eps, \kappa}}}_{L^2(\Omega)}
  &\leq C \kappa \, \eta_{\eps,\kappa}
   + C\eps\kappa^{-1/2}, \label{correctorDelta} \\
  \frac{1}{\mu_\eps} \int_{\Gamma_\eps} w_{\eps,\kappa}^2 \, \d\sigma
  &\leq C, \label{corrector_L2Gamma}
 \end{align}
 and
 \begin{equation} \label{corrector_Gamma}
   \partial_\nu w_{\eps,\kappa}
    = \left(\lambda_*(\kappa) + \chi_{\eps,\kappa}
     \right) \mu_\eps^{-1} w_{\eps,\kappa}
 \end{equation}
 in the sense of (normal) traces on~$\Gamma_\eps$,
 where~$\chi_{\eps,\kappa}$ is a constant such that
 \begin{equation} \label{corrector_chi}
   \abs{\chi_{\eps,\kappa}} \leq
    C \kappa^{-1} \eta_{\eps,\kappa}
 \end{equation}
 for all small enough~$\eps$. In all of the conditions above,
 $C$ denotes a constant independent of~$\eps$, $\kappa$.
 Moreover, $\overline{\nabla w_{\eps,\kappa}}\rightharpoonup 0$
 weakly in~$L^2(\Omega)$ as~$\eps\to 0$.
\end{lemma}
\begin{proof}
 We split the proof into several steps.
 We will denote by~$C$ several constants
 independent of~$\eps$, $\kappa$.

 \medskip
 \noindent
 \textit{Proof of~\eqref{correctorL2}.}
 By the estimate~\eqref{secondogrado5} in the proof
 of Lemma~\ref{lemma:constantlimit-}, we know that
 \begin{equation} \label{corrector-M}
  M_{\eps,\kappa}^2 \geq 4 + \mathrm{O}(\kappa\eps^2) .
 \end{equation}
 Therefore, the uniform Poincar\'e inequality
 provided by Lemma~\ref{lemma:Poincare} and~\eqref{lambda_eps-} implies
 \begin{equation*} 
  \int_{\T^n\setminus H_\eps}
   \left(\frac{v_{\eps, \kappa}}{M_{\eps,\kappa}} - 1\right)^2 \d x
   = \frac{1}{M_{\eps,\kappa}^2} \int_{\T^n\setminus H_\eps}
   \left(v_{\eps, \kappa} - M_{\eps,\kappa}\right)^2 \d x
   \leq C\abs{\lambda(\eps, \, \kappa)}
 \end{equation*}
 for some~$\eps$-independent constant~$C$.
 An explicit computation based on the periodicity of~$w_\eps$
 and a change of variable shows that
 \begin{equation*}
  \norm{w_{\eps, \kappa} - 1}_{L^2(\Omega_\eps)}
  \leq C\norm{\frac{v_{\eps,\kappa}}{M_{\eps,\kappa}} - 1}_{L^2(\T^n\setminus H_\eps)}
  \leq C \abs{\lambda(\eps, \, \kappa)}^{1/2}.
 \end{equation*}
 The estimate~\eqref{lambda_bound-0}
 and assumption~\eqref{hp:critical} imply
 \begin{equation} \label{corrector1}
  \norm{w_{\eps, \kappa} - 1}_{L^2(\Omega_\eps)}
  \leq \left(\kappa^{-1} \, \Dir(\eps)\right)^{1/2}
  \leq C\eps \, \kappa^{-1/2} .
 \end{equation}
 Since~$\Omega\cap\mcP_\eps$ is the union
 of~$\mathrm{O}(\eps^{-n})$ copies of~$\eps H_\eps$,
 we have~$\abs{\Omega\cap\mcP_\eps} \leq C \abs{H_\eps}$
 and the right-hand side can be further estimated
 as in Remark~\ref{rk:zerocapacity}:
 \begin{equation} \label{corrector-meas}
  \abs{\Omega\cap\mcP_\eps} \leq C\eps^2.
 \end{equation}
 Combining~\eqref{corrector1} with~\eqref{corrector-meas},
 we obtain~\eqref{correctorL2}.

 \medskip
 \noindent
 \textit{Proof of~\eqref{correctorDelta}.}
 Equation~\eqref{eigenvalue} and the definition~\eqref{eta_eps}
 of~$\eta_{\eps,\kappa}$ imply that
 \[
  \begin{split}
   -\Delta w_{\eps, \kappa}(x)
   = -\kappa \, \eps^{-2} \, \lambda(\eps, \, \kappa) \, w_{\eps, \kappa}(x)
   = -\kappa\left(\lambda_*(\kappa)
   + \mathrm{O}(\eta_{\eps, \kappa})\right)
   w_{\eps, \kappa}(x)
  \end{split}
 \]
 for all~$x\in\R^n\setminus\mcP_{\eps}$. Therefore, we have
 \[
  \norm{-\Delta w_{\eps,\kappa} + \kappa\lambda_*(\kappa)}_{L^2(\Omega_\eps)}
  \leq
  \kappa \abs{\lambda_*(\kappa)} \, \norm{w_{\eps,\kappa} - 1}_{L^2(\Omega_\eps)}
  + \mathrm{O}(\kappa \eta_{\eps,\kappa})
   \, \norm{w_{\eps,\kappa}}_{L^2(\Omega_\eps)}
 \]
 and, taking~\eqref{lambda_bound-}
 and~\eqref{correctorL2} into account,
 \eqref{correctorDelta} follows.

 \medskip
 \noindent
 \textit{Proof of~\eqref{corrector_L2Gamma}.}
 We assume first that~$\mu_\eps =
 \overline{\mu}_\eps := \sigma(\partial H_\eps)/\eps$.
 Then, recalling that~$w_{\eps,\kappa}$ is periodic
 and applying a change of variable, we obtain
 \[
  \frac{1}{\overline{\mu}_\eps}
   \int_{\Gamma_\eps} w_{\eps,\kappa}^2 \,\ d\sigma
  \leq \frac{C}{M_{\eps,\kappa}^2}
   \fint_{\partial H_\eps} v_{\eps,\kappa}^2 \, \d\sigma
  \leq \frac{C\kappa}{M_{\eps,\kappa}^2}
   \int_{\T^n\setminus H_\eps} v_{\eps,\kappa}^2 \, \d\sigma.
 \]
 The last inequality follows by the constraint
 $\mcG_{\eps,\kappa}(v_{\eps,\kappa}) = -1$.
 Applying the inequality~$(a + b)^2 \leq 2a^2 + 2b^2$
 and 
 Lemma~\ref{lemma:Poincare}, 
 we obtain
 \[
  \begin{split}
   \frac{1}{\overline{\mu}_\eps}
    \int_{\Gamma_\eps} w_{\eps,\kappa}^2 \,\ d\sigma
   \leq \frac{C\kappa}{M_{\eps,\kappa}^2}
    \left(\int_{\T^n\setminus H_\eps}
    (v_{\eps,\kappa} - M_{\eps,\kappa})^2 \, \d\sigma
    + M_{\eps,\kappa}^2 \abs{\T^n\setminus H_\eps}\right)
   \leq \frac{C\kappa\abs{\lambda(\eps, \, \kappa)}}
    {M_{\eps,\kappa}^2} + C\kappa.
  \end{split}
 \]
 We know that~$\kappa\abs{\lambda(\eps,\kappa)}$
 is bounded above by~\eqref{lambda_bound-0} and we
 have observed already that~$M_{\eps,\kappa}$
 is bounded below by~\eqref{corrector-M}
 (taking into account that~$M_{\eps,\kappa} > 0$),
 so we have proved that~\eqref{corrector_L2Gamma}
 holds when~$\mu_\eps = \overline{\mu}_\eps$.
 Now, \eqref{corrector_L2Gamma} remains true for any
 choice of~$\mu_\eps$ satisfying~\eqref{mu_eps},
 since for any such~$\mu_\eps$ one
 has~$\mu_\eps/\overline{\mu}_\eps\to 1$ as~$\eps\to 0$.

 \medskip
 \noindent
 \textit{Proof of~\eqref{corrector_Gamma}--\eqref{corrector_chi}.}
 Equation~\eqref{eigenvalue} implies
 \[
  \begin{split}
   \partial_\nu w_{\eps,\kappa}(x)
   = \frac{\lambda(\eps, \, \kappa)}{\eps \sigma(\partial H_\eps)}
     w_{\eps,\kappa}(x)
   = \frac{\lambda(\eps, \, \kappa)}{\eps^2 \overline{\mu}_\eps}
     w_{\eps,\kappa}(x)
   = \frac{\lambda(\eps, \, \kappa) \, \mu_\eps}
     {\eps^2 \overline{\mu}_\eps}
   \frac{w_{\eps,\kappa}(x)}{\mu_\eps}
  \end{split}
 \]
 for~$x\in\partial\mcP_\eps$, in the sense of
 (normal) traces. Then, \eqref{corrector_Gamma}
 is satisfied with
 \[
  \chi_{\eps,\kappa} := \frac{\lambda(\eps,\kappa) \mu_\eps}
   {\eps^2 \overline{\mu}_\eps} - \lambda_*(\kappa),
 \]
 for which we have
 \[
  \abs{\chi_{\eps,\kappa}}
  \leq \frac{\mu_\eps}{\overline{\mu}_\eps}
  \abs{\frac{\lambda(\eps,\kappa)}{\eps^2} - \lambda_*(\kappa)}
   + \abs{\lambda_*(\kappa)} \abs{\frac{\mu_\eps}{\overline{\mu}_\eps}- 1}
  \leq \left(\frac{\mu_\eps}{\overline{\mu}_\eps}
   + \abs{\lambda_*(\kappa)}\right) \eta_{\eps,\kappa}  .
 \]
 The convergence~$\mu_\eps/\overline{\mu}_\eps\to 1$ as~$\eps\to 0$
 implies~$\mu_\eps/\overline{\mu}_\eps\leq 2$ for small enough~$\eps$,
 and~$\kappa\abs{\lambda_*(\kappa)}\leq \Dir_*$ by~\eqref{lambda_bound-}.
 The bound~\eqref{corrector_chi} now follows.

 \medskip
 \noindent
 \textit{Proof that $\overline{\nabla w_{\eps,\kappa}}\rightharpoonup 0$ weakly in~$L^2(\Omega)$ as~$\eps\to 0$.}
 The family~$(\overline{\nabla w_{\eps, \kappa}})_{0 < \eps < 1}$
 is bounded in~$L^2(\Omega)$, by periodicity
 and assumption~\eqref{hp:limit}.
 Let~$\widetilde{E}_\eps$ be the extension
 operator given by~\eqref{extension}, and let~$\chi_\eps$
 be the indicator function of the set~$\mcP_\eps$
 (i.e., $\chi_\eps := 1$ on~$\mcP_\eps$ and~$\chi_\eps := 0$ otherwise).
 The family $(\widetilde{E}_\eps w_{\eps,\kappa})_{0 < \eps < 1}$
 is bounded in~$H^1(\Omega)$ and hence, upon extracting
 a (non-relabelled, countable) subsequence,
 we can assume that~$\widetilde{E}_\eps w_{\eps,\kappa}$
 converges weakly in~$H^1(\Omega)$ to some limit~$\xi$.
 Since~$\chi_\eps \to 0$ strongly in~$L^2(\Omega)$
 by~\eqref{corrector-meas}, $\widetilde{E}_\eps w_{\eps,\kappa}
 - \overline{w_{\eps,\kappa}} = \chi_\eps\,
 \widetilde{E}_\eps w_{\eps,\kappa} \to 0$
 strongly in~$L^1(\Omega)$, and $\overline{w_{\eps,\kappa}}\to 1$
 strongly in~$L^2(\Omega)$, it follows that~$\xi = 1$.
 This implies that
 $\nabla(\widetilde{E}_\eps w_{\eps\,\kappa}) \rightharpoonup 0$
 weakly in~$L^2(\Omega)$ and, hence, $\overline{\nabla w_\eps}
 = \nabla(\widetilde{E}_\eps w_\eps) \, \chi_\eps \rightharpoonup 0$
 weakly in~$L^2(\Omega)$.
 Since the limit is uniquely identified (and the weak topology on
 bounded subsets of~$L^2(\Omega)$ is metrisable), we have
 weak convergence $\overline{\nabla w_\eps}\rightharpoonup 0$
 not only along a subsequence, but for the whole sequence
 as~$\eps\to 0$.
\end{proof}

Now we have all the tools we need to prove Theorem~\ref{th:strange_term} and Proposition~\ref{prop:abstractrate}.

\begin{proof}[Proof of Theorem~\ref{th:strange_term}]
 Theorem~\ref{th:lambda*} implies that for all~$\beta > 0$
 there exists a unique~$\kappa_*(\beta)\in (0, \, 1)$
 such that~$-\lambda_*(\kappa_*(\beta)) = \beta$.
 Consider the function~$w_\eps := w_{\eps, \, \kappa_*(\beta)}$
 given by~\eqref{corrector}.
 By Remark~\ref{rk:bdav}, we have
 \[
  \fint_{\partial H_\eps} w_{\eps,\kappa_*(\beta)}(\eps y) \, \d\sigma(y)
  = \frac{1}{M_{\eps,\kappa_*(\beta)}} \fint_{\partial H_\eps} v_{\eps,\kappa_*(\beta)}(y) \, \d\sigma(y)
  = \kappa_*(\beta)
 \]
 for all~$\eps\in (0, \, 1)$.
 Lemma~\ref{lemma:corrector} implies
 that~$w_\eps$ satisfies all assumptions~\eqref{test-L2}--\eqref{test-BC}
 with~$\Phi = \beta\kappa_*(\beta)$,
 $\tau_* = \kappa_*(\beta)$ and
 $\xi_\eps := \chi_{\eps, \, \kappa_*(\beta)} w_\eps$,
 where~$\chi_{\eps, \, \kappa}$ is given in~\eqref{corrector_Gamma}.
 Therefore, by Lemma~\ref{lemma:test} $u_0$
 is a solution of Problem~\eqref{hom_eq}.
 Since the limit problem is uniquely identified and
 has a unique solution, the convergence~$\overline{u_\eps}\to u_0$
 holds not only along a subsequence, but also for the whole
 family~$(\overline{u_\eps})_{0 < \eps < 1}$.
\end{proof}

\begin{remark} \label{rk:f_eps}
 Suppose that~$(f_\eps)_{0 < \eps < 1}$, $(g_\eps)_{0 < \eps < 1}$
 are families of functions in~$L^2(\Omega)$, $C^0(\overline{\Omega})$ respectively,
 such that~$f_\eps\rightharpoonup f$ weakly in~$L^2(\Omega)$,
 $g_\eps\to g$ uniformly. Then, the solutions~$u_\eps$ to
 \begin{equation} \label{main_eq_eps}
  \begin{cases}
   -\Delta u_\eps + \alpha u_\eps = f_\eps & \textrm{in } \Omega_\eps, \\
   \hspace{0.8mm} \partial_\nu u_\eps
    + \dfrac{\beta\,u_\eps}{\mu_\eps}
    = \dfrac{g_\eps}{\mu_\eps}
    & \textrm{on } \Gamma_\eps, \\
   u_\eps = 0 &\textrm{on } \partial\Omega_\eps\cap\partial\Omega
  \end{cases}
 \end{equation}
 still satisfy~$\norm{u_\eps - u_0}_{L^2(\Omega_\eps)} \to 0$
 as~$\eps\to 0$, where~$u_0$ is the unique solution to~\eqref{hom_eq}.
 The proof of this claim is analogous to that of Theorem~\ref{th:strange_term}, the only difference
 being the source terms. However, if~$w_{\eps,\kappa}$ are
 the correctors defined in~\eqref{corrector}
 and~$\varphi\in C^\infty_{\mathrm{c}}(\Omega)$
 is a cut-off function, we have
 \begin{equation*} 
  \int_{\Omega_\eps} \left(f_\eps - f\right)
   w_{\eps,\kappa} \varphi \, \d x
  = \int_{\Omega} (f_\eps - f) \,
   \overline{w_{\eps,\kappa}} \, \varphi \to 0
   \qquad \textrm{as } \eps\to 0,
 \end{equation*}
 by virtue of the weak convergence
 $f_\eps\rightharpoonup f$ in~$L^2(\Omega)$
 and~\eqref{correctorL2}. Moreover, applying the
 H\"older inequality, we obtain
 \begin{equation*} 
  \abs{\frac{1}{\mu_\eps} \int_{\Gamma_\eps}
   \left(g_\eps - g\right) w_{\eps,\kappa} \varphi \, \d \sigma}
  \leq \norm{\varphi}_{L^\infty(\Omega)}
   \norm{g_\eps - g}_{C^0(\overline{\Omega})}
   \left(\frac{\sigma(\Gamma_\eps)}{\mu_\eps}\right)^{1/2}
   \frac{\norm{w_{\eps,\kappa}}_{L^2(\Gamma_\eps)}}{\mu_\eps^{1/2}}
    \to 0 \qquad \textrm{as } \eps\to 0,
 \end{equation*}
 in view of~\eqref{mu_eps}, \eqref{corrector_L2Gamma}
 and the uniform convergence~$g_\eps\to g$. Therefore,
 the arguments in Lemma~\ref{lemma:test}
 easily extend to case of $\eps$-dependent source terms.
 The rest of the proof requires no modifications.
\end{remark}

\begin{proof}[Proof of Proposition~\ref{prop:abstractrate}]
 We proceed along the lines of~\cite[Th\'eor\`eme~1.1]{KacimiMurat}.
 Assume that the domain~$\Omega$ is of class~$C^2$,
 that~$f\in L^\infty(\Omega)$ and that~$g = 0$.
 By elliptic regularity and the Sobolev embedding
 $W^{2,p}(\Omega) \hookrightarrow W^{1,\infty}(\Omega)$,
 with~$p > n$, the unique solution~$u_0$
 to Problem~\eqref{hom_eq} satisfies
 \begin{equation} \label{rate0}
  \norm{u_0}_{L^\infty(\Omega)} + \norm{\nabla u_0}_{L^\infty(\Omega)}
   + \norm{\Delta u_0}_{L^\infty(\Omega)} \leq C \norm{f}_{L^\infty(\Omega)},
 \end{equation}
 for some constant~$C$ that does not depend on~$f$.
 Next, we consider the corrector~$w_\eps := w_{\eps,\kappa_*(\beta)}$
 again and write an equation for $U_\eps := u_\eps - w_\eps u_0$.
 Taking Equation~\eqref{main_eq} and~\eqref{hom_eq} into account,
 we obtain
 \[
  \begin{split}
   -\Delta U_\eps + \alpha U_\eps
   &= -\Delta u_\eps + w_\eps \Delta u_0
    + 2 \nabla w_\eps \cdot \nabla u_0 + u_0 \Delta w_\eps
    + \alpha u_\eps - \alpha w_\eps u_0  \\
   &= (1 - w_\eps) f
    + (\beta \kappa_*(\beta) w_\eps + \Delta w_\eps) u_0 
   + 2\,\mathrm{div}\left((w_\eps - 1)\nabla u_0\right)
    - 2(w_\eps - 1) \Delta u_0
  \end{split}
 \]
 in~$\Omega_\eps$. By testing this equation against~$U_\eps$
 and integrating by parts, we deduce
 \[
  \begin{split}
   &\int_{\Omega_\eps} \left(\abs{\nabla U_\eps}^2
    + \alpha U_\eps^2\right) \d x
    - \int_{\Gamma_\eps} U_\eps \, \partial_\nu U_\eps \, \d\sigma
    = \int_{\Omega_\eps} \left((1 - w_\eps) f
    + (\beta \kappa_*(\beta) w_\eps + \Delta w_\eps)
     u_0 \right) U_\eps \, \d x \\
   &\hspace{1.5cm} - 2\int_{\Omega_\eps}
    (w_\eps - 1) \left(\nabla u_0\cdot\nabla U_\eps
    + U_\eps \, \Delta u_0\right) \d x
    + 2\int_{\Gamma_\eps} (w_\eps - 1) U_\eps
    \, \partial_\nu u_0 \, \d \sigma.
  \end{split}
 \]
 Equation~\eqref{corrector_Gamma} in Lemma~\ref{lemma:corrector}
 together with our choice of~$\kappa = \kappa_*(\beta)$
 implies that
 \[
  \partial_\nu w_{\eps} + \frac{\beta \, w_{\eps}}{\mu_\eps}
   = \frac{\chi_{\eps} w_{\eps}}{\mu_\eps}
 \]
 in the sense of (normal) traces on~$\Gamma_\eps$,
 where~$\chi_\eps := \chi_{\eps, \, \kappa_*(\beta)}$
 is as in~\eqref{corrector_Gamma}.
 This equality, together with the boundary conditions
 for~$u_\eps$ in~\eqref{main_eq} (with~$g = 0$), implies
 \[
   \partial_\nu U_{\eps}
    + \frac{\beta \, U_{\eps}}{\mu_\eps}
   = - w_\eps \, \partial_\nu u_0
     - \frac{\chi_{\eps} w_{\eps} u_0}{\mu_\eps}
   \qquad \textrm{on } \Gamma_\eps,
 \]
 and hence
 \[
  \begin{split}
   &\int_{\Omega_\eps} \left(\abs{\nabla U_\eps}^2
    + \alpha U_\eps^2\right) \d x
    + \frac{\beta}{\mu_\eps} \int_{\Gamma_\eps} U_\eps^2 \, \d\sigma
    = \int_{\Omega_\eps} \left((1 - w_\eps) f
    + (\beta \kappa_*(\beta) w_\eps + \Delta w_\eps)
     u_0 \right) U_\eps \, \d x \\
   &\hspace{1.5cm} - 2\int_{\Omega_\eps}
    (w_\eps - 1) \left(\nabla u_0\cdot\nabla U_\eps
    + U_\eps \, \Delta u_0\right) \d x
    + \int_{\Gamma_\eps} (w_\eps - 2) U_\eps
    \, \partial_\nu u_0 \, \d \sigma
    - \frac{\chi_\eps}{\mu_\eps}
    \int_{\Gamma_\eps} U_\eps w_\eps u_0 \, \d\sigma.
  \end{split}
 \]
 Let~$\tau > 0$ be a small parameter, to be chosen later.
 By making repeated use of the inequality
 $\abs{ab} \leq \theta a^2/2 + b^2/(2\theta)$,
 valid for all~$a$, $b$ and all~$\theta > 0$, we obtain
 \begin{equation} \label{rate1}
   \begin{split}
    &\int_{\Omega_\eps} \left(\abs{\nabla U_\eps}^2
     + \alpha U_\eps^2\right) \d x
     + \frac{\beta}{\mu_\eps} \int_{\Gamma_\eps} U_\eps^2 \, \d\sigma \\
    &\hspace{0.75cm} \leq \tau\int_{\Omega_\eps}
      \left(\abs{\nabla U_\eps}^2 + U_\eps^2\right) \d x
      + \frac{\tau}{\mu_\eps} \int_{\Gamma_\eps} U_\eps^2 \, \d\sigma \\
    &\hspace{1.5cm} + \frac{C}{\tau} \int_{\Omega_\eps}
     \left((1 - w_\eps)^2 \left(f^2
     + \abs{\nabla u_0}^2 + (\Delta u_0)^2\right)
     + (\beta \kappa_*(\beta) w_\eps + \Delta w_\eps)^2 u_0^2
      \right) \d x \\
    &\hspace{1.5cm}
     + \frac{C\mu_\eps}{\tau} \int_{\Gamma_\eps} (w_\eps - 2)^2
     \, (\partial_\nu u_0)^2 \, \d \sigma
     + \frac{C\chi_\eps^2}{\tau\mu_\eps}
     \int_{\Gamma_\eps} w_\eps^2 \,  u_0^2 \, \d\sigma
   \end{split}
 \end{equation}
 for some constant~$C$ independent of~$\eps$, $\kappa$, $\tau$.
 We can bound several terms on the right-hand side
 of~\eqref{rate1} using the estimate~\eqref{rate0} and
 the inequality~$\abs{\chi_\eps} \leq C\eta_\eps$,
 which follows from~\eqref{corrector_chi} and which is valid
 for some constant~$C$ depending on~$\kappa_*(\beta)$
 but not on~$\eps$. We thus obtain
 \begin{equation} \label{rate2}
    \begin{split}
     &\int_{\Omega_\eps} \left(\abs{\nabla U_\eps}^2
      + \alpha U_\eps^2\right) \d x
      + \frac{\beta}{\mu_\eps} \int_{\Gamma_\eps} U_\eps^2 \, \d\sigma
     \leq \tau\int_{\Omega_\eps}
       \left(\abs{\nabla U_\eps}^2 + U_\eps^2\right) \d x
       + \frac{\tau}{\mu_\eps} \int_{\Gamma_\eps} U_\eps^2 \, \d\sigma \\
     &\hspace{1.5cm} + \frac{C}{\tau}
      \norm{f}^2_{L^\infty(\Omega)} \int_{\Omega_\eps}
      \left((1 - w_\eps)^2
      + (\beta \kappa_*(\beta) w_\eps + \Delta w_\eps)^2
       \right) \d x \\
     &\hspace{1.5cm}
      + \frac{C\mu_\eps}{\tau} \norm{f}^2_{L^\infty(\Omega)}
      \int_{\Gamma_\eps} (w_\eps^2 + 1)^2 \, \d \sigma
      + \frac{C\eta_\eps^2}{\tau\mu_\eps} \norm{f}^2_{L^\infty(\Omega)}
      \int_{\Gamma_\eps} w_\eps^2 \, \d\sigma.
    \end{split}
 \end{equation}
 If~$\alpha > 0$, we choose
 $\tau := \min(1/2, \, \alpha/2, \, \beta/2)$
 and absorb the first two terms on the right-hand side
 into the left-hand side. If~$\alpha = 0$,
 we accomplish the same task by applying the inequality
 \begin{equation} \label{rate-Poincare}
  \int_{\Omega} U_\eps^2 \, \d x
   \leq C \int_{\Omega_\eps} \abs{\nabla U_\eps}^2 \, \d x,
 \end{equation}
 which is valid for an~$\eps$-independent constant~$C$.
 To see why~\eqref{rate-Poincare} holds, we recall that~$u_\eps$,
 $u$ are equal to zero (in the sense of traces)
 on~$\partial\Omega_\eps \cap\partial\Omega$, $\partial\Omega$
 respectively, and we extend~$U_\eps$ to a function
 in~$H^1_{\mathrm{loc}}(\R^n\setminus\mcP_\eps)$ by setting
 $U_\eps := 0$ in~$\R^n\setminus(\Omega\cup\mcP_\eps)$.
 Then, by~\eqref{extensionsupports} the
 function~$\widetilde{E}_\eps U_\eps$
 (with~$\widetilde{E}_\eps$ being an extension operator
 as in~\eqref{extension})
 is supported in a small neighbourhood of~$\Omega$,
 and~\eqref{rate-Poincare} follows by
 applying the Poincar\'e inequality
 to~$\widetilde{E}_\eps U_\eps$.
 From~\eqref{rate2} and~\eqref{rate-Poincare},
 for~$\tau$ small enough we deduce
 \[
  \begin{split}
   &\int_{\Omega_\eps} \left(\abs{\nabla U_\eps}^2
    + \alpha U_\eps^2\right) \d x
    + \frac{\beta}{\mu_\eps} \int_{\Gamma_\eps} U_\eps^2 \, \d\sigma
    \leq C \norm{f}_{L^\infty(\Omega)}^2
     \int_{\Omega_\eps} \left((1 - w_\eps)^2
    + (\beta \kappa_*(\beta) w_\eps + \Delta w_\eps)^2 \right) \d x \\
   &\hspace{1.5cm} + \frac{C\mu_\eps}{\beta}
    \norm{f}_{L^\infty(\Omega)}^2
    \int_{\Gamma_\eps} \left(w_\eps^2 + 1\right) \d \sigma
    + \frac{\eta_\eps^2}{\mu_\eps} \norm{f}_{L^\infty(\Omega)}^2
    \int_{\Gamma_\eps} w_\eps^2 \d\sigma .
  \end{split}
 \]
 We now bound the right-hand side with
 using Lemma~\ref{lemma:corrector}
 (in particular, the estimates~\eqref{correctorL2},
 \eqref{correctorDelta}, \eqref{corrector_L2Gamma})
 and of assumption~\eqref{mu_eps}. This yields
 \[
  \begin{split}
   &\int_{\Omega_\eps} \left(\abs{\nabla U_\eps}^2
    + \alpha U_\eps^2\right) \d x
    + \frac{\beta}{\mu_\eps} \int_{\Gamma_\eps} U_\eps^2 \, \d\sigma
    \leq C \left(\eps^2 + \eta_\eps^2 + \sigma(\Gamma_\eps)^2\right)
     \norm{f}_{L^\infty(\Omega)}^2 \!,
  \end{split}
 \]
 where the constant~$C$ on the right-hand side depends
 on~$\beta > 0$ and on~$\kappa_*(\beta)$ but not on~$\eps$,
 thus completing the proof.
\end{proof}

\section{The model example revisited}
\label{sect:example}

\subsection{The model example satisfies~\eqref{hp:first}--\eqref{hp:last}}

The goal of this section is to prove that the family of holes
defined by Equation~\eqref{model_example}
--- namely, $\mcP_\eps = \eps^{\gamma} \overline{D} + \eps\Z^n$,
where~$D\subseteq\R^n$ is a bounded open set
of class~$C^2$ in dimension~$n\geq 3$
such that~$\R^n\setminus\overline{D}$ is connected
and~$\gamma := n/(n-2)$ ---
fits into the abstract theory developed above.
Let~$\eps_0 \in (0, \, 1)$ be chosen
small enough so that~$\eps^{\gamma - 1}\overline{D}\subseteq Y$
for all~$\eps\in (0, \, \eps_0]$. We define
\begin{equation} \label{model_Heps}
 H_\eps := \eps^{\gamma - 1}\overline{D}
  \quad \textrm{if } \eps\in (0, \, \eps_0], \qquad
 H_\eps := H_{\eps_0}
  \quad \textrm{if } \eps\in (\eps_0, \, 1).
\end{equation}

\begin{lemma} \label{lemma:example-extension}
 The family~$(H_\eps)_{0 < \eps < 1}$ defined
 in~\eqref{model_Heps} satisfies
 assumptions~\eqref{hp:H}, \eqref{hp:decreas}, and~\eqref{hp:extension}.
\end{lemma}
\begin{proof}
 Conditions~\eqref{hp:H} and~\eqref{hp:decreas} are
 satisfied immediately by definition of~$H_\eps$.
 To see that~\eqref{hp:extension} is also satisfied,
 we consider the (open) cube~$Y_0 := \eps_0^{1 - \gamma} Y$,
 with~$\eps_0 > 0$ chosen as above,
 so that~$\overline{D}\subseteq Y_0$.
 The open set~$Y_0\setminus\overline{D}$
 is connected and has Lipschitz boundary,
 hence there exists a bounded linear extension operator
 $E\colon H^1(Y_0\setminus\overline{D})\to H^1(Y_0)$.
 We can always assume that $E$ maps constant functions
 to constant functions, for otherwise, we replace
 $E$ with~$\tilde{E}\colon z \mapsto E(z - \bar{z}) + \bar{z}$,
 where~$\bar{z} := \fint_{Y_0\setminus\overline{D}} z \, \d x$.
 Such an operator satisfies
 \begin{equation*}
  \norm{\nabla(Ez)}_{L^2(Y_0)}
  = \norm{\nabla(E(z - \bar{z}))}_{L^2(Y_0)}
  \leq C \norm{\nabla z}_{L^2(Y_0\setminus\overline{D})}
   + C \norm{z - \bar{z}}_{L^2(Y_0\setminus\overline{D})} \! .
 \end{equation*}
 By applying the Poincar\'e inequality to the second term on
 the right-hand side, we deduce that,
 for all~$z\in H^1(Y_0\setminus\overline{D})$,
 \begin{equation*} 
  \norm{\nabla(Ez)}_{L^2(Y_0)}
  \leq C \norm{\nabla z}_{L^2(Y_0\setminus\overline{D})} \! ,
 \end{equation*}
 where the constant~$C$ depends on~$Y_0$ and~$D$, but not on~$z$.
 Now, since the inequality in~\eqref{hp:extension} is scale-invariant,
 we can construct an extension operator
 $E_\eps\colon H^1(Y\setminus H_\eps)\to H^1(Y)$
 satisfying~\eqref{hp:extension} by invoking a scaling argument.
\end{proof}

It still remains to prove that~$(H_\eps)_{0 < \eps < 1}$
satisfies~\eqref{hp:critical} and~\eqref{hp:limit}.
To this end, we recall some notation already
introduced in Section~\ref{model_example_fit_sec}.
We define~$\dot{H}^1(\R^{n}\setminus\overline{D})$ as
set of functions~$z\in L^2_{\mathrm{loc}}(\R^n\setminus\overline{D})$
that satisfy~$\nabla z\in L^2(\R^n\setminus\overline{D}, \, \R^n)$
and $\abs{\{x\in\R^n\setminus\overline{D}\colon
\abs{u(x)}\geq c\}} < +\infty$
for all~$c > 0$. The space~$\dot{H}^1(\R^n\setminus\overline{D})$
is a real Hilbert space with the inner product
\begin{equation} \label{E1-innerproduct}
 \left\langle z, \, w \right\rangle_{\dot{H}^1(\R^n\setminus\overline{D})}
 := \int_{\R^n\setminus\overline{D}} \nabla z\cdot \nabla w \, \d x
  + \fint_{\partial D} z w \, \d\sigma
\end{equation}
(see e.g.~\cite[Theorem~3.3]{AuchmutyHan}).
Let~$X_D$ be the set of functions~$\R^n\setminus\overline{D}\to\R$
that admit an extension in~$C^\infty_{\mathrm{c}}(\R^n)$.

\begin{lemma} \label{lemma:densityE1}
 The set~$X_D$ is dense in~$\dot{H}^1(\R^n\setminus\overline{D})$.
\end{lemma}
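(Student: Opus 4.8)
The plan is to approximate a given $z\in\dot{H}^1(\R^n\setminus\overline{D})$ by elements of $X_D$ in two stages: first cut $z$ off far from $D$ to reduce to functions of bounded support, then extend across $\partial D$ and mollify. The only non-routine ingredient, which I single out below, is a Sobolev-type embedding on the exterior domain that makes the truncation step work.

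For the first stage I would use the embedding $\dot{H}^1(\R^n\setminus\overline{D})\hookrightarrow L^{2^*}(\R^n\setminus\overline{D})$ with $2^*:=2n/(n-2)$ (available since $n\geq 3$): extending $z$ across the $C^{1,1}$ boundary $\partial D$ by a collar construction to $\tilde z\in L^2_{\mathrm{loc}}(\R^n)$ with $\nabla\tilde z\in L^2(\R^n)$ and $\tilde z$ still vanishing at infinity in the measure-theoretic sense, the $\R^n$-version of this inequality (see Lieb--Loss~\cite{LiebLoss}) gives $\tilde z\in L^{2^*}(\R^n)$, hence $z\in L^{2^*}(\R^n\setminus\overline{D})$. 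Then, choosing cutoffs $\chi_R\in C^\infty_{\mathrm{c}}(\R^n)$ with $\chi_R=1$ on $B^n_R$, $\spt\chi_R\subseteq B^n_{2R}$ and $\abs{\nabla\chi_R}\leq C/R$, the function $\chi_R z$ lies in $\dot{H}^1(\R^n\setminus\overline{D})$, has bounded support and coincides with $z$ near $\partial D$ (so the boundary term in~\eqref{E1-innerproduct} drops), whence
\[
 \norm{z-\chi_R z}_{\dot{H}^1(\R^n\setminus\overline{D})}
 = \norm{\nabla\bigl((1-\chi_R)z\bigr)}_{L^2(\R^n\setminus\overline{D})}
 \leq \norm{(1-\chi_R)\nabla z}_{L^2}
  + \norm{z\,\nabla\chi_R}_{L^2}.
\]
The first term on the right tends to $0$ by dominated convergence, and for the second I would use Hölder's inequality,
\[
 \int_{B^n_{2R}\setminus B^n_R}\frac{\abs{z}^2}{R^2}\,\d x
 \leq \frac{C}{R^2}\Bigl(\int_{\R^n\setminus B^n_R}\abs{z}^{2^*}\,\d x\Bigr)^{2/2^*}
  \abs{B^n_{2R}\setminus B^n_R}^{2/n}
 \leq C\Bigl(\int_{\R^n\setminus B^n_R}\abs{z}^{2^*}\,\d x\Bigr)^{2/2^*},
\]
which tends to $0$ as $R\to\infty$ since $z\in L^{2^*}$. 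This reduces the statement to the case $z\in\dot{H}^1(\R^n\setminus\overline{D})$ with $\spt z\subseteq B^n_R$ for some $R$.

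For the second stage, given such a $z$, I would extend it across $\partial D$ by a bounded extension operator for the $C^{1,1}$ domain applied on a collar of $\partial D$ (where $z\in H^1$) and, since $z$ already vanishes near $\partial B^n_{2R}$, extend by zero to obtain $\tilde z\in H^1(\R^n)$ with compact support and $\tilde z=z$ on $\R^n\setminus\overline{D}$. Mollifying, $z_\delta:=(\tilde z*\rho_\delta)\big|_{\R^n\setminus\overline{D}}$ extends to $\tilde z*\rho_\delta\in C^\infty_{\mathrm{c}}(\R^n)$, so $z_\delta\in X_D$; and as $\delta\to 0$ one has $\tilde z*\rho_\delta\to\tilde z$ in $H^1(\R^n)$, hence $\nabla z_\delta\to\nabla z$ in $L^2(\R^n\setminus\overline{D})$ (everything being supported in a fixed ball) and, by continuity of the trace operator into $L^2(\partial D)$, $z_\delta|_{\partial D}\to z|_{\partial D}$ in $L^2(\partial D)$. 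In view of the form~\eqref{E1-innerproduct} of the inner product, this gives $z_\delta\to z$ in $\dot{H}^1(\R^n\setminus\overline{D})$, and combining the two stages proves the lemma.

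The main obstacle is really just the embedding $\dot{H}^1(\R^n\setminus\overline{D})\hookrightarrow L^{2^*}$ invoked in the first stage to control the tail $\int_{B^n_{2R}\setminus B^n_R}\abs{z}^2$: this is precisely where the measure-theoretic decay condition built into the definition of $\dot{H}^1(\R^n\setminus\overline{D})$ is used essentially (without it one could not rule out, e.g., a nonzero additive constant surviving the truncation). One could alternatively argue via the abstract Hilbert-space description of $\dot{H}^1(\R^n\setminus\overline{D})$ from~\cite{AuchmutyHan}, but the truncation argument above seems the most transparent route.
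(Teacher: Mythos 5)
Your proof is correct, but it takes a genuinely different route from the paper's. You invoke the Lieb--Loss Sobolev embedding $\dot H^1(\R^n\setminus\overline D)\hookrightarrow L^{2^*}(\R^n\setminus\overline D)$ (after extending across $\partial D$) to control the ``annulus'' term $\int_{B^n_{2R}\setminus B^n_R}|z|^2/R^2$, and then proceed directly with a single radial cutoff. The paper instead performs a \emph{double truncation in value}: it replaces $z$ by $z_j := \mathrm{sign}(z)\min(j,(|z|-1/j)_+)$, which is bounded above by $j$ and supported in $\{|z|\geq 1/j\}$, a set of finite measure by the definition of $\dot H^1(\R^n\setminus\overline D)$. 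This forces $z_j\in L^2(\R^n)$ by elementary means, after which the radial cutoff $\eta_R$ is harmless because both $z_j$ and $\nabla z_j$ are square-integrable. So where you use the Gagliardo--Nirenberg--Sobolev inequality to get integrability of the tail, the paper manufactures it by a pointwise truncation, exploiting the decay-in-measure condition directly. Your route is conceptually natural (it is, in essence, the standard argument for density of $C^\infty_c$ in $D^1(\R^n)$) but imports a nontrivial inequality; the paper's is more elementary and stays entirely within the $L^2$-gradient framework. Both correctly identify the key point, namely that the measure-theoretic decay condition is what rules out constants surviving the truncation, and your collar-extension/mollification step at the end is the same as the regularisation step alluded to at the end of the paper's proof.
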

\begin{proof}
 This is a classical result and we provide a proof for
 the reader's convenience only.
 Let~$z\in \dot{H}^1(\R^n\setminus\overline{D})$.
 By making use of the operator
 $E\colon H^1(U\setminus\overline{D})\to H^1(U)$,
 we can extend any~$z\in \dot{H}^1(\R^n\setminus\overline{D})$
 to an element of~$H^1_{\mathrm{loc}}(\R^n)$,
 still denoted by~$z$ for simplicity.
 For any positive integer~$j$, we define
 \[
  \phi_j (t) :=
  \begin{cases}
   j       &\textrm{for } t \geq 1/j + j, \\
   t - 1/j &\textrm{for } 1/j < t < 1/j + j, \\
   0       &\textrm{for } -1/j \leq t \leq 1/j, \\
   t + 1/j &\textrm{for } - j - 1/j < t < -1/j, \\
   -j      &\textrm{for } t \leq - j - 1/j
  \end{cases}
 \]
 and~$z_j := \phi_j\circ z$. Note that~$\phi_j$
 is Lipschitz continuous, hence~$z_j\in H^1_{\mathrm{loc}}(\R^n)$
 and~$\nabla z_j \in L^2(\R^n, \, \R^n)$.
 Moreover, defining
 $A_j := \{x\in\R^n\colon \abs{z(x)} \leq 1/j \textrm{ or }
   \abs{z(x)} \geq j + 1/j \}$
 and~$A := \{x\in\R^n\colon z(x) = 0\}$, we have
 \begin{equation*} 
   \limsup_{j\to +\infty} \norm{\nabla z - \nabla z_j}_{L^2(\R^n)}
   = \lim_{j\to+\infty} \norm{\nabla z}_{L^2(A_j)}
   = \norm{\nabla z}_{L^2(A)} = 0.
 \end{equation*}
 This fact, together with Lebesgue's dominated convergence theorem,
 implies that~$\norm{z - z_j}_{\dot{H}^1(\R^n\setminus\overline{D})}\to 0$
 as~$j\to+\infty$.
 By construction, $z_j$ is bounded in~$\R^n$
 and is equal to zero outside the
 set~$\{x\in\R^n\colon \abs{z(x)}\geq 1/j\}$,
 which has finite measure because~$z\in\dot{H}^1(\R^n\setminus\overline{D})$.
 Therefore, $z_j\in L^2(\R^n)$, and
 hence~$z_j\in H^1(\R^n)$. By standard density results
 (and continuity of the trace operator $H^1(D)\to L^2(\partial D)$)
 we find functions~$\tilde{z}_j\in C^\infty_{\mathrm{c}}(\R^n)$
 such that~$\norm{z_j - \tilde{z}_j}_{H^1(\R^n)}
 + \norm{z_j - \tilde{z}_j}_{L^2(\partial D)} \leq 1/j$ for each~$j$.
 The lemma follows.
\end{proof}

Let~$V := \dot{H}^1(\R^n\setminus\overline{D})\oplus\R$.
Any element~$z\in V$ can be identified uniquely
with a function of the form
$z = z_0 + L$, where~$z\in\dot{H}^1(\R^n\setminus\overline{D})$
and~$L\in\R$ is a constant. In the sequel, we will
systematically identify elements of~$V$
with functions as above and write~$z(\infty) := L$.

\begin{lemma} \label{lemma:V-limit}
 Let~$(U_j)_{j\in\N}$ be an increasing sequence
 of bounded open sets
 such that \mbox{$\cup_{j\in\N} U_j = \R^n\setminus\overline{D}$}.
 Suppose that the sets in~$(U_j)_{j\in\N}$ satisfy
 a uniform Poincar\'e-Sobolev inequality, i.e. there exists
 a constant~$C$ such that
 \begin{equation} \label{hp:Vlimit-Poincare}
  \norm{z}_{L^p(U_j)} \leq C \norm{\nabla z}_{L^2(U_j)}
 \end{equation}
 for~$p := 2n/(n-2)$, all~$j\in\N$ and all~$z\in H^1(U_j)$
 with~$\int_{U_j} z(x) \d x = 0$.
 Let~$z_j\in H^1(U_j)$ be a sequence of functions such that
 \begin{equation} \label{hp:Vlimit}
  \sup_{j\in\N} \norm{\nabla z_j}_{L^2(U_j)} < + \infty,
  \qquad L := \lim_{j\to+\infty} \fint_{U_j} z_j \, \d x \in\R
 \end{equation}
 (in particular, we assume that the limit exists).
 Then, there exists a (non-relabelled) subsequence
 and a function~$z\in V$ such that~$z_j\rightharpoonup z$
 pointwise almost everywhere and weakly
 in~$H^1(U\setminus\overline{D})$ for all open
 bounded set~$U\subseteq\R^n$, $z(\infty) = L$, and
 \begin{equation} \label{Vlimit}
  \norm{\nabla z}_{L^2(\R^n\setminus\overline{D})}
  \leq \liminf_{j\to +\infty} \norm{\nabla z_j}_{L^2(U_j)}.
 \end{equation}
\end{lemma}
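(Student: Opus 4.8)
The plan is to remove the running averages, reduce to a local compactness argument on an exhaustion of $\R^n\setminus\overline D$ by nice bounded sets, and then recover the constant at infinity by a Fatou-type argument together with the Poincaré--Sobolev control. First I would set $a_j := \fint_{U_j} z_j\,\d x$, so that $a_j\to L$ by~\eqref{hp:Vlimit}, and replace $z_j$ by $\tilde z_j := z_j - a_j$: then $\int_{U_j}\tilde z_j\,\d x = 0$, $\nabla\tilde z_j = \nabla z_j$, hence $M := \sup_j\norm{\nabla\tilde z_j}_{L^2(U_j)} < +\infty$, and the uniform Poincaré--Sobolev inequality~\eqref{hp:Vlimit-Poincare} provides a uniform bound $\norm{\tilde z_j}_{L^p(U_j)}\leq CM$ with $p = 2n/(n-2)$. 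This $L^p$-bound is the key estimate for controlling the behaviour of the limit near infinity.

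Next I would fix an exhaustion of $\R^n\setminus\overline D$ by bounded Lipschitz open sets. Writing $D_\delta := \{x\colon\dist(x,\overline D)<\delta\}$, and choosing $\delta$ small enough that $\partial D_\delta$ is of class $C^{1,1}$ (possible since $D$ is $C^{1,1}$), I set $W_{k,m} := B^n_k\setminus\overline{D_{1/m}}$ for $k,m$ large. Each $W_{k,m}$ is a bounded Lipschitz domain with $\overline{W_{k,m}}\csubset\R^n\setminus\overline D$ and $\bigcup_{k,m} W_{k,m} = \R^n\setminus\overline D$; in particular $\overline{W_{k,m}}\subseteq U_j$ for $j$ large, so $z_j$ is eventually defined on $W_{k,m}$, with $\norm{\nabla z_j}_{L^2(W_{k,m})}\leq M$ and, by Hölder combined with the $L^p$-bound above and $a_j\to L$, with $\norm{z_j}_{L^2(W_{k,m})}$ bounded uniformly in $j$. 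By the Rellich--Kondrachov theorem and a diagonal extraction over the countable family $\{(k,m)\}$, I pass to a (non-relabelled) subsequence along which $z_j\rightharpoonup z$ weakly in $H^1(W_{k,m})$, strongly in $L^2(W_{k,m})$ and $z_j\to z$ a.e.\ in $W_{k,m}$, for every $(k,m)$. Taking unions, this defines $z$ a.e.\ on $\R^n\setminus\overline D$ with $z_j\to z$ a.e.; and since, for any bounded open $U\subseteq\R^n$, the functions $z_j$ are bounded in $H^1$ on $U\cap U_j$ (which increases to $U\setminus\overline D$), the local weak convergence upgrades to weak convergence in $H^1(U\setminus\overline D)$.

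Then weak lower semicontinuity of the $L^2$-norm on each $W_{k,m}$, followed by letting $W_{k,m}$ exhaust $U\setminus\overline D$, gives $\norm{\nabla z}_{L^2(U\setminus\overline D)}\leq\liminf_j\norm{\nabla z_j}_{L^2(U_j)}$, which is~\eqref{Vlimit}; taking $U=\R^n$ yields in particular $\nabla z\in L^2(\R^n\setminus\overline D)$. To identify $z\in V$ with $z(\infty)=L$, I note that $\tilde z_j = z_j - a_j\to z - L$ a.e.\ on $\R^n\setminus\overline D$, so Fatou's lemma applied to $\abs{\tilde z_j}^p\one_{U_j}$ gives $\norm{z-L}_{L^p(\R^n\setminus\overline D)}\leq\liminf_j\norm{\tilde z_j}_{L^p(U_j)}\leq CM<+\infty$; hence the superlevel sets $\{\abs{z-L}\geq c\}$ have finite measure for every $c>0$, and together with $\nabla(z-L)\in L^2(\R^n\setminus\overline D)$ and $z\in L^2_{\mathrm{loc}}$ this means exactly $z-L\in\dot H^1(\R^n\setminus\overline D)$, i.e.\ $z\in V$ and $z(\infty)=L$. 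This completes the argument.

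The main obstacle is the simultaneous book-keeping at the two places where control is delicate: near infinity, where only the $L^p$ (and not the $L^2$) bound on $\tilde z_j$ persists and must be combined with Fatou's lemma to pin down $z(\infty)=L$; and near $\partial D$, where a fixed bounded $U$ meeting $\partial D$ is never contained in a single $U_j$, so the exhaustion by the $W_{k,m}$ and the diagonal extraction have to be arranged carefully. Everything else is a routine compactness-and-lower-semicontinuity argument.
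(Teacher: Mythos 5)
Your argument is correct and follows essentially the same strategy as the paper's: extract a diagonally chosen subsequence converging a.e.\ and weakly in $H^1$ on an exhausting family of compactly contained bounded sets, obtain~\eqref{Vlimit} by weak lower semicontinuity and monotone exhaustion, and deduce the decay of $z - L$ by combining Fatou's lemma with the uniform $L^p$ bound coming from the Poincar\'e--Sobolev hypothesis. The only cosmetic difference is that you apply Fatou directly to $\abs{\tilde z_j}^p\one_{U_j}$ to conclude $z - L\in L^p$, whereas the paper applies the set-indexed version of Fatou to the superlevel sets $\{\abs{z_j - L}\geq c/2\}$ together with a Chebyshev estimate; these are equivalent and yield the same conclusion.
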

\begin{proof}
 Assumption~\eqref{hp:Vlimit}, combined with standard
 compactness results and a diagonal argument, enables us to
 extract a (non-relabelled) subsequence that converges
 pointwise almost everywhere and weakly
 in~$H^1(U\setminus\overline{D})$ for all bounded open
 sets~$U\subseteq\R^n$. The limit function~$z$ must satisfy
 \[
  \norm{\nabla z}_{L^2(U\setminus\overline{D})}
  \leq \liminf_{j\to+\infty} \norm{\nabla z_j}_{L^2(U\setminus\overline{D})}
  \leq \liminf_{j\to+\infty} \norm{\nabla z_j}_{L^2(U_j)}
 \]
 for all bounded~$U$. Taking the limit as~$U\nearrow\R^n$,
 we deduce that the inequality~\eqref{Vlimit} holds
 and, using assumption~\eqref{hp:Vlimit-Poincare}, that
 $\nabla z\in L^2(\R^n\setminus\overline{D}, \, \R^n)$.
 To conclude the proof, it only remains to show that~$z - L$
 decays at infinity, namely, that for any~$c > 0$ the set
 $E_c := \{x\in\R^n\setminus\overline{D}\colon \abs{z(x) - L}
 \geq c\}$ has finite measure. To this end, we define
 $L_j := \fint_{U_j} z_j(x) \, \d x$ and
 $E_c^j := \{x\in U_j\colon \abs{z_j(x) - L_j} \geq c/2\}$
 for each~$j\in\N$.
 The uniform Poicar\'e-Sobolev inequality~\eqref{hp:Vlimit-Poincare}
 and the uniform bound~\eqref{hp:Vlimit} imply
 \begin{equation} \label{Vlimit1}
  \abs{E^j_c}^{1/p}
  \leq 2c^{-1} \norm{z_j - L_j}_{L^p(U_j)}
  \leq 2Cc^{-1} \norm{\nabla z_j}_{L^2(U_j)}
  \leq M_c,
 \end{equation}
 for some constant~$M_c$ that depends on~$c$ but not on~$j$.
 Moreover, since~$L_j\to L$ by~\eqref{hp:Vlimit} and
 $z_j\to z$ pointwise almost everywhere as~$j\to+\infty$,
 we have $E_c\subseteq \liminf_{j\to+\infty} E_c^j
 := \cup_{k\geq 1} \cap_{j\geq k} E_c^j$ up to negligible sets
 (more precisely, the difference $E_c\setminus(\liminf_{j\to+\infty} E_c^j)$ has Lebesgue measure zero).
 Therefore, Fatou's lemma implies
 \[
  \begin{split}
   \abs{E_c} \leq \liminf_{j\to\infty} \abs{E_c^j}
   \leq M_c^p,
  \end{split}
 \]
 where the last inequality follows from~\eqref{Vlimit1}.
 This shows that~$E_c$ has finite measure, thus completing the proof.
\end{proof}

\begin{remark} \label{rk:assumePoincare}
 We have seen in Lemma~\ref{lemma:example-extension}
 that the family of sets~$(H_\eps)_{0 < \eps \leq \eps_0}$
 defined in~\eqref{model_Heps}
 satisfies~\eqref{hp:first}--\eqref{hp:extension}.
 Remark~\ref{rk:Poincarep} provides a constant~$C>0$ such that
 \begin{equation} \label{assumePoincare1}
  \norm{u - \bar{u}}_{L^p(Y\setminus H_\eps)}
  \leq C \norm{\nabla u}_{L^2(Y\setminus H_\eps)}
 \end{equation}
 for all~$\eps \in (0, \, 1)$, $u\in H^1(Y\setminus H_\eps)$,
 with~$\bar{u} := \fint_{Y\setminus H_\eps} u(y) \, \d y$
 and~$p := n/(n-2)$. Let~$Y_0 := \eps_0^{1-\gamma} Y$,
 where~$\eps_0 > 0$ is chosen as in~\eqref{model_Heps}.
 Take~$\eps \in (0, \, \eps_0]$ and let
 $r := (\eps/\eps_0)^{1-\gamma} \geq 1$.
 Since both sides of the inequality~\eqref{assumePoincare1}
 are scale-invariant, by a change of variables we obtain
 \[
  \norm{z - \bar{z}}_{L^p(r Y_0 \setminus\overline{D})}
   \leq C \norm{\nabla z}_{L^2(r Y_0 \setminus\overline{D})}
 \]
 for all~$r\geq 1$, $z\in H^1(r Y_0 \setminus\overline{D})$,
 where~$\bar{z} := \fint_{r Y_0 \setminus\overline{D}} z(x) \, \d x$.
 The constant~$C$ on the right-hand side does not depend on~$r$, $z$.
 Therefore, the family~$(r Y_0\setminus\overline{D})_{r\geq 1}$
 satisfies condition~\eqref{hp:Vlimit-Poincare}.
\end{remark}

Having introduced the space~$V$, we can now prove that
the family~$(H_\eps)_{0 < \eps < 1}$ satisfies the
assumptions~\eqref{hp:critical} and~\eqref{hp:limit}.
We consider~\eqref{hp:limit} first.
Given~$\kappa > 1$, we define
\begin{equation} \label{Lambda*+}
 \Lambda_*(\kappa) := \inf\left\{
 \int_{\R^n\setminus\overline{D}}\abs{\nabla z}^2\colon
  z\in V, \ \fint_{\partial D} z^2 \,\d\sigma
 = 1 + \kappa \, z(\infty)^2\right\} \! .
\end{equation}
Similarly, if~$0 < \kappa < 1$, we define
\begin{equation} \label{Lambda*-}
 \Lambda_*(\kappa) := \inf\left\{
 \int_{\R^n\setminus D}\abs{\nabla z}^2\colon
 z\in V, \ \fint_{\partial D} z^2 \,\d\sigma
 = -1 + \kappa \, z(\infty)^2\right\} \!.
\end{equation}

\begin{lemma} \label{lemma:example_limit}
 The family of sets~$(H_\eps)_{0 < \eps < 1}$
 defined in~\eqref{model_Heps} satisfies
 condition~\eqref{hp:limit}, and the equality
 \[
  \lambda_*(\kappa) = \Lambda_*(\kappa)
 \]
 holds for all~$\kappa > 0$ with~$\kappa\neq 1$.
\end{lemma}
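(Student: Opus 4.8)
The plan is to reduce the statement to a scaling argument on the blown-up perforated cell, and then pass to the limit using the compactness Lemma~\ref{lemma:V-limit}. Set $\delta_\eps := \eps^{\gamma-1}$, so that $H_\eps = \delta_\eps\overline D$; since $\gamma - 1 = 2/(n-2)$ we have $\delta_\eps^{n-2} = \eps^2$. For $v\in H^1(\T^n\setminus H_\eps)$, the rescaled function $z(y) := v(\delta_\eps y)$ on the periodic domain $U_\eps := \delta_\eps^{-1}Y\setminus\overline D$ satisfies $\int_{\T^n\setminus H_\eps}\abs{\nabla v}^2 = \eps^2\int_{U_\eps}\abs{\nabla z}^2$, $\fint_{\partial H_\eps} v^2\,\d\sigma = \fint_{\partial D} z^2\,\d\sigma$ and $\int_{\T^n\setminus H_\eps} v^2 = \delta_\eps^n\int_{U_\eps} z^2$ (the boundary average being scale-invariant). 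Hence $\eps^{-2}\abs{\lambda(\eps,\kappa)}$ equals the infimum of $\int_{U_\eps}\abs{\nabla z}^2$ over periodic $z\in H^1(U_\eps)$ with $\fint_{\partial D} z^2 - \kappa\,\delta_\eps^n\int_{U_\eps} z^2 = \pm1$ (the sign being $+$ if $\kappa>1$ and $-$ if $0<\kappa<1$, as in \eqref{lambda_eps+}--\eqref{lambda_eps-}), and $U_\eps$ exhausts $\R^n\setminus\overline D$ as $\eps\to0$. Heuristically, if $z$ is close to a constant $L$ near infinity then $\delta_\eps^n\int_{U_\eps} z^2\to L^2$, so the constraint tends to $\fint_{\partial D} z^2 - \kappa z(\infty)^2 = \pm1$ --- exactly the one defining $\Lambda_*(\kappa)$ in \eqref{Lambda*+}--\eqref{Lambda*-}. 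The two inequalities below make this precise.

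\emph{Upper bound.} Given any admissible $z\in V$ for $\Lambda_*(\kappa)$, Lemma~\ref{lemma:densityE1} lets me approximate $z - z(\infty)$ in the $\dot H^1$-seminorm by functions supported in a fixed ball, so I may assume $z \equiv z(\infty)$ outside some $B_R$. For $\eps$ small enough that $B_R\subseteq\delta_\eps^{-1}Y$, the restriction of $z$ to $U_\eps$ is periodic and, after multiplication by a factor $1 + \mathrm{o}_{\eps\to0}(1)$ correcting the discrepancy $\delta_\eps^n\int_{U_\eps} z^2 = z(\infty)^2 + \mathrm{o}(1)$, is admissible for the infimum problem defining $\eps^{-2}\abs{\lambda(\eps,\kappa)}$. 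This gives $\eps^{-2}\abs{\lambda(\eps,\kappa)} \leq \int_{\R^n\setminus\overline D}\abs{\nabla z}^2 + \mathrm{o}(1)$; infimizing over $z$ yields $\limsup_{\eps\to0}\eps^{-2}\abs{\lambda(\eps,\kappa)} \leq \Lambda_*(\kappa)$ (and, in particular, $\Lambda_*(\kappa) < +\infty$).

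\emph{Lower bound.} Let $v_{\eps,\kappa}$ be the positive minimiser and $z_\eps(y) := v_{\eps,\kappa}(\delta_\eps y)$, so that $\int_{U_\eps}\abs{\nabla z_\eps}^2 = \eps^{-2}\abs{\lambda(\eps,\kappa)}$ is bounded by the previous step. Write $z_\eps = \tilde z_\eps + m_\eps$ with $m_\eps := \fint_{U_\eps} z_\eps$. The uniform Poincaré--Sobolev inequality of Remark~\ref{rk:assumePoincare} bounds $\norm{\tilde z_\eps}_{L^{2n/(n-2)}(U_\eps)}$ independently of $\eps$, and since $\abs{U_\eps}^{2/n}\sim\delta_\eps^{-2}$, Hölder's inequality gives $\delta_\eps^n\int_{U_\eps}\tilde z_\eps^2 \leq C\eps^2\to0$; hence $\delta_\eps^n\int_{U_\eps} z_\eps^2 = m_\eps^2 + \mathrm{o}(1)$. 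Feeding this into the constraint, together with the rescaled form $\fint_{\partial D} z_\eps = \kappa\,\delta_\eps^n\int_{U_\eps} z_\eps = \kappa m_\eps(1+\mathrm{o}(1))$ of Remark~\ref{rk:bdav} and the uniform trace bound $\fint_{\partial D}(z_\eps - m_\eps)^2 \leq C\norm{z_\eps}_{H^1(B_R\setminus\overline D)}^2\leq C$ (with $\overline D\subset B_R$ fixed), one checks that $(m_\eps)$ is bounded: for $\kappa>1$ this follows directly from the constraint and Cauchy--Schwarz on $\partial D$, while for $0<\kappa<1$ one uses that the variance $\fint_{\partial D} z_\eps^2 - (\fint_{\partial D} z_\eps)^2 = (\kappa-\kappa^2)m_\eps^2 - 1 + \mathrm{o}(1)(1+m_\eps^2)$ is controlled by $\fint_{\partial D}(z_\eps - m_\eps)^2$. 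Passing to a subsequence with $m_{\eps_j}\to L$, Lemma~\ref{lemma:V-limit} applied with $U_j := U_{\eps_j}$ (an increasing exhaustion of $\R^n\setminus\overline D$ satisfying the required uniform Poincaré--Sobolev inequality, with $\sup_j\norm{\nabla z_{\eps_j}}_{L^2(U_j)}<\infty$ and $\fint_{U_j} z_{\eps_j}\to L$) produces $z\in V$ with $z(\infty)=L$, $z_{\eps_j}\rightharpoonup z$ weakly in $H^1(U\setminus\overline D)$ for every bounded $U$, and $\int_{\R^n\setminus\overline D}\abs{\nabla z}^2 \leq \liminf_j\int_{U_j}\abs{\nabla z_{\eps_j}}^2 = \liminf_j\eps_j^{-2}\abs{\lambda(\eps_j,\kappa)}$. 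The compact trace embedding $H^1(B_R\setminus\overline D)\hookrightarrow L^2(\partial D)$ gives $\fint_{\partial D} z_{\eps_j}^2\to\fint_{\partial D} z^2$, and passing to the limit in the constraint yields $\fint_{\partial D} z^2 - \kappa z(\infty)^2 = \pm1$, so $z$ is admissible for $\Lambda_*(\kappa)$. Therefore $\liminf_j\eps_j^{-2}\abs{\lambda(\eps_j,\kappa)} \geq \int_{\R^n\setminus\overline D}\abs{\nabla z}^2 \geq \Lambda_*(\kappa)$.

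Combining the two bounds, $\eps^{-2}\abs{\lambda(\eps,\kappa)}\to\Lambda_*(\kappa)$ as $\eps\to0$ along the whole family (the limit being subsequence-independent); recalling the sign of $\lambda(\eps,\kappa)$ ($>0$ for $\kappa>1$, $<0$ for $0<\kappa<1$, as in \eqref{lambda_eps+}--\eqref{lambda_eps-}, with the corresponding sign in \eqref{Lambda*+}--\eqref{Lambda*-}), this establishes \eqref{hp:limit} and the identity $\lambda_*(\kappa) = \Lambda_*(\kappa)$ (which, incidentally, is nonzero, since the constraints in \eqref{Lambda*+}--\eqref{Lambda*-} force non-constant competitors). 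The main obstacle is the lower bound, specifically: (i) the uniform bound on $m_\eps$, where in the range $0<\kappa<1$ the constraint alone bounds $m_\eps$ only from below and one genuinely needs the oscillation estimate for $z_\eps$ on $\partial D$; and (ii) reconciling the growing domains $U_\eps$ with the fixed-scale boundary average on $\partial D$ --- handled by Lemma~\ref{lemma:V-limit} for the Dirichlet part and the compact trace embedding for the constraint, the key quantitative input being that the ``mass at infinity'' $\delta_\eps^n\int_{U_\eps} z_\eps^2$ does converge to $z(\infty)^2$, which is exactly where the scaling $\abs{U_\eps}^{2/n}\sim\delta_\eps^{-2}$ and the identity $\delta_\eps^{n-2}=\eps^2$ are used.
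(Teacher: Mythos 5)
Your proof follows essentially the same route as the paper's: rescale by $\delta_\eps = \eps^{\gamma-1}$ (so that $\delta_\eps^{n-2}=\eps^2$), get the upper bound by the density argument from Lemma~\ref{lemma:densityE1} applied to competitors that are constant near infinity, and get the lower bound by rescaling the minimiser $v_{\eps,\kappa}$, invoking Lemma~\ref{lemma:V-limit} for the gradient part and the compact trace embedding for the constraint. The one noticeable stylistic difference is in how the boundedness of the averages $m_\eps$ is obtained: the paper simply cites Lemma~\ref{lemma:constantlimit+} (resp.\ \ref{lemma:constantlimit-}), whereas you re-derive the bound directly from the constraint, Remark~\ref{rk:bdav}, and Cauchy--Schwarz/variance estimates on $\partial D$ --- which is in fact what the proofs of those cited lemmas do under the hood, so this is a transparent unpacking rather than a genuinely different argument, and it has the modest advantage of keeping the present lemma more self-contained (avoiding the appearance of circularity with \eqref{hp:limit}, which Lemma~\ref{lemma:constantlimit+} formally assumes).
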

\begin{proof}
 We focus on the case~$\kappa > 1$; in case~$0 < \kappa < 1$
 the proof is analogous.

 \medskip
 \noindent
 \textit{First step.}
 First, we claim that
 \begin{equation} \label{example-limsup}
  \limsup_{\eps\to 0} \frac{\lambda(\eps, \, \kappa)}{\eps^2}
   \leq \Lambda_*(\kappa).
 \end{equation}
 By Lemma~\ref{lemma:densityE1} and a density argument,
 the infimum on the right-hand side
 of~\eqref{Lambda*+} does not change if we restrict our
 attention to competitors~$z\colon\R^n\setminus\overline{D}\to\R$
 that admit a smooth extension~$\R^n\to\R$ and are constant
 in a neighbourhood of infinity. Let~$z$ be such a competitor,
 and let
 \[
  v_\eps(x) := c_\eps^{-1/2}z(\eps^{1 - \gamma} x)
  \qquad \textrm{for } x\in \T^n\setminus H_\eps \textrm{ and } \eps \in (0, \, 1),
 \]
 where~$c_\eps > 0$ is a normalisation constant,
 to be chosen in a moment. Since~$z$ is equal to
 a constant~$z(\infty)$ away from a compact
 set (which we call~$K$, say), for small enough~$\eps \in (0, \, 1)$
 we have~$v_\eps = c_\eps^{-1/2}z(\infty)$ in~$Y\setminus \eps^{\gamma - 1} K$.
 In particular, $v_\eps$ does satisfy periodic boundary conditions on~$\partial Y$.
 Moreover, we have $c_\eps\int_{\T^n\setminus H_\eps} v_\eps^2 \,\d x
 \to z(\infty)^2$ as~$\eps\to 0$ and, as a consequence,
 \[
  c_\eps \mcG_{\eps,\kappa}(v_\eps)
  = \fint_{\partial H_\eps} z^2 \, \d\sigma
   - \kappa\, z(\infty)^2 + \mathrm{o}_{\eps\to 0}(1)
  = 1 + \mathrm{o}_{\eps\to 0}(1) .
 \]
 In particular, $\mcG_{\eps,\kappa}(v_\eps) > 0$
 for small enough~$\eps$, and we can
 take~$c_\eps$ so that~$\mcG_{\eps,\kappa}(v_\eps) = 1$.
 Such a constant~$c_\eps$ then satisfies~$c_\eps\to 1$ as~$\eps\to 0$.
 Then, $v_\eps$ is an admissible competitor for
 Problem~\eqref{lambda_eps+}, and we have
 \[
  \begin{split}
   \lambda(\eps, \, \kappa)
   \leq \int_{\T^n\setminus H_\eps} \abs{\nabla v_\eps}^2 \, \d x
   = \frac{\eps^{(\gamma - 1)(n-2)}}{c_\eps}
    \int_{\eps^{1 - \gamma} Y\setminus\overline{D}}
    \abs{\nabla z}^2 \, \d x
   = \frac{\eps^2}{1 + \mathrm{o}_{\eps\to 0}(1)}
    \int_{ \eps^{1 - \gamma} Y\setminus\overline{D}}
    \abs{\nabla z}^2 \, \d x.
  \end{split}
 \]
 We divide both sides of this inequality by~$\eps^2$,
 pass to the limit superior as~$\eps\to 0$,
 and take the infimum over all possible~$z$.
 The claim~\eqref{example-limsup} follows.

 \medskip
 \noindent
 \textit{Second step.}
 Next, we claim that
 \begin{equation} \label{example-liminf}
  \Lambda_*(\kappa) \leq
   \liminf_{\eps\to 0} \frac{\lambda(\eps, \, \kappa)}{\eps^2}.
 \end{equation}
 Combined with~\eqref{example-limsup},
 this claim implies that the limit~$\lambda_*(\kappa)$
 exists and is equal to~$\Lambda_*(\kappa)$.
 For the proof, we consider the positive minimiser~$v_{\eps,\kappa}$
 for Problem~\eqref{eigenvalue} and define
 $z_{\eps,\kappa}\colon \eps^{1-\gamma} Y\setminus\overline{D}\to\R$ as
 \begin{equation} \label{example-rescale}
  z_{\eps,\kappa}(x) := v_{\eps,\kappa}\left(\eps^{\gamma - 1} x\right) \! ,
  \qquad x\in \eps^{1-\gamma} Y\setminus\overline{D},
 \end{equation}
 for $\eps$ small enough.
 Up to extraction of a (non-relabelled) subsequence~$\eps\to 0$,
 Lemma~\ref{lemma:constantlimit+} implies that
 \begin{equation} \label{example-liminf-average}
  \fint_{\eps^{1-\gamma} Y\setminus\overline{D}} z_{\eps,\kappa} \, \d x =
  \fint_{\T^n\setminus H_\eps} v_{\eps, \, \kappa} \, \d x \to v_*(\kappa)
 \end{equation}
 as~$\eps\to 0$. Moreover, we have
 \[
  \int_{\eps^{1-\gamma} Y\setminus\overline{D}}
   \abs{\nabla z_{\eps,\kappa}}^2 \d x
  = \eps^{(\gamma - 1)(2 - n)}\int_{\T^n\setminus H_\eps}
   \abs{\nabla v_{\eps,\kappa}}^2 \d x
  = \frac{\lambda(\eps, \kappa)}{\eps^2}.
 \]
 The inequality~\eqref{example-limsup} we have proved
 in the first step shows that the right-hand side remains bounded
 as~$\eps\to 0$. By Lemma~\ref{lemma:V-limit}
 and Remark~\ref{rk:assumePoincare}
 (and up to extraction of a non-relabelled subsequence),
 the functions~$z_{\eps,\kappa}$ converge pointwise almost
 everywhere and
 weakly in~$H^1(U\setminus\overline{D})$ for each
 bounded open~$U\subseteq\R^n$ to a function~$z\in V$,
 which satisfies~$z(\infty) = v_*(\kappa)$
 (because of~\eqref{example-liminf-average}) and
 \begin{equation} \label{example-liminf1}
  \int_{\R^n\setminus\overline{D}}
   \abs{\nabla z}^2 \d x
  \leq \liminf_{\eps\to 0} \frac{\lambda(\eps, \, \kappa)}{\eps^2}.
 \end{equation}
 Since the trace operator~$H^1(U\setminus\overline{D})\to
 H^{1/2}(\partial D)\hookrightarrow L^2(\partial D)$
 is compact, we have
 \[
  \begin{split}
   \fint_{\partial D} z^2 \, \d\sigma
   = \lim_{\eps\to 0} \fint_{\partial D}
    z_{\eps,\kappa}^2 \, \d\sigma
   = \lim_{\eps\to 0} \fint_{\partial H_\eps}
    v_{\eps,\kappa}^2 \, \d\sigma
   = 1 + \kappa v_*(\kappa)^2.
  \end{split}
 \]
 The last equality follows from the constraint
 $\mcG_{\eps,\kappa}(v_{\eps,\kappa}) = 1$.
 Therefore, $z$ is an admissible competitor for
 Problem~\eqref{Lambda*+}.
 Taking~\eqref{example-liminf1} into account,
 we obtain the inequalities
 \begin{equation} \label{example-liminf2}
  \Lambda_*(\kappa)
  \leq \int_{\R^n\setminus\overline{D}} \abs{\nabla z}^2 \, \d x
  \leq \liminf_{\eps\to 0} \frac{\lambda(\eps, \, \kappa)}{\eps^2},
 \end{equation}
 which prove~\eqref{example-liminf} and hence complete the proof of the lemma.
\end{proof}

\begin{remark} \label{rk:Gamma}
 Let~$z\in \dot{H}^1(\R^n\setminus\overline{D}) + v_*(\kappa)$
 be the function constructed in Step~2 of the previous proof.
 By combining~\eqref{example-liminf2} with~\eqref{example-limsup},
 we obtain
 \[
  \int_{\R^n} \abs{\nabla z}^2 \, \d x
  = \Lambda_*(\kappa) = \lambda_*(\kappa).
 \]
 In particular, when~$\kappa > 1$
 the infimum on the right-hand side of~\eqref{Lambda*+}
 is attained. The same is true for~\eqref{Lambda*-}
 when~$0 < \kappa < 1$, by a similar argument.
\end{remark}

\begin{proof}[Proof of Proposition~\ref{prop:example}]
 We have seen already that~$(H_\eps)_{0 < \eps < 1}$
 satisfies assumptions~\eqref{hp:H}, \eqref{hp:decreas},
 \eqref{hp:extension}, and~\eqref{hp:limit}.
 By reasoning exactly as in Lemma~\ref{lemma:example_limit}
 we can prove that
 \begin{align}
  \Dir_* := \lim_{\eps\to 0} \frac{\Dir(\eps)}{\eps^2}
   &= \inf\left\{\int_{\R^{n}\setminus\overline{D}}
   \abs{\nabla z}^2\colon
   z\in V, \ z = 0 \textrm{ on } \partial D,
   \ z(\infty) = 1 \right\} \! , \label{example_Dir*} \\
  \St_* := \lim_{\eps\to 0} \frac{\St(\eps)}{\eps^2}
   &= \inf\left\{\int_{\R^{n}\setminus\overline{D}}
   \abs{\nabla z}^2\colon z\in V, \
   \fint_{\partial D} z^2\, \d\sigma = 1,
   \ z(\infty) = 0 \right\} \label{example_St*} \! .
 \end{align}
 For instance, in the case of~\eqref{example_Dir*},
 we consider the unique (nonnegative) minimiser~$\varphi_\eps$
 to~\eqref{Dir_eps}, extend it by zero inside~$H_\eps$
 (which preserves the~$H^1$-regularity, since~$\varphi_\eps = 0$
 on~$\partial H_\eps$ by definition) and observe that
 \[
  1 - \left(\int_Y \varphi_\eps \, \d y\right)^2
  = \int_Y \varphi_\eps^2 \, \d x
   - \left(\int_Y \varphi_\eps \, \d y\right)^2
  = \int_Y \left( \varphi_\eps
     - \int_Y \varphi_\eps \, \d y\right)^2 \d x
  \leq C\eps^2
 \]
 for some~$\eps$-independent constant~$C$,
 because of the Poincar\'e inequality and~\eqref{hp:critical}.
 The functions defined
 by~$z_\eps(x) := \varphi_\eps(\eps^{\gamma - 1} x)$
 for~$x\in \eps^{1-\gamma} Y\setminus\overline{D}$ have
 zero trace on~$\partial D$ and satisfy
 \[
  \fint_{\eps^{1-\gamma}\setminus\overline{D}} z_\eps \, \d x
  = \fint_{Y\setminus H_\eps}\varphi_\eps \, \d x\to 1
  \quad \textrm{as } \eps \to 0,
  \qquad \int_{\eps^{1-\gamma}Y\setminus\overline{D}}
   \abs{\nabla z_\eps}^2 \, \d x = \frac{\Dir(\eps)}{\eps^2}.
 \]
 Therefore, we can apply Lemma~\ref{lemma:V-limit}
 and extract a (non-relabelled) subsequence so that~$z_\eps\to z$,
 where~$z\in V$ is zero on~$\partial D$ and satisfies~$z(\infty) = 1$,
 $\int_{\R^n\setminus\overline{D}} \abs{\nabla z}^2 \, \d x \leq \Dir_*$.
 This proves the right-hand side of~\eqref{example_Dir*}
 does not exceed its left-hand side;
 the opposite inequality follows along the lines of~\eqref{example-limsup}.
 The proof of~\eqref{example_St*} is similar
 but requires some modifications. Indeed, in this case
 we are naturally led to consider the functions
 $\tilde{z}_\eps(x) := \psi_\eps(\eps^{\gamma - 1} x)$,
 $x\in \eps^{1 - \gamma} Y\setminus\overline{D}$,
 where~$\psi_\eps$ is the unique (nonnegative) minimiser to~\eqref{St_eps}.
 Compactness for the functions~$\tilde{z}_\eps$
 comes not from controlling their average but from
 the boundary condition~$\tilde{z}_\eps = 0$ on~$\eps^{1-\gamma} \partial Y$.
 The necessary adaptations to Lemma~\ref{lemma:V-limit}
 and Remark~\ref{rk:assumePoincare} are straightforward.

 Since the trace operator~$V\to L^2(\partial D)$
 is compact (see e.g.~\cite[Corollary~3.4]{AuchmutyHan})
 and the constraints $z(\infty) = 0$, $z(\infty) = 1$
 are convex (in fact, affine), the direct method
 in the calculus of variations implies that the right-hand sides of
 both~\eqref{example_Dir*} and~\eqref{example_St*}
 are attained. In particular, both~$\Dir_*$ and~$\St_*$ are finite
 and strictly positive and~\eqref{hp:critical} is satisfied.

 To complete the proof, it remains to show that~$\lambda_0 := -\lim_{\kappa\to 0} \kappa\lambda_*(\kappa) = \Dir_*$.
 We will do so by applying Proposition~\ref{prop:lambda0}.
 By scaling, condition~\eqref{hp:lambda0_geometry}
 reduces to finding a constant~$C> 0$ such that
 \[
  \sigma(\partial D \cap B^n_\theta(x)) \geq C \theta^{n-1}
 \]
 for all~$x\in\partial D$ and~$\theta$
 such that~$0 < \theta < \sigma(\partial D)^{\frac{1}{n - 1}}$.
 Such a constant exists, because~$\partial D$
 is a compact and $C^2$-regular Riemannian manifold.
 Therefore, \eqref{hp:lambda0_geometry} is satisfied.
 We next prove that~\eqref{hp:lambda0} is also satisfied.
 To this end, for~$\eps\in (0, \, 1)$, we consider the functions
 \begin{equation*} 
  z_{\eps,\kappa}(x) := \kappa^{1/2}
   v_{\eps,\kappa}\left(\eps^{\gamma - 1} x\right) \! ,
  \qquad x\in \eps^{1-\gamma} Y\setminus\overline{D}.
 \end{equation*}
 (Compared to~\eqref{example-rescale},
 this definition has an extra factor of~$\kappa^{1/2}$.)
 By rescaling Equation~\eqref{eigenvalue},
 we see that $z_{\eps,\kappa}$ satisfies, for~$0 < \kappa < 1$,
 \[
  \begin{cases}
   -\Delta z_{\eps,\kappa} = \kappa \eps^{2\gamma - 2} \,
    \abs{\lambda(\eps,\kappa)} z_{\eps, \kappa}
    &\textrm{in } \eps^{1 - \gamma}Y\setminus\overline{D} \\[3mm]
   \partial_{\nu} z_{\eps,\kappa}
    + \dfrac{\abs{\lambda(\eps,\kappa)} \, z_{\eps,\kappa}}
    {\eps^2 \, \sigma(\partial D)} = 0
    &\textrm{on } \partial D,
  \end{cases}
 \]
 where~$\nu$ is the unit
 normal pointing inside~$D$. Having already proved
 that~$H_\eps = \eps^{1-\gamma} \overline{D}$ satisfies the
 conditions~\eqref{hp:first}--\eqref{hp:last},
 we know 
 that~$\lambda(\eps,\kappa)/\eps^2\to \lambda_*(\kappa) < 0$
 as~$\eps\to 0$. Moreover,
 $\norm{\nabla z_{\eps,\kappa}}_{L^2(\eps^{1-\gamma} Y\setminus\overline{D})}
 = \kappa^{1/2} \eps^{-1} \abs{\lambda(\eps, \, \kappa)}^{1/2}$
 is bounded uniformly with respect to~$\eps$ and~$\kappa$,
 by~\eqref{lambda_bound-} and~\eqref{hp:critical},
  while
 \[
  \limsup_{\eps\to 0} \fint_{\eps^{1 - \gamma}Y\setminus\overline{D}}
   z_{\eps,\kappa} \, \d x
  = \kappa^{1/2} \limsup_{\eps\to 0}
   \fint_{\T^n\setminus H_\eps} v_{\eps,\kappa} \, \d x
  \leq \frac{2\left(\Dir_*\right)^{1/2}}{\lambda_\infty^{1/2}(1 - \kappa)}
 \]
 by~\eqref{secondogrado-futref}.
 Therefore, Remark~\ref{rk:assumePoincare}
 implies that the $L^p(\eps^{1-\gamma} Y\setminus\overline{D})$-norm
 of~$z_{\eps,\kappa}$ (where~$p := 2n/(n-2)$) is bounded uniformly with
 respect to~$\eps$ and~$\kappa\in (0, \, 1/2)$.
 Then, elliptic regularity estimates and a bootstrap argument
 imply that the functions~$z_{\eps,\kappa}$
 restricted to~$\partial D$ are Lipschitz continuous,
 with uniform bounds on the Lipschitz constant. After rescaling,
 this shows that condition~\eqref{hp:lambda0} is satisfied.
 Therefore, Proposition~\ref{prop:lambda0} applies,
 and~$\lambda_0 = \Dir_*$.
\end{proof}

When~$D$ is the unit ball~$B^n$, the problems~\eqref{Lambda*+},
\eqref{Lambda*-}, \eqref{example_Dir*} and~\eqref{example_St*}
can be solved explicitly, leading to the
expressions for~$\lambda_*(\kappa)$
and~$\kappa_*(\beta)$ featuring in Proposition~\ref{prop:ball}.

\begin{proof}[Proof of Proposition~\ref{prop:ball}]
 Given a function~$z\in V$,
 we define~$z_r\colon [1, \, +\infty)\to\R$ as
 \[
  z_r(\rho) := \fint_{\partial B^n}
   z(\rho \, \omega) \, \d\sigma(\omega),
   \qquad \textrm{a.e. } \rho\in [1, \, +\infty).
 \]
 By abuse of notation, we identify~$z_r$ with a radial function
 defined in~$\R^n\setminus B^n$, i.e.~$z_r(x) = z_r(\abs{x})$.
 The~$L^2$-norm of the gradient of~$z_r$ does not exceed that of~$z$:
 \[
  \begin{split}
   \frac{1}{\sigma(\partial B^n)}
    \norm{\nabla z_r}_{L^2(\R^n\setminus B_1)}^2
   = \int_1^{+\infty} z_r^\prime(\rho)^2 \rho^{n-1} \, \d \rho
   &= \int_1^{+\infty} \left(\fint_{\partial B^n}
    \partial_\rho z(\rho\, \omega) \, \d\sigma(\omega) \right)^2
    \rho^{n-1} \, \d \rho \\
   &\leq \int_1^{+\infty} \left(\fint_{\partial B^n}
    \abs{\partial_\rho z(\rho\, \omega)}^2  \d\sigma(\omega) \right)
    \rho^{n-1} \, \d \rho \\
   &\leq \frac{1}{\sigma(\partial B^n)}
    \norm{\nabla z}_{L^2(\R^n\setminus B_1)}^2 \! .
  \end{split}
 \]
 Therefore, upon replacing~$z$ with~$z_r$,
 the infima in Equations~\eqref{Lambda*+},
 \eqref{Lambda*-}, \eqref{example_Dir*} and~\eqref{example_St*}
 can be restricted to radial functions,
 i.e. those of the form~$z(x) = z(\abs{x})$. Assuming
 for instance~$\kappa > 1$, Lemma~\ref{lemma:example_limit}
 implies
 \begin{equation} \label{Lambda*+ball}
  \lambda_*(\kappa) = \inf\left\{
  \sigma_n \int_1^{+\infty} z^\prime(\rho)^2 \rho^{n-1} \, \d\rho\colon
  z\colon [1, \, +\infty)\to\R, \
  z(1)^2 = 1 + \kappa z(\infty)^2\right\} \! ,
 \end{equation}
 where~$\sigma_n :=\sigma(\partial B^n)$.
 More precisely, the infimum is taken among all
 functions~$z\in H^1_{\mathrm{loc}}(1, \, +\infty)$
 such that the integral on the right-hand side of~\eqref{Lambda*+ball}
 is finite and there exists a constant~$z(\infty)\in\R$ for which
 the difference~$z - z(\infty)$ decays at infinity, i.e.
 $\{\rho\in [1, \, +\infty)\colon \abs{z(\rho) - z(\infty)} > c\}$
 has finite measure for all~$c> 0$.
 By Remark~\ref{rk:Gamma},
 we know that the infimum is actually a minimum.
 Any minimiser for~\eqref{Lambda*+ball} must satisfy
 the Euler-Lagrange equation, 
 which implies
 \[
  z^\prime(\rho) = \frac{c}{\rho^{n-1}}
  \qquad \textrm{a.e. } \rho\geq 1
 \]
 for some constant~$c$.
 By integrating this equality over~$\rho\in [1, \, +\infty)$,
 we can express~$z(\infty)$
 as a function of~$z_1 := z(1)$ and~$c$, so that
 the constraint~$z(1)^2 = 1 + \kappa z(\infty)^2$
 takes the form
 \[
  z_1^2 = 1 + \kappa \left(z_1 + \frac{c}{n-2}\right)^2 \!.
 \]
 This is a quadratic equation for~$c$, whose solutions are
 \begin{equation} \label{ballc}
  c_\pm(z_1) := (n-2)
   \left(- z_1 \pm \sqrt{\frac{z_1^2 - 1}{\kappa}}\right) \! .
 \end{equation}
 The functional to be minimised in~\eqref{Lambda*+ball}
 is proportional to~$c^2$, so all
 that remains to do is minimising the
 function~$f(z_1) := \min(c_-(z_1)^2, \, c_+(z_1)^2)$
 over all~$z_1$ with~$\abs{z_1}\geq 1$.
 Note that~$c_+(-z_1) = -c_-(z_1)$ and
 $c_-(-z_1) = -c_+(z_1)$ for all~$z_1$ with~$\abs{z_1}\geq 1$,
 so the function~$f$ is even. A standard computation shows
 that~$f$ attains its the minimum value when
 \[
  \abs{z_1} = z_\kappa := \sqrt{\frac{\kappa}{\kappa - 1}}.
 \]
 Now, Equation~\eqref{Lambda*+ball}
 reads
 \[
  \lambda_*(\kappa) = \frac{\sigma_n \, f(z_\kappa)}{n-2},
 \]
 and the lemma follows by simple computations.
 The case~$0 < \kappa < 1$ is analogous.
\end{proof}

\subsection{Other asymptotic regimes}
\label{sect:otherregimes}

Finally, in this section we consider other regimes
for our model example, i.e.~we assume the holes have the form
\begin{equation} \label{model_example_bis}
 \mcP_\eps := \eps\Z^n + r_\eps \overline{D},
\end{equation}
where~$r_\eps > 0$ satisfies either~$\eps^{-\gamma} r_\eps \to 0$
or~$\eps^{-\gamma} r_\eps\to + \infty$ as~$\eps\to 0$.
We will deduce the asymptotic behaviour of solutions to~\eqref{main_eq} from abstract results, along
the lines of Theorem~\ref{th:strange_term}.

\begin{prop} \label{prop:strangeterm-0}
 Consider a family of sets~$(H_\eps)_{0 < \eps < 1}$ in~$Y$
 that satisfies~\eqref{hp:first}--\eqref{hp:extension}
 and a family~$(\mu_\eps)_{0 < \eps < 1}$
 of positive numbers satisfying~\eqref{mu_eps}.
 Suppose also that the numbers~$\Dir(\eps)$
 defined in~\eqref{Dir_eps} satisfy
 \begin{equation} \label{hp:strangeterm0}
  \lim_{\eps\to 0} \frac{\Dir(\eps)}{\eps^2} = 0.
 \end{equation}
 Then, the unique solution~$u_\eps$ to~\eqref{main_eq}
 satisfies~$\norm{u_\eps - u_0}_{L^2(\Omega_\eps)} \to 0$
 as~$\eps\to 0$, where~$u_0$ is the unique solution to
 \begin{equation} \label{hom_eq_0}
  \begin{cases}
   -\Delta u_0 + \alpha u_0 = f & \textrm{in } \Omega, \\
   u_0 = 0 &\textrm{on } \partial\Omega.
  \end{cases}
 \end{equation}
\end{prop}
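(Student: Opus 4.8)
The strategy is to realise Proposition~\ref{prop:strangeterm-0} as a ``degenerate'' instance of the machinery already developed for Theorem~\ref{th:strange_term}, where the strange term disappears because the relevant spectral quantities collapse to zero. Concretely, I would construct a corrector $w_\eps$ playing the role of the one in Lemma~\ref{lemma:corrector}, but now built from the capacitary minimiser $\zeta_\eps$ of Problem~\eqref{Cap_eps} rather than from a Steklov-type eigenfunction, and then feed it into Lemma~\ref{lemma:test} (or rather, into Remark~\ref{rk:test0}) to identify the limit equation as $-\Delta u_0 + \alpha u_0 = f$ with no zero-order correction.

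\textbf{Step 1: a priori bounds and extraction of a weak limit.} Exactly as in the proof of Theorem~\ref{th:strange_term}, test Problem~\eqref{main_eq} against $u_\eps$ itself to obtain $\|\nabla u_\eps\|_{L^2(\Omega_\eps)} + \|u_\eps\|_{L^2(\Omega_\eps)} \le C\|f\|_{H^{-1}(\Omega)}$ (using the uniform Poincar\'e inequality from Lemma~\ref{lemma:Poincare}, together with the trace inequality~\eqref{trace}, which requires only \eqref{hp:first}--\eqref{hp:extension} plus a lower bound on $\lambda(\eps,\kappa)/\eps^2$; note that here we do \emph{not} assume \eqref{hp:critical}, so one must check that Lemma~\ref{lemma:trace} still applies, or argue the trace bound directly from the fact that $\sigma(H_\eps)/\eps$ and the relevant quantities stay controlled --- this is the point to be careful about, see below). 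Extend $u_\eps$ by zero and apply the extension operator $\widetilde E_\eps$ of~\eqref{extension} to extract a subsequence with $\widetilde E_\eps u_\eps \rightharpoonup u_0$ weakly in $H^1(\R^n)$, with $u_0$ supported in $\overline\Omega$.

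\textbf{Step 2: construction of the corrector.} Let $\zeta_\eps$ be the minimiser of Problem~\eqref{Cap_eps}, extended periodically to a $\Z^n$-periodic function on $\R^n$, and set $w_\eps(x) := \zeta_\eps(x/\eps)$ on $\R^n\setminus\mcP_\eps$. Then $w_\eps = 0$ on $\mcP_\eps$, $w_\eps = 1$ on the ``cell boundaries'', and $-\Delta w_\eps = 0$ in $\Omega_\eps$. Using the Poincar\'e inequality in $Y$ applied to $\zeta_\eps - 1$ (which has zero trace on $\partial Y$), the hypothesis~\eqref{hp:strangeterm0} gives $\|\zeta_\eps - 1\|_{L^2(Y)} \le \Cap(\eps)^{1/2} = \mathrm{o}(\eps)$, hence by periodicity and scaling $\overline{w_\eps}\to 1$ strongly in $L^2(\Omega)$ and $\|\overline{\nabla w_\eps}\|_{L^2(\Omega)}^2 \le C\,\eps^{-2}\Cap(\eps) \to 0$, so also $\overline{\nabla w_\eps}\to 0$ strongly in $L^2(\Omega)$. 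This is the crucial point where~\eqref{hp:strangeterm0} enters: it forces the corrector to converge to $1$ \emph{with its gradient}, which is exactly what kills the strange term. On $\Gamma_\eps$ one has $\partial_\nu w_\eps = 0$ since $w_\eps$ vanishes identically on $\mcP_\eps$ and hence on $\partial\mcP_\eps$ from inside (more precisely $w_\eps$ is harmonic in $\Omega_\eps$ with $w_\eps = 0$ on $\Gamma_\eps$, so the Robin-type boundary term for $w_\eps$ reduces to $\partial_\nu w_\eps + \beta w_\eps/\mu_\eps = \partial_\nu w_\eps$, which need not vanish --- but this is harmless, as Remark~\ref{rk:test0} only needs $\overline{w_\eps}\to 1$, $\overline{\nabla w_\eps}\to 0$ strongly).

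\textbf{Step 3: passage to the limit.} With $w_\eps$ as above satisfying $\overline{w_\eps}\to 1$ and $\overline{\nabla w_\eps}\to 0$ strongly in $L^2(\Omega)$, apply Remark~\ref{rk:test0}: test~\eqref{main_eq} against $w_\eps\varphi$ for $\varphi\in C^\infty_{\mathrm c}(\Omega)$, integrate by parts in the cross term, use that $-\Delta w_\eps = 0$ in $\Omega_\eps$ to drop the Laplacian term entirely, control the surface integral $\frac{\beta}{\mu_\eps}\int_{\Gamma_\eps} u_\eps w_\eps \varphi\,\d\sigma\to 0$ via H\"older and the trace inequality together with $\|w_\eps\|$ bounds on $\Gamma_\eps$, and pass to the limit to obtain $\int_\Omega(\nabla u_0\cdot\nabla\varphi + \alpha u_0\varphi)\,\d x = \langle f,\varphi\rangle$, i.e. $-\Delta u_0 + \alpha u_0 = f$ in $\Omega$. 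Since $u_0$ is supported in $\overline\Omega$ and lies in $H^1_0(\Omega)$ (the zero trace on $\partial\Omega$ follows from the support property of $\widetilde E_\eps$), $u_0$ is the unique solution of~\eqref{hom_eq_0}, and uniqueness of the limit upgrades the convergence to the full family.

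\textbf{Main obstacle.} The delicate point is that Proposition~\ref{prop:strangeterm-0} drops assumption~\eqref{hp:critical}, so one cannot invoke Lemma~\ref{lemma:trace} or Lemma~\ref{lemma:corrector} verbatim; in particular the trace inequality~\eqref{trace} on $\Omega_\eps$ was proved using $\lambda(\eps,\kappa)/\eps^2 \ge \St_*/4 > 0$. Here one must instead establish a (possibly $\eps$-dependent but still usable) trace estimate directly --- e.g. $\frac{1}{\mu_\eps}\int_{\Gamma_\eps} u^2\,\d\sigma \le \frac{C}{\lambda(\eps,\kappa)}\int_{\Omega_\eps}|\nabla u|^2 + C\kappa\int_{\Omega_\eps}u^2$ from Lemma~\ref{lemma:singletrace-N} is still valid without~\eqref{hp:critical}, and one then needs a lower bound on $\lambda(\eps,\kappa)$ for some fixed $\kappa>1$. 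Since~\eqref{hp:strangeterm0} says $\Dir(\eps)/\eps^2\to 0$ whereas one expects $\lambda(\eps,\kappa) \sim \St(\eps)$ for $\kappa>1$, and $\St(\eps)\le\Cap(\eps)=\Dir(\eps)$ (up to the comparison argument in the excerpt), one gets $\lambda(\eps,\kappa)/\eps^2\to 0$ too --- so the trace constant actually \emph{blows up}. The resolution is that this blow-up is compensated: when testing~\eqref{main_eq} against $u_\eps$ one gets $\frac{\beta}{\mu_\eps}\int_{\Gamma_\eps}u_\eps^2\,\d\sigma \le C\|f\|\|u_\eps\|$, which together with the Poincar\'e inequality on $\widetilde E_\eps u_\eps$ already bounds $\|u_\eps\|_{H^1(\Omega_\eps)}$; and in the passage to the limit the surface term $\frac{\beta}{\mu_\eps}\int_{\Gamma_\eps}u_\eps w_\eps\varphi\,\d\sigma$ can be controlled by Cauchy--Schwarz as $\big(\frac{\beta}{\mu_\eps}\int_{\Gamma_\eps}u_\eps^2\big)^{1/2}\big(\frac{\beta}{\mu_\eps}\int_{\Gamma_\eps}w_\eps^2\varphi^2\big)^{1/2}$, where the first factor is bounded and the second tends to zero because $w_\eps = 0$ on $\Gamma_\eps$ (indeed $w_\eps$ vanishes on all of $\mcP_\eps \supseteq \Gamma_\eps$!), making that term identically zero. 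So the argument actually \emph{simplifies} here rather than requiring the full trace machinery --- the only care needed is to extract the $H^1$ bound on $u_\eps$ without~\eqref{hp:critical}, which the energy identity provides directly.
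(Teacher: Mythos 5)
Your proof is correct in substance, but it takes a genuinely different route from the paper. The paper keeps the eigenfunction-based corrector $w_{\eps,\kappa}(x) := v_{\eps,\kappa}(x/\eps)/M_{\eps,\kappa}$ from~\eqref{corrector}, shows $\lambda(\eps,\kappa)/\eps^2 \to 0$ for $\kappa\in(0,1)$ directly from~\eqref{hp:strangeterm0} via the elementary bound~\eqref{lambda_bound-}, deduces $\overline{w_\eps}\to 1$ and $\overline{\nabla w_\eps}\to 0$ strongly by the same scaling argument as in Lemma~\ref{lemma:corrector}, and invokes Remark~\ref{rk:test0}. You instead build the corrector from the capacitary minimiser, $w_\eps(x):=\zeta_\eps(x/\eps)$, whose crucial feature is that it \emph{vanishes identically on} $\Gamma_\eps$. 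This makes the surface term $\frac{\beta}{\mu_\eps}\int_{\Gamma_\eps}u_\eps w_\eps\varphi\,\d\sigma$ exactly zero, which is a cleaner way out of the difficulty you correctly identified: once~\eqref{hp:critical} is dropped and $\Dir(\eps)/\eps^2\to 0$, the quantities $\lambda(\eps,\kappa)/\eps^2$ and $\St(\eps)/\eps^2$ also go to zero, so the constant in the trace inequality~\eqref{traceeps}/\eqref{trace} degenerates and Remark~\ref{rk:test0}'s appeal to~\eqref{trace} is delicate. Your route sidesteps this entirely. The trade-off is that you need $\Cap(\eps)/\eps^2\to 0$, which is one step removed from the hypothesis $\Dir(\eps)/\eps^2\to 0$: this does follow, but one should note explicitly that the first step of the proof of Proposition~\ref{prop:DirCap} (the inequality~\eqref{Cap<Dir}) goes through verbatim with $\Dir_*=0$, whereas the paper's choice of corrector uses~\eqref{lambda_bound-} directly and so avoids this detour.

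Two small points to tighten. First, in Step~3 you say to ``integrate by parts in the cross term, use that $-\Delta w_\eps=0$''. That integration by parts produces a boundary term $\int_{\Gamma_\eps} u_\eps\,\varphi\,\partial_\nu w_\eps\,\d\sigma$ which does \emph{not} vanish (for the capacity corrector $\partial_\nu\zeta_\eps \ne 0$ on $\partial H_\eps$, only $\zeta_\eps=0$ there); you would be trading the Robin surface term for a Neumann one. The right move — and what you actually need, given that you have \emph{strong} convergence $\overline{\nabla w_\eps}\to 0$ — is to \emph{not} integrate by parts and simply bound $\int_{\Omega_\eps}\varphi\,\nabla u_\eps\cdot\nabla w_\eps\,\d x\to 0$ by Cauchy--Schwarz, using the $L^2$ boundedness of $\nabla u_\eps$ from~\eqref{bounds}. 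Second, the Poincar\'e inequality gives $\|\zeta_\eps-1\|_{L^2(Y)}\le C\,\Cap(\eps)^{1/2}$ with a dimensional constant $C$, not with constant~$1$; this is immaterial but worth stating correctly. With these fixes the argument is clean and, in the handling of the boundary term, arguably tighter than the sketch in the paper.
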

\begin{proof}
 Let~$\varphi_\eps\in H^1(\T^n)$ be the unique
 (positive) minimiser of Problem~\eqref{Dir_eps},
 which we identify with a $\Z^n$-periodic function in~$\R^n$.
 Let
 \[
  w_\eps(x) := \varphi_\eps\!\left(\frac{x}{\eps}\right)
  \qquad \textrm{for } x\in\R^n.
 \]
 Let~$N_\eps$ be the number of cubes of the form~$\eps z+ \eps Y$,
 with~$z\in\Z^n$, that intersect~$\Omega$.
 Since~$\eps^n N_\eps \to \abs{\Omega}$ as~$\eps\to 0$,
 assumption~\eqref{hp:strangeterm0}
 combined with a change of variable yields
 \[
  \int_{\Omega} \abs{\nabla w_\eps(x)}^2 \, \d x
  \leq N_\eps \eps^{n-2} \int_{Y} \abs{\nabla\varphi_\eps(y)}^2 \, \d y
  \leq \frac{C\Dir(\eps)}{\eps^2} \to 0
  \qquad \textrm{as } \eps\to 0.
 \]
 Moreover,
 \[
  \int_{\Omega} w_\eps(x)^2 \, \d x
  = \eps^{n}N_\eps \int_{Y} \varphi_\eps(y)^2 \, \d y
   + \mathrm{o}_{\eps\to 0}(1)
  \to \abs{\Omega} \qquad \textrm{as } \eps\to 0,
 \]
 so~$w_\eps\to 1$ strongly
 in~$H^1(\Omega)$ as~$\eps\to 0$. Now,
 we consider the solutions~$u_\eps$ to~\eqref{main_eq}.
 The uniform estimate~\eqref{bounds} remains true,
 and it is still possible to extract a subsequence~$\eps\to 0$
 such that~$u_\eps$ converges in the sense of~\eqref{weakconv}.
 Given a function~$\varphi\in C^\infty_{\mathrm{c}}(\Omega)$,
 we test Problem~\eqref{main_eq}
 against~$w_\eps\varphi$ and integrate by parts:
 \begin{equation*}
  \begin{split}
   &\int_{\Omega_\eps}\left(w_\eps\nabla u_\eps\cdot\nabla\varphi
    + \varphi\nabla u_\eps\cdot\nabla w_\eps
    + \alpha \, u_\eps w_\eps \varphi\right) \d x
   = \int_{\Omega_\eps} f w_\eps \varphi \, \d x.
  \end{split}
 \end{equation*}
 All the surface integrals on~$\Gamma_\eps$
 vanish because~$w_\eps = 0$
 on~$\partial H_\eps$. Using the strong
 convergence~$w_\eps\to 1$, it is immediate
 to pass to the limit as~$\eps\to 0$ and check that
 the limit~$u_0$ is a solution of~\eqref{hom_eq_0}.
 Since the limit equation
 has a unique solution, we have convergence not
 only along a subsequence, but for the whole family~$\eps\to 0$.
\end{proof}

\begin{prop} \label{prop:strangeterm-beta}
 Consider a family~$(H_\eps)_{0 < \eps < 1}$ of subsets of~$Y$
 that satisfies~\eqref{hp:first}--\eqref{hp:extension}
 and a family~$(\mu_\eps)_{0 < \eps < 1}$ of positive
 numbers that satisfies~\eqref{mu_eps}.
 Moreover, suppose that there exists~$\kappa > 1$ such that
 \begin{equation} \label{hp:strangetermbeta}
  \lim_{\eps\to 0} \frac{\lambda(\eps, \, \kappa)}{\eps^2}
  = + \infty.
 \end{equation}
 Then the unique solution~$u_\eps$ to~\eqref{main_eq}
 satisfies~$\norm{u_\eps - u_0}_{L^2(\Omega_\eps)} \to 0$
 as~$\eps\to 0$, where~$u_0$ is the unique solution to
 \begin{equation} \label{hom_eq_beta}
  \begin{cases}
   -\Delta u_0 + \alpha u_0 + \beta u_0
     = f + g & \textrm{in } \Omega, \\
   u_0 = 0 &\textrm{on } \partial\Omega.
  \end{cases}
 \end{equation}
\end{prop}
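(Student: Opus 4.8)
The plan is to pass to the limit directly in the weak formulation of~\eqref{main_eq}, with no corrector, using the hypothesis~\eqref{hp:strangetermbeta} to control the oscillating boundary measure. I begin with the facts that carry over verbatim from Section~\ref{sect:strange_term}: Problem~\eqref{main_eq} has a unique solution $u_\eps$, obeying the uniform bound~\eqref{bounds} (whose proof uses only~\eqref{hp:extension}); extending $u_\eps$ by zero to $\overline{u_\eps}$ and applying the extension operator $\widetilde{E}_\eps$ of~\eqref{extension}, I extract a (non-relabelled) subsequence with $\widetilde{E}_\eps u_\eps\rightharpoonup u_0$ weakly in $H^1(\R^n)$, where, by~\eqref{extensionsupports}, $u_0$ is supported in $\overline{\Omega}$, so that $u_0\in H^1_0(\Omega)$. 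I will also use that $\abs{\mcP_\eps\cap\Omega}\to 0$ --- which holds in the situations to which the proposition is applied (for the model example, $\abs{H_\eps}=(r_\eps/\eps)^n\abs{D}\to 0$ because $r_\eps\ll\eps$) and which I expect can be added to the hypotheses or derived from them --- and which, together with the Sobolev embedding, yields $\overline{u_\eps}\to u_0$ strongly in $L^2(\Omega)$ and $\overline{\nabla u_\eps}\rightharpoonup\nabla u_0$ weakly in $L^2(\Omega)$.

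Testing~\eqref{main_eq} against an arbitrary $\varphi\in C^\infty_{\mathrm c}(\Omega)$ (admissible, since it vanishes near $\partial\Omega$) gives
\[
 \int_{\Omega_\eps}\bigl(\nabla u_\eps\cdot\nabla\varphi+\alpha\,u_\eps\varphi\bigr)\,\d x
 +\frac{\beta}{\mu_\eps}\int_{\Gamma_\eps}u_\eps\,\varphi\,\d\sigma
 =\langle f,\varphi\rangle ,
\]
whose right-hand side is independent of $\eps$ and whose first two terms pass to the limit by the convergences above. The heart of the matter is therefore to prove that $\mu_\eps^{-1}\int_{\Gamma_\eps}u_\eps\,\varphi\,\d\sigma\to\int_\Omega u_0\,\varphi\,\d x$. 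My plan for this is a three-step comparison through a fixed $g\in C^\infty_{\mathrm c}(\Omega)$ with $\norm{u_0\varphi-g}_{L^2(\Omega)}$ small. First, applying the trace inequality~\eqref{traceeps} of Lemma~\ref{lemma:trace} --- with the same $\kappa>1$ as in~\eqref{hp:strangetermbeta}, for which the condition $\kappa\abs{\T^n\setminus H_\eps}>1$ needed in~\eqref{traceeps} is automatic for $\eps$ small --- to $u_\eps\varphi-g$, and using that $\eps^2/\lambda(\eps,\kappa)\to 0$, that $\norm{\nabla(u_\eps\varphi-g)}_{L^2(\Omega_\eps)}$ is bounded by~\eqref{bounds}, and that $\overline{u_\eps}\varphi\to u_0\varphi$ in $L^2(\Omega)$, one finds $\limsup_{\eps\to 0}\mu_\eps^{-1}\int_{\Gamma_\eps}(u_\eps\varphi-g)^2\,\d\sigma\leq 4\kappa\norm{u_0\varphi-g}^2_{L^2(\Omega)}$; by the Cauchy--Schwarz inequality and~\eqref{Gammamu} this bounds $\limsup_{\eps\to 0}\mu_\eps^{-1}\bigl|\int_{\Gamma_\eps}(u_\eps\varphi-g)\,\d\sigma\bigr|$ by a constant multiple of $\norm{u_0\varphi-g}_{L^2(\Omega)}$. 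Second, for the \emph{continuous} function $g$ one has $\mu_\eps^{-1}\int_{\Gamma_\eps}g\,\d\sigma\to\int_\Omega g\,\d x$ --- an elementary Riemann-sum estimate exploiting $\mu_\eps=\sigma(\partial H_\eps)/\eps$ (so that the cell weights are exactly $\eps^n$), $H_\eps\subseteq K_0\csubset Y$, and $\partial\Omega$ Lipschitz. Third, $\int_\Omega g\,\d x\to\int_\Omega u_0\varphi\,\d x$ as $g\to u_0\varphi$ in $L^2(\Omega)$. Combining the three estimates and letting $g\to u_0\varphi$ proves the claim.

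Passing to the limit in the tested identity then yields $\int_\Omega\bigl(\nabla u_0\cdot\nabla\varphi+(\alpha+\beta)u_0\varphi\bigr)\,\d x=\langle f,\varphi\rangle$ for every $\varphi\in C^\infty_{\mathrm c}(\Omega)$, i.e.\ $u_0$ is the (unique) solution of~\eqref{hom_eq_beta}; uniqueness of the limit problem upgrades the convergence to the whole family $\eps\to 0$, and $\norm{u_\eps-u_0}_{L^2(\Omega_\eps)}\leq\norm{\overline{u_\eps}-u_0}_{L^2(\Omega)}\to 0$. I expect the main obstacle to be exactly the convergence of the boundary term above: because $u_\eps$ is only $H^1$, not continuous, the mere weak-$\ast$ convergence of the unnormalised surface measures $\sigma\mres\Gamma_\eps$ does not suffice, and it is the smallness $\eps^2/\lambda(\eps,\kappa)\to 0$ supplied by~\eqref{hp:strangetermbeta} --- which renders the gradient term in~\eqref{traceeps} asymptotically negligible --- that closes the gap; in the critical regime of Theorem~\ref{th:strange_term} that coefficient stays bounded away from $0$, which is precisely why the strange term there is the nonlinear $\beta\kappa_*(\beta)$ rather than $\beta$. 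A secondary technical point, as noted, is the justification of $\abs{\mcP_\eps\cap\Omega}\to 0$.
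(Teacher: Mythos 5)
Your proof follows essentially the same route as the paper's: extract a weakly convergent subsequence via the extension operator and the uniform bound~\eqref{bounds}, reduce everything to the single claim $\mu_\eps^{-1}\int_{\Gamma_\eps}u_\eps\varphi\,\d\sigma\to\int_\Omega u_0\varphi\,\d x$, and prove that claim by splitting off a smooth approximant, handling the remainder with the trace inequality~\eqref{traceeps} whose gradient coefficient $\eps^2/\lambda(\eps,\kappa)$ vanishes by~\eqref{hp:strangetermbeta}, and treating the smooth part with a Riemann-sum (weak-$*$) argument based on the exact cell weight $\eps^{-1}\sigma(\partial H_\eps)\eps^{n-1}/\mu_\eps=\eps^n$. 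The only structural difference is that you approximate $u_0\varphi$ in $L^2$ by a single $g\in C^\infty_{\mathrm c}(\Omega)$ and apply~\eqref{traceeps} once to $u_\eps\varphi-g$, whereas the paper approximates $u_0$ in $H^1$ by $v_0$ and splits $u_\eps\varphi=(u_\eps-u_0)\varphi+(u_0-v_0)\varphi+v_0\varphi$, applying the trace inequality to the first two pieces separately; the two decompositions are interchangeable.

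Your reservation about $\abs{\mcP_\eps\cap\Omega}\to 0$ is well placed and worth keeping explicit: the paper's own proof cites Remark~\ref{rk:zerocapacity} for $\abs{H_\eps}\to 0$, but that remark rests on Proposition~\ref{prop:DirCap}, which requires~\eqref{hp:critical} --- an assumption not made in Proposition~\ref{prop:strangeterm-beta}. The fact $\abs{H_\eps}\to 0$ is genuinely needed (both for the strong $L^2$ convergence of $\overline{u_\eps}$ and for the Riemann-sum step to land on $\int_\Omega g$ rather than $\int_\Omega(1-\abs{H_\eps})g$), and~\eqref{hp:strangetermbeta} alone does not force it (take $H_\eps$ a fixed ball: then $\lambda(\eps,\kappa)$ is $\eps$-independent, so $\lambda(\eps,\kappa)/\eps^2\to+\infty$, yet $\abs{H_\eps}$ stays constant and the conclusion fails because an effective-tensor homogenisation sets in). So either $\abs{H_\eps}\to 0$ should be added to the hypotheses, or one should note that it holds in the application (Proposition~\ref{prop:otherregimes}, where $r_\eps\ll\eps$).
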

\begin{proof}
 The uniform estimate~\eqref{bounds} remains valid and allows us
 to extract a (non-relabelled) subsequence such that~$u_\eps$
 converges to a limit~$u_0$ in the sense of~\eqref{weakconv}.
 Now, let~$\varphi\in C^\infty_{\mathrm{c}}(\Omega)$ be
 a test function. We claim that
 \begin{equation} \label{strangetermbetag}
  \lim_{\eps\to 0} \frac{1}{\mu_\eps} \int_{\Gamma_\eps}
   g \varphi \, \d\sigma
  = \int_{\Omega} g \varphi \, \d x
 \end{equation}
 and that
 \begin{equation} \label{strangetermbeta0}
  \lim_{\eps\to 0} \frac{1}{\mu_\eps} \int_{\Gamma_\eps}
   u_\eps \varphi \, \d\sigma
  = \int_{\Omega} u_0 \varphi \, \d x.
 \end{equation}
 Once these claims are proved, we will be able to pass to the limit
 in the weak formulation of~\eqref{main_eq} and conclude that~$u_0$
 satisfies~\eqref{hom_eq_beta}. Since the limit problem
 has a unique solution, this argument also proves convergence
 for the whole sequence~$\eps\to 0$.

 The identity~\eqref{strangetermbetag} is an immediate
 consequence of Lemma~\ref{lemma:surfacetobulk}
 (and Remark~\ref{rk:stb-otherregimes}),
 applied with~$h = g\varphi$ and~$\tau_\eps = 1$.
 The proof of~\eqref{strangetermbeta0} relies on
 the trace inequality in Lemma~\ref{lemma:trace},
 particularly on~\eqref{traceeps},
 which remains valid under assumption~\eqref{hp:strangetermbeta}
 (see Remark~\ref{rk:tracenoncritical}).
 Applying the H\"older inequality
 and~\eqref{traceeps}, we obtain
 \begin{equation*}
  \begin{split}
   \frac{1}{\mu_\eps} \int_{\Gamma_\eps}
    \abs{(u_\eps - u_0) \varphi} \d\sigma
   &\leq \left(\frac{1}{\mu_\eps} \int_{\Gamma_\eps} (u_\eps - u_0)^2 \,\d\sigma\right)^{1/2}
   \left(\frac{1}{\mu_\eps} \int_{\Gamma_\eps} \varphi^2 \,\d\sigma\right)^{1/2} \\
   &\leq C\left( \frac{\eps^2}{\lambda(\eps,\, \kappa)}
    \int_{\Omega_\eps} \abs{\nabla(u_\eps - u_0)}^2 \, \d x
    + \kappa \int_{\Omega_\eps} (u_\eps - u_0)^2 \, \d x\right)^{1/2} \\
   &\hspace{2cm} \cdot \left( \frac{\eps^2}{\lambda(\eps,\, \kappa)}
    \int_{\Omega_\eps} \abs{\nabla \varphi}^2 \, \d x
    + \kappa \int_{\Omega_\eps} \varphi^2 \, \d x\right)^{1/2}
  \end{split}
 \end{equation*}
 for all~$\kappa > 1$ and any small enough~$\eps$.
 We can assume~$\widetilde{E}_\eps u_\eps$ converges to~$u_0$
 weakly in~$H^1(\Omega)$ and strongly in~$L^2(\Omega)$,
 by compact Sobolev embedding.
 Then, using assumption~\eqref{hp:strangetermbeta},
 we conclude that
 \begin{equation} \label{strangetermbeta1}
  \begin{split}
   \lim_{\eps\to 0} \frac{1}{\mu_\eps} \int_{\Gamma_\eps}
    \abs{(u_\eps - u_0) \varphi} \d\sigma = 0.
  \end{split}
 \end{equation}
 Now, we fix~$\delta > 0$ and take~$v_0\in C^\infty_{\mathrm{c}}(\Omega)$
 such that~$\norm{u_0 - v_0}_{L^2(\Omega)}
 + \norm{\nabla u_0 - \nabla v_0}_{L^2(\Omega)}\leq \delta$.
 Applying the H\"older inequality and~\eqref{traceeps}
 exactly as above, we deduce
 \begin{equation} \label{strangetermbeta2}
  \begin{split}
   \limsup_{\eps\to 0} \frac{1}{\mu_\eps} \int_{\Gamma_\eps}
    \abs{(u_0 - v_0) \varphi} \d\sigma
   &\leq C\kappa \, \norm{u_0 - v_0}_{L^2(\Omega)}
    \norm{\varphi}_{L^2(\Omega)}
    \leq C\kappa \, \delta \norm{\varphi}_{L^2(\Omega)}.
  \end{split}
 \end{equation}
 The H\"older inequality also implies
 \begin{equation} \label{strangetermbeta2.5}
   \begin{split}
    \int_{\Omega} \abs{(u_0 - v_0) \varphi} \d x
     \leq \delta \norm{\varphi}_{L^2(\Omega)} \! .
   \end{split}
  \end{equation}
 Finally, applying Lemma~\ref{lemma:surfacetobulk}
 (and Remark~\ref{rk:stb-otherregimes})
 with~$h = v_0 \varphi$ and~$\tau_\eps = 1$, we deduce
 \begin{equation} \label{strangetermbeta3}
  \lim_{\eps\to 0}
   \frac{1}{\mu_\eps} \int_{\Gamma_\eps} v_0\varphi \, \d x
   = \int_{\Omega} v_0\varphi \, \d x.
 \end{equation}
 Combining~\eqref{strangetermbeta1}, \eqref{strangetermbeta2},
 \eqref{strangetermbeta2.5}, and~\eqref{strangetermbeta3}, we obtain
 \[
  \limsup_{\eps\to 0} \abs{ \frac{1}{\mu_\eps}
   \int_{\Gamma_\eps} u_\eps\varphi \, \d x
   - \int_{\Omega} u_0\varphi \, \d x}
   \leq C\kappa\delta\norm{\varphi}_{L^2(\Omega)}
 \]
 and, since~$\delta > 0$ is arbitrary,
 the claim~\eqref{strangetermbeta0} follows.
\end{proof}

Now, let~$(r_\eps)_{0 < \eps < 1}$ be a family of positive numbers
such that~$r_\eps/\eps\to 0$ as~$\eps\to 0$, and let~$\mcP_\eps$
be given as in~\eqref{model_example_bis}.
We define the rescaled reference hole in the unit cube as
\[
 H_\eps := \eps^{-1} r_\eps \overline{D}
\]
and take~$(\mu_\eps)_{0 < \eps < 1}$ satisfying~\eqref{mu_eps}.

\begin{prop} \label{prop:otherregimes}
 Let~$\gamma := n/(n - 2)$, for~$n\geq 3$. Suppose that
 \begin{equation} \label{r_eps_sup}
  \lim_{\eps\to 0} \frac{r_\eps}{\eps^\gamma} = 0.
 \end{equation}
 Then, the unique solution~$u_\eps$ to~\eqref{main_eq}
 satisfies
 \begin{equation} \label{otherregimes_conv}
  \lim_{\eps\to 0} \norm{u_\eps - u_0}_{L^2(\Omega_\eps)} = 0,
 \end{equation}
 where~$u_0$ is the unique solution to~\eqref{hom_eq_0}. Instead, if
 \begin{equation} \label{r_eps_sub}
  \lim_{\eps\to 0} \frac{r_\eps}{\eps^\gamma} = + \infty,
  \qquad \lim_{\eps\to 0} \frac{r_\eps}{\eps} = 0,
 \end{equation}
 then the unique solution~$u_\eps$ to~\eqref{main_eq}
 converges (in the sense of~\eqref{otherregimes_conv})
 to the unique solution of~\eqref{hom_eq_beta}.
\end{prop}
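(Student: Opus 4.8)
The plan is to reduce both statements to the two abstract results already proved in this section, Proposition~\ref{prop:strangeterm-0} and Proposition~\ref{prop:strangeterm-beta}, applied to the rescaled reference hole $H_\eps := (r_\eps/\eps)\overline{D}$. Write $\rho_\eps := r_\eps/\eps$. Since $\gamma > 1$ one has $\eps^\gamma = \mathrm{o}(\eps)$, so in either regime~\eqref{r_eps_sup} or~\eqref{r_eps_sub} we get $\rho_\eps\to 0$; hence $H_\eps\subseteq Y$ for $\eps$ small, $\abs{H_\eps} = \rho_\eps^n\abs{D}\to 0$, and exactly as in Section~\ref{model_example_fit_sec} (a scaling argument starting from a fixed extension operator on $Y_0\setminus\overline D$, for a cube $Y_0\supseteq\overline D$) the family $(H_\eps)_{\eps>0}$ satisfies~\eqref{hp:first}--\eqref{hp:extension}. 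It therefore only remains to verify hypothesis~\eqref{hp:strangeterm0} of Proposition~\ref{prop:strangeterm-0} in the first case and hypothesis~\eqref{hp:strangetermbeta} of Proposition~\ref{prop:strangeterm-beta} in the second. I also record that, since $\gamma(n-2)=n$, condition~\eqref{r_eps_sup} is equivalent to $\rho_\eps^{n-2}/\eps^2 = r_\eps^{n-2}/\eps^n\to 0$, and~\eqref{r_eps_sub} to $\rho_\eps^{n-2}/\eps^2\to+\infty$. (In the critical scaling $\rho_\eps=\eps^{\gamma-1}$ one has $\rho_\eps^{n-2}=\eps^2$, which is the boundary case between the two.)

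For the regime~\eqref{r_eps_sup} I would bound $\Dir(\eps)$ from above by the condenser capacity. Extending the capacitary potential $\zeta_\eps$ of~\eqref{Cap_eps_eigen} by zero inside $H_\eps$ produces an admissible competitor for~\eqref{Dir_eps}, so $\Dir(\eps)\le\Cap(\eps)/\norm{\zeta_\eps}_{L^2(Y\setminus H_\eps)}^2$; and $\norm{\zeta_\eps-1}_{L^2(Y)}\le\Cap(\eps)^{1/2}\to 0$ by the Poincar\'e inequality, so the denominator tends to $1$. On the other hand, testing~\eqref{Cap_eps} with a cut-off of the rescaled $\R^n$-harmonic capacitary potential of $D$ (equal to $1$ on $\partial D$, decaying at infinity, composed with $x\mapsto x/\rho_\eps$) gives $\Cap(\eps)\le C\rho_\eps^{n-2}$ for $\eps$ small, with $C=C(D,n)$. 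Hence $\Dir(\eps)/\eps^2\le C\rho_\eps^{n-2}/\eps^2\to 0$ by~\eqref{r_eps_sup}, and Proposition~\ref{prop:strangeterm-0} yields convergence to the solution of~\eqref{hom_eq_0}.

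For the regime~\eqref{r_eps_sub} I would fix $\kappa>1$ (to be chosen) and establish a uniform trace inequality
\begin{equation*}
 \fint_{\partial H_\eps} v^2 \, \d\sigma
 \leq C\,\rho_\eps^{2-n} \int_{\T^n\setminus H_\eps}\abs{\nabla v}^2 \, \d x
 + C \int_{\T^n\setminus H_\eps} v^2 \, \d x
 \qquad \textrm{for all } v\in H^1(\T^n\setminus H_\eps),
\end{equation*}
with $C=C(D,n)$ independent of $\eps$. Rescaling by $\rho_\eps^{-1}$ turns this into a trace inequality on $\rho_\eps^{-1}\T^n\setminus\overline D$, uniform as this cell invades $\R^n$: writing $v$ as its mean over $\rho_\eps^{-1}\T^n$ plus a fluctuation, the fluctuation is controlled near $\partial D$ by the trace inequality on the fixed bounded set $Y_0\setminus\overline D$, after bounding its $L^2(Y_0)$-norm by the \emph{scale-invariant} Sobolev--Poincar\'e inequality on cubes (the critical exponent $p=2n/(n-2)$ makes that constant independent of the cell size, cf.\ Remark~\ref{rk:assumePoincare}), while the mean is absorbed into the last term using $\abs{\T^n\setminus H_\eps}\to1$. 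Choosing $\kappa:=\max\{C+1,2\}$, the displayed inequality has the form required by Lemma~\ref{lemma:singletrace} (note $\kappa\abs{\T^n\setminus H_\eps}>1$ for $\eps$ small), so $1/\lambda(\eps,\kappa)\le C\rho_\eps^{2-n}$, i.e.\ $\lambda(\eps,\kappa)\ge C^{-1}\rho_\eps^{n-2}$, for $\eps$ small. Thus $\eps^{-2}\lambda(\eps,\kappa)\ge C^{-1}\rho_\eps^{n-2}/\eps^2\to+\infty$ by~\eqref{r_eps_sub}, so~\eqref{hp:strangetermbeta} holds and Proposition~\ref{prop:strangeterm-beta} gives convergence to the solution of~\eqref{hom_eq_beta}.

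The main obstacle is the uniform trace constant on the expanding perforated cell $\rho_\eps^{-1}\T^n\setminus\overline D$: the naive $L^2$ Poincar\'e inequality there has a constant blowing up like $\rho_\eps^{-2}$, which is useless, so one genuinely has to exploit the exact criticality of the Sobolev exponent to get a scale-invariant Poincar\'e--Sobolev bound and combine it with a single \emph{fixed} trace inequality near $\partial D$. (Alternatively one could argue by lower semicontinuity in the spirit of Step~2 of Lemma~\ref{lemma:example_limit}, proving $\liminf_{\eps\to0}\rho_\eps^{2-n}\lambda(\eps,\kappa)\ge\Lambda_*(\kappa)>0$ with $\Lambda_*$ as in~\eqref{Lambda*+}, but the direct trace estimate is more economical and avoids reintroducing the space $V$.)
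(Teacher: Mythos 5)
Your proposal is correct and reduces to the same two abstract results as the paper, but verifies their hypotheses by a genuinely different route. The paper's proof works by establishing the full rescaled limits $\lim_{\eps\to 0}(\eps/r_\eps)^{n-2}\Dir(\eps)$ and $\lim_{\eps\to 0}(\eps/r_\eps)^{n-2}\lambda(\eps,\kappa)=\Lambda_*(\kappa)$ ``by reasoning as in Lemma~\ref{lemma:example_limit}'', i.e.\ it reruns the weak-compactness/lower-semicontinuity machinery in the space~$V$ for the rescaling $x\mapsto\eps x/r_\eps$, then reads off the asymptotics from the positivity and finiteness of these limits. You instead observe that only one-sided estimates are needed: an upper bound $\Dir(\eps)\leq C\rho_\eps^{n-2}$ via the condenser capacity and an explicit cut-off test function in the subcritical regime, and a lower bound $\lambda(\eps,\kappa)\geq C^{-1}\rho_\eps^{n-2}$ in the supercritical regime, the latter obtained directly from a uniform trace inequality on the expanding cell $\rho_\eps^{-1}\T^n\setminus\overline{D}$ via the scale-invariant critical Sobolev--Poincar\'e inequality (Remark~\ref{rk:assumePoincare}). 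This avoids reintroducing~$V$ and its compactness theory and is therefore more elementary; the cost is that you must track scaling exponents by hand and set up the uniform trace estimate carefully, whereas the paper's argument recycles Lemma~\ref{lemma:example_limit} wholesale and incidentally identifies the precise limiting constants. You explicitly acknowledge the lower-semicontinuity alternative, which is in fact what the paper does. Both verifications are sound, and your reduction of the two scaling conditions to $\rho_\eps^{n-2}/\eps^2\to 0$ or $\to+\infty$ matches the paper's implicit use of $\gamma(n-2)=n$.
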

In the case of~\eqref{r_eps_sup},
Proposition~\ref{prop:otherregimes} is consistent with
the results in~\cite{CioranescuMurat} for the Dirichlet problem:
the holes are too small to have an effect on the limit problem.
Instead, in the case of~\eqref{r_eps_sub}, the additional
term proportional to~$\beta$ in the limit problem
can simply be interpreted as an average
of surface integrals over the boundary of the holes,
as per~\eqref{strangetermbeta3}.
\begin{proof}[Proof of Proposition~\ref{prop:otherregimes}]
 We only sketch the proof, as several arguments
 are very similar to those we presented already in
 the previous sections. Let~$(r_\eps)_{0 < \eps < 1}$ be any
 family of positive numbers such that~$\eps^{-1} r_\eps \to 0$
 as~$\eps\to 0$, and let~$H_\eps := \eps^{-1} r_\eps\overline{D}$.
  We consider the rescaling
 $x\mapsto r_\eps^{-1}\eps x$, which maps~$Y\setminus H_\eps$
 into~$r_\eps^{-1}\eps Y\setminus\overline{D}$.
 By scaling the variable in this way and reasoning
 as in Lemma~\ref{lemma:example_limit}, we can see that
 \begin{equation} \label{otherregimes1}
  \lim_{\eps\to 0} \left(\frac{\eps}{r_\eps}\right)^{n-2}
  \lambda(\eps, \, \kappa) = \Lambda_*(\kappa)
 \end{equation}
 for all~$\kappa > 0$, where~$\Lambda_*(\kappa)$
 is defined by~\eqref{Lambda*+}, \eqref{Lambda*-}, and that
 \begin{equation} \label{otherregimes2}
  \lim_{\eps\to 0} \left(\frac{\eps}{r_\eps}\right)^{n-2} \Dir(\eps)
   = \inf\left\{\int_{\R^n} \abs{\nabla z}^2 \, \d x
   \colon z\in V, \ z = 0 \textrm{ a.e. in } \overline{D},
   \ z(\infty) = 1 \right\} \! .
 \end{equation}
 As we have observed already, the infima on the right-hand sides of
 both~\eqref{otherregimes1} and~\eqref{otherregimes2} are
 attained (by the direct method of the calculus of
 variations) and are positive and finite. Therefore, if~$r_\eps$
 satisfies~\eqref{r_eps_sup}, then
 assumption~\eqref{hp:strangeterm0} is verified and we can apply
 Proposition~\ref{prop:strangeterm-0}, which proves the
 first part of the statement.
 Instead, if~$r_\eps$ satisfies~\eqref{r_eps_sub},
 then~\eqref{hp:strangetermbeta} holds, and the conclusion follows
 from Proposition~\ref{prop:strangeterm-beta}.
\end{proof}

\section{An alternative (but not so strange) example}
\label{sect:many_holes}

\begin{figure}[t]
 \centering
 \includegraphics[height=.35\textheight]{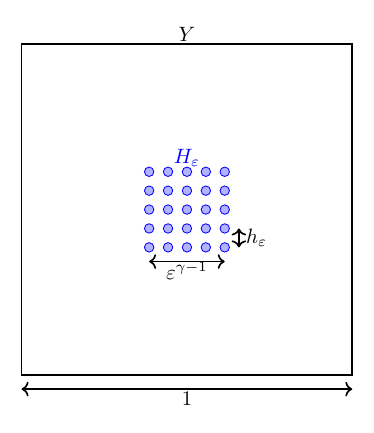}
 \caption{The set~$H_\eps$ defined in~\eqref{example-path}.}
 \label{fig:many_holes}
\end{figure}

The aim of this section is to construct a
family~$(H_\eps)_{0 < \eps < 1}$ which satisfies
assumptions~\eqref{hp:first}--\eqref{hp:last}
but has arbitrarily large surface area.
As in the previous section, we (assume~$n\geq 3$ and) define
\begin{equation} \label{gamma-path}
 \gamma := \frac{n}{n-2}.
\end{equation}
Given~$\eps\in (0, \, 1)$, we consider the
cube~$Q_\eps := [-\eps^{\gamma-1}/2, \, \eps^{\gamma-1}/2]^n$
and take an arbitrary integer~$N_\eps > 0$. We consider
a cubic grid of size~$h_\eps := \eps^{\gamma-1}/N_\eps$
in~$Q_\eps$, label its vertices as
$x_\eps^1, \, \ldots x_\eps^{N_{\eps}^n}$, and define
\begin{equation} \label{example-path}
 H_\eps := \bigcup_{j=1}^{N_\eps^n}
  \overline{B}\!\left(x_\eps^j, \, \frac{h_\eps}{4}\right) \! .
\end{equation}
The set~$H_\eps$ is represented in Figure~\ref{fig:many_holes}.
For the family~$(H_\eps)_{0 < \eps < 1}$, we have
\begin{equation} \label{mu_-path}
 \begin{split}
  \mu_\eps := \frac{\sigma(\partial H_\eps)}{\eps}
   = \frac{\sigma_n N_\eps^n \, h_\eps^{n-1}}{4^{n-1} \eps }
   = C_n N_\eps \eps^{(\gamma - 1)(n-1) - 1}
   = C_n N_\eps \eps^\gamma,
 \end{split}
\end{equation}
where, as before, $\sigma_n := \sigma(\partial B^n)$
and~$C_n := 4^{1-n}\sigma_n$
is a dimensional constant. Therefore, $\mu_\eps$
can be made arbitrarily large by taking~$N_\eps$ large enough.
Since~$\mu_\eps$ satisfies~\eqref{mu_eps},
it follows that~$\sigma(\Gamma_\eps)$ can also be made arbitrarily large.
In fact, we can even make sure that
$\sigma(\Gamma_\eps)\to+\infty$ as~$\eps\to 0$,
by choosing~$N_\eps$ appropriately.
We claim that, no matter what~$N_\eps$ is,
the family~$(H_\eps)_{0 < \eps < 1}$ satisfies
conditions~\eqref{hp:first}--\eqref{hp:last},
at least after extraction of a subsequence.

\begin{prop} \label{prop:example-path}
 Let~$(H_\eps)_{0 < \eps < 1}$ be defined
 as in~\eqref{gamma-path}--\eqref{example-path}
 for an arbitrary family~$(N_\eps)_{0 < \eps < 1}$
 of positive integers.
 Then, there exists a subsequence~$\eps_j\to 0$
 such that~$(H_{\eps_j})_{j\in\N}$ satisfies
 conditions~\eqref{hp:first}--\eqref{hp:last}.
\end{prop}

Proposition~\ref{prop:example-path} implies that
Theorem~\ref{th:strange_term} is applicable
to the family~$(H_{\eps_j})_{j\in\N}$,
yielding a homogenised problem of the form~\eqref{hom_eq}
with a non-zero ``strange term''.
However, when~$(N_\eps)_{0 < \eps < 1}$ is chosen so that
$\sigma(\Gamma_{\eps})\to+\infty$
as~$\eps\to 0$, we have~$\mu_\eps\to +\infty$
for any family~$(\mu_\eps)_{0 < \eps < 1}$
satisfying~\eqref{mu_eps}. In this case,
the Robin coefficient~$\beta/\mu_\eps$ in~\eqref{main_eq}
converges to zero as~$\eps\to 0$, but the
large surface area of~$\Gamma_\eps$ compensates
for the smallness of the coefficient, thus
still producing a non-trivial effect in the limit.

In the proof of Proposition~\ref{prop:example-path},
we will make use of the following results,
which provide sufficient conditions for
assumptions~\eqref{hp:critical}--\eqref{hp:limit} to hold.

\begin{lemma} \label{lemma:suffcond}
 Let~$(H_\eps)_{0 < \eps < 1}$ be a family of sets
 that satisfies conditions~\eqref{hp:first}--\eqref{hp:extension}.
 Assume that there exist~$\eps_0 \in (0, \, 1)$
 and $\eps$-independent positive
 constants~$C_1$, $C_2$ such that
 \begin{equation} \label{hp:suffcond}
  C_1 \, \eps^2 \leq \St(\eps), \qquad
  \Cap(\eps) \leq C_2 \, \eps^2
 \end{equation}
 for all~$\eps \in (0, \, \eps_0]$.
 Then, there exists a subsequence~$\eps_j\to 0$
 such that~$(H_{\eps_j})_{j\in\N}$ satisfies
 conditions~\eqref{hp:critical}--\eqref{hp:limit} too.
\end{lemma}
\begin{proof}
 This lemma follows from Lemma~\ref{lemma:inequalities},
 whose validity depends on assumptions~\eqref{hp:first}--\eqref{hp:extension}
 but not on~\eqref{hp:critical}--\eqref{hp:limit}.
 Inequality~\eqref{St-Cap} in Lemma~\ref{lemma:inequalities},
 combined with assumption~\eqref{hp:suffcond},
 implies that
 \[
  C_1 \, \eps^2 \leq \St(\eps) \leq \Cap(\eps) \leq C_2\, \eps^2.
 \]
 Moreover, it follows from inequality~\eqref{Dir-Cap}
 \[
  \Dir(\eps) \leq \frac{\Cap(\eps)}{(1 - C\eps)^2}
  \leq C\eps^2
 \]
 for some~$\eps$-independent~$C$ and all~$\eps$ small enough.
 Hence, by~\eqref{Cap-Dir},
 \[
  C_1 \, \eps^2 \leq \Cap(\eps) \leq \frac{\Dir(\eps)}{(1 - C\eps)^2},
 \]
 which implies~$\Dir(\eps) \geq C \eps^2$ for a possibly different~$C$
 (that still does not depend on~$\eps$) and any small enough~$\eps$.
 As a consequence, we can extract a subsequence~$\eps_j\to 0$
 so that the limits $\Dir_* := \lim_{j\to+\infty} \eps_j^{-2} \Dir(\eps_j)$,
 $\St_* := \lim_{j\to+\infty} \eps_j^{-2} \St(\eps_j)$
 exist in~$(0, \, +\infty)$. Then, Remark~\ref{rk:Helly}
 implies that condition~\eqref{hp:limit} is
 also satisfied along a (non-relabelled) subsequence.
\end{proof}

\begin{lemma} \label{lemma:suffcondbis}
 Let~$(H_\eps)_{0 < \eps < 1}$ be a family of sets
 that satisfies conditions~\eqref{hp:first}--\eqref{hp:extension}.
 Assume that there exist~$\eps_0 \in (0, \, 1)$, $\kappa > 1$
 and an $\eps$-independent constant~$C_1 > 0$ such that
 \begin{equation} \label{hp:suffcondbis-lower}
  \fint_{\partial H_\eps} v^2 \, \d\sigma
  \leq \frac{C_1}{\eps^2} \int_{\T^n\setminus H_\eps} \abs{\nabla v}^2 \, \d x
  + \kappa \int_{\T^n\setminus H_\eps} v^2 \, \d x
 \end{equation}
 for all~$v\in H^1(\T^n\setminus H_\eps)$ and all~$\eps \in (0, \, \eps_0]$.
 Finally, assume that there exists an $\eps$-independent
 constant~$C_2 > 0$ such that
 \begin{equation} \label{hp:suffcondbis-upper}
  \Cap(\eps) \leq C_2 \, \eps^2
 \end{equation}
 for all small enough~$\eps \in (0, \, 1)$.
 Then, there exists a subsequence~$\eps_j\to 0$
 such that~$(H_{\eps_j})_{j\in\N}$ satisfies
 conditions~\eqref{hp:critical}--\eqref{hp:limit} too.
\end{lemma}
\begin{proof}
 According to Lemma~\ref{lemma:singletrace},
 the number~$1/\lambda(\eps, \, \kappa)$,
 for~$\kappa > 1$, is the optimal constant
 in the trace inequality~\eqref{singletrace+}.
 Therefore, assumption~\eqref{hp:suffcondbis-lower}
 implies that
 \[
  \frac{1}{\lambda(\eps, \, \kappa)} \leq \frac{C_1}{\eps^2},
 \]
 that is,
 \begin{equation} \label{suffcondbis1}
  \lambda(\eps, \, \kappa) \geq \frac{\eps^2}{C_1}.
 \end{equation}
 Inequality~\eqref{St-Cap}, together with~\eqref{hp:suffcondbis-upper},
 implies that~$\St(\eps) \leq \Cap(\eps) \leq C_2\, \eps^2$.
 Therefore, by virtue of~\eqref{suffcondbis1}
 and~\eqref{lambda_bound+0}, one has
 \[
  \frac{\eps^2}{C_1} \leq \lambda(\eps, \, \kappa)
  \leq \frac{\St(\eps)}{(1 - C\eps)^2},
 \]
 which yields~$\St(\eps) \geq C\eps^2$ for
 some~$\eps$-independent constant~$C$.
 The result now follows from Lemma~\ref{lemma:suffcond}.
\end{proof}

\begin{proof}[Proof of Proposition~\ref{prop:example-path}]
 It is clear that~$(H_\eps)_{0 < \eps < 1}$
 satisfies~\eqref{hp:H} and~\eqref{hp:decreas}.
 In what follows, we show that it satisfies~\eqref{hp:extension},
 \eqref{hp:suffcondbis-lower}, and~\eqref{hp:suffcondbis-upper},
 and then apply Lemma~\ref{lemma:suffcondbis}.

 \medskip
 \noindent
 \textit{The family~$(H_\eps)_{0 < \eps < 1}$ satisfies~\eqref{hp:extension}.}
 Arguing as in Lemma~\ref{lemma:example-extension},
 we see that there exists a linear extension operator
 $E\colon H^1(B_2^n\setminus\overline{B}^n)\to H^1(B^n_2)$
 and a constant~$C$ such that
 \[
  \norm{\nabla (Eu)}_{L^2(B_2^n)}
   \leq C \norm{\nabla u}_{L^2(B_2^n\setminus\overline{B}^n)}
 \]
 for all~$u\in H^1(B_2^n\setminus\overline{B}^n)$.
 The set~$H_\eps$ is a union of balls $B_\eps^j := B(x_\eps^j, \, h_\eps/4)$
 whose centres~$x_\eps^j$ are the vertices of a
 cubic grid of size~$h_\eps$. Each ball~$B_\eps^j$
 is contained in a larger (open) ball,
 $2B_\eps^j := B(x_\eps^j, \, h_\eps/2)$, and the balls~$2B_\eps^j$
 are pairwise disjoint. We can therefore construct an extension
 operator~$E_\eps\colon H^1(Y\setminus H_\eps)\to H^1(Y)$
 satisfying the (scale-invariant) estimate in~\eqref{hp:extension},
 based on the above extension operator$E$ and a scaling argument.

 \medskip
 \noindent
 \textit{The family~$(H_\eps)_{0 < \eps < 1}$
 satisfies~\eqref{hp:suffcondbis-upper}.}
 We next prove an upper bound for~$\Cap(\eps)$
 by considering a suitable test function for the
 minimisation problem~\eqref{Cap_eps}. More precisely, set
 \begin{equation} \label{expath-r_eps}
  r_\eps := \sqrt{n}\eps^{\gamma-1} + h_\eps
  = \left(\sqrt{n} + \frac{1}{N_\eps}\right) \eps^{\gamma-1} \! .
 \end{equation}
 Note that the ball centred at the origin of radius~$r_\eps/2$
 contains~$H_\eps$ and that~$r_\eps < 1$ for small enough~$\eps$.
 For such small values of~$\eps$, we also define
 \[
  \xi_\eps(\rho) :=
  \begin{cases}
   0 &\textrm{if } \rho < \dfrac{r_\eps}{2} \\
   \dfrac{r_\eps^{2-n} - 2^{2-n}\rho^{2-n}}{r_\eps^{2-n} - 1}
    &\textrm{if } \dfrac{r_\eps}{2} < \rho \leq \dfrac{1}{2} \\
   1 &\textrm{if } \rho > \dfrac{1}{2}.
  \end{cases}
 \]
 and we identify~$\xi_\eps$ with a radially symmetric
 function~$\xi_\eps(x) := \xi_\eps(\abs{x})$, $x\in Y$.
 Then, $\xi_\eps$ is an admissible
 competitor for the minimisation problem in~\eqref{Cap_eps},
 so we have the upper bound
 \[
  \begin{split}
   \Cap(\eps) \leq \int_Y \abs{\nabla \xi_\eps(x)}^2 \, \d x
   = \sigma_n \int_{r_\eps/2}^{1/2} \xi_\eps^\prime(\rho)^2 \rho^{n-1}\, \d\rho
   = \frac{2^{2-n}\sigma_n}{(n-2)\left(r_\eps^{2 - n} - 1\right)} ,
  \end{split}
 \]
 where~$\sigma_n := \sigma(\partial B^n)$.
 By injecting the definition of~$r_\eps$ and~$\gamma$
 --- i.e. Equations~\eqref{expath-r_eps}, \eqref{gamma-path}
 respectively --- into the right-hand side, we obtain
 \[
  \begin{split}
   \Cap(\eps) \leq \frac{C_n}
    {\left(\sqrt{n} + N_\eps^{-1}\right)^{2 - n}\eps^{-2} - 1}
   \leq \frac{C_n \eps^2}{\left(\sqrt{n} + 1\right)^{2-n} - \eps^2},
  \end{split}
 \]
 where~$C_n := 2^{2-n}\sigma_n/(n-2)$ is a dimensional constant.
 Therefore, we have the estimate $\Cap(\eps) \leq \eps^2$,
 where~$C$ depends only on~$n$, at least for small enough~$\eps$.

 \medskip
 \noindent
 \textit{The family~$(H_\eps)_{0 < \eps < 1}$
 satisfies~\eqref{hp:suffcondbis-lower}.}
 By a standard trace inequality (and a scaling argument),
 there exists a constant~$C$ that satisfies
 \begin{equation} \label{expath1}
  \fint_{\partial B_r^n} u^2 \, \d\sigma
  \leq \frac{C}{r^{n-2}}
   \int_{B_{2r}^n\setminus\overline{B}^n_r} \abs{\nabla u}^2 \, \d x
   + \frac{C}{r^{n}} \int_{B_{2r}^n\setminus\overline{B}^n_r} u^2 \, \d x
 \end{equation}
 for all radii~$r > 0$ and all $u\in H^1(B_{2r}^n\setminus\overline{B}^n_r)$.
 Now, let~$v\in H^1(\T^n\setminus H_\eps)$.
 We apply the inequality~\eqref{expath1} on each of the balls
 $B_\eps^j := B(x_\eps^j, \, h_\eps/4)$, contained in~$H_\eps$,
 Since the balls~$2B_\eps^j := B(x_\eps^j, \, h_\eps/2)$
 are pairwise disjoint and contained in
 $U_\eps := B_{r_\eps/2}^n\setminus H_\eps$,
 with~$r_\eps$ as in~\eqref{expath-r_eps}, we are led to
 \begin{equation*}
  \frac{1}{\sigma_n h_\eps^{n-1}}
  \int_{\partial H_\eps} v^2 \, \d\sigma
  \leq \frac{C}{h_\eps^{n-2}}
   \int_{U_\eps} \abs{\nabla v}^2 \, \d x
   + \frac{C}{h_\eps^n} \int_{U_\eps} v^2 \, \d x.
 \end{equation*}
 Dividing both sides of this inequality by~$N_\eps^n$
 (i.e., the number of balls in~$H_\eps$)
 and substituting~$h_\eps = \eps^{\gamma - 1}/N_\eps$,
 $\gamma = n/(n-2)$ into the right-hand side, we obtain
 \begin{equation} \label{expath2}
  \fint_{\partial H_\eps} v^2 \, \d\sigma
  \leq \frac{C}{\eps^2 N_\eps^2}
   \int_{U_\eps} \abs{\nabla v}^2 \, \d x
   + \frac{C}{\eps^p} \int_{U_\eps} v^2 \, \d x,
 \end{equation}
 where~$p := 2n/(n-2)$. It remains to
 estimate the last term on the right-hand side of~\eqref{expath2}.
 To this end, we let~$\bar{v} := \fint_{Y\setminus H_\eps} v \, \d x$.
 The  inequality $(a+b)^2 \leq 2a^2 + 2b^2$
 and the H\"older inequality imply
 \[
  \begin{split}
   \int_{U_\eps} v^2 \, \d x
   &\leq 2\int_{U_\eps} (v - \bar{v})^2 \, \d x
    + 2 \abs{U_\eps} \bar{v}^2 \\
   &\leq 2 \abs{U_\eps}^{2/n}
    \left(\int_{\T^n\setminus H_\eps} (v - \bar{v})^p \, \d x\right)^{2/p}
    + 2 \abs{U_\eps} \fint_{\T^n\setminus H_\eps} v^2 \, \d x.
  \end{split}
 \]
 The uniform Poincar\'e-Sobolev inequality in~$Y\setminus H_\eps$
 (Remark~\ref{rk:Poincarep}), which follows from the previously established
 condition~\eqref{hp:extension}, implies
 \[
  \begin{split}
   \int_{U_\eps} v^2 \, \d x
   \leq C\abs{U_\eps}^{2/n} \int_{\T^n\setminus H_\eps} \abs{\nabla v}^2 \, \d x
    + C \abs{U_\eps} \int_{\T^n\setminus H_\eps} v^2 \, \d x
  \end{split}
 \]
 for some~$\eps$-independent constant~$C$.
 Finally, since~$\abs{U_\eps} \leq C r_\eps^n$ with~$r_\eps \leq C\eps^{2/(n-2)}$
 as in~\eqref{expath-r_eps}, we conclude that
 \begin{equation} \label{expath3}
  \begin{split}
   \int_{U_\eps} v^2 \, \d x
   \leq C \eps^{p - 2} \int_{\T^n\setminus H_\eps} \abs{\nabla v}^2 \, \d x
    + C \eps^p \int_{\T^n\setminus H_\eps} v^2 \, \d x.
  \end{split}
 \end{equation}
 Combining~\eqref{expath2} with~\eqref{expath3},
 we see that~$(H_\eps)_{0 < \eps < 1}$ satisfies~\eqref{hp:suffcondbis-lower}
 for all~$\kappa > \max(C, \, 1)$.
 This completes the proof.
\end{proof}

\section{Acknowledgements}
Part of the work leading to this paper was carried out while GC was visiting the Basque Centre of Applied Mathematics, whose hospitality is gratefully acknowledged.
GC was partially supported by INdAM--GNAMPA, Project E53C25002010001.
KC was supported by EPSRC grant EP/V013025/1. KC also acknowledges the hospitality of the Basque Centre of Applied Mathematics and the University of Verona. The work of AZ has been partially supported by the Basque Government through the BERC 2022-2025 program and by the Spanish State Research Agency through BCAM Severo Ochoa excellence accreditation Severo Ochoa CEX2021-00114 and through project PID2023-146764NB-I00
funded by MICIU/AEI/10.13039/501100011033. AZ was also partially supported by a grant of the Ministry of Research, Innovation and Digitization, CNCS-UEFISCDI, project number PN-IV-P2-2.1-T-TE-2023-1704, within PNCDI IV.

\medskip
\noindent
\textbf{Data availability statement.} Data sharing not applicable to this article as no datasets were
generated or analysed during the current study.

\medskip
\noindent
\textbf{Conflict of interest.} The authors have no competing interests to declare that are relevant to the content of this article.

\bibliographystyle{plain}
\bibliography{homogeneisation}

\end{document}